\documentclass[10pt]{amsart}
\usepackage[margin=1in]{geometry}
\usepackage{amsmath, amsthm, amssymb, color, url}
\usepackage{hyperref}
\hypersetup{
    colorlinks,
    citecolor=blue,
    filecolor=black,
    linkcolor=black,
    urlcolor=black
}
\usepackage[final]{showkeys}
\usepackage[disable]{todonotes}

\makeatletter
\let\@wraptoccontribs\wraptoccontribs
\makeatother

\providecommand{\url}[1]{\url{#1}}

\newcommand{\RR}{\mathbb{R}}

\newcommand{\wD}{\wtilde{D}}
\newcommand{\lc}{{\text{lc}}}

\newcommand{\wnu}{\wtilde{\nu}}

\newcommand{\wPhi}{\wtilde{\Phi}}

\newcommand{\wgamma}{\wtilde{\gamma}}

\newcommand{\eps}{\varepsilon}

\newcommand{\wtilde}{\widetilde}

\newcommand{\FF}{\mathcal{F}}

\newcommand{\EE}{\mathbb{E}}

\newcommand{\wF}{\wtilde{F}}
\newcommand{\cD}{\mathcal{D}}

\newcommand{\BESQ}{\mathsf{BESQ}}
\newcommand{\JAC}{\mathsf{JAC}}
\newcommand{\XX}{\mathcal{X}}
\newcommand{\YY}{\mathcal{Y}}
\renewcommand{\AA}{\mathcal{A}}
\newcommand{\PP}{\mathbb{P}}
\renewcommand{\div}{\text{div}}
\newcommand{\NN}{\mathbb{N}}

\newcommand{\GT}{\mathbb{GT}}
\newcommand{\GTO}{\mathbb{GT}^{[0, 1]}}

\newcommand{\J}{\mathsf{J}}

\newcommand{\wM}{\wtilde{M}}

\newcommand{\vol}{\text{vol}}
\newcommand{\dom}{\mathsf{dom}}
\newcommand{\cL}{\mathsf{L}}
\newcommand{\cJ}{\mathsf{J}}
\newcommand{\wDelta}{\wtilde{\Delta}}
\newcommand{\cZ}{\mathcal{Z}}
\newcommand{\wLambda}{\wtilde{\Lambda}}
\newcommand{\supp}{\text{supp}}
\newcommand{\cH}{\mathcal{H}}

\newcommand{\cV}{\mathcal{V}}
\newcommand{\wU}{\wtilde{U}}
\newcommand{\fA}{\mathfrak{A}}
\newcommand{\cR}{\mathcal{R}}
\newcommand{\Ds}{D^s}
\newcommand{\wZ}{\wtilde{Z}}

\newcommand{\wphi}{\wtilde{\phi}}
\newcommand{\wb}{\wtilde{b}}
\newcommand{\wDs}{\wtilde{\Ds}}
\newcommand{\wf}{\wtilde{f}}
\newcommand{\wk}{\wtilde{k}}

\theoremstyle{definition}

\newtheorem{theorem}{Theorem}[section]
\newtheorem{corr}[theorem]{Corollary}
\newtheorem{lemma}[theorem]{Lemma}
\newtheorem{prop}[theorem]{Proposition}
\newtheorem{define}[theorem]{Definition}
\newtheorem{assump}[theorem]{Assumption}
\newtheorem*{remark}{Remark}

\numberwithin{equation}{section}

\begin{document}

\title{Laguerre and Jacobi analogues of the Warren process}
\author{Yi Sun}
\address{Y.S.: Department of Mathematics\\ Columbia University\\ 2990 Broadway\\ New York, NY 10027, USA}
\email{yisun@math.columbia.edu}

\contrib[with an appendix by]{Andrey Sarantsev}
\address{A.S.: Department of Statistics and Applied Probability, University of California, Santa Barbara}
\email{sarantsev@pstat.ucsb.edu}
\date{\today}

\begin{abstract}
We define Laguerre and Jacobi analogues of the Warren process.  That is, we construct local dynamics on a triangular array of particles so that the projections to each level recover the Laguerre and Jacobi eigenvalue processes of K\"onig-O'Connell and Doumerc and the fixed time distributions recover the joint distribution of eigenvalues in multilevel Laguerre and Jacobi random matrix ensembles.  Our techniques extend and generalize the framework of intertwining diffusions developed by Pal-Shkolnikov.  One consequence is the construction of particle systems with local interactions whose fixed time distribution recovers the hard edge of random matrix theory.  An appendix by Andrey Sarantsev establishes strong existence and uniqueness for solutions to SDER's satisfied by these processes.
\end{abstract}

\maketitle 

\tableofcontents

\section{Introduction}

The purpose of the present work is to construct and characterize Laguerre and Jacobi analogues of the Warren process.  These are stochastic dynamics on a triangular array of particles such that each particle evolves independently outside of interactions implemented by reflections of particles on level $n$ off particles on level $n - 1$.  We show that when started from Gibbs initial conditions, their projections to each level are Markovian and coincide in law with Laguerre and Jacobi analogues of Dyson Brownian motion introduced in \cite{KO} and \cite{Dou}, and their fixed time distributions recover the joint distribution of eigenvalues in multilevel Laguerre and Jacobi random matrix ensembles.  When projected to the left edge, our results yield a construction of an interacting particle system with local interactions whose fixed time distribution recovers the hard edge of random matrix theory.

Our motivation comes from the original construction of Warren in \cite{War} providing a coupling of Dyson Brownian motions via reflected Brownian motions.  In the context of interacting particle systems, multilevel intertwinings of Brownian particles have also appeared in the works \cite{WW, MOW, FF14, GS4}, while \cite{OCo, BC, GS2} study other intertwined processes whose single level projections are similar to Dyson Brownian motion but with more complicated multilevel interactions.  We note in particular that the paper \cite{FF14} gives a generalization of Warren's construction for Brownian particles with drift.  The Laguerre and Jacobi Warren processes we consider in this work differ from the previously mentioned processes in incorporating both non-Brownian diffusion terms and reflection of particles from one level on particles from lower levels.

Our method of proof proceeds via the framework of intertwining diffusions introduced by Pal-Shkolnikov in \cite{PS}; this framework gives a criterion for two diffusions to admit a Markov coupling in the spirit of the original work of Dynkin and Rogers-Pitman in \cite{Dyn, RP81}.  We extend their results by giving a general existence criterion for intertwining diffusions with non-Brownian diffusion term; our criterion involves deforming two conditions from the Brownian case.  The main results then follow from an application of our new criterion to the Laguerre and Jacobi eigenvalues processes, a general result on strong existence for stochastic differential equations with reflection on time-dependent barrier in one dimension, and a technical verification of a core criterion for the resulting process.

In the remainder of this introduction, we state our results more precisely and provide additional motivation and background. For the reader's convenience, all notations will be redefined in later sections.

\begin{remark}
After this work was completed and during the final preparation of this article, the author was made aware of the recent preprint \cite{AOW}, which obtains similar results using a different approach.
\end{remark}

\subsection{Dyson Brownian motion and the Warren process}

For each $n \geq 1$, let $X_n(t)$ be a standard Brownian motion in the space of $n \times n$ Hermitian matrices.  It was shown by Dyson in \cite{Dys} that the evolution of the ordered eigenvalues $\lambda^n_1(t) \leq \cdots \leq \lambda^n_n(t)$ of $X_n(t)$ is Markovian and solves the SDE
\[
d\lambda^n_i(t) = dB^n_i(t) + \sum_{j \neq i} \frac{1}{\lambda^n_i(t) - \lambda^n_j(t)} dt, \qquad 1 \leq i \leq n,
\]
where $B^n_i(t)$ are $n$ independent standard real Brownian motions.  The resulting process is known as \textit{Dyson Brownian motion}.  In \cite{War}, Warren introduced the following system of stochastic differential equations with reflection valued in the Gelfand-Tsetlin cone
\[
\GT_n := \{\mu^k_i \mid \mu^{k - 1}_{i - 1} \leq \mu^k_i \leq \mu^{k-1}_i\}_{1 \leq i \leq k, 1 \leq k \leq n}
\]
and given by
\begin{equation} \label{eq:war-def}
d\mu^k_i(t) = dB^k_i(t) + \frac{1}{2} dL^{k, +}_i(t) - \frac{1}{2} dL^{k, -}_i(t), \qquad 1 \leq i \leq k, 1 \leq k \leq n,
\end{equation}
where $B^k_i(t)$ are standard real Brownian motions, $L^{k, -}_i(t)$ is $0$ if $i = k$ and the local time of $\mu^k_i(t) - \mu^{k-1}_{i}(t)$ at $0$ otherwise, and $L^{k, +}_i(t)$ is $0$ if $i = 1$ and the local time of $\mu^k_i(t) - \mu^{k-1}_{i-1}(t)$ at $0$ otherwise.  He proved the following theorem, which shows that the unique weak solution to (\ref{eq:war-def}), known as the \textit{Warren process}, gives a coupling of Dyson Brownian motions with different numbers of particles.  Let $\Delta(\mu) := \prod_{i < j} (\mu_i - \mu_j)$ denote the Vandermonde determinant.

\begin{theorem}[{\cite[Section 4]{War}}] \label{thm:war-process}
The SDE (\ref{eq:war-def}) satisfies the following properties.
\begin{itemize}
\item[(a)] It admits a unique weak solution $\{\mu^k_i(t)\}$ when started at $0$ with entrance law
\[
(2\pi)^{-n} t^{-n^2/2} \Delta(\mu^n) \prod_{i = 1}^n e^{-(\mu^n_i)^2/2t} \prod_{k = 1}^n \prod_{i = 1}^k d\mu^k_i.
\]

\item[(b)] For each $k$, the projection of $\{\mu^k_i(t)\}$ to level $k$ is Markovian and coincides in law with Dyson Brownian motion started at $0$ with entrance law
\[
(2\pi)^{-k/2} t^{-k^2/2} \Delta(\mu^k)^2 \prod_{i = 1}^k e^{-(\mu^k_i)^2/2t} d\mu^k_i.
\]
\end{itemize}
\end{theorem}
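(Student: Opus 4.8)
The plan is to establish (a) and (b) together by the method of Markov functions (Dynkin \cite{Dyn}, Rogers--Pitman \cite{RP81}), implemented recursively in the level index $k$: the one-dimensional theory of Skorokhod reflection supplies existence and weak uniqueness of the underlying system, Brownian scaling reduces the entrance law to its form at $t=1$, and a generator intertwining identifies each level with a Dyson process.

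First I would construct $\{\mu^k_i(t)\}$ by induction on $k$: level $1$ is a standard Brownian motion, and given the continuous paths of levels $1,\dots,k-1$ one solves, for each $i$, the one-dimensional Skorokhod reflection problem for $\mu^k_i$ driven by $B^k_i$ and confined to the random interval $[\mu^{k-1}_{i-1}(t),\mu^{k-1}_i(t)]$ (with the lower, resp.\ upper, barrier absent when $i=1$, resp.\ $i=k$), whose reflecting finite-variation terms are exactly $\tfrac12\,dL^{k,+}_i$ and $-\tfrac12\,dL^{k,-}_i$. One-dimensional reflected equations with continuous barriers have pathwise-unique strong solutions, so this produces a solution in $\GT_n$ and pathwise uniqueness at each level gives weak uniqueness of the array; the one delicate point is the random set of times at which a reflecting interval collapses (a collision within a level), which one checks is Lebesgue-null and crossed continuously using that all particles issue from $0$ and the barriers themselves interlace. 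Since Brownian motion and Skorokhod reflection are covariant under $x\mapsto ax(\cdot/a^2)$, the whole array is self-similar, so $\mathrm{Law}(\{\mu^k_i(t)\})$ is the $\sqrt t$-dilation of $\mathrm{Law}(\{\mu^k_i(1)\})$ and it suffices to identify the latter.

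The core step is a generator intertwining at two consecutive levels. Let $W_k\subset\RR^k$ be the Weyl chamber, let $\mathcal{L}^{(k)}$ be the Dyson generator with $k$ particles --- the Doob $h$-transform of $k$ killed Brownian motions by the harmonic function $h=\Delta$ --- and let $\Lambda_k(x,d\nu)=(k-1)!\,\frac{\Delta(\nu)}{\Delta(x)}\,\one_{\nu\prec x}\,d\nu$ be the Markov kernel from $W_k$ to $W_{k-1}$, normalized by the one-level Gelfand--Tsetlin volume identity $\int_{\nu\prec x}\Delta(\nu)\,d\nu=\Delta(x)/(k-1)!$. With $\mathcal{L}^{\GT}_k$ the generator of the two-level diffusion $(\mu^{k-1},\mu^k)$ on $\{\nu\prec x\}$ --- a Laplacian in $x$, the Dyson-$(k-1)$ generator in $\nu$, and reflection at the walls $x_i=\nu_i$ and $x_{i+1}=\nu_i$ --- the identity to prove is $\Lambda_k\,\mathcal{L}^{\GT}_k=\mathcal{L}^{(k)}\,\Lambda_k$ on a suitable domain. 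I would expand both sides using Green's identity in $\nu$ and the Leibniz rule for the $x$-dependent integration domain: on the left the bulk of the Dyson-$(k-1)$ part cancels via $\tfrac12\Delta_\nu 1=0$ and leaves boundary integrals over the walls $x_i=\nu_i$, $x_{i+1}=\nu_i$, which also receive the local-time contributions of \eqref{eq:war-def}; on the right, differentiating the moving domain produces boundary integrals over the same walls and the interior drift assembles into $\partial_{x_i}\log\Delta(x)=\sum_{j\ne i}(x_i-x_j)^{-1}$. The hard part will be showing that these boundary contributions cancel in matched triples so that exactly $\mathcal{L}^{(k)}(\Lambda_k g)$ remains; the coefficient $\tfrac12$ in \eqref{eq:war-def} is precisely what calibrates the local-time terms for this cancellation, and any other coefficient would leave a spurious boundary drift in the projected process. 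Equivalently, $\Lambda_k$ carries the domain of $\mathcal{L}^{\GT}_k$ into that of $\mathcal{L}^{(k)}$.

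Granting the intertwining, I would invoke the Markov-functions criterion \cite{RP81} for the two-level diffusion $(\mu^{k-1},\mu^k)$, whose $\mu^{k-1}$-marginal is Dyson-$(k-1)$ by the inductive hypothesis: when the array is started so that $\mu^{k-1}(0+)\sim\Lambda_k(\mu^k(0+),\cdot)$, the $\mu^k$-marginal is Markov with the Dyson-$(k)$ semigroup and, conditionally on the $\mu^k$-path, $\mu^{k-1}(t)\sim\Lambda_k(\mu^k(t),\cdot)$. As levels $1,\dots,k$ of the full array evolve autonomously (level $k$ reflects only off level $k-1$), iterating from $k=1$ to $k=n$ shows that every level is Dyson Brownian motion with the stated number of particles and that at each time $t$ the pattern below level $n$ is conditionally uniform on the Gelfand--Tsetlin polytope with top row $\mu^n(t)$; combining the known Dyson-$(k)$ entrance law $\propto t^{-k^2/2}\Delta(x)^2\prod_i e^{-x_i^2/2t}$ (eigenvalues of Hermitian Brownian motion, or the $h$-transform entrance law) with this conditional and the iterated volume identity $\int\prod_{k=1}^{n-1}\prod_i d\mu^k_i=\Delta(\mu^n)/\prod_{j=1}^{n-1}j!$ yields the displayed densities, the numerical constants being fixed by the Gaussian normalization, and the fixed-time assertions are then automatic. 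A residual hazard, to be handled throughout, is that the reflected system and the $h$-transforms are singular at the degenerate configurations, so the semigroup identities should be proved on the open cone and chamber and extended using that these processes started from $0$ enter the interior immediately and spend Lebesgue-null time on the walls.
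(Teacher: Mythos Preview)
The paper does not prove Theorem~\ref{thm:war-process}: it is quoted as Warren's result from \cite{War} and serves only as motivation for the Laguerre and Jacobi analogues that the paper does establish. So there is no proof in this paper to compare against directly.

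That said, your proposal is a correct outline of Warren's argument and is very close in spirit to the machinery the paper builds in Section~\ref{sec:ps-ext} (the Pal--Shkolnikov framework) and deploys in Section~\ref{sec:main} for the Laguerre and Jacobi cases. The organizational difference is this: you intertwine the genuine two-level process $(\mu^{k-1},\mu^k)$, where $\mu^{k-1}$ is taken to be an autonomous Dyson-$(k-1)$ process, and then argue that in the full array $\mu^{k-1}$ has this law by induction. The paper instead takes $X$ to be the \emph{entire} lower array $\{\mu^j\}_{j\le k-1}$ (which is manifestly Markov), $Y$ the level-$k$ eigenvalue process, and intertwines via the full Gelfand--Tsetlin link; Proposition~\ref{prop:geo-cond} then factors this through the single-step Dixon--Anderson kernel $\Lambda_1$ to recover exactly your boundary cancellation computation. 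Your route is more economical in the Brownian case (and is essentially what Warren does); the paper's route is chosen because for Laguerre and Jacobi generators one needs the process $X$ to be a \emph{regular Feller} diffusion with an identified core, and this is easier to verify for the full lower array than for a single Dyson-type level with singular drift.

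One point in your sketch deserves sharpening: the two-level process $(\mu^{k-1},\mu^k)$ extracted from the full array is not a priori Markov on its own, since $\mu^{k-1}$ is reflected off $\mu^{k-2}$. What saves you is that $\mu^k$ is a strong (pathwise) functional of $\mu^{k-1}$ and the independent driving Brownian motions $B^k$, so once the inductive hypothesis gives $\mu^{k-1}\overset{d}{=}\text{Dyson-}(k-1)$, the pair $(\mu^{k-1},\mu^k)$ has the same law as the autonomous two-level model to which you apply Rogers--Pitman. You allude to this (``levels $1,\dots,k$ evolve autonomously''), but it is the step that makes the induction close, and it is worth stating explicitly.
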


\begin{remark}
While it might be tempting to conjecture that the Warren process is the joint evolution of eigenvalues of principal submatrices of $X_n(t)$, it was shown by Adler-Nordenstam-van~Moerbeke in \cite{ANvM14} that in general the joint evolution of eigenvalues of three principal submatrices is not Markovian.
\end{remark}

\subsection{Statement of the main results}

The purpose of the present work is to provide a generalization of the Warren process of Theorem \ref{thm:war-process} for Laguerre and Jacobi analogues of Dyson Brownian motion.  For the Laguerre case, it was shown in \cite{KO} that the largest $\min\{n, p\}$ singular values
\[
0 \leq \lambda^{(n)}_1(t) \leq \cdots \leq \lambda^{(n)}_{\min\{n, p\}}(t)
\]
of a $n \times p$ rectangular matrix of complex Brownian motions satisfy the stochastic differential equation
\[
d\lambda^{(n)}_i(t) = 2 \sqrt{\lambda^{(n)}_i(t)} dB^{(n)}_i(t) + 2 \Big(|n - p| + 1\Big) dt + \sum_{j \neq i} \frac{4 \lambda^{(n)}_i(t)}{\lambda^{(n)}_i(t) - \lambda^{(n)}_j(t)} dt,
\]
where $B^{(n)}_i(t)$ are standard Brownian motions, and may be started at $0$ with entrance law proportional to 
\[
\Delta(\lambda^{(n)})^2 \prod_{i = 1}^{\min\{n, p\}} (\lambda^{(n)}_i)^{|p - n|} e^{-\frac{\lambda^{(n)}_i}{2t}} d\lambda^{(n)}_i.
\]
This process is known as the Laguerre eigenvalues process of rank $p$ and level $n$.  Consider the system of SDE's with reflection
\begin{equation} \label{eq:lw-eq}
dl^n_i(t) = 2 \sqrt{l^n_i(t)} dB^n_i(t) + 2 \Big(p - n + 1\Big) dt + \frac{1}{2}dL^{n, +}_i(t) - \frac{1}{2}dL^{n, -}_i(t), 1 \leq i \leq \min\{n, p\}, 1 \leq n \leq m,
\end{equation}
where $B^n_i(t)$ are standard real Brownian motions, $L_i^{n, +}(t)$ is the local time at $0$ of $l_i^n(t) - l^{n - 1}_i(t)$ if $n > p$, the local time at $0$ of $l_i^n(t) - l^{n - 1}_{i - 1}(t)$ if $n \leq p$ and $i > 1$, and $0$ if $n \leq p$ and $i = 1$, and $L_i^{n, -}(t)$ is $0$ if $i = \min\{n, p\}$, the local time at $0$ of $l^{n - 1}_{i+1}(t) - l^n_i(t)$ if $n > p$ and $i < \min\{n, p\}$, and the local time at $0$ of $l^{n - 1}_i(t) - l^n_i(t)$ if $n > p$ and $i < \min\{n, p\}$. We say that an initial condition $\{l^n_i(0)\}$ for (\ref{eq:lw-eq}) is Gibbs if for any Borel $B$, we have
\[
\PP(\{l^n_i(0)\} \in B \mid l^m(0) = \lambda\} = \frac{(m - 1)! \cdots 1!}{(m - p - 1)_+! \cdots 1!} \frac{\vol(B)}{\Delta(\lambda) \prod_i \lambda_i^{(m - p)_+}}.
\]
Theorem \ref{thm:lwt}, our first main result, states that a solution to (\ref{eq:lw-eq}) from a Gibbs initial condition provides a simultaneous coupling of Laguerre eigenvalues processes of different levels.  It follows from Theorems \ref{thm:lag-war-inter} and \ref{thm:lag-war-exist} and Corollaries \ref{corr:lag-war} and \ref{corr:lag-entrance} in Section \ref{sec:lwp}.

\begin{theorem} \label{thm:lwt}
The SDER (\ref{eq:lw-eq}) admits a unique strong solution, known as the Laguerre Warren process, for any Gibbs initial condition.  This solution satisfies the following properties.

\begin{itemize}
\item[(a)] Its projection to level $n$ is Markovian and coincides in law with the Laguerre eigenvalues process of rank $p$ and level $n$.

\item[(b)] Its fixed time distribution at any $t > 0$ is Gibbs.

\item[(c)] It may be started from $l^n_i(0) = 0$ with entrance law proportional to 
\[
\Delta(l^m) \prod_{i = 1}^{\min\{m, p\}}(l^m_i)^{(p - m)_+} e^{-\frac{l_i^m}{2t}} \prod_{n = 1}^m \prod_{i = 1}^{\min\{n, p\}} dl^n_i.
\]
\end{itemize}
\end{theorem}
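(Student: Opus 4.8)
The argument reduces to a two-level statement and proceeds in the order announced after the theorem. For each $2 \le n \le m$ let $\Lambda^n_{n-1}$ be the Markov kernel from level-$n$ configurations to level-$(n-1)$ configurations given by the Gibbs conditional law, i.e. (up to the stated normalizing constant) Lebesgue measure on the relevant interlacing polytope weighted by $\Delta(l^{n-1})\prod_i(l^{n-1}_i)^{(p-n+1)_+}$ divided by $\Delta(l^n)\prod_i(l^n_i)^{(p-n)_+}$, and let $\mathcal{L}_n$ denote the generator of the Laguerre eigenvalues process of rank $p$ and level $n$. The first task (Theorem \ref{thm:lag-war-inter}) is to show $\Lambda^n_{n-1}$ intertwines $\mathcal{L}_n$ and $\mathcal{L}_{n-1}$. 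This is where the extension of the Pal--Shkolnikov framework is used: since the diffusion coefficient is $\sigma(x)=2\sqrt{x}$ rather than constant, one verifies the two \emph{deformed} conditions of the general criterion — a commutation identity matching the second-order part of $\mathcal{L}_n\Lambda^n_{n-1}$ with $\Lambda^n_{n-1}\mathcal{L}_{n-1}$, and a boundary/flux condition on $\partial\GT$. The flux condition is what both produces the local-time reflection terms with coefficient $\frac12$ in (\ref{eq:lw-eq}) and generates the extra drift needed to promote the level-$n$ drift $2(p-n+1)\,dt$ to the correct marginal drift $2(|n-p|+1)\,dt$ of the Laguerre process. The verification splits into the cases $n \le p$ (number of particles growing) and $n > p$ (number constant, reversed interlacing), and in each is a direct but lengthy computation with the explicit Laguerre density and generator.

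Second, I would prove strong existence and uniqueness for (\ref{eq:lw-eq}) (Theorem \ref{thm:lag-war-exist}) by solving level by level. Level $1$ is an autonomous squared-Bessel-type SDE with no reflection; given a continuous path of level $n-1$, each coordinate $l^n_i$ solves a one-dimensional SDE reflected off the time-dependent barriers formed by the neighbouring coordinates of level $n-1$, so the appendix result of Sarantsev on strong existence and uniqueness for one-dimensional SDER's on moving barriers applies coordinate by coordinate, and the solutions are pasted together up the triangle. Care is needed near the hard edge $\{x=0\}$, where $\sigma$ degenerates and only the drift $2(p-n+1)\,dt$ survives; comparison with squared Bessel processes of the appropriate dimension keeps the barriers ordered and the local times well defined.

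Third, with the solution in hand, I would verify the core criterion of the Pal--Shkolnikov machinery (Corollary \ref{corr:lag-war}): started from a Gibbs initial condition, the solution of (\ref{eq:lw-eq}) has the property that, conditionally on the entire path of level $n$, levels $1,\dots,n-1$ evolve as the corresponding lower Laguerre Warren process, whence the level-$n$ projection is Markov with generator $\mathcal{L}_n$ and hence, being also characterized by its entrance behaviour, coincides in law with the Laguerre eigenvalues process of level $n$. Composing these two-level statements up the triangle gives part (a). Part (b) then follows from the intertwining: the Gibbs property asserts that the conditional law of levels $1,\dots,m-1$ given level $m$ equals the composition $\Lambda^m_{m-1}\Lambda^{m-1}_{m-2}\cdots\Lambda^2_1$, and the intertwining relations show this conditional law is invariant under the joint dynamics, so it persists for all $t>0$ once it holds at $t=0$. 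For part (c), I would start the level-$m$ Laguerre process at $0$ from its known entrance law (recalled in the excerpt), place the Gibbs conditional on the lower levels, and pass to the limit from Gibbs initial data $l^m(0)=\lambda\to 0$ using continuity of the solution map and the explicit convergence of the entrance densities; this is Corollary \ref{corr:lag-entrance} and yields the stated entrance law carrying the single power of $\Delta(l^m)$.

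\textbf{Main obstacle.} The crux is the combination of the first and second steps: verifying the deformed intertwining identity for the \emph{non-constant, degenerate} coefficient $2\sqrt{x}$ while simultaneously controlling the reflected system at the hard edge $x=0$, where the diffusion vanishes, the drift is the only surviving term, and the interlacing constraints accumulate. Keeping the regimes $n\le p$ and $n>p$ mutually consistent — so that the single SDER (\ref{eq:lw-eq}) and the single Gibbs kernel do the right thing on both sides of $n=p$ — is what makes the bookkeeping delicate, and is the technical heart of Section \ref{sec:lwp}.
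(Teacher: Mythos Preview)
Your broad outline---intertwining via an extended Pal--Shkolnikov criterion, existence level by level, then deduce (a)--(c)---matches the paper. However, several structural points are off, and you miss the step the paper treats as the real technical obstacle.

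\textbf{What is intertwined with what.} You phrase Theorem~\ref{thm:lag-war-inter} as an intertwining of $\mathcal{L}_n$ with $\mathcal{L}_{n-1}$ via the single Dixon--Anderson kernel $\Lambda^n_{n-1}$. The paper does \emph{not} do this. It intertwines the level-$n$ Laguerre eigenvalues process with the \emph{full} Laguerre Warren process on levels $1,\dots,n-1$, using the kernel that conditions all lower levels on level $n$. The reason is spelled out in the remark after Proposition~\ref{prop:geo-cond}: applying Theorem~\ref{thm:ps-ext} directly to the two-level process would require showing that process is regular Feller (i.e.\ that $\cD(\AA^Z)$ is a core), and the paper cannot do this for the two-level process alone. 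Instead it establishes regularity for the full multi-level Warren process and uses Proposition~\ref{prop:geo-cond} to reduce the intertwining verification to conditions involving only the single-level kernel $\Lambda_1$.

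\textbf{The inductive loop.} You treat intertwining and existence as sequential. In the paper they are proved \emph{together} by induction on the number of levels: the proof of Theorem~\ref{thm:lag-war-exist} for $m$ levels invokes Theorem~\ref{thm:lag-war-inter} for $m-1$ levels to conclude that the level-$(m-1)$ projection is a Laguerre eigenvalues process, hence (by Proposition~\ref{prop:lep}) has no collisions. Only then are the time-dependent barriers for level $m$ known to be separated, so that Theorem~\ref{thm:exist-crit} (the Yamada--Watanabe-type result of Section~\ref{sec:sder}, not the appendix) applies. Your comparison with squared Bessel processes to keep barriers ordered would not give this.

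\textbf{The core criterion.} You identify Corollary~\ref{corr:lag-war} as ``the core criterion,'' but that corollary is a one-line consequence of the two theorems. The actual core criterion is Theorem~\ref{thm:z-reg-cond}, which establishes that $\cD(\AA^Z)$ is a core for the generator---a hypothesis of Theorem~\ref{thm:ps-ext} without which the intertwining argument does not close. Its verification (Proposition~\ref{prop:lag-regular}) is the paper's hardest technical step and hinges on Lemma~\ref{lem:drift-lag-ver}, a Bessel-process estimate bounding the quantity~(\ref{eq:drift-exp}) uniformly in the starting point; this uses the change of measure in Lemma~\ref{lem:bessel-mart} and a hitting-time density bound. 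The appendix (Sarantsev) enters only here, supplying Assumption~\ref{ass:diag}(g) that the process avoids non-smooth boundary points. None of this appears in your proposal, and it is exactly where the degenerate diffusion at $0$ bites.
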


\begin{remark}
The fixed-time marginals of Theorem \ref{thm:lwt}(c) correspond to the joint distribution of eigenvalues at different levels of the Laguerre ensemble from random matrix theory.  More precisely, let $X(t)$ be a matrix of complex Brownian motions with $p$ columns and an infinite number of rows.  Letting $X_n(t)$ consist of its top $n$ rows, the distribution of Theorem \ref{thm:lwt}(c) is the joint distribution of the largest $\min\{n, p\}$ singular values of $X_n(t)$ for $1 \leq n \leq m$.
\end{remark}

For the Jacobi case, fix parameters $(p, q)$ and $n \leq p, q$, and let $N = p + q$.  In \cite{Dou}, it was shown that the singular values
\[
0 \leq \mu^{(n)}_1(t) \leq \cdots \leq \mu^{(n)}_n(t) \leq 1
\]
of the top left $n \times p$ submatrix of a Brownian motion on the space of unitary $N \times N$ matrices satisfy the stochastic differential equation
\[
d\mu^{(n)}_i(t) = 2 \sqrt{\mu^{(n)}_i(t) (1 - \mu^{(n)}_i(t))} dB^{(n)}_i(t) + 2 \Big((p - n + 1) + (p + q - 2 n + 2) \mu^{(n)}_i(t)\Big) dt + \sum_{j \neq i} \frac{4 \mu^{(n)}_i(1 - \mu^{(n)}_i(t))}{\mu^{(n)}_i(t) - \mu^{(n)}_j(t)} dt
\]
for standard real Brownian motions $B^{(n)}_i(t)$ and may be started with invariant measure proportional to 
\[
\Delta(\mu^{(n)})^2 \prod_{i = 1}^n (\mu^{(n)}_i)^{p - n} (1 - \mu^{(n)}_i)^{q - n} d\mu^{(n)}_i.
\]
This process is known as the Jacobi eigenvalues process with parameters $(p, q)$ and level $n$.  Consider the system of SDE's with reflection
\begin{multline} \label{eq:jw-eq}
dj^n_i(t) = 2 \sqrt{j^n_i(t) (1 - j^n_i(t))} dB^n_i(t) + 2\Big((p - n + 1) + (p + q - 2n + 2) j^n_i(t)\Big) dt\\ + \frac{1}{2} dL^{n, +}_i(t) - \frac{1}{2} dL^{n, -}_i(t), 1 \leq i \leq n, 1 \leq n \leq \min\{p, q\},
\end{multline}
where $B^n_i(t)$ are standard real Brownian motions, $L_i^{n, +}(t)$ is $0$ if $i = 1$ and the local time at $0$ of $j_i^n(t) - j^{n - 1}_{i - 1}(t)$ otherwise, and $L_i^{n, -}(t)$ is $0$ if $i = n$ and the local time at $0$ of $j^{n - 1}_i(t) - j^n_i(t)$ otherwise.  We say an initial condition $\{j^n_i(0)\}$ for \ref{eq:jw-eq} is Gibbs if for any Borel $B$ we have
\[
\PP(\{j^n_i(0)\} \in B \mid j^{\min\{p, q\}}(0) = \lambda) = (\min\{p, q\} - 1)! \cdots 1! \frac{\vol(B)}{\Delta(\lambda)}.
\]
Theorem \ref{thm:jwt}, our second main result, states that a solution to (\ref{eq:jw-eq}) from a Gibbs initial condition provides a simultaneous coupling of Jacobi eigenvalues processes of different levels.  It follows from Theorems \ref{thm:jac-war-inter} and \ref{thm:jac-war-exist} and Corollaries \ref{corr:jac-war} and \ref{corr:jac-inv} in Section \ref{sec:jwp}.
\begin{theorem} \label{thm:jwt}
The SDER (\ref{eq:jw-eq}) admits a unique strong solution, known as the Jacobi Warren process, for any Gibbs initial condition.  This solution satisfies the following properties.
\begin{itemize}
\item[(a)] Its projection to level $n$ is Markovian and coincides in law with the Jacobi eigenvalues process with parameters $(p, q)$ and level $n$.

\item[(b)] Its fixed time distribution at any $t > 0$ is Gibbs.

\item[(c)] It may be started with invariant measure proportional to
\[
\Delta(j^{\min\{p, q\}}) \prod_{i = 1}^{\min\{p, q\}} (j^{\min\{p, q\}}_i)^{p - \min\{p, q\}} (1 - j^{\min\{p, q\}}_i)^{q - \min\{p, q\}} \prod_{n = 1}^{\min\{p, q\}} \prod_{i = 1}^n dj^n_i.
\]
\end{itemize}
\end{theorem}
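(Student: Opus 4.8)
The plan is to obtain the three assertions from the four cited ingredients, in the order: strong existence and uniqueness, intertwining of generators, Markovian projection, and identification of the stationary and fixed-time laws. For the first, I would construct the solution of (\ref{eq:jw-eq}) level by level and, within a level, coordinate by coordinate. The system is triangular: the coefficients of $j^n_i$ involve only levels $\le n$, and once level $n-1$ has been built, $j^n_i$ is a one-dimensional diffusion with coefficients smooth on $(0,1)$, reflected off the continuous moving barriers $t \mapsto j^{n-1}_{i-1}(t)$ and $t \mapsto j^{n-1}_i(t)$. Strong existence and uniqueness then follow from Theorem \ref{thm:jac-war-exist}, which rests on the appendix's general result for one-dimensional SDER's with a time-dependent barrier; the one delicate point is that the diffusion coefficient $2\sqrt{j(1-j)}$ degenerates at $0$ and $1$, but since it is comparable to a squared-Bessel-type coefficient there, a Yamada--Watanabe comparison keeps each particle strictly inside $(0,1)$ and preserves the interlacing.

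The heart of the argument is the intertwining. For $1 \le n \le \min\{p,q\}$ let $\cL_n$ be the generator of the Jacobi eigenvalues process with parameters $(p,q)$ at level $n$, and let $\Lambda^n_{n-1}$ be the Markov kernel from level $n$ to level $n-1$ given by the Gibbs conditional law --- Lebesgue measure on the interlacing simplex, normalized by $\Delta$. Using our extension of the Pal--Shkolnikov framework \cite{PS} (Theorem \ref{thm:jac-war-inter}), I would verify the relation $\cL_{n-1}\Lambda^n_{n-1} = \Lambda^n_{n-1}\widehat{\cL}_n$, where $\widehat{\cL}_n$ denotes the generator of the reflected two-level $(n-1,n)$ dynamics. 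This amounts to checking the two hypotheses of our criterion, each a deformation of the Brownian case by the factor $j(1-j)$: an interior identity saying that the drift $2\big((p-n+1)+(p+q-2n+2)j\big)$, paired with $2\sqrt{j(1-j)}$, is precisely the one making the uniform density on the interlacing simplex $\cL_n$-harmonic after reweighting by the Jacobi weight $j^{p-n}(1-j)^{q-n}$; and a boundary identity matching the local-time terms $\tfrac12\,dL^{n,\pm}_i$ to the conormal flux of $\Lambda^n_{n-1}$ across the facets $\{j^n_i = j^{n-1}_{i-1}\}$ and $\{j^n_i = j^{n-1}_i\}$. The interior identity follows from an integration by parts against the Jacobi weight; the boundary identity is what forces the reflection directions and the factor $\tfrac12$ in (\ref{eq:jw-eq}).

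Granting the intertwining together with a verification that the function class on which it has been checked is a core for $\cL_n$, the Rogers--Pitman criterion in the form of Pal--Shkolnikov \cite{RP81, PS} gives Corollary \ref{corr:jac-war}: from a Gibbs initial condition, the level-$n$ projection of the solution is Markov with generator $\cL_n$, hence is the Jacobi eigenvalues process at level $n$, which is (a). The intertwining also propagates the Gibbs property: with $P^{(n)}_t$ the level-$n$ semigroup, the relation $P^{(n-1)}_t\Lambda^n_{n-1} = \Lambda^n_{n-1}\widehat{P}^{(n)}_t$ shows that a law which at time $0$ is Gibbs over its top marginal $\bar\nu_0$ is, at time $t$, Gibbs over $\bar\nu_0 P^{(\min\{p,q\})}_t$, giving (b); and the measure in (c) is the product of the level-$\min\{p,q\}$ invariant measure of the Jacobi eigenvalues process \cite{Dou} with the Gibbs conditional kernel, hence invariant for (\ref{eq:jw-eq}) by the same intertwining, and by the uniqueness of the first step it is the law of the stationary solution (Corollary \ref{corr:jac-inv}).

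I expect the main obstacle to be twofold, lying in the second and third steps. First, verifying the deformed Pal--Shkolnikov hypotheses requires handling the boundary terms exactly: with a non-constant diffusion coefficient the reflection is no longer a symmetric local time of a difference of Brownian motions, so one must check that the one-sided local times and the factor $\tfrac12$ in (\ref{eq:jw-eq}) reproduce the conormal flux of $\Lambda^n_{n-1}$ with no leftover boundary contribution. Second, the core property for $\cL_n$, needed to upgrade the formal generator intertwining to an identity of semigroups, is genuinely technical: $\cL_n$ is a degenerate diffusion on a closed simplex with Dirichlet-type behavior at $\{j=0\}$ and $\{j=1\}$ and reflecting behavior at the interlacing facets, and producing a core compatible with all these boundary conditions is the most delicate part of the verification.
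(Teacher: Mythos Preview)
Your plan is essentially the paper's own: it assembles Theorem~\ref{thm:jwt} from Theorems~\ref{thm:jac-war-inter} and~\ref{thm:jac-war-exist} and Corollaries~\ref{corr:jac-war} and~\ref{corr:jac-inv} via the same inductive level-by-level construction, the extended Pal--Shkolnikov criterion with the Jacobi-deformed interior and boundary identities, and the core verification you flag as the chief difficulty. One caution on bookkeeping: the displayed relation $\cL_{n-1}\Lambda^n_{n-1} = \Lambda^n_{n-1}\widehat{\cL}_n$ does not typecheck as stated---in the paper's setup the link $L$ sends functions on the lower levels to functions on level $n$, and the intertwining is $\AA^Y L = L\,\AA^X$ with $\AA^Y=\cJ^{p,q}_n$ the level-$n$ Jacobi eigenvalues generator and $\AA^X$ the Jacobi Warren generator on levels $1,\ldots,n-1$; straightening this out is purely notational and does not affect your argument.
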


\begin{remark}
The invariant measure of Theorem \ref{thm:jwt} corresponds to the joint distribution of eigenvalues at different levels of the Jacobi ensemble from random matrix theory.  More precisely, let $X$ and $Y$ be matrices of standard complex Gaussians with infinitely many columns and $p$ rows and $q$ rows, respectively.  Let $X_n$ and $Y_n$ be the first $n$ columns of $X$ and $Y$; then the measure of Theorem \ref{thm:jwt} is the joint density of eigenvalues of $X_n^*X_n (X_n^* X_n + Y_n^*Y_n)^{-1}$ for $1 \leq n \leq \min\{p, q\}$. 
\end{remark}

\begin{remark}
In the appendix to this paper, authored by Andrey Sarantsev, it is shown that the SDER's (\ref{eq:lw-eq}) and (\ref{eq:jw-eq}) have a unique strong solution for arbitrary initial condition.  However, the properties of the solutions given in Theorems \ref{thm:lwt} and \ref{thm:jwt} hold only for Gibbs initial conditions.
\end{remark}

\subsection{Pal-Shkolnikov method of intertwining diffusions}

Our proof of Theorems \ref{thm:lwt} and \ref{thm:jwt} proceeds by studying the intertwining of Markov processes stemming from the semigroup criteria given by Dynkin in \cite{Dyn} and Rodgers-Pitman in \cite{RP81}.  In particular, we apply and extend the framework of intertwining diffusions introduced by Pal-Shkolnikov in \cite{PS}, which gives a version of intertwining adapted to our setting and shows that it applies to the Warren process of \cite{War} and the Whittaker process of \cite{OCo}.  We refer the reader to \cite{PS} for a detailed review of the literature on intertwinings. Let $X$ and $Y$ be diffusion processes with generators 
\begin{align*}
\AA^X &:= \frac{1}{2} \sum_{i = 1}^m \sum_{j = 1}^m a_{ij}(x) \partial_{x_i} \partial_{x_j} + \sum_{i = 1}^m b_i(x) \partial_{x_i}\\
\AA^Y &:= \frac{1}{2} \sum_{i = 1}^n \sum_{j = 1}^n \rho_{ij}(y) \partial_{y_i} \partial_{y_j} + \sum_{i = 1}^n \gamma_i(y) \partial_{y_i}
\end{align*}
on domains $\XX$ and $\YY$, respectively.  Let $L$ be a stochastic transition operator mapping $C_0(\XX)$ to $C_0(\YY)$.  Pal-Shkolnikov define an intertwining diffusion as follows.

{\renewcommand{\thetheorem}{\ref{def:ps-def}}
\begin{define}[{\cite[Definition 2]{PS}}] 
A process $Z = (Z_1, Z_2)$ is an intertwining of diffusions $X$ and $Y$ with link operator $L$ if:
\begin{itemize}
\item[(i)] $Z_1 \overset{d} = X$ and $Z_2 \overset{d}= Y$, where $\overset{d}=$ denotes equality in law, and 
\[
\EE[f(Z_1(0)) \mid Z_2(0) = y] = (Lf)(y),
\]
for all bounded Borel measurable functions $f$ on $D(y)$.

\item[(ii)] The transition semigroups $P_t$ and $Q_t$ of $Z_1$ and $Z_2$ are intertwined, meaning that $Q_t L = L P_t$ for all $t \geq 0$.

\item[(iii)] The process $Z_1$ is Markovian with respect to the joint filtration generated by $(Z_1, Z_2)$.

\item[(iv)] For any $s \geq 0$, conditional on $Z_2(s)$, the random variable $Z_1(s)$ is independent of $\{Z_2(u), 0 \leq u \leq s\}$ and is conditionally distributed according to $L$.
\end{itemize}
\end{define}\addtocounter{theorem}{-1}}

Let $D \subset \XX \times \YY$ be a domain with polyhedral closure, and let $D(y) := \{x \mid (x, y) \in D\}$.  We are interested in cases where $Z$ takes values in the domain $D$.  Suppose that $L$ is given by
\[
(Lf)(y) := \int_{D(y)} \Lambda(y, x) f(x) dx.
\]
for some non-negative probability density $\Lambda(y, x): D \to \RR$. Under several technical conditions given in Assumptions \ref{ass:proc}, \ref{ass:domain}, \ref{ass:link}, and \ref{ass:compat}, we give in Theorem \ref{thm:ps-ext} a new criterion for a diffusion $Z$ solving the SDER (\ref{eq:sder-ps}) with domain $D$ and reflection on moving boundaries to intertwine $X$ and $Y$ with link $L$.  Our result extends Pal-Shkolnikov's criterion in \cite[Theorem 3]{PS} to the case of non-Brownian diffusion terms and requires modifications to the conditions in \cite[Theorem 3]{PS} for this setting; in particular, conditions (\ref{eq:int-eq}) of Assumption \ref{ass:compat} requires the introduction of non-trivial terms which vanish in the case of a Brownian diffusion term.  Our proofs of Theorems \ref{thm:lwt} and \ref{thm:jwt} rest on an application of Theorem \ref{thm:ps-ext}, and we believe it may be of independent interest.

{\renewcommand{\thetheorem}{\ref{thm:ps-ext}}
\begin{theorem}
Suppose that $D, L, X, Y$ satisfy Assumptions \ref{ass:proc}, \ref{ass:domain}, \ref{ass:link}, and \ref{ass:compat} and that the SDER (\ref{eq:sder-ps}) has a weak solution $Z$ which is a regular Feller diffusion with generator $\AA^Z$ as defined in Assumption \ref{ass:proc}.  If the resulting process satisfies the initial condition
\[
\PP(Z_1(0) \in B \mid Z_2(0) = y) = \int_B \Lambda(y, x) dx \text{ for Borel $B \subset D(y)$},
\]
then $Z$ is an intertwining of $X$ and $Y$ with link $L$.
\end{theorem}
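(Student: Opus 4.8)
The plan is to derive Definition~\ref{def:ps-def}(i)--(iv) from a single intertwining identity at the level of generators, promoted to the semigroup level using the regular Feller hypothesis on $Z$, in the spirit of the Markov function criterion of Dynkin and Rogers--Pitman (cf.\ \cite{Dyn, RP81}). Write $\Phi(x,y)=y$ for the projection $D\to\YY$, so that $Z_2=\Phi(Z)$ and $\Lambda$ is a probability kernel from $\YY$ to $D$ supported on the fibres $\Phi^{-1}(y)\cong D(y)$. The reduction is: once we produce a Feller semigroup $Q_t$ on $\YY$ with
\[
\Lambda\,P^Z_t \;=\; Q_t\,\Lambda\qquad\text{for all }t\ge 0,
\]
where $P^Z_t$ is the transition semigroup of $Z$, the Markov function theorem applied with the prescribed initial condition $\PP(Z_1(0)\in\cdot\mid Z_2(0)=y)=\Lambda(y,\cdot)$ yields immediately that $Z_2$ is Markov with semigroup $Q_t$ and that $\PP(Z_1(s)\in\cdot\mid\FF^{Z_2}_s)=\Lambda(Z_2(s),\cdot)$ for every $s\ge 0$, which is exactly (iv). Since Assumption~\ref{ass:proc} makes $Z_1$ an autonomous diffusion with generator $\AA^X$, it is Markov for the joint filtration, giving (iii); matching the initial laws via the link then gives $Z_1\overset{d}{=} X$, and identifying $Q_t$ below as the semigroup of $Y$ gives $Z_2\overset{d}{=} Y$, hence all of (i), as well as (ii). So the whole statement reduces to producing this $Q_t$.

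\textbf{Step 1 (infinitesimal intertwining).} I would first establish the generator form $\Lambda\,\AA^Z=\AA^Y\,\Lambda$ of the displayed identity, where $\AA^Z$ is the bulk generator of Assumption~\ref{ass:proc}, acting on functions $h$ on $D$ satisfying the conormal (Neumann-type) reflection condition along the moving faces of $\partial D$ dictated by (\ref{eq:sder-ps}). Testing against $h(x,y)=f(x)$ for $f$ in a core of $\AA^X$: by the autonomy built into Assumption~\ref{ass:proc} and the compatibility of the reflection field with the faces (Assumption~\ref{ass:domain}), such $h$ lies in the domain and $\AA^Z h=\AA^X f$, and one must then verify
\[
\int_{D(y)}\Lambda(y,x)\,(\AA^X f)(x)\,dx \;=\; (\AA^Y Lf)(y).
\]
This I expect to be precisely the content of the compatibility equations (\ref{eq:int-eq}) of Assumption~\ref{ass:compat}: integrating by parts in $x$ to move the derivatives onto $\Lambda(y,x)$, the interior terms reassemble into $\AA^Y$ acting on $Lf$ --- which is where the correction terms in (\ref{eq:int-eq}), absent when the diffusion matrix of $X$ is constant, enter --- while the boundary integrals over $\partial D(y)$ cancel against the push supplied by the reflection local time, thanks to the polyhedral geometry of $\overline D$ and the prescribed behaviour and normalization of $\Lambda$ along $\partial D$ (Assumptions~\ref{ass:domain} and~\ref{ass:link}). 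For genuinely $y$-dependent $h$ in a core of $\AA^Z$ the same boundary computation together with (\ref{eq:int-eq}) gives $\Lambda\AA^Z h=\AA^Y(\Lambda h)$.

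\textbf{Step 2 and the main obstacle.} Since $Z$ is a regular Feller diffusion and, under the standing assumptions, $\AA^Y$ generates the Feller semigroup $Q_t$ of $Y$, the generator identity of Step~1 integrates to $\Lambda P^Z_t=Q_t\Lambda$ in the usual way: for $h$ in a sufficiently rich core, $s\mapsto Q_{t-s}\,\Lambda\,P^Z_s h$ has vanishing $s$-derivative on $[0,t]$; restricting to functions of the $x$-coordinate alone also yields $Q_t L=L P_t$, which is (ii), and the reduction above then completes the proof. The crux is the boundary analysis inside Step~1: one must show that the reflection local-time contributions arising when $Z$ meets the moving faces of $\partial D$ are, after integration against $\Lambda$, cancelled exactly by the conormal boundary terms produced when $\AA^X f$ is integrated by parts over $D(y)$, so that the net effect reproduces precisely the discrepancy between $\AA^Y$ and the bulk $Z_2$-generator (the interaction drift present in $Y$ but carried in (\ref{eq:sder-ps}) only through the reflection). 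In the Brownian case of Pal--Shkolnikov this cancellation is comparatively rigid, whereas in the present non-Brownian setting it forces the extra terms in (\ref{eq:int-eq}) and a delicate matching among the reflection directions, the faces of $\partial D$, and the conormal derivative of $\Lambda$ along each face; secondary points --- the Feller regularity of $Q_t$, that $\Lambda$ carries cores into cores, and $\FF^{Z_2}$-measurability issues in the Markov function step --- are what Assumptions~\ref{ass:proc}--\ref{ass:compat} and the regular Feller hypothesis are tailored to dispatch.
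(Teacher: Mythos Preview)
Your overall strategy matches the paper's: establish the generator-level identity $\Lambda\AA^Z h=\AA^Y(\Lambda h)$ on a core, upgrade to $\Lambda P^Z_t=Q_t\Lambda$, and then read off Definition~\ref{def:ps-def}(i)--(iv). The paper packages the first two steps as Lemma~\ref{lem:ibv-prob}, showing that $u(t,y)=\EE[f(Z(t))\mid Z_2(0)=y]$ and $u'(t,y)=Q_t(\Lambda f)(y)$ both solve $\partial_t u=\AA^Y u$ with the same initial data, and then verifies the four conditions as separate claims, exactly as you sketch.

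Two points in your write-up are off, however. First, your description of the boundary mechanism in Step~1 is wrong: for $h\in\cD(\AA^Z)$ the Neumann conditions guarantee (Proposition~\ref{prop:kr-domain}) that $h(Z(t))-\int_0^t\AA^Zh\,ds$ is a genuine martingale, so no local-time term ever appears on the generator side. The boundary integrals in (\ref{eq:int-eq}) arise on the \emph{other} side, when you compute $\AA^Y\!\int_{D(y)}\Lambda h\,dx$ via the multidimensional Leibniz rule --- the domain $D(y)$ moves with $y$, producing surface integrals over each face $\partial D(y)_k$ with weights $\langle\Psi^j_k,\eta_k\rangle$. Assumption~\ref{ass:compat}(c) is precisely the statement that these Leibniz boundary terms plus $\int\AA^Y(\Lambda h)\,dx$ equal $\int\Lambda\AA^Zh\,dx$; the reflection enters only through the choice of Neumann directions defining $\cD(\AA^Z)$, which in turn kills one boundary term in (\ref{eq:u-y-dev2}).

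Second, in Step~2 you need $\Lambda P^Z_s h\in\dom(\AA^Y)$ to differentiate $s\mapsto Q_{t-s}\Lambda P^Z_s h$, but $P^Z_s h$ lies only in $\dom(\AA^Z)$, not in the core $\cD(\AA^Z)$ where the Step-1 identity is proved. The paper handles this by approximating $P^Z_s h$ by $v_l\in\cD(\AA^Z)$ with $\AA^Zv_l\to\AA^ZP^Z_sh$ (this is exactly where the regular Feller hypothesis, i.e.\ that $\cD(\AA^Z)$ is a core, is used), applying Lemma~\ref{lem:dom-trans} to get $\Lambda v_l\in\cD(\AA^Y)$, and then invoking closedness of $\AA^Y$ to pass to the limit. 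Your parenthetical ``$\Lambda$ carries cores into cores'' is the right lemma but does not by itself bridge this gap.
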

\addtocounter{theorem}{-1}}

\subsection{Organization of the paper}

The remainder of this paper is organized as follows.  In Section \ref{sec:lep-jep}, we define the Laguerre and Jacobi eigenvalues processes, their realizations via Doob $h$-transform, and their entrance law and invariant measure.  In Section \ref{sec:main}, we define the Laguerre and Jacobi Warren processes and prove Theorems \ref{thm:lag-war-inter}, \ref{thm:lag-war-exist}, \ref{thm:jac-war-inter}, and \ref{thm:jac-war-exist} showing that they give coupling of the Laguerre and Jacobi eigenvalues processes at different levels; in this section we make reference to tools developed in the next two sections.  In Section \ref{sec:sder}, we collect results from the literature to prove Theorem \ref{thm:exist-crit} giving a criterion for strong uniqueness and existence for stochastic differential equations with reflection on time-dependent boundaries with Holder regular diffusion term.  In Section \ref{sec:ps-ext}, we introduce the Pal-Shkolnikov framework of intertwining diffusions and prove Theorem \ref{thm:ps-ext} extending their results to general diffusion terms and reflection on moving boundaries and Proposition \ref{prop:geo-cond} allowing its application in a specific geometric context.  In Section \ref{sec:core-crit}, we prove Theorem \ref{thm:z-reg-cond} identifying a core for a certain class of diffusion processes.  Sections \ref{sec:sder}, \ref{sec:ps-ext}, and \ref{sec:core-crit} develop tools which we apply in Section \ref{sec:main} and may be read independently.  Appendix \ref{sec:appendix}, by Andrey Sarantsev, provides a proof that the SDER's (\ref{eq:lw-eq}) and (\ref{eq:jw-eq}) defining the Laguerre and Jacobi Warren processes have unique strong solutions for arbitrary initial conditions.

\subsection*{Acknowledgments}

The author thanks H. Altman, T. Assiotis, A. Borodin, V. Gorin, K. Ramanan, and M. Shkolnikov for helpful discussions and the organizers of the CMI workshop on Random Polymers and Algebraic Combinatorics in May 2015 for providing the environment where this work was initiated. Y.~S. was partially supported by a NSF Graduate Research Fellowship (NSF Grant 1122374), a Junior Fellow award from the Simons Foundation, and NSF Grants DMS-1637087 and DMS-1701654.

\section{The Laguerre and Jacobi eigenvalues processes} \label{sec:lep-jep}

In this section we introduce the Laguerre and Jacobi eigenvalues processes as Doob $h$-transforms of independent squared Bessel and univariate Jacobi processes and explain their realization as eigenvalues of certain matrix-valued processes.

\subsection{The Laguerre eigenvalues process} \label{sec:lep}

Let $A(t)$ be an infinite matrix of complex Brownian motions with $p$ columns and initial condition $A(0)$.  Denote by $A_n(t)$ its top $n$ rows.  The \textit{complex Wishart process} of rank $p$ and level $n$ is the process valued in $n \times n$ matrices given by $M_n(t) = A_n(t) A_n(t)^*$.  The \textit{Laguerre eigenvalues process} of rank $p$ and level $n$ consists of the $\min\{n, p\}$ largest eigenvalues
\[
0 \leq \lambda_1^{(n)}(t) \leq \cdots \leq \lambda_{\min\{n, p\}}^{(n)}(t)
\]
of $M_n(t)$.  This process was introduced in the real case in \cite{Bru} and studied in the complex case in \cite{KO}.  It was studied in \cite{Dem07} and may be identified with the radial Dunkl process of type B studied in \cite{Dem09}.  The singular SDE it satisfies was analyzed in detail in \cite{GM2014}.  We collect some of its properties below.

\begin{prop}[{\cite[Theorem 1]{Bru}, \cite{KO}, \cite[Lemma 1]{Dem07}, and \cite[Corollary 6.2]{GM2014}}] \label{prop:lep}
For any initial condition $\lambda^{(n)}(0)$, the Laguerre eigenvalues process $\lambda^{(n)}(t)$ of rank $p$ and level $n$ satisfies the following:
\begin{enumerate}
\item[(a)] The $\lambda^n(t)$ are Markovian, have all $\lambda^{(n)}_i(t)$ positive and distinct for $t > 0$, and form a diffusion with generator
\[
\cL^p_n := \sum_{i = 1}^{\min\{n, p\}} 2\lambda^{(n)}_i \partial_i^2 + \sum_{i = 1}^{\min\{n, p\}} 2 \Big(|n - p| + 1\Big)\partial_i + \sum_{i = 1}^{\min\{n, p\}} \sum_{j \neq i} \frac{4\lambda^{(n)}_i}{\lambda^{(n)}_i - \lambda^{(n)}_j} \partial_i.
\]
Equivalently, $\lambda^{(n)}(t)$ is a solution to the SDE
\[
d\lambda^{(n)}_i(t) = 2 \sqrt{\lambda^{(n)}_i(t)} dB^{(n)}_i(t) + 2 \Big(|n - p| + 1\Big) dt + \sum_{j \neq i} \frac{4\lambda^{(n)}_i(t)}{\lambda^{(n)}_i(t) - \lambda^{(n)}_j(t)} dt
\]
for standard Brownian motions $B^{(n)}_i(t)$.

\item[(b)] When started at $\lambda^{(n)}(0) = 0$, the Laguerre eigenvalues process has entrance law proportional to
\[
\Delta(\lambda^{(n)})^2 \prod_{i = 1}^{\min\{p, n\}} (\lambda^{(n)}_i)^{|p - n|} e^{-\frac{\lambda^{(n)}_i}{2t}} d\lambda^{(n)}_i.
\]
\end{enumerate}
\end{prop}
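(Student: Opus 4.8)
The plan is to establish Proposition \ref{prop:lep} by realizing the Laguerre eigenvalues process as a Doob $h$-transform of a system of independent squared Bessel processes, which is the strategy anticipated by the section heading. First I would recall that the squared Bessel process $\BESQ(\delta)$ of dimension $\delta = 2(|n-p| + 1)$ solves $dx(t) = 2\sqrt{x(t)}\,dB(t) + \delta\, dt$ and has a well-understood transition kernel $q^\delta_t(x,y)$ (expressible via a modified Bessel function $I_{\delta/2 - 1}$); for $\min\{n,p\}$ such independent copies the joint generator is $\sum_i \bigl(2x_i \partial_i^2 + 2(|n-p|+1)\partial_i\bigr)$. Then I would verify that $h(\lambda) := \Delta(\lambda)$ is harmonic for the part of the generator that matters — more precisely, that conjugating the independent $\BESQ$ generator by $\Delta(\lambda)$ produces exactly the extra drift term $\sum_{j \neq i} \frac{4\lambda_i}{\lambda_i - \lambda_j}\partial_i$ appearing in $\cL^p_n$, with no zeroth-order remainder. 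This is the standard $h$-transform computation: one checks that $\frac{1}{\Delta(\lambda)}\sum_i 2\lambda_i \partial_i^2 \Delta(\lambda) = 0$ using the identity $\sum_i \lambda_i \partial_i^2 \Delta = 0$ for the Vandermonde (which follows because $\Delta$ is a harmonic polynomial under the relevant second-order operator, a consequence of antisymmetry killing the diagonal terms), and that the cross term $\frac{1}{\Delta}\sum_i 4\lambda_i (\partial_i \Delta)\partial_i = \sum_i \sum_{j\neq i}\frac{4\lambda_i}{\lambda_i-\lambda_j}\partial_i$ by logarithmic differentiation of $\Delta$.

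Having identified the generator, the Markov property and the fact that the eigenvalues stay positive and distinct for $t > 0$ follow from the standard theory: the $h$-transformed $\BESQ$ system is a diffusion on the Weyl chamber $\{0 \leq \lambda_1 \leq \cdots \leq \lambda_{\min\{n,p\}}\}$ that is instantaneously pushed into the interior, with $h = \Delta$ vanishing on the walls guaranteeing non-collision, exactly as in the classical Dyson/König-O'Connell arguments. For the equivalence with the SDE formulation I would invoke the general fact that the diffusion with generator $\cL^p_n$ is the (pathwise-unique away from collisions, and in fact unique-in-law) solution of the displayed SDE, citing \cite{GM2014} for the delicate analysis of the singular drift near the walls and near zero; since the proposition only asks for a solution (not strong uniqueness), it suffices to note that the $h$-transform construction produces a weak solution with the stated generator. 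The matrix realization $M_n(t) = A_n(t)A_n(t)^*$ and the claim that its eigenvalue process has generator $\cL^p_n$ is precisely the content of \cite{KO} (and \cite{Bru} in the real case), obtained by applying It\^o's formula to the eigenvalues of the Wishart process; I would state this and refer to those sources rather than reprove it.

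For part (b), the entrance law from $\lambda^{(n)}(0) = 0$, the plan is to compute the $t$-indexed family of densities directly from the $h$-transform. The transition density of the $h$-transformed process started from $x$ is $\frac{h(y)}{h(x)}\det\bigl[q^\delta_t(x_i, y_j)\bigr]_{i,j}$ after symmetrizing over the Weyl chamber (a Karlin-McGregor / Lindström-Gessel-Viennot determinant), and the factor $\frac{1}{h(x)}\det[q^\delta_t(x_i,y_j)]$ has a finite limit as $x \to 0$ by a Taylor expansion of the Bessel kernel: $q^\delta_t(x,y) \sim c_t\, y^{\delta/2 - 1} e^{-y/2t}\bigl(1 + O(xy)\bigr)$ as $x \to 0$, so that the determinant, divided by the Vandermonde $\Delta(x)$, converges to a constant multiple of $\Delta(y) \prod_i y_i^{\delta/2 - 1} e^{-y_i/2t}$ (here $\delta/2 - 1 = |p-n|$). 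Multiplying by the surviving $h(y) = \Delta(y)$ yields the claimed density proportional to $\Delta(\lambda^{(n)})^2 \prod_i (\lambda^{(n)}_i)^{|p-n|} e^{-\lambda^{(n)}_i/2t}$, and one checks consistency (the Chapman-Kolmogorov/entrance-law identity $\mu_s P_{t-s} = \mu_t$) either by direct computation with the Bessel semigroup or by appealing to the corresponding statement in \cite{KO}.

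The main obstacle is the careful treatment of the entrance law in part (b): extracting the precise leading-order behavior of the Bessel transition kernel as the starting point degenerates to $0$, showing the $0/0$ limit $\det[q^\delta_t(x_i,y_j)]/\Delta(x)$ exists and identifying its value, and confirming that the resulting measure genuinely is an entrance law (i.e.\ the semigroup consistency holds and no mass escapes to the boundary). Everything else — the $h$-transform generator computation, the non-collision and positivity, the matrix-model interpretation — is either a short direct calculation or a citation to \cite{Bru, KO, Dem07, GM2014}.
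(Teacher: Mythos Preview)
The paper does not supply its own proof of Proposition~\ref{prop:lep}: it is stated as a compilation of known facts with citations to \cite{Bru, KO, Dem07, GM2014}, introduced by ``We collect some of its properties below.'' Your sketch is correct and is precisely the route taken in those references---in particular the Doob $h$-transform realization via $h=\Delta$ is exactly what the paper records immediately afterward as Proposition~\ref{prop:ko} (attributed to \cite{KO}). So there is nothing to compare against beyond noting that your argument reconstructs the cited literature.

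One small remark on your generator computation: your phrasing suggests the vanishing of the zeroth-order remainder comes solely from $\sum_i \lambda_i\,\partial_i^2\Delta = 0$, but the first-order piece $2(|n-p|+1)\sum_i \partial_i\Delta$ must also be accounted for. It does vanish, since $\sum_i \partial_i\Delta = \Delta\sum_i\sum_{j\ne i}(\lambda_i-\lambda_j)^{-1}=0$ by antisymmetry, so the conclusion $L^p_n\Delta=0$ is correct---just make sure both contributions are mentioned if you write this out.
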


In \cite{KO}, it was observed that the Laguerre eigenvalues process may be constructed from  independent squared Bessel processes conditioned never to intersect.  More precisely, let $\BESQ^d(t)$ denote the squared Bessel process of dimension $d$; it solves the stochastic differential equation 
\[
d\BESQ^d(t) = 2 \sqrt{\BESQ^d(t)} dB(t) + d\,dt
\]
and has generator $2x \partial + d\, \partial$.  Let $\Delta(\lambda^{(n)})$ denote the Vandermonde determinant $\Delta(\lambda^{(n)}) := \prod_{i < j} (\lambda^{(n)}_i - \lambda^{(n)}_j)$, and denote the generator of $\min\{n, p\}$ independent squared Bessel processes of dimension $2\Big(|n - p| + 1\Big)$ by
\[
L^p_n := \sum_{i = 1}^{\min\{n, p\}} 2 \lambda_i^{(n)} \partial_i^2 + \sum_{i = 1}^{\min\{n, p\}} 2\Big(|n - p| + 1\Big) \partial_i.
\]
In \cite{KO}, K\"onig-O'Connell realized the Laguerre eigenvalues process as a Doob $h$-transform as follows.

\begin{prop}[\cite{KO}] \label{prop:ko}
The function $\Delta(\lambda^{(n)})$ is harmonic with respect to $L_n^p$, and the Doob $h$-transform of $\min\{n, p\}$ independent squared Bessel processes of dimension $2\Big(|n - p| + 1\Big)$ with respect to $\Delta(\lambda^{(n)})$ is the Laguerre eigenvalues process with rank $p$ and level $n$.
\end{prop}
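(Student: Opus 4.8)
\emph{Proof proposal.} Write $k := \min\{n,p\}$ and $d := 2(|n-p|+1)$, so that $L^p_n = \sum_{i=1}^k\big(2\lambda_i\partial_i^2 + d\,\partial_i\big)$ acts on functions of $\lambda^{(n)}=(\lambda_1,\dots,\lambda_k)$. Recall that for a positive $L^p_n$-harmonic function $h$ the Doob $h$-transform is the diffusion with generator $f\mapsto h^{-1}L^p_n(hf)$; hence both assertions reduce to the two pointwise identities
\begin{equation*}
L^p_n\Delta = 0, \qquad \Delta^{-1}L^p_n(\Delta f) = \cL^p_n f,
\end{equation*}
on the open chamber $\{0<\lambda_1<\dots<\lambda_k\}$, where $\Delta := \prod_{i<j}(\lambda_i-\lambda_j)$ does not vanish (one replaces $\Delta$ by $|\Delta|$ to get a positive function; this sign does not affect the generator computation). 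It is convenient to set $D_i := \partial_i\log\Delta = \sum_{j\ne i}(\lambda_i-\lambda_j)^{-1}$, so that $\partial_i\Delta = \Delta D_i$ and $\partial_i^2\Delta = \Delta\big(D_i^2+\partial_iD_i\big)$ with $\partial_iD_i = -\sum_{j\ne i}(\lambda_i-\lambda_j)^{-2}$.

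\emph{Harmonicity.} Substituting these into $L^p_n\Delta$ and dividing by $\Delta$ gives
\begin{equation*}
\Delta^{-1}L^p_n\Delta = \sum_{i=1}^k\Big(2\lambda_i(D_i^2+\partial_iD_i) + d\,D_i\Big).
\end{equation*}
The term $d\sum_i D_i = d\sum_{i\ne j}(\lambda_i-\lambda_j)^{-1}$ vanishes by antisymmetry under the swap $i\leftrightarrow j$. For the remaining sum, the diagonal part of $D_i^2$ cancels $\partial_iD_i$, leaving $D_i^2+\partial_iD_i = \sum_{j,l\ne i,\,j\ne l}\big((\lambda_i-\lambda_j)(\lambda_i-\lambda_l)\big)^{-1}$; regrouping $\sum_i 2\lambda_i(D_i^2+\partial_iD_i)$ according to the unordered triple $\{a,b,c\}$ of distinct indices involved yields
\begin{equation*}
4\sum_{\{a,b,c\}}\left(\frac{\lambda_a}{(\lambda_a-\lambda_b)(\lambda_a-\lambda_c)} + \frac{\lambda_b}{(\lambda_b-\lambda_a)(\lambda_b-\lambda_c)} + \frac{\lambda_c}{(\lambda_c-\lambda_a)(\lambda_c-\lambda_b)}\right),
\end{equation*}
and each inner sum is the coefficient of $x^2$ in the Lagrange interpolant of degree $\le 2$ of $x\mapsto x$ through $\lambda_a,\lambda_b,\lambda_c$, hence $0$ (for $k\le 2$ there are no such triples and the claim is immediate). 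Thus $L^p_n\Delta = 0$.

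\emph{The $h$-transform.} Expanding by the Leibniz rule, $\partial_i(\Delta f) = \Delta(D_if+\partial_if)$ and $\partial_i^2(\Delta f) = \Delta\big((D_i^2+\partial_iD_i)f + 2D_i\partial_if + \partial_i^2f\big)$, so
\begin{equation*}
\Delta^{-1}L^p_n(\Delta f) = \sum_{i=1}^k 2\lambda_i\partial_i^2 f + \sum_{i=1}^k\big(d + 4\lambda_iD_i\big)\partial_if + \Big(\Delta^{-1}L^p_n\Delta\Big)f.
\end{equation*}
By the previous step the last term vanishes, and since $d = 2(|n-p|+1)$ and $D_i = \sum_{j\ne i}(\lambda_i-\lambda_j)^{-1}$ the right-hand side is precisely $\cL^p_n f$. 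This proves the second identity, so the $h$-transform is a diffusion with generator $\cL^p_n$.

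\emph{Identification and the main point.} Since $\cL^p_n$ is the generator of the Laguerre eigenvalues process of rank $p$ and level $n$ (Proposition \ref{prop:lep}), which is characterized as the unique diffusion with that generator, the $h$-transform coincides in law with $\lambda^{(n)}(t)$. The genuinely delicate part of the argument is not the algebraic identity above but the boundary analysis it is attached to: one must know that the $\Delta$-transform of the independent $\BESQ^d$'s — equivalently, the $\BESQ^d$'s conditioned to stay in the chamber for all time — is well defined, never touches the walls for $t>0$, admits the stated entrance law from $0$, and is the same process as the matrix eigenvalue process. This is exactly the content of \cite{KO}, together with the hitting-time and singular-SDE regularity inputs of \cite{Bru, GM2014}, and I would invoke it rather than reprove it.
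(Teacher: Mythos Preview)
Your proof is correct. The paper does not actually prove this proposition: it is stated as a result of K\"onig--O'Connell and simply cited to \cite{KO} without argument. What you have supplied is the standard direct verification --- compute $L^p_n\Delta$ via the logarithmic derivatives $D_i$, use antisymmetry and the three-term Lagrange-interpolation identity to kill the zeroth-order piece, and then read off the first-order drift from the Leibniz expansion --- and it is carried out cleanly. Your closing paragraph is also well judged: the algebra is routine, and the substantive content (well-posedness of the $h$-transform, absence of collisions, entrance law) really does sit in \cite{KO} and the references you name; the paper itself later uses exactly this conjugation identity $\cL^p_n = \Delta^{-1}\circ L^p_n\circ\Delta$ as a black box in Section~\ref{sec:lag-proof}.
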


\subsection{The Jacobi eigenvalues process}

Fix parameters $(p, q)$ and $n \leq p, q$, and let $N = p + q$.  Let $U_N(t)$ be a Brownian motion on the space of unitary $N \times N$ matrices, and let $X^{p, q}_n(t)$ denote the top left $n \times p$ submatrix of $U_N(t)$.  In \cite{Dou}, the \textit{complex matrix Jacobi process} with parameters $(p, q)$ and level $n$ was defined to be the process valued in $n \times n$ matrices given by 
\[
J^{p, q}_n(t) := X^{p, q}_n(t) X^{p, q}_n(t)^*.
\]
The \textit{Jacobi eigenvalues process} with parameters $(p, q)$ and level $n$ consists of the eigenvalues
\[
0 \leq \mu^{(n)}_1(t) \leq \cdots \leq \mu^{(n)}_n(t) \leq 1
\]
of $J^{p, q}_n(t)$.  This process was introduced in \cite{Dou} and analyzed in detail in \cite{Dem10, GM2013, GM2014}.  We collect some of its properties below.

\begin{prop}[{\cite[Proposition 9.4.7]{Dou}, \cite{Dem10}, \cite[Corollary 9]{GM2013}, \cite[Corollary 6.7]{GM2014}}] \label{prop:jep}
The Jacobi eigenvalues process $\mu^{(n)}(t)$ with parameters $(p, q)$ and level $n$ satisfies the following:
\begin{enumerate}
\item[(a)] When started at any initial condition $\mu^{(n)}(0)$, the process $\mu^{(n)}(t)$ is Markovian, has all $\mu^{(n)}_i(t)$ distinct in $(0, 1)$ for $t > 0$, and forms a diffusion with generator
\[
\cJ^{p, q}_n := 2 \sum_{i = 1}^n \mu^{(n)}_i (1 - \mu^{(n)}_i) \partial_i^2 + 2 \sum_{i = 1}^n \Big((p - n + 1) + (p + q - 2n + 2) \mu^{(n)}_i\Big) \partial_i + \sum_{i = 1}^n \sum_{j \neq i} \frac{4 \mu^{(n)}_i (1 - \mu^{(n)}_i)}{\mu^{(n)}_i - \mu^{(n)}_j} \partial_i.
\]
Equivalently, $\mu^{(n)}(t)$ is a solution to the SDE
\[
d\mu^{(n)}_i(t) = 2 \sqrt{\mu^{(n)}_i(t)(1 - \mu^{(n)}_i(t))} dB^{(n)}_i(t) + 2 \Big((p - n + 1) + (p + q - 2n + 2)\mu^{(n)}_i(t)\Big) dt + \sum_{j \neq i} \frac{4 \mu^{(n)}_i(t)(1 - \mu^{(n)}_i(t))}{\mu^{(n)}_i(t) - \mu^{(n)}_j(t)} dt
\]
for standard Brownian motions $B^{(n)}_i(t)$.

\item[(b)] The Jacobi eigenvalues process is a diffusion whose invariant measure on $[0, 1]^n$ has density proportional to
\begin{equation} \label{eq:jac-inv}
\Delta(\mu^{(n)})^2 \prod_{i = 1}^n (\mu^{(n)}_i)^{p - n} (1 - \mu^{(n)}_i)^{q - n} d\mu^{(n)}_i.
\end{equation}
\end{enumerate}
\end{prop}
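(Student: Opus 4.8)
The proposition compiles results of \cite{Dou, Dem10, GM2013, GM2014}, so the plan is to recover both assertions from the matrix model. For part (a), I would start from the defining SDE for Brownian motion $U_N(t)$ on $U(N)$, writing it in Stratonovich form $dU_N = U_N \circ dW$ with $W$ a Brownian motion on $\mathfrak{u}(N)$ normalized by a multiple of the Killing form, so that in Itô form there is an additional drift proportional to $U_N$ coming from the Casimir/curvature term. Restricting to the top-left $n\times p$ corner $X^{p,q}_n(t)$ and applying Itô's formula, I would compute the SDE satisfied by $J^{p,q}_n = X^{p,q}_n (X^{p,q}_n)^*$; the curvature drift together with the Itô covariation correction should combine into a drift that is affine in $J^{p,q}_n$, while the martingale part has bracket expressible through $J^{p,q}_n$ and $I - J^{p,q}_n$ (reflecting that $0 \preceq J^{p,q}_n \preceq I$). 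Next I would pass to the eigenvalues using the standard first- and second-order perturbative Itô formula for a Hermitian matrix semimartingale, as in Bru's treatment of Wishart processes: the first-order term produces the affine drift $2\big((p-n+1) + (p+q-2n+2)\mu^{(n)}_i\big)$ and the diffusion coefficient $2\sqrt{\mu^{(n)}_i(1-\mu^{(n)}_i)}$, and summing the second-order term over the off-diagonal entries yields exactly the repulsion $\sum_{j\neq i} \frac{4\mu^{(n)}_i(1-\mu^{(n)}_i)}{\mu^{(n)}_i - \mu^{(n)}_j}$. Reading off the coefficients gives the generator $\cJ^{p,q}_n$; this computation is the content of \cite{Dou}.

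To make sense of this singular SDE, I would invoke \cite{GM2013, GM2014}: the $\mu^{(n)}_i(t)$ lie in $[0,1]$ automatically because $J^{p,q}_n$ is a compression of a unitary matrix, and strict positivity together with distinctness for $t>0$ — i.e.\ that the solution stays in $\{0 < \mu^{(n)}_1 < \cdots < \mu^{(n)}_n < 1\}$ — follows by comparing the coordinates near $0$ and $1$, and the gaps $\mu^{(n)}_i - \mu^{(n)}_j$, with Bessel-type processes and a McKean non-collision argument; this also yields a unique strong solution.

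For part (b), the quickest route is to use that the Haar measure $\Haar$ on $U(N)$ is invariant for Brownian motion on $U(N)$: taking $U_N(0)\sim\Haar$ makes $U_N(t)\sim\Haar$ for all $t$, so $J^{p,q}_n(t)$ has the fixed matrix-Jacobi (MANOVA) law of $X^{p,q}_n (X^{p,q}_n)^*$ for a truncated Haar-unitary, whose eigenvalue density is the classical Jacobi ensemble $\Delta(\mu^{(n)})^2 \prod_i (\mu^{(n)}_i)^{p-n}(1-\mu^{(n)}_i)^{q-n}$, obtained from the Haar density by a standard change of variables. Being a fixed-time law of the diffusion, this density is invariant for $\cJ^{p,q}_n$. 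As a cross-check I would verify invariance directly by exhibiting $\cJ^{p,q}_n$ in divergence form $\cJ^{p,q}_n f = \rho^{-1}\sum_i \partial_i\big(\rho\, 2\mu^{(n)}_i(1-\mu^{(n)}_i)\,\partial_i f\big)$ with $\rho$ the displayed density — equivalently, noting that up to an additive constant $\cJ^{p,q}_n$ is the Doob transform by the eigenfunction $\Delta(\mu^{(n)})$ of $n$ independent univariate Jacobi diffusions with product-of-Beta reversing measure, and such a transform multiplies the reversing measure by $\Delta(\mu^{(n)})^2$.

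I expect the main obstacle to be part (a): organizing the matrix Itô calculus on $U(N)$ so that the Stratonovich-to-Itô and covariation corrections assemble into the stated affine drift, and then rigorously justifying that the resulting singular eigenvalue SDE is well posed with a non-colliding solution confined to $(0,1)^n$. Once the matrix model and the MANOVA eigenvalue density are in place, part (b) is essentially bookkeeping.
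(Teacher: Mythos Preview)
The paper does not supply its own proof of this proposition: it is stated as a compilation of results from \cite{Dou, Dem10, GM2013, GM2014} and left at that. Your sketch is a faithful outline of how those references establish the claims --- deriving the eigenvalue SDE from the unitary Brownian motion via Bru-type perturbative It\^o calculus (as in \cite{Dou}), handling well-posedness and non-collision via the analysis in \cite{GM2013, GM2014}, and obtaining the invariant measure either from Haar invariance pushed to the MANOVA corner or from the Doob $h$-transform picture (the latter being exactly Proposition~\ref{prop:jac-h} in the paper) --- so there is nothing to correct or contrast.
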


\begin{remark}
The density of (\ref{eq:jac-inv}) admits an alternate realization as follows.  It is the probability density of the eigenvalues of $X^*X(X^*X + Y^*Y)^{-1}$, where $X$ and $Y$ are matrices of standard complex Gaussians of size $p \times n$ and $q \times n$, respectively.
\end{remark}

In \cite{Dou}, it was observed that the Jacobi eigenvalues process may be constructed from independent univariate Jacobi processes conditioned to never intersect.  More precisely, let $\JAC^{a, b}(t)$ denote the univariate Jacobi process with parameters $(a, b)$.  It is the diffusion which solves the stochastic differential equation
\[
d\JAC^{a, b}(t) = 2 \sqrt{\JAC^{a, b}(t) (1 - \JAC^{a, b}(t))} dB(t) + 2 \Big(a + 1 - (a + b + 2)\JAC^{a, b}(t)\Big) dt
\]
and has generator $2 x (1 - x) \partial^2 + 2 (a + 1 - (a + b + 2)x)\partial$.  Denote the generator of $n$ independent univariate Jacobi processes by
\[
J^{a, b}_n := 2 \sum_{i = 1}^n \mu^{(n)}_i (1 - \mu^{(n)}_i) \partial_{i}^2 + 2 \sum_{i = 1}^n (a + 1 - (a + b + 2)\mu^{(n)}_i)\partial_{i}.
\]
In \cite{Dou}, Doumerc realized the Jacobi eigenvalues process via Doob $h$-transform as follows.

\begin{prop}[{\cite[Proposition 9.4.7]{Dou}}] \label{prop:jac-h}
For $p, q, n$ with $n \leq \min\{p, q\}$, the function $\Delta(\mu^{(n)})$ is an eigenfunction of $J^{p - n, q - n}_n$ with eigenvalue $\frac{n (n - 1)(3p + 3q - 4n + 2)}{3}$, and the Doob $h$-transform of $n$ independent univariate Jacobi processes with parameters $(p - n, q - n)$ is the Jacobi eigenvalues process of with parameters $(p, q)$ and level $n$.
\end{prop}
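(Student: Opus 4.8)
The plan is to prove the two assertions separately: the eigenfunction identity is a short algebraic computation with the Vandermonde, and the identification of the Doob $h$-transform with the Jacobi eigenvalues process is then the standard passage from an identity of generators to an identity of diffusions, relying on uniqueness of the associated martingale problem.

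Write $a=p-n$, $b=q-n$, so that $J^{a,b}_n=2\sum_i\mu_i(1-\mu_i)\partial_i^2+2\sum_i\bigl(a+1-(a+b+2)\mu_i\bigr)\partial_i$. Its coefficients are polynomials of degree $\le 2$ in front of each $\partial_i^2$ and of degree $\le 1$ in front of each $\partial_i$, so $J^{a,b}_n$ maps polynomials to polynomials without raising the degree, and it commutes with the $S_n$-action on the variables. Since $\Delta$ is alternating (it vanishes on each hyperplane $\mu_i=\mu_j$, hence is divisible by $\Delta$) and homogeneous of degree $\binom n2$, the polynomial $J^{a,b}_n\Delta$ is alternating of degree $\le\binom n2$ and therefore equals $c\,\Delta$ for a constant $c$. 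To evaluate $c$ I would isolate the homogeneous, degree-preserving part $D:=-2\sum_i\mu_i^2\partial_i^2-2(a+b+2)\sum_i\mu_i\partial_i$ of $J^{a,b}_n$; it is diagonal on monomials, with $D\bigl(\prod_k\mu_k^{m_k}\bigr)=-2\sum_i\bigl(m_i(m_i-1)+(a+b+2)m_i\bigr)\prod_k\mu_k^{m_k}$, and since every monomial of $\Delta$ has exponent vector a permutation of the staircase $(n-1,n-2,\dots,1,0)$ this eigenvalue is common to all of them, so comparing top-degree parts gives $c=-2\sum_{i=1}^n\bigl(\ell_i(\ell_i-1)+(a+b+2)\ell_i\bigr)$ with $\ell_i=n-i$. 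Using $\sum_i\ell_i=\binom n2$, $\sum_i\ell_i(\ell_i-1)=\tfrac{n(n-1)(n-2)}{3}$, and $a+b+2=p+q-2n+2$ recovers the eigenvalue in the statement. (Alternatively one can compute $\tfrac1\Delta J^{a,b}_n\Delta$ directly from $\partial_i\log\Delta=\sum_{j\neq i}(\mu_i-\mu_j)^{-1}$ and $\partial_i^2\Delta/\Delta=2\sum_{j<k;\,j,k\neq i}(\mu_i-\mu_j)^{-1}(\mu_i-\mu_k)^{-1}$, using the symmetrization $\tfrac{\mu_i(1-\mu_i)}{\mu_i-\mu_j}+\tfrac{\mu_j(1-\mu_j)}{\mu_j-\mu_i}=1-\mu_i-\mu_j$ to see that all non-constant terms cancel.)

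Next I would record the conjugation identity. From $\partial_i(\Delta f)=(\partial_i\Delta)f+\Delta\partial_if$ and $\partial_i^2(\Delta f)=(\partial_i^2\Delta)f+2(\partial_i\Delta)\partial_if+\Delta\partial_i^2f$, dividing by $\Delta$ gives $\tfrac1\Delta J^{a,b}_n(\Delta f)=2\sum_i\mu_i(1-\mu_i)\partial_i^2f+\sum_i\Bigl(2\bigl(a+1-(a+b+2)\mu_i\bigr)+\sum_{j\neq i}\tfrac{4\mu_i(1-\mu_i)}{\mu_i-\mu_j}\Bigr)\partial_if+c\,f$, the interaction drift arising precisely from the mixed term $2\cdot2\mu_i(1-\mu_i)\,\partial_i\Delta/\Delta$ and the constant from $J^{a,b}_n\Delta/\Delta$. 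Substituting $a=p-n$, $b=q-n$ and subtracting $c$ identifies $\tfrac1\Delta\bigl(J^{p-n,q-n}_n-c\bigr)(\Delta f)$ with $\cJ^{p,q}_n f$. Hence the Doob $h$-transform of $n$ independent $\JAC^{p-n,q-n}$ processes by $\Delta$ --- more precisely by $|\Delta|$, which is positive on the open chamber $W=\{0<\mu_1<\cdots<\mu_n<1\}$, vanishes on $\partial W$, and remains an eigenfunction of $J^{p-n,q-n}_n$ with eigenvalue $c$ on each chamber --- is a conservative diffusion on $W$ with generator $\cJ^{p,q}_n$.

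Finally I would identify this $h$-transformed diffusion with the Jacobi eigenvalues process rather than just matching generators. Away from $\partial W$ the coefficients of the SDE attached to $\cJ^{p,q}_n$ are locally Lipschitz (the only singularities being the terms $(\mu_i-\mu_j)^{-1}$ on the walls), so the martingale problem is well posed and any two $W$-valued solutions that almost surely avoid $\partial W$ have the same law. By Proposition \ref{prop:jep}(a) the Jacobi eigenvalues process is such a solution, with coordinates distinct and inside $(0,1)$ for all $t>0$; on the other hand, the $h$-transform by a minimal positive eigenfunction vanishing on $\partial W$ is, by the standard Doob/Karlin--McGregor argument, the law of $n$ independent $\JAC^{p-n,q-n}$ processes conditioned never to collide and never to hit $0$ or $1$, which in particular never reaches $\partial W$. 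Matching generators then forces the two laws to agree. I expect this last step --- promoting the formal generator identity to an identity of processes, i.e. verifying non-attainment of $\partial W$ for the conditioned process together with well-posedness of the martingale problem --- to be the main technical point; the detailed verification is the content of \cite[Proposition 9.4.7]{Dou}, and as an independent check one may derive the SDE of Proposition \ref{prop:jep}(a) from the unitary Brownian motion realization of $J^{p,q}_n$ by applying It\^o's formula on the set of times where the spectrum is simple, then match it term by term with the $h$-transformed SDE.
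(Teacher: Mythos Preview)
The paper does not supply its own proof of this proposition; it is quoted as \cite[Proposition 9.4.7]{Dou} and used as a black box. Your argument is the standard one and is correct in structure: the symmetry/degree argument forcing $J^{p-n,q-n}_n\Delta=c\,\Delta$, the explicit evaluation of $c$ via the degree-preserving part, the conjugation identity $\Delta^{-1}J^{p-n,q-n}_n(\Delta f)=\cJ^{p,q}_n f+c\,f$, and the identification of laws via well-posedness on the open chamber.

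One point to flag: your formula $c=-2\sum_i\bigl(\ell_i(\ell_i-1)+(a+b+2)\ell_i\bigr)$ gives
\[
c=-2\Bigl(\tfrac{n(n-1)(n-2)}{3}+(p+q-2n+2)\tfrac{n(n-1)}{2}\Bigr)=-\tfrac{n(n-1)(3p+3q-4n+2)}{3},
\]
i.e.\ the \emph{negative} of the value printed in the statement (check $n=2$: $J^{a,b}_2(\mu_1-\mu_2)=-2(a+b+2)(\mu_1-\mu_2)$). This is consistent with how the paper actually uses the result in the proof of Theorem~\ref{thm:jac-war-inter}, where the relation $\cJ^{p,q}_n=\Delta^{-1}\circ J^{p-n,q-n}_n\circ\Delta+\tfrac{n(n-1)(3p+3q-4n+2)}{3}$ is invoked; that identity forces the eigenvalue to be $-\tfrac{n(n-1)(3p+3q-4n+2)}{3}$, since $\cJ^{p,q}_n 1=0$. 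So your computation is right and the sign in the displayed eigenvalue is a typo you should not try to match.
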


\section{The Laguerre and Jacobi Warren processes via intertwining diffusions} \label{sec:main}

In this section, we define the Laguerre and Jacobi Warren processes and show that, when started at Gibbs initial conditions, their projections to each level recover the Laguerre and Jacobi eigenvalues processes.  These are the main results of this paper.

\subsection{The Laguerre Warren process} \label{sec:lwp}

In this section we define the Laguerre Warren process as the unique weak solution to a certain stochastic differential equation with reflection.  This construction is the Laguerre analogue of the Warren process defined using Brownian motions in \cite{War}.  For any positive integer $m$, denote the positive Gelfand-Tsetlin cone of rank $p$ and level $m$ by
\begin{multline*}
\GT_{m, p} := \{l_i^n \mid 0 \leq l_{i-1}^{n - 1} \leq l^n_i \leq l^{n - 1}_i \text{ if } n \leq p,\\ 0 \leq l_i^{n-1} \leq l^n_i \leq l^{n-1}_{i+1} \text{ if } n > p, \text{ and } l_1^m < \cdots < l^m_{\min\{m, p\}}\}_{1 \leq i \leq \min\{n, p\}, 1 \leq n \leq m
}
\end{multline*}
so that there are $\min\{n, p\}$ particles $l^n_{1} \leq \cdots \leq l^n_{\min\{n, p\}}$ on level $n$. Define the positive Gelfand-Tsetlin polytope of rank $p$ and level $m$ subordinate to $\lambda = (\lambda_1 < \cdots < \lambda_{\min\{m, p\}})$ by
\[
\GT_{m, p}(\lambda) := \{\{l^n_i\} \in \GT_{m, p} \mid l^m = \lambda\}.
\]
For a set of variables $x_1, \ldots, x_m$, we define the modified Vandermonde determinant by
\[
\wDelta_{m, p}(x) := \Delta(x) \prod_i x_i^{(m - p)_+}.
\]
We say that a probability distribution $\nu$ on $\GT_{m, p}$ is \textit{Gibbs} if for any $(\lambda_1 < \cdots < \lambda_{m})$ and any Borel $B \subset \GT_{m, p}(\lambda)$, we have that 
\begin{equation} \label{eq:l-gibbs-def}
\PP_\nu(\{l^n_i\} \in B \mid l^m = \lambda) = \frac{(m - 1)! \cdots 1!}{(m - p - 1)_+! \cdots 1!} \frac{\vol(B)}{\wDelta_{m, p}(\lambda)}.
\end{equation}

\begin{remark}
By repeated application of the Dixon-Anderson integral stated in \cite{Dix05, And91} and surveyed in \cite[Theorem 2.1]{Rai10}, we obtain for $n > p$ that
\[
\int_{x_1 \leq y_1 \leq \cdots \leq x_p \leq y_p} \frac{\wDelta_{n - 1, p}(x)}{\wDelta_{n, p}(y)} dx = \frac{(n - p - 1)!}{(n - 1)!},
\]
which implies by induction that
\[
\vol(\GT_{m, p}(\lambda)) = \wDelta_{m, p}(\lambda) \frac{(m - p - 1)_+! \cdots 1!}{(m - 1)! \cdots 1!}
\]
and therefore that (\ref{eq:l-gibbs-def}) defines a valid probability density.
\end{remark}

Consider the system of stochastic differential equations with reflection with domain $\GT_{m, p}$ given by
\begin{equation} \label{eq:lag-war-def}
dl_i^{n}(t) = 2 \sqrt{l_i^n(t)} dB^n_i(t) + 2(p - n + 1) dt + \frac{1}{2}dL_i^{n, +}(t) - \frac{1}{2}dL_i^{n, -}(t), \text{ $1 \leq i \leq \min\{n, p\}, 1 \leq n \leq m$},
\end{equation}
where $B^n_i(t)$ are standard real Brownian motions, $L_i^{n, +}(t)$ is the local time at $0$ of $l_i^n(t) - l^{n - 1}_i(t)$ if $n > p$, the local time at $0$ of $l_i^n(t) - l^{n - 1}_{i - 1}(t)$ if $n \leq p$ and $i > 1$, and $0$ if $n \leq p$ and $i = 1$, and $L_i^{n, -}(t)$ is $0$ if $i = \min\{n, p\}$, the local time at $0$ of $l^{n - 1}_{i+1}(t) - l^n_i(t)$ if $n > p$ and $i < \min\{n, p\}$, and the local time at $0$ of $l^{n - 1}_i(t) - l^n_i(t)$ if $n > p$ and $i < \min\{n, p\}$.

\begin{remark}
Informally, a solution to (\ref{eq:lag-war-def}) may be described as follows.  At level $n$, it consists of independent squared Bessel processes with dimension $2(p - n + 1)$ interlacing with and reflecting off the processes at level $n - 1$.  This differs from the Warren process by replacing Brownian motions by squared Bessel processes and introducing different parameters on each level.
\end{remark}

The following two theorems, whose proofs are given in Section \ref{sec:lag-proof}, are the first of our main results.  We show that (\ref{eq:lag-war-def}) admits a unique strong solution for any Gibbs initial condition and that this solution provides a coupling of Laguerre eigenvalues processes on each level.  We call the resulting process the \textit{Laguerre Warren process}.  The key technical ingredient is our extension of the theory of intertwining diffusions of \cite{PS} given in Section \ref{sec:ps-ext}.  

\begin{theorem} \label{thm:lag-war-inter}
For any Gibbs initial condition $\{\lambda^n_i(0)\}$, if the SDER (\ref{eq:lag-war-def}) admits a unique weak solution which is a regular Feller process in the sense of Assumption \ref{ass:proc}, then for $1 < n \leq m$, its projection to levels $1, \ldots, n$ is Markovian and is an intertwining of the Laguerre eigenvalues processes of rank $p$ and level $n$ and the Laguerre Warren process of rank $p$ and level $n - 1$ in the sense of Definition \ref{def:ps-def}.
\end{theorem}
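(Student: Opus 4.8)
The plan is to apply the general intertwining criterion of Theorem \ref{thm:ps-ext} with $X$ the Laguerre eigenvalues process of rank $p$ and level $n$, with $Y$ the Laguerre Warren process of rank $p$ and level $n-1$ (whose existence and regularity we may take as hypothesis via the recursive structure), with $D$ the closure of $\GT_{n,p}$ fibered over $\GT_{n-1,p}$, and with link operator $L$ given by the kernel $\Lambda(y,x)$ obtained from the Gibbs conditional density \eqref{eq:l-gibbs-def}, i.e.\ $\Lambda(\lambda^{n-1}, \lambda^n)$ proportional to $\mathbf{1}_{\GT_{n,p}(\lambda^n)}(\lambda^{n-1}) / \wDelta_{n,p}(\lambda^n)$ up to the combinatorial constant, so that $L$ restricts the level-$n$ coordinate to a point and spreads the lower levels according to the Gibbs measure. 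The bulk of the work is then the verification of Assumptions \ref{ass:proc}, \ref{ass:domain}, \ref{ass:link}, and \ref{ass:compat} in this concrete setting. First I would identify the generator $\AA^X = \cL^p_n$ from Proposition \ref{prop:lep}(a), the generator $\AA^Y$ as the level-$(n-1)$ Laguerre Warren generator, and confirm the domain $D$ has polyhedral closure cut out by the interlacing inequalities $0 \le l^{n-1}_{i-1} \le l^n_i \le l^{n-1}_i$ (for $n \le p$) or $0 \le l^{n-1}_i \le l^n_i \le l^{n-1}_{i+1}$ (for $n > p$); this is Assumption \ref{ass:domain}. The process assumptions (Assumption \ref{ass:proc}) are granted by the hypothesis that \eqref{eq:lag-war-def} has a regular Feller weak solution, together with Theorem \ref{thm:z-reg-cond} identifying a core, which I would invoke to control the domain of $\AA^Z$.

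The heart of the verification is Assumption \ref{ass:link} (that $\Lambda$ is a genuine probability density with the right harmonicity-type properties relative to the diffusion terms) and especially the compatibility equations \eqref{eq:int-eq} of Assumption \ref{ass:compat}. For the link: by the Dixon--Anderson integral recalled in the Remark after \eqref{eq:l-gibbs-def}, $\vol(\GT_{n,p}(\lambda)) = \wDelta_{n,p}(\lambda)\cdot \tfrac{(n-p-1)_+!\cdots 1!}{(n-1)!\cdots 1!}$, so $\Lambda$ normalizes correctly; the Doob $h$-transform description of Proposition \ref{prop:ko}, which expresses $\cL^p_n$ as the conjugation of the independent $\BESQ^{2((p-n)_++1)}$ generator $L^p_n$ by $\wDelta_{n,p}$, will be the source of the required algebraic identity relating $\AA^X$, $\AA^Y$, and $\Lambda$. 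Concretely, one wants $(\AA^X)^* \Lambda(\cdot, x) = (\AA^Y)^* \Lambda(y, \cdot)$ in the appropriate distributional sense inside $D$, with boundary terms that are precisely absorbed by the reflection local times $L^{n,\pm}_i$ appearing in \eqref{eq:lag-war-def}; the coefficient $1/2$ in front of the local times, matching Warren's normalization, should be exactly what makes the boundary flux balance. I expect the non-Brownian diffusion term $2\sqrt{l}$ to force the appearance of the extra terms that the excerpt warns about in \eqref{eq:int-eq} — these come from the $x$-dependence of the diffusion matrix $a_{ij}(x) = 4 l^n_i \delta_{ij}$, whose derivative $\partial_{x_i} a_{ii} = 4$ contributes a first-order correction absent in the Brownian case. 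Checking that these corrections are consistent with the linear drift $2(p-n+1)$ in \eqref{eq:lag-war-def} (rather than the $2(|n-p|+1)$ appearing at the eigenvalue level) is the delicate point: the discrepancy between $p - n + 1$ and $|p-n|+1$ when $n > p$ is compensated by the reflection structure, which changes from "push up off $l^{n-1}_{i-1}$" to "push up off $l^{n-1}_i$" and introduces the $L^{n,-}$ reflections only when $n > p$.

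Once all four assumptions are verified, Theorem \ref{thm:ps-ext} applies directly: provided the process satisfies the Gibbs initial condition $\PP(Z_1(0) \in B \mid Z_2(0) = y) = \int_B \Lambda(y,x)\,dx$ — which is exactly the Gibbs property \eqref{eq:l-gibbs-def} restricted to the top two levels — we conclude that $Z = (Z_1, Z_2)$, the projection of the Laguerre Warren process to levels $1,\dots,n$ split as (level $n$) versus (levels $1,\dots,n-1$), is an intertwining of the level-$n$ Laguerre eigenvalues process and the level-$(n-1)$ Laguerre Warren process in the sense of Definition \ref{def:ps-def}. In particular property (iii) of that definition gives that $Z_1$ — the level-$n$ projection — is Markovian, and property (i) identifies its law with the Laguerre eigenvalues process of rank $p$ and level $n$, yielding the theorem. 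I anticipate the main obstacle to be the careful bookkeeping in Assumption \ref{ass:compat}: matching the reflection terms of \eqref{eq:lag-war-def} face-by-face against the boundary integrals produced when integrating $\AA^X$ against $\Lambda$, with the two regimes $n \le p$ and $n > p$ handled separately, and confirming that the "extra terms" mandated by the non-Brownian setting vanish on exactly the faces where no reflection is prescribed (notably the $i=1$ face when $n \le p$, where $L^{n,+}_1 \equiv 0$, and the $i = \min\{n,p\}$ face, where $L^{n,-}$ is absent).
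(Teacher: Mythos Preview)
Your overall plan---apply Theorem \ref{thm:ps-ext} after checking Assumptions \ref{ass:proc}--\ref{ass:compat}---is the paper's plan, but you have the roles of $X$ and $Y$ reversed, and this is not merely cosmetic. In the Pal--Shkolnikov framework the link $\Lambda(y,x)$ must be a probability density in $x\in D(y)$ for each fixed $y\in\YY$, and in the SDER \eqref{eq:sder-ps} it is $Z_2\overset{d}=Y$ that carries the reflection $\Phi$ off the moving boundary and the drift correction $\langle\rho,\nabla_{z_2}\log\Lambda\rangle$. In the Laguerre Warren SDER \eqref{eq:lag-war-def} it is the level-$n$ particles that reflect off level $n-1$ and whose drift is $2(p-n+1)$ rather than the eigenvalue drift $2(|n-p|+1)+\sum_{j\ne i}\frac{4l_i}{l_i-l_j}$; the $\log\Lambda$ correction is exactly what cancels the interaction term, so level $n$ must be $Y$, not $X$. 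With your assignment the kernel $\Lambda(\lambda^{n-1},\lambda^n)\propto 1/\wDelta_{n,p}(\lambda^n)$ would have to integrate to $1$ in $\lambda^n$ over all level-$n$ configurations interlacing a fixed $\lambda^{n-1}$, but that domain is unbounded (the top particle can run to $\infty$) and the integral diverges. The paper takes $X=\{l^k_i\}_{k\le n-1}$ on $\XX=\GT_{n-1,p}$, $Y=$ Laguerre eigenvalues process at level $n$, and $\Lambda(y,x)=\frac{(n-1)!\cdots 1!}{(n-p-1)_+!\cdots 1!}\,\wDelta_{n,p}(y)^{-1}$, which is a density in $x$ by the Dixon--Anderson volume formula.

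The second point where your sketch diverges is the verification of Assumption \ref{ass:compat}(c). You propose to check the boundary flux balance face by face directly against \eqref{eq:int-eq}. The paper instead invokes Proposition \ref{prop:geo-cond}: it factors $\Lambda=\Lambda_1\Lambda_2$ with $\Lambda_1(y,x^1)=\frac{(n-1)!}{(n-p-1)_+!}\frac{\wDelta_{n-1,p}(x^1)}{\wDelta_{n,p}(y)}$ the one-step Dixon--Anderson kernel and $\Lambda_2$ the Gibbs kernel one level down, introduces an intermediate generator $\AA^{X_1}=\cL^p_{n-1}$, and then checks the six conditions (a)--(f) of that proposition. Condition (a) is precisely the inductive hypothesis (intertwining at level $n-1$), and conditions (c),(d) reduce to the single-level identities you anticipate, treated separately for $n\le p$ and $n>p$. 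Your direct approach via the pointwise conditions in the Remark after Assumption \ref{ass:compat} could in principle succeed, but once $X$ is the full Warren process on $\GT_{n-1,p}$ the operator $(\AA^X)^*$ involves boundary terms from \emph{all} interlacing faces between levels $1,\dots,n-1$, not just the top two; Proposition \ref{prop:geo-cond} is what packages those lower contributions into the inductive hypothesis.
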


\begin{theorem} \label{thm:lag-war-exist}
For any Gibbs initial condition $\{l^n_i(0)\}$, the SDER (\ref{eq:lag-war-def}) admits a unique strong solution $\{l^n_i(t)\}_{1 \leq i \leq \min\{n, p\}, 1 \leq n \leq m}$ which is a regular Feller process in the sense of Assumption \ref{ass:proc} and which we call the Laguerre Warren process.
\end{theorem}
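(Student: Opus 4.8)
The plan is to construct the solution one level at a time: at level $n$ we invoke the one-dimensional SDER existence and uniqueness criterion of Theorem \ref{thm:exist-crit}, treating the already-constructed level-$(n-1)$ particles as time-dependent reflecting barriers, and we then upgrade the resulting coupled process to a regular Feller diffusion in the sense of Assumption \ref{ass:proc} using the core identification of Theorem \ref{thm:z-reg-cond}. I would induct on $n$. For $n = 1$ the single particle $l^1_1$ carries no reflection term and solves the squared Bessel SDE of dimension $2p$; when $p \geq 1$ this has a unique strong solution which is a well-studied regular Feller diffusion on $[0, \infty)$, and when $p = 0$ there is nothing to construct. Assume inductively that $\{l^k_i(t)\}_{1 \leq k \leq n-1}$ has been realized as a strong solution adapted to the filtration of the driving Brownian motions, which is a continuous semimartingale and a regular Feller diffusion.

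Fixing the level-$(n-1)$ trajectories, the equations governing the $\min\{n, p\}$ particles on level $n$ decouple: for distinct indices they are driven by independent Brownian motions and interact only with the lower level. For $n \leq p$ the particle $l^n_i$ is confined to the interval $[l^{n-1}_{i-1}(t), l^{n-1}_i(t)]$, reading $l^{n-1}_0 \equiv 0$ (the hard wall) and $l^{n-1}_n \equiv +\infty$; for $n > p$ it is confined to $[l^{n-1}_i(t), l^{n-1}_{i+1}(t)]$, reading $l^{n-1}_{p+1} \equiv +\infty$. The endpoints of these time-dependent intervals are continuous semimartingales by the inductive hypothesis, and the diffusion coefficient $x \mapsto 2\sqrt{x}$ is H\"older of exponent $1/2$ near the wall and uniformly near the moving boundaries; hence Theorem \ref{thm:exist-crit} applies and each one-dimensional SDER at level $n$ has a unique strong solution. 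Assembling these over $i$ produces the level-$n$ process, again adapted to the Brownian filtration.

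It remains to propagate the inductive hypotheses. Each $l^n_i$ is a continuous semimartingale, its dynamics being $2\sqrt{l^n_i}\, dB^n_i$ plus a constant drift plus continuous reflection terms of locally bounded variation. Because each $l^n_i$ stays in its prescribed interval, the array stays in $\GT_{m, p}$, and consecutive particles on level $n$ are automatically ordered (the upper barrier of $l^n_i$ is the lower barrier of $l^n_{i+1}$); following the chain of intervals down to level $p$, where the smallest particle is a squared Bessel process of dimension $2$ which does not reach $0$, shows that positivity is maintained. Finally, the joint process through level $n$ is a regular Feller diffusion in the sense of Assumption \ref{ass:proc}: continuity in the initial condition follows from pathwise uniqueness together with the strong-solution estimates underlying Theorem \ref{thm:exist-crit}, which yields the Feller property, while a core for its generator is supplied by Theorem \ref{thm:z-reg-cond}. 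Running the induction to $n = m$ gives the Laguerre Warren process; it is a strong solution, and pathwise uniqueness holds because uniqueness at level $n$ given levels $< n$ combines with uniqueness of the lower levels by induction. (Strong existence and uniqueness for arbitrary, not necessarily Gibbs, initial conditions is established independently in Appendix \ref{sec:appendix}.)

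The main obstacle is the interaction between the degenerate, non-Lipschitz diffusion coefficient $2\sqrt{x}$ and reflection on barriers that are only semimartingales rather than smooth curves; this is exactly the setting Theorem \ref{thm:exist-crit} is built for, so the substance of each inductive step is the verification of its hypotheses --- the uniform H\"older-$1/2$ control of the diffusion coefficient near the moving boundary, the semimartingale regularity of the barrier processes, and the treatment of the degenerate situation in which two adjacent level-$(n-1)$ particles momentarily coincide and pin a level-$n$ particle. A secondary difficulty is meeting the Feller and core requirements of Assumption \ref{ass:proc} for a reflected diffusion with degenerate coefficients, which is where Theorem \ref{thm:z-reg-cond} enters.
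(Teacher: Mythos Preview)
Your inductive skeleton matches the paper's, but there is a genuine gap at the heart of the inductive step. To apply Theorem \ref{thm:exist-crit} at level $n$, you need the time-dependent barriers $l^{n-1}_{i-1}(t)$ and $l^{n-1}_i(t)$ (or $l^{n-1}_i(t)$ and $l^{n-1}_{i+1}(t)$ when $n>p$) to satisfy $L_t < U_t$ for all $t>0$ almost surely. You flag this ``degenerate situation'' as an obstacle but never resolve it, and nothing in your inductive hypothesis as stated (continuous semimartingale, regular Feller) prevents the level-$(n-1)$ particles from colliding. The paper's proof closes this gap by invoking Theorem \ref{thm:lag-war-inter} inside the induction: once the $(n-1)$-level system is known to be regular Feller (the inductive hypothesis), Theorem \ref{thm:lag-war-inter} says its projection to level $n-1$ is Markovian and agrees in law with the Laguerre eigenvalues process of rank $p$ and level $n-1$, which by Proposition \ref{prop:lep}(a) has strictly ordered positive coordinates for all $t>0$. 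Thus the barriers never touch and Theorem \ref{thm:exist-crit} applies cleanly. This interleaving of Theorems \ref{thm:lag-war-inter} and \ref{thm:lag-war-exist} is the essential mechanism and is absent from your argument.

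Two secondary points. First, your Feller argument (``continuity in the initial condition follows from pathwise uniqueness together with strong-solution estimates'') does not address the $C_0$-preservation; the paper handles the vanishing-at-infinity part by a stochastic domination of $l^{m}_1$ and $l^{m}_{\min\{m,p\}}$ by squared Bessel processes. Second, ``a core is supplied by Theorem \ref{thm:z-reg-cond}'' hides nontrivial work: Assumption \ref{ass:drift-bound} requires showing that
\[
\EE\Big[\int_0^{\sigma_i} e^{-\lambda t}\,\partial_i h(\chi(Z(t)))\exp\Big(\tfrac{1-2(p-k+1)}{2}\int_0^t \frac{dr}{l^k_i(r)}\Big)\,dt\Big]
\]
is bounded in the starting point, which for $k>p$ involves an exponential of a potentially large positive integral; the paper proves this via a change of measure to a Bessel process of dimension $\delta+2$ (Lemmas \ref{lem:bessel-mart} and \ref{lem:drift-lag-ver}) together with a first-passage density estimate. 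Finally, one still has to identify the $\Phi,\Psi$ produced by Theorem \ref{thm:exist-crit} with the half local times in \eqref{eq:lag-war-def}; the paper does this with It\^o--Tanaka and the occupation-time formula.
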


\begin{corr} \label{corr:lag-war}
For $1 \leq n \leq m$, the projection of the Laguerre Warren process to level $n$ is Markovian and coincides in law with the Laguerre eigenvalues process of rank $p$ and level $n$.
\end{corr}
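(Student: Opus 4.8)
The plan is to read off Corollary \ref{corr:lag-war} as a formal consequence of Theorems \ref{thm:lag-war-exist} and \ref{thm:lag-war-inter} together with the definition of an intertwining diffusion, Definition \ref{def:ps-def}. First I would invoke Theorem \ref{thm:lag-war-exist}: for any Gibbs initial condition it produces a unique strong solution to the SDER (\ref{eq:lag-war-def}) which is a regular Feller process in the sense of Assumption \ref{ass:proc}. In particular this solution is the unique weak solution, so the hypothesis of Theorem \ref{thm:lag-war-inter} is met and we may apply it.

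Next, fix $1 \le n \le m$. For $1 < n \le m$, Theorem \ref{thm:lag-war-inter} asserts that the projection $Z = (Z_1, Z_2)$ of the Laguerre Warren process to levels $1, \dots, n$ is Markovian and is an intertwining, in the sense of Definition \ref{def:ps-def}, of the Laguerre eigenvalues process $X$ of rank $p$ and level $n$ with the Laguerre Warren process $Y$ of rank $p$ and level $n-1$. Since $X$ is supported on the top level $n$ while $Y$ is supported on levels $1, \dots, n-1$, the component $Z_1$ of the intertwining is exactly the projection of the Laguerre Warren process to level $n$. By part (i) of Definition \ref{def:ps-def} we have $Z_1 \overset{d}{=} X$, which is precisely the assertion that the level-$n$ projection coincides in law with the Laguerre eigenvalues process of rank $p$ and level $n$; by part (iii) of Definition \ref{def:ps-def}, $Z_1$ is Markovian with respect to the joint filtration of $(Z_1, Z_2)$, hence a fortiori in its own filtration. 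For the remaining case $n = 1$, Theorem \ref{thm:lag-war-inter} does not apply (it is stated only for $n > 1$), but here there is nothing to prove beyond the definitions: since $i = 1 = \min\{1, p\}$ forces $L^{1, +}_1 \equiv L^{1, -}_1 \equiv 0$, the first-level coordinate of (\ref{eq:lag-war-def}) is $dl^1_1 = 2\sqrt{l^1_1}\,dB^1_1 + 2p\,dt$, i.e. a squared Bessel process of dimension $2p$, which is exactly the Laguerre eigenvalues process of rank $p$ and level $1$ — and is Markovian — by Proposition \ref{prop:lep}(a).

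I do not expect a genuine obstacle in this step: all of the substantive work lies in Theorems \ref{thm:lag-war-inter} and \ref{thm:lag-war-exist} (and, through them, in Theorems \ref{thm:ps-ext}, \ref{thm:exist-crit}, and \ref{thm:z-reg-cond}). The only points needing (routine) attention are the bookkeeping that identifies the top coordinate $Z_1$ of the intertwining with the level-$n$ marginal of the Warren process, so that ``$Z_1 \overset{d}{=} X$'' is literally the statement of the corollary, and the separate treatment of the degenerate top level $n = 1$, which falls outside the range $n > 1$ covered by Theorem \ref{thm:lag-war-inter}.
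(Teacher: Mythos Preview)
Your proposal is correct and follows exactly the paper's approach: the paper's proof is the single sentence ``This follows by combining Theorem \ref{thm:lag-war-exist}, Theorem \ref{thm:lag-war-inter}, and the definition of intertwining,'' and you have simply unpacked that sentence, including the routine $n=1$ base case. One minor bookkeeping caution: the paper is not entirely consistent about which of $Z_1,Z_2$ is the top level (compare the statement of Theorem \ref{thm:lag-war-inter} with the variable assignments in its proof), so be careful when invoking part (iii) of Definition \ref{def:ps-def}; in any case part (i) gives the distributional identification for both components, and the Markov property for the remaining component is established in Claim~5 of the proof of Theorem \ref{thm:ps-ext}.
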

\begin{proof}
This follows by combining Theorem \ref{thm:lag-war-exist}, Theorem \ref{thm:lag-war-inter}, and the definition of intertwining.
\end{proof}

\begin{corr} \label{corr:lag-entrance}
The Laguerre Warren process may be started from $l^n_i(0) = 0$ with entrance law proportional to
\[
\Delta(l^m) \prod_{i = 1}^{\min\{m, p\}}(l^m_i)^{(p - m)_+} e^{-\frac{l_i^m}{2t}} \prod_{n = 1}^m \prod_{i = 1}^{\min\{n, p\}} dl^n_i.
\]
\end{corr}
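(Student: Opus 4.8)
The plan is to exhibit the claimed density as the $\epsilon\downarrow 0$ limit of the Laguerre Warren process started from a one–parameter family of Gibbs initial conditions collapsing to the origin, using the Markovianity of the level-$n$ projections (Corollary \ref{corr:lag-war}) together with the preservation of the Gibbs property in time. Let $Q^m_t(0,\cdot)$ denote the time-$t$ distribution of the Laguerre eigenvalues process of rank $p$ and level $m$ started at $0$ (Proposition \ref{prop:lep}(b)), let $\mathcal{P}_t$ be the Feller transition semigroup of the Laguerre Warren process supplied by Theorem \ref{thm:lag-war-exist}, and let $\Xi_t$ be the Gibbs measure on $\GT_{m,p}$ whose level-$m$ marginal is $Q^m_t(0,\cdot)$; this determines $\Xi_t$ uniquely, since the Gibbs property prescribes the conditional law given level $m$. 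First I would check that $\Xi_t$ has exactly the density in the statement: as that density does not involve the lower-level coordinates $\{l^n_i\}_{n<m}$, the conditional law it induces on $\GT_{m,p}(\lambda)$ is uniform, matching \eqref{eq:l-gibbs-def}; and integrating the lower levels out multiplies by $\vol(\GT_{m,p}(\lambda)) = \wDelta_{m,p}(\lambda)\,\frac{(m-p-1)_+!\cdots 1!}{(m-1)!\cdots 1!}$ from the Dixon-Anderson volume computation in the Remark after \eqref{eq:l-gibbs-def}, so that, using $\wDelta_{m,p}(\lambda) = \Delta(\lambda)\prod_i\lambda_i^{(m-p)_+}$ and the elementary identity $|p-m| - (m-p)_+ = (p-m)_+$, the level-$m$ marginal comes out proportional to $\Delta(\lambda)^2\prod_i\lambda_i^{|p-m|}e^{-\lambda_i/2t}$, which is the Laguerre entrance law of Proposition \ref{prop:lep}(b).

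Next I would prove the consistency relation $\Xi_s\mathcal{P}_t = \Xi_{s+t}$ for all $s,t>0$. Running the Laguerre Warren process from the Gibbs initial condition $\Xi_s$ (admissible by Theorem \ref{thm:lag-war-exist}), Corollary \ref{corr:lag-war} identifies its level-$m$ projection as the Laguerre eigenvalues process started from $Q^m_s(0,\cdot)$, so its level-$m$ marginal at time $t$ equals $Q^m_s(0,\cdot)Q^m_t = Q^m_{s+t}(0,\cdot)$ by the semigroup property of the Laguerre entrance law. On the other hand, applying Theorem \ref{thm:lag-war-inter} with $n=m$ (its hypothesis being furnished by Theorem \ref{thm:lag-war-exist}) realizes the whole process as an intertwining in the sense of Definition \ref{def:ps-def}; by condition (i) of that definition and the fact that $\Xi_s$ is Gibbs, the link $L$ is the kernel sending a level-$m$ configuration $\lambda$ to the uniform probability measure on $\GT_{m,p}(\lambda)$, and then property (iv) of Definition \ref{def:ps-def} forces, for every $t$, the conditional law of the lower levels given the level-$m$ configuration to be uniform on the corresponding polytope, i.e.\ the time-$t$ law is again Gibbs. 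Since a Gibbs measure is determined by its level-$m$ marginal, the time-$t$ law equals $\Xi_{s+t}$, establishing the consistency relation and exhibiting $(\Xi_t)_{t>0}$ as an entrance family for $\mathcal{P}_t$. Finally I would verify that $\Xi_\epsilon\Rightarrow\delta_0$ as $\epsilon\downarrow 0$: for bounded continuous $F$ on the closure of $\GT_{m,p}$ one has $\int F\,d\Xi_\epsilon = \int Q^m_\epsilon(0,d\lambda)\,\frac{1}{\vol(\GT_{m,p}(\lambda))}\int_{\GT_{m,p}(\lambda)} F\,dl$, and since the defining inequalities of $\GT_{m,p}(\lambda)$ are homogeneous the set $\GT_{m,p}(\lambda)$ has diameter $O(\max_i\lambda_i)$, so the inner average tends to $F(0)$ as $\lambda\to 0$ while $Q^m_\epsilon(0,\cdot)\Rightarrow\delta_0$ by Proposition \ref{prop:lep}(b); hence $\int F\,d\Xi_\epsilon\to F(0)$. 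Combined with the Feller property of $\mathcal{P}_t$, this identifies $(\Xi_t)_{t>0}$ as the entrance law of the Laguerre Warren process from the origin, which is the assertion of the corollary.

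The step I expect to require the most care is the consistency relation $\Xi_s\mathcal{P}_t = \Xi_{s+t}$, and within it the preservation of the Gibbs property: one must correctly identify the link $L$ of the intertwining in Theorem \ref{thm:lag-war-inter} as the uniform law on the Gelfand-Tsetlin polytope (equivalently, pin down which component of $Z$ is the autonomously Markov one), so that property (iv) of Definition \ref{def:ps-def} genuinely propagates the Gibbs prescription \eqref{eq:l-gibbs-def} forward in time, and then combine this with the level-$m$ marginal from Corollary \ref{corr:lag-war}. Everything else — the density identification through the Dixon-Anderson volume formula and the weak degeneration $\Xi_\epsilon\Rightarrow\delta_0$ — is routine once the volume formula and the entrance-law property of the Laguerre eigenvalues process are in hand.
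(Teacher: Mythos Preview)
Your proposal is correct and follows essentially the same strategy as the paper: identify the claimed density as the Gibbs measure whose level-$m$ marginal is the Laguerre entrance law of Proposition~\ref{prop:lep}(b), then use that the Laguerre Warren process preserves Gibbs measures and projects to the Laguerre eigenvalues process (Theorem~\ref{thm:lag-war-inter} and Corollary~\ref{corr:lag-war}) to transfer the entrance-law property from level $m$ to the full process. The paper compresses this into two sentences, while you have unpacked the entrance-law consistency $\Xi_s\mathcal{P}_t=\Xi_{s+t}$ and the degeneration $\Xi_\epsilon\Rightarrow\delta_0$ explicitly; the extra care you flag around property~(iv) of Definition~\ref{def:ps-def} is exactly what the paper means by ``preserves Gibbs measures.''
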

\begin{proof}
This measure is the Gibbs measure associated to the entrance law of the Laguerre eigenvalues process from Proposition \ref{prop:lep}(b).  By Theorem \ref{thm:lag-war-inter}, the Laguerre Warren process preserves Gibbs measures and projects to the Laguerre eigenvalues process of rank $p$ and level $m$, hence the claimed measure is a valid entrance law because the measure of Proposition \ref{prop:lep}(b) is.
\end{proof}

\begin{remark}
By analogy with the Wigner case studied in \cite{ANvM14}, we do not expect the Laguerre Warren process to coincide with the process of singular values of slices of the matrix $A(t)$ from Section \ref{sec:lep}.
\end{remark}

\begin{remark}
Consider the projection of $\{l^n_i(t)\}$ to the smallest particle on the first $p$ levels.  The result is a Markovian particle process on
\[
l^1_1(t) \geq l^2_1(t) \geq \cdots \geq l^p_1(t) \geq 0
\]
solving the system of stochastic differential equations
\begin{equation} \label{eq:left-edge-sde}
dl^n_1(t) = 2 \sqrt{l^n_1(t)} dB^n_1(t) + 2(p - n + 1)dt - \frac{1}{2}dL_1^{n, -}(t),
\end{equation}
where $L_1^{n, -}(t)$ is the local time at $0$ of $l^{n - 1}_1(t) - l^n_1(t)$.  By Corollary \ref{corr:lag-war}, for each level $n$ and time $t$, the smallest particle $l^n_1(t)$ on each level is equal in law to the smallest particle of a Laguerre eigenvalues process of rank $p$ and level $n$.  Therefore, for $n \leq p$, the fixed time distribution at $t = 1$ has the law of the smallest particle of a Laguerre random matrix of shape $p \times n$, which is the hard edge of random matrix theory.  On the other hand, (\ref{eq:left-edge-sde}) only involves interactions between neighboring particles, meaning that this construction gives a method to produce the hard edge from a process with purely local interactions.
\end{remark}

\subsection{Proof of Theorem \ref{thm:lag-war-inter}} \label{sec:lag-proof}

We prove by induction on $n$ that $\{l^k_i(t)\}_{1 \leq k \leq n - 1}$ is intertwined with the Laguerre eigenvalues process of rank $p$ and level $n$.  The base case is trivial.  During the proof, we adopt the notations of Sections \ref{sec:ps-ext} and \ref{sec:core-crit}.  In particular, we consider the domains $\XX := \GT_{n - 1, p}$, $\YY := \{0 \leq y_1 \leq \cdots \leq y_{\min\{n, p\}}\}$, and
\[
D := \begin{cases} \{(\{x^k\}, y) \mid \{x^k\} \in \XX, y \in \YY, 0 \leq y_1 \leq x_1^{n-1} \leq y_2 \cdots \leq x_{n-1}^{n-1} \leq y_n\} & n \leq p \\
\{(\{x^k\}, y) \mid \{x^k\} \in \XX, y \in \YY, 0 \leq x_1^{n-1} \leq y_1 \leq \cdots \leq y_{\min\{n, p\}}\} & n > p. \end{cases}
\]
On these domains, we let $X(t)$ denote $\{l^k_i(t)\}_{1 \leq k \leq n - 1}$, $Y(t)$ denote the Laguerre eigenvalues process of rank $p$ and level $n$, and $Z(t)$ denote $\{l^k_i(t)\}_{1 \leq k \leq n}$, the weak solution to (\ref{eq:lag-war-def}) stopped at the non-smooth parts of the boundary.  Let
\begin{align*}
\AA^X &:= \sum_{k = 1}^{n - 1} \sum_{i = 1}^{\min\{k, p\}} \Big(2 x^k_i \partial^2_{x^k_i} + 2 (p - k + 1)\partial_{x^k_i}\Big)\\
\AA^Z &:= \AA^X + \sum_{i = 1}^{\min\{n, p\}} \Big(2 y_i \partial_{y_i}^2 + 2 (p - n + 1) \partial_{y_i}\Big)
\end{align*}
denote the generators of $X(t)$ and $Z(t)$, and let $\AA^Y := \cL^{p}_n$ denote the generator of $Y(t)$.  Define the kernel
\[
\Lambda(y, x) := \frac{(n - 1)! \cdots 1!}{(n - p - 1)_+! \cdots 1!} \wDelta_{n, p}(y)^{-1}.
\]
We will verify the hypotheses of Theorem \ref{thm:ps-ext} for $D$ and $\Lambda$. 
\begin{itemize}
\item \textbf{Assumption \ref{ass:proc}:} Note that $Y(t)$ is Feller as a projection under the eigenvalue map of the Wishart process, which is Feller by \cite[Lemma 2]{Bru}; it is regular as the solution to the SDE for the Laguerre eigenvalues process.  In addition, $X(t)$ is regular Feller by the given.

\item \textbf{Assumption \ref{ass:domain}:} All three points are evident from the parametrization $x^k_i = \xi^k_i x^{k+1}_i + (1 - \xi^{k_i}) x^{k + 1}_{i + 1}$ for $k < p$ and $x^k_i = \xi^k_i x^{k+1}_{i-1} + (1 - \xi^{k_i}) x^{k + 1}_{i}$ for $k \geq p$, where we adopt the conventions that $x^n = y$ and $x^k_0 = 0$.

\item \textbf{Assumption \ref{ass:link}:} Points (a), (b), and (c) follow from the definition of $\Lambda(y, x)$.  For (d), note that $\cL^{p, y}_n = \Delta(y)^{-1} \circ L^{p, y}_n \circ \Delta(y)$ by Proposition \ref{prop:ko}, so we observe directly that $\cL^{p, y}_n\Lambda(y, x) = 0$ is continuous and bounded.  For $n > p$, noting that
\[
\Big(2y_i \partial_{y_i}^2 + 2 (|p - n| + 1) \partial_{y_i}\Big) y_i^{p - n} = \Big(2(p - n)(p - n - 1) + 2(n - p + 1)(p - n)\Big) y_i^{p - n - 1} = 0,
\]
we see that
\[
\cL^{p, y}_n \Lambda(y, x) = \frac{(n - 1)! \cdots 1!}{(n - p - 1)_+! \cdots 1!} \Delta(y)^{-1} L^{p, y}_n \prod_i y_i^{p - n} = 0, 
\]
which is again continuous and bounded.  This establishes (d).

\item \textbf{Assumption \ref{ass:compat}:}  Points (a) and (d) are trivial because $Y(t)$ has no boundary reflection. Point (b) follows by definition of the kernel.  For (c), we apply Proposition \ref{prop:geo-cond} with 
\begin{align*}
\AA^{X_1} &= \cL^{p, x^1}_{n - 1} = \Delta(x^1)^{-1} \circ L^{p, x^1}_{n-1} \circ \Delta(x^1)\\
&= \sum_{i = 1}^{\min\{n - 1, p\}} 2 x^1_i \partial_{x^1_i}^2 + \sum_{i = 1}^{\min\{n - 1, p\}} 2 \Big(|n - p - 1| + 2\Big) \partial_{x^1_i} + \sum_{i = 1}^{\min\{n - 1, p\}} \sum_{j \neq i} \frac{4 x^1_i}{x^1_i - x^1_j} \partial_{x^1_i}
\end{align*}
being the generator of the Laguerre eigenvalues process of rank $p$ and level $n - 1$, and the two kernels
\[
\Lambda_1(y, x^1) = \frac{(n - 1)!}{(n - p - 1)_+!} \frac{\wDelta_{n - 1, p}(x^1)}{\wDelta_{n, p}(y)} \qquad \text{ and } \qquad \Lambda_2(x^1, x^2) = \frac{(n - 2)! \cdots 1!}{(n - p - 2)_+! \cdots 1!} \wDelta_{n - 1, p}(x^1)^{-1}.
\]
We now check the hypotheses of Proposition \ref{prop:geo-cond}.
\begin{itemize}
\item[(a)] This follows by the inductive hypothesis.
\item[(b)] We have that $\cL^{2|p - n| + 2, y}_n \Lambda_1 = 0$ by the same computation as in Assumption \ref{ass:link}.  Notice now that 
\[
(\AA^{X_1})^*\Lambda_1 = \frac{(n - 1)!}{(n - p - 1)_+!} \frac{\Delta(x^1)}{\wDelta_{n, p}(y)} (L^{p, x^1}_{n - 1})^* \prod_i (x^1_i)^{(n - p - 1)_+} = 0,
\]
where for $n \geq p + 1$ we use the univariate computation
\[
\Big(2 x^1_i \partial_{x^1_i}^2 + 2(|p - n + 1| + 1) \partial_{x^1_i}\Big)^* (x^1_i)^{n - p - 1} = (2(n - p)(n - p - 1) - 2(n - p - 1)(n - p)) (x^1_i)^{n - p - 2} = 0.
\]

\item[(c)] If $n \leq p$, faces of $\partial D_1(y)$ take the form $\{x_i^1 = y_i\}$ or $\{x_i^1 = y_{i+1}\}$ for $1 \leq i \leq n - 1$.  On $\{x_i^1 = y_i\}$, we have $\eta_k^1 = -1_i$ and $\langle \Psi^{j, 1}_k, \eta_k^1 \rangle = - \delta_{ij}$, so the equality becomes
\begin{multline*}
- 2(|n - p - 1| + 1) - \sum_{j \neq i}\frac{4x^1_i}{x^1_i - x^1_j} + 2 + \sum_{j \neq i}\frac{4x^1_i}{x^1_i - x^1_j} = - 2 (|n - p| + 1) - \sum_{j \neq i} \frac{4 y_i}{y_i - y_j} + \sum_{j \neq i} \frac{4 y_i}{y_i - y_j}.
\end{multline*}
Similarly, on $\{x_i = y_{i + 1}\}$, we have $\eta_k^1 = 1_i$ and $\langle \Psi^{j, 1}_k, \eta_k^1\rangle = \delta_{i + 1, j}$, so the equality becomes
\[
2(|n - p - 1| + 1) + \sum_{j \neq i}\frac{4x^1_i}{x^1_i - x^1_j} - 2 - \sum_{j \neq i}\frac{4x^1_i}{x^1_i - x^1_j} = 2 (|n - p| + 1) + \sum_{j \neq i + 1} \frac{4 y_{i + 1}}{y_{i + 1} - y_j} - \sum_{j \neq i + 1} \frac{4 y_{i + 1}}{y_{i + 1} - y_j}.
\]
If $n > p$, faces of $\partial D_1(y)$ take the form $\{x_i^1 = y_i\}$ or $\{x_i^1 = y_{i-1}\}$ for $1 \leq i \leq p$.  On $\{x_i^1 = y_i\}$, we have $\eta_k^1 = 1_i$ and $\langle \Psi^{j, 1}_k, \eta_k^1 \rangle = \delta_{ij}$, so the desired equality becomes
\begin{multline*}
2(|n - p - 1| + 1) + \sum_{j \neq i}\frac{4x^1_i}{x^1_i - x^1_j} - 2 - \sum_{j \neq i}\frac{4x^1_i}{x^1_i - x^1_j} - 4 (n - p - 1)\\  =  2 (|n - p| + 1) + \sum_{j \neq i} \frac{4 y_i}{y_i - y_j} - \sum_{j \neq i} \frac{4 y_i}{y_i - y_j} - 4 (n - p).
\end{multline*}
On $\{x_i^1 = y_{i - 1}\}$, we have $\eta_k^1 = -1_i$ and $\langle\Psi^{j, 1}_k, \eta_k^1\rangle = - \delta_{i - 1, j}$, so the desired equality becomes
\begin{multline*}
- 2(|n - p - 1| + 1) - \sum_{j \neq i}\frac{4x^1_i}{x^1_i - x^1_j} + 2 + \sum_{j \neq i}\frac{4x^1_i}{x^1_i - x^1_j} + 4 (n - p - 1)\\ = - 2 (|n - p| + 1) - \sum_{j \neq {i - 1}} \frac{4 y_{i - 1}}{y_{i - 1} - y_j} + \sum_{j \neq i} \frac{4 y_{i - 1}}{y_{i - 1} - y_j} + 4 (n - p).
\end{multline*}

\item[(d)] If $n \leq p$, the desired reduces to $- 4 x_i^1 = - 4y_i$ on $\{x_i = y_i\}$ and $4 x_i^1 = 4 y_{i+1}$ on $\{x_i = y_{i+1}\}$.  If $n > p$, it reduces to $4x_i = 4y_i$ on $\{x_i = y_i\}$ and $-4x_i = -4y_{i-1}$ on $\{x_i = y_{i-1}\}$.

\item[(e)] Such faces take the form $\{x^a_b = x^{a - 1}_c\}$ for $c \in \{b - 1, b, b + 1\}$ and $a < n$.  Therefore, we have $(\Psi^i_k)_{a, b} = (\Psi^i_k)_{a - 1, c}$ for all $i$, and $\eta_k$ is proportional to $1_{a, b} - 1_{a - 1, c}$, giving the desired.

\item[(f)] Notice that $\partial D(y)_{\wk}$ takes the form $\{x_a = y_b\}$ for $b \in \{a - 1, a, a + 1\}$.  In this case, $\eta_k$ is proportional to $1_a$, and $(\Psi^i_k)_a = \delta_{ib}$, so $\langle \Psi^i_k, \eta_k\rangle = \delta_{ib}$, as desired.
\end{itemize}
Having checked all the hypotheses of Proposition \ref{prop:geo-cond}, we deduce Assumption \ref{ass:compat}(c), as desired.
\end{itemize}
We conclude that Assumptions \ref{ass:proc}, \ref{ass:domain}, \ref{ass:link}, and \ref{ass:compat} hold for $(X(t), Y(t))$.  Notice now that the SDER of Theorem \ref{thm:ps-ext} is given by (\ref{eq:lag-war-def}) and by the given has a weak solution $Z(t)$ which is a regular Feller diffusion.  Noting that $\partial_{y_i} \log \Lambda(y, x) = - \sum_{j \neq i} \frac{1}{y_i - y_j} - \frac{(n - p)_+}{y_i}$, its Markov generator (\ref{eq:z-gen-def}) is
\begin{align*} 
\AA^Z &:= \cL^{p, x}_{n - 1} + \sum_{i = 1}^{\min\{n, p\}} \Big(2(|n - p| + 1) + \sum_{j \neq i} \frac{4y_i}{y_i - y_j}\Big) \partial_{y_i} - \sum_{i = 1}^{\min\{n, p\}} \Big(\sum_{j \neq i} \frac{4y_i}{y_i - y_j} + 4(n - p)_+\Big) \partial_{y_i}\\
&= \cL^{p, x}_{n - 1} + \sum_{i = 1}^{\min\{n, p\}} 2y_i \partial_{y_i}^2 + \sum_{i = 1}^{\min\{n, p\}}2(p - n + 1) \partial_{y_i}
\end{align*}
with the claimed Neumann boundary conditions on $\partial D(y)_k$.  We may therefore apply Theorem \ref{thm:ps-ext} to conclude that $(X(t), Y(t))$ are intertwined by $Z(t)$, meaning that $(\{l^k(t)\}_{1 \leq k \leq n -1}, l^n(t))$ is an intertwining of the Laguerre Warren process of rank $p$ and level $n - 1$ and the Laguerre eigenvalue process of rank $p$ and level $n$, completing the proof.

\begin{remark}
The single-level version $\Lambda_1$ of our kernel is the Dixon-Anderson kernel.  It was shown via random corank $1$ projections to correspond to the Gibbs property of fixed time distributions of random matrix eigenvalues in \cite{FR, FN} and is thus the natural one to use.
\end{remark}

\subsection{Proof of Theorem \ref{thm:lag-war-exist}} \label{sec:lag-exist}

We first check that $\cD(\AA^Z)$ is a core for $\AA^Z$ via Theorem \ref{thm:z-reg-cond}.  

\begin{prop} \label{prop:lag-regular}
The generator $\AA^Z$ of the Jacobi Warren process has core $\cD(\AA^Z)$.
\end{prop}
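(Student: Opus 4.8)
The plan is to invoke Theorem \ref{thm:z-reg-cond}, which (by the title of Section \ref{sec:core-crit}) identifies a core for a certain class of reflected diffusions, and to verify that the generator $\AA^Z$ of the Laguerre Warren process of rank $p$ and level $n$ falls into this class. Concretely, I would first recall that $\AA^Z$ acts on functions on the polytope $\GT_{n,p}$ (with the particles on the top level $n$ playing the role of $y$), is a second-order elliptic operator degenerating like $2x_i^n\partial_{x_i^n}^2$ at the faces $\{x_i^n = 0\}$ and like the squared-Bessel/Vandermonde-transformed operators on the interior levels, and carries Neumann-type reflection conditions on the interlacing faces $\partial D(y)_k$ as computed in the proof of Theorem \ref{thm:lag-war-inter}. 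The set $\cD(\AA^Z)$ referenced in the statement is the natural domain of smooth functions satisfying those boundary conditions; the claim is precisely that this domain is a core, i.e. dense in the full Feller domain in the graph norm.

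The key steps, in order, are: (1) restate Theorem \ref{thm:z-reg-cond} and extract its hypotheses — presumably conditions on the regularity of the diffusion and drift coefficients, the polyhedral structure of the domain, and compatibility of the reflection vector fields with the faces; (2) match $D = \GT_{n,p}$ against the polyhedral-domain hypothesis, using the explicit interlacing inequalities and the parametrization $x^k_i = \xi^k_i x^{k+1}_i + (1-\xi^k_i)x^{k+1}_{i+1}$ (and its $k\ge p$ analogue) already used to verify Assumption \ref{ass:domain}; (3) check that the coefficients of $\AA^Z$ — the squared-Bessel diffusion coefficients $2l^n_i$ and the affine drifts $2(p-n+1)$ on each level — satisfy the required (Hölder/smoothness and non-degeneracy-away-from-the-singular-set) regularity, noting that the only degeneracies are the boundary degeneracy at $\{l^n_i = 0\}$ and the singularities of the conditioned-process drifts, both of which are of the type the theorem is designed to handle; (4) verify that the reflection directions attached to the faces $\partial D(y)_k$ are the ones prescribed in the theorem, quoting the boundary computations $(\Psi^i_k)_a = \delta_{ib}$ and $\langle\Psi^i_k,\eta_k\rangle = \delta_{ib}$ from the proof of Theorem \ref{thm:lag-war-inter}; (5) conclude that $\cD(\AA^Z)$ is a core.

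The main obstacle I anticipate is step (3) together with the interaction between the boundary degeneracy and the reflection: near a corner where a face $\{l^n_i = 0\}$ meets an interlacing face, the diffusion coefficient vanishes and the drift of the conditioned subprocess can blow up, so one must check that Theorem \ref{thm:z-reg-cond}'s hypotheses genuinely cover this corner geometry rather than just smooth interiors of faces. I expect this to be handled either by the general scope of Theorem \ref{thm:z-reg-cond} (which, being stated for "a certain class of diffusion processes," should be formulated precisely to include such degenerate-at-the-boundary cases arising from Doob transforms) or by an explicit local change of variables near such corners that straightens the faces and regularizes the coefficients; a secondary, more routine point is confirming that the affine level-dependent drifts $2(p-n+1)$ (which may be negative for $n>p+1$) still satisfy whatever sign or growth condition the core criterion imposes, which follows since squared Bessel processes of any real dimension are well-defined reflected diffusions here.

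*(Note: the statement as written says "the Jacobi Warren process" but from context this is Proposition \ref{prop:lag-regular} about the Laguerre Warren process — presumably a typo; the argument above is written for $\AA^Z$ as defined just above the proposition.)*
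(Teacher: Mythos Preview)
Your overall strategy is correct and matches the paper: invoke Theorem \ref{thm:z-reg-cond} and verify its hypotheses. Steps (1), (2), and (4) of your plan correspond to the routine verifications of Assumptions \ref{ass:domain}, \ref{ass:d-cone}, \ref{ass:diag}(a)--(f), \ref{ass:ref-geo}, and \ref{ass:estimate}, and the paper dispatches these in essentially one sentence each.

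However, there are two hypotheses of Theorem \ref{thm:z-reg-cond} that your plan does not isolate, and these are precisely where the real work lies. First, Assumption \ref{ass:diag}(g) requires that the process $Z(t)$ stays in $\Ds$ almost surely, i.e.\ that it never hits the non-smooth part of $\partial D$. Concretely this means no two interlacing constraints become active simultaneously --- no ``double collisions'' in the Gelfand--Tsetlin array. This is not covered by any regularity of coefficients or by a local change of variables; the paper obtains it from Theorem \ref{thm:existence-general-lw} in the appendix, which is a separate argument via comparison with reflected Brownian motion in a quadrant and the non-attainability results of \cite{Sarantsev15, BruggemanSarantsev}.

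Second, and more seriously, Assumption \ref{ass:drift-bound} asks that a certain expectation of the form
\[
\EE\left[\int_0^{\sigma_i} e^{-\lambda t}\, \partial_i h(\chi(Z(t)))\, \exp\Big(\int_0^t \tfrac{\partial \wgamma_i^{\wZ}}{\partial z_i}(\chi_i(Z(r)_i))\, dr\Big)\, dt\right]
\]
be bounded in the initial condition. After the coordinate change $\chi$, the $i$th particle at level $k$ becomes a Bessel process of dimension $2(p-k+1)$, and for $k>p$ the integrand $\tfrac{\partial \wgamma_i^{\wZ}}{\partial z_i}$ is positive and blows up like $(k-p-\tfrac12)/z^2$ near zero, so the exponential factor is unbounded. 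Your hope that ``the general scope of Theorem \ref{thm:z-reg-cond}'' or ``an explicit local change of variables'' handles this is not borne out: the theorem takes this bound as a hypothesis, not a conclusion. The paper proves it as a standalone Lemma \ref{lem:drift-lag-ver}, using a change of measure that turns the exponential weight into a shift of Bessel dimension (Lemma \ref{lem:bessel-mart}) together with quantitative first-hitting-time density bounds for Bessel processes from \cite{BMR13}. Without this ingredient the verification is genuinely incomplete.
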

\begin{proof}
By Theorem \ref{thm:z-reg-cond}, it suffices to verify Assumptions \ref{ass:domain}, \ref{ass:d-cone}, \ref{ass:diag}, \ref{ass:ref-geo}, \ref{ass:estimate}, and \ref{ass:drift-bound}.  Assumptions \ref{ass:domain} and \ref{ass:d-cone}(b) hold because the faces of the Gelfand-Tsetlin cone are defined by inequalities between coordinates.  Assumptions \ref{ass:diag} (a)-(f), Assumption \ref{ass:ref-geo}(b)-(c), and Assumption \ref{ass:estimate} hold by the definition of $\AA^Z$.  For Assumption \ref{ass:ref-geo}(a), the face $F$ of minimal dimension containing any point $z$ is defined by equalities between different coordinates.  The span of $\wU(z)$ contains only vectors which are $0$ in the coordinate corresponding to the lowest level, while this coordinate is non-zero for all non-zero vectors in $T_zF$, yielding Assumption \ref{ass:ref-geo}(a).  Finally, Assumptions \ref{ass:diag}(g) and \ref{ass:drift-bound} follow from Theorem \ref{thm:existence-general-lw} and Lemma \ref{lem:drift-lag-ver} below, respectively.  This completes the verification of all assumptions of Theorem \ref{thm:z-reg-cond}, yielding the desired conclusion.
\end{proof}

\begin{lemma} \label{lem:bessel-mart}
For any $\delta \in \RR$ and $x > 0$, let $\EE^{\delta}_x[\cdot]$ denote the law of the Bessel process of dimension $\delta$ started at $x$.  For the process
\[
D^\delta_x(t) := \rho_x^\delta(t) \exp\Big(\frac{1 - \delta}{2} \int_0^t \frac{1}{\rho_x^\delta(s)^2} ds\Big) 1_{t < T_0},
\]
we have for any function $F: C([0, t], \RR_{\geq 0}) \to \RR_{\geq 0}$ that
\[
\EE^{\delta}_x\left[\frac{1}{x} D^\delta_x(t) F(\rho)\right] = \EE^{\delta + 2}_x\left[F(\rho)1_{t < T_0}\right],
\]
where $T_0$ is the hitting time at $0$.
\end{lemma}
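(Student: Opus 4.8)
The plan is to read the identity as the path–functional form of the Doob $h$-transform of the Bessel($\delta$) semigroup by the function $h(r) = r$. Conjugating the Bessel($\delta$) generator $L_\delta := \tfrac12\partial_r^2 + \tfrac{\delta-1}{2r}\partial_r$ by $h$ produces $\tfrac12\partial_r^2 + \tfrac{\delta+1}{2r}\partial_r$, the generator of Bessel($\delta+2$), while the multiplicative functional implementing the transform is $\exp\big(-\int_0^t \tfrac{(L_\delta h)(\rho_s)}{h(\rho_s)}\,ds\big) = \exp\big(\tfrac{1-\delta}{2}\int_0^t \rho_s^{-2}\,ds\big)$ and $h(\rho_t)/h(\rho_0) = \rho_t/x$ supplies the remaining prefactor. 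Thus $\tfrac1x D^\delta_x(t)$, restricted to $\{t < T_0\}$, should be exactly the Radon--Nikodym derivative converting Bessel($\delta$) into Bessel($\delta+2$) on path space. This is a classical fact, but I would give a self-contained argument along the following lines. Write $\PP^\delta_x$ for the law of the Bessel($\delta$) process started at $x$, so that $\EE^\delta_x$ is the associated expectation.

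First, I would check via It\^o's formula that $M_t := \tfrac1x \rho_x^\delta(t)\,\exp\big(\tfrac{1-\delta}{2}\int_0^t \rho_x^\delta(s)^{-2}\,ds\big)$ is a continuous local martingale on $[0, T_0)$. Writing $\rho = \rho_x^\delta$, $A_t := \tfrac{1-\delta}{2}\int_0^t \rho_s^{-2}\,ds$, and using $d\rho_t = dB_t + \tfrac{\delta-1}{2\rho_t}\,dt$ on $[0, T_0)$, the product rule gives $d(\rho_t e^{A_t}) = e^{A_t}\,dB_t$, hence $M_t = 1 + \tfrac1x\int_0^t e^{A_s}\,dB_s$ with $dM_t = M_t \rho_t^{-1}\,dB_t$; note $M_t = \tfrac1x D^\delta_x(t)$ on $\{t < T_0\}$ and $M_0 = 1$.

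Next, I would localize. For $\eps > 0$ small enough that $x \in (\eps, \eps^{-1})$, set $\tau_\eps := \inf\{t \ge 0 : \rho_t \notin (\eps, \eps^{-1})\}$, so $\tau_\eps \le T_0$. On $[0,\tau_\eps]$ the integrand $e^{A_s}\rho_s^{-1}$ is bounded, so $M_{\cdot \wedge \tau_\eps}$ is a bounded martingale and $d\PP^{\delta,\eps} := M_{t \wedge \tau_\eps}\,d\PP^\delta_x$ defines a probability measure on $\FF_{t\wedge\tau_\eps}$. Since $dM_t/M_t = \rho_t^{-1}\,dB_t$, Girsanov's theorem shows that under $\PP^{\delta,\eps}$ the process $\rho$ acquires the extra drift $\rho_s^{-1}\,ds$ up to $\tau_\eps$ and hence solves $d\rho_s = d\widetilde B_s + \tfrac{\delta+1}{2\rho_s}\,ds$ there; by pathwise uniqueness for this SDE with smooth coefficients on $(\eps,\eps^{-1})$, the law of the stopped path $(\rho_{s\wedge\tau_\eps})_{s \le t}$ — which in particular determines the functional $\tau_\eps$ — coincides under $\PP^{\delta,\eps}$ with that of a Bessel($\delta+2$) process from $x$ stopped at its own exit time from $(\eps,\eps^{-1})$. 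Consequently, on the event $\{\tau_\eps > t\} \in \FF_{t\wedge\tau_\eps}$, where the stopped and unstopped paths agree and $M_{t\wedge\tau_\eps} = \tfrac1x D^\delta_x(t)$, I obtain for every bounded $F \ge 0$ that $\EE^\delta_x\big[\tfrac1x D^\delta_x(t)\,F(\rho)\,1_{\tau_\eps > t}\big] = \EE^{\delta+2}_x\big[F(\rho)\,1_{\tau_\eps > t}\big]$. Letting $\eps \downarrow 0$ and using that Bessel processes do not explode, the events $\{\tau_\eps > t\}$ increase to $\{t < T_0\}$ under both $\PP^\delta_x$ and $\PP^{\delta+2}_x$, so monotone convergence on both sides (all quantities being nonnegative, and $D^\delta_x(t)$ already carrying the factor $1_{t < T_0}$) yields the claim for bounded $F \ge 0$; the general nonnegative case then follows by truncation and a further monotone convergence.

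The step I expect to be the main obstacle is the passage $\eps \downarrow 0$: one has to confirm that the mass that survives in the limit is exactly the mass carried on $\{t < T_0\}$ — that is, that it is the lower barrier at $\eps$, not escape to $\eps^{-1}$, that controls the asymptotics — which is precisely the statement that Bessel processes of every real dimension are non-explosive. The local-martingale and boundedness checks of the first two steps and the Girsanov identification of the drift are otherwise routine.
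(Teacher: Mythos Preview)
Your argument is correct and is precisely the classical Doob $h$-transform / Girsanov computation for Bessel processes. The paper does not give a self-contained proof: it simply cites \cite[Proposition~6]{Alt} and remarks that the steps there remain valid for $\delta < 0$. Your localization via $\tau_\eps$, the It\^o computation showing $dM_t = M_t \rho_t^{-1}\,dB_t$, the Girsanov identification of the $\delta+2$ drift, and the monotone passage $\eps \downarrow 0$ to $\{t < T_0\}$ are exactly the ingredients one would expect in the cited reference, so the approaches coincide; you have just supplied the details the paper outsources.
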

\begin{proof}
This follows by the proof of \cite[Proposition 6]{Alt}, where we note that all steps are valid for $\delta < 0$.
\end{proof}

\begin{lemma} \label{lem:drift-lag-ver}
Let $\{l^k_i(t)\}_{1 \leq k \leq n}$ be a weak solution to (\ref{eq:lag-war-def}), and let $\tau^k_i$ be the first hitting time of $l^k_i(t)$ on a particle on a lower level. For sufficiently large $\lambda > 0$, and any $h(\{z^k_i\}) \in C^2_c(D)$ which is locally constant near $0$, the quantity
\[
\EE\left[\int_0^{\tau^k_i} e^{-\lambda t} \frac{\partial h}{\partial z^k_i} (\{l^k_i(t)^{1/2}\}) \exp\Big(\frac{1 - 2(p - k + 1)}{2} \int_0^t \frac{1}{l^k_i(r)} dr\Big) dt\right]
\]
is bounded as a function of $l^k_i(0)$.
\end{lemma}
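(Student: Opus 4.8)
The plan is to analyze $l^k_i$ before $\tau^k_i$ as a squared Bessel process of dimension $\delta:=2(p-k+1)$, driven by $B^k_i$ and independent of the lower levels, since every local‑time term acting on $l^k_i$ vanishes on $[0,\tau^k_i)$. Writing $\rho(t):=l^k_i(t)^{1/2}$, Itô's formula shows $\rho$ is a Bessel process of dimension $\delta$ on $[0,\tau^k_i)$, and the weight in the statement equals $\exp\!\big(\tfrac{1-\delta}{2}\int_0^t\rho(r)^{-2}\,dr\big)$. If $k\le p$ then $\delta\ge 2$, so $\tfrac{1-\delta}{2}\le 0$, the weight is at most $1$, and bounding $|\partial h/\partial z^k_i|$ by $\|\partial h/\partial z^k_i\|_\infty$ and integrating $e^{-\lambda t}$ already gives a bound independent of $l^k_i(0)$. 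So assume $k>p$, where $\delta\le 0$ and $\tfrac{1-\delta}{2}=k-p-\tfrac12>0$; the weight may blow up, and Lemma~\ref{lem:bessel-mart} is the tool for it.

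Set $R:=\sup\{z^k_i : z\in\supp(\partial h/\partial z^k_i)\}<\infty$, so the integrand vanishes unless $l^k_i(t)\le R^2$. First I would reduce to $l^k_i(0)\le R^2$: if $l^k_i(0)>R^2$, let $\sigma$ be the first time $l^k_i$ reaches $R^2$; on $[0,\sigma)$ the integrand is zero and $\int_0^\sigma\rho(r)^{-2}\,dr\le\sigma/R^2$, so splitting the exponent at $\sigma$ and applying the strong Markov property at $\sigma$ (where $l^k_i(\sigma)=R^2$), the prefactor $e^{((k-p-1/2)/R^2-\lambda)\sigma}$ is at most $1$ as soon as $\lambda\ge (k-p-\tfrac12)/R^2$, and the remaining expectation is controlled by the $l^k_i(0)\le R^2$ estimate below with the weight $\partial h/\partial z^k_i$ replaced by $1$. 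This is the only place where ``$\lambda$ sufficiently large'' is used. (Here we use that the weak solution is strong Markov, which we may take for granted by taking it to be the unique strong solution of the appendix.)

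Now suppose $l^k_i(0)\le R^2$. Bound $|\partial h/\partial z^k_i|$ by a constant $C$, apply Fubini, and condition on the $\sigma$‑algebra $\mathcal G$ generated by the levels $\le k-1$; these are autonomous, since in (\ref{eq:lag-war-def}) a level only reflects off the level below it, so given $\mathcal G$ the path $\rho$ is a Bessel process of dimension $\delta$ started at $x:=l^k_i(0)^{1/2}$, independent of $\mathcal G$, and $\tau^k_i$ is its first exit from the interval with floor $l^{k-1}_i(\cdot)$, now deterministic given $\mathcal G$. Since $l^{k-1}_i>0$ for $t>0$ we have $\tau^k_i\le T_0$, hence $\mathbf 1_{t<\tau^k_i}\exp\!\big(\tfrac{1-\delta}{2}\int_0^t\rho^{-2}\big)=x^{-1}D^\delta_x(t)\cdot x\,\rho(t)^{-1}\mathbf 1_{t<\tau^k_i}$, and Lemma~\ref{lem:bessel-mart} with $F(\rho)=x\,\rho(t)^{-1}\mathbf 1_{t<\tau^k_i}$ turns the conditional expectation into $x\,\widetilde\EE\big[\rho(t)^{-1}\mathbf 1_{t<\tau^k_i}\mid\mathcal G\big]$, where under $\widetilde\EE$ the process $\rho^2=l^k_i$ is a squared Bessel process of dimension $\delta+2=2(p-k+2)$ and the levels $\le k-1$ keep their law. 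On $\{t<\tau^k_i\}$ one has $\rho(t)^2=l^k_i(t)>l^{k-1}_i(t)$, so $\rho(t)^{-1}\mathbf 1_{t<\tau^k_i}\le l^{k-1}_i(t)^{-1/2}$, and averaging over $\mathcal G$ the whole quantity is at most $C\,l^k_i(0)^{1/2}\int_0^\infty e^{-\lambda t}\,\EE\big[l^{k-1}_i(t)^{-1/2}\big]\,dt\le CR\int_0^\infty e^{-\lambda t}\,\EE\big[l^{k-1}_i(t)^{-1/2}\big]\,dt$.

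It remains to prove $\int_0^\infty e^{-\lambda t}\,\EE[l^{k-1}_i(t)^{-1/2}]\,dt<\infty$, uniformly in the initial condition of the lower levels (this uniformity is what the strong Markov reduction of paragraph two needs), and this moment estimate is the main obstacle. The idea is that by Gelfand–Tsetlin interlacing $l^{k-1}_i$ dominates the chain of particles obtained by repeatedly passing to the particle it reflects off from below; this chain terminates in a particle on a level $\le p$, which is a squared Bessel process of dimension $\ge 2$ reflected below a moving ceiling and hence coincides with a free squared Bessel process of dimension $\ge 2$ whenever it is strictly below the ceiling. A uniform lower‑tail bound $\PP(l^{k-1}_i(t)<\epsilon)\le \phi(t)\,\epsilon^{1/2+\eta}$ for some $\eta>0$, with $\phi$ locally bounded and decaying as $t\to\infty$, then yields $\EE[l^{k-1}_i(t)^{-1/2}]\le\psi(t)$ with $\int_0^\infty e^{-\lambda t}\psi(t)\,dt<\infty$. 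The delicate point, which I expect to be the real work, is establishing this lower‑tail estimate uniformly in the (possibly degenerate) initial condition for a squared Bessel process reflected at a moving boundary, in particular at the borderline dimension $2$ occurring on level $p$; the reductions via Lemma~\ref{lem:bessel-mart} and the strong Markov property are then routine.
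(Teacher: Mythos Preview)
Your reduction for $k\le p$ matches the paper. For $k>p$, however, you take a harder road than necessary and leave a real gap: the uniform moment bound $\int_0^\infty e^{-\lambda t}\,\EE[l^{k-1}_i(t)^{-1/2}]\,dt<\infty$, which you yourself flag as ``the real work,'' is not proved. Your Markov reduction for $l^k_i(0)>R^2$ then requires this bound uniformly over \emph{all} lower-level configurations reachable at the random time $\sigma$, which compounds the difficulty. The chain-down argument you sketch (descending through floors until you reach a level $\le p$) would have to control $\EE[\rho(t)^{-1}]$ for a $\BESQ^2$-type process reflected at a moving ceiling; at the borderline dimension $2$ this is genuinely delicate, and you give no mechanism to establish it.

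The paper avoids this entire detour by exploiting the hypothesis ``$h$ locally constant near $0$'' more fully than you do. You only use compact support (to get an upper cutoff $R$); the paper uses the lower cutoff as well: since $\partial h/\partial z^k_i$ vanishes for $z^k_i$ near $0$, the ratio $(\partial h/\partial z^k_i)(\{z\})/z^k_i$ is bounded, say by $C\cdot 1_{z^k_i<Z}$. One then writes the integrand as
\[
e^{-\lambda t}\cdot\frac{\partial_i h}{\sqrt{l^k_i(t)}}\cdot \Big(\sqrt{l^k_i(t)}\,1_{t<\tau^k_i}\exp\Big(\tfrac{1-\delta}{2}\!\int_0^t l^k_i(r)^{-1}dr\Big)\Big),
\]
bounds the middle factor by $C\,1_{\sqrt{l^k_i(t)}<Z}$, and recognizes the last factor as $D^\delta_x(t)$ after replacing $1_{t<\tau^k_i}$ by the larger $1_{t<T_0}$ (legitimate since the integrand is nonnegative and $l^k_i>0$ on $[0,\tau^k_i]$). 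Lemma~\ref{lem:bessel-mart} with the \emph{bounded} functional $F(\rho)=1_{\rho(t)<Z}$ then gives $S(x)=x\,\EE^{\delta+2}_x\big[\int_0^\infty e^{-\lambda t}1_{\rho(t)<Z}1_{t<T_0}\,dt\big]\le x/\lambda$. For $x\le Z$ this is already $\le Z/\lambda$; for $x>Z$ one uses a hitting-time density estimate for $\rho^{\delta+2}$ at level $Z$ (the paper cites \cite{BMR13}) to show $S(x)\to 0$ as $x\to\infty$. No information about the lower levels is needed at any point.

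In short: your use of Lemma~\ref{lem:bessel-mart} with $F(\rho)=x\rho(t)^{-1}1_{t<\tau^k_i}$ trades the Bessel weight for an unbounded $\rho(t)^{-1}$ that you then try to control via $l^{k-1}_i$; the paper instead absorbs the extra factor of $\rho(t)$ into $\partial_i h/\rho(t)$, which is bounded precisely because $h$ is locally constant at $0$. That single move eliminates the need for any moment estimate on lower-level particles.
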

\begin{proof}
First, if $k \leq p$, the claim follows because the norm of the integrand is bounded above by $\sup |\nabla h|$. Now, suppose that $k > p$. Notice that
\begin{multline*}
\EE\left[\int_0^{\tau^k_i} e^{-\lambda t} \frac{\partial h}{\partial z^k_i} (\{l^k_i(t)^{1/2}\}) \exp\Big(\frac{1 - 2(p - k + 1)}{2} \int_0^t \frac{1}{l^k_i(r)} dr\Big) dt\right]\\
= \EE\left[\int_0^{\infty} e^{-\lambda t} \frac{\frac{\partial h}{\partial z^k_i} (\{l^k_i(t)^{1/2}\})}{\sqrt{l^k_i(t)}}\, 1_{t < \tau^k_i} \sqrt{l^k_i(t)} \exp\Big(\frac{1 - 2(p - k + 1)}{2} \int_0^t \frac{1}{l^k_i(r)} dr\Big) dt\right].
\end{multline*}
Choose $Z > 0$ so that $|z^k_i| < Z$ for all $\{z^k_i\} \in \supp(h)$. Since $h$ is locally constant near $0$ and in $C^2_c(D)$, we see that
\[
\left|\frac{\frac{\partial h}{\partial z^k_i} (\{z^k_i\})}{z^k_i}\right| < C \cdot 1_{|z^k_i| < Z}
\]
for some $C > 0$. Consequently, it suffices for us to check that
\[
\EE\left[\int_0^\infty e^{-\lambda t}\, 1_{\sqrt{l^k_i(t)} < Z} 1_{t < \tau^k_i} \sqrt{l^k_i(t)} \exp\Big(\frac{1 - 2(p - k + 1)}{2} \int_0^t \frac{1}{l^k_i(r)} dr\Big) dt\right]
\]
is bounded.  For this, let $\delta = 2(p - k + 1)$ and $x = \sqrt{l^k_i(0)}$. For $t \leq \tau^k_i$, the law of $\sqrt{l^k_i(t)}$ coincides with that of a Bessel process $\rho^\delta_x(t)$ of dimension $\delta$ started at $\rho^\delta_x(0) = x$.  Let $T_0^\delta(x)$ be the hitting time of $\rho^\delta_x(t)$ at $0$, and define
\[
D^\delta_x(t) = 1_{t < T_0^\delta(x)} \, \rho^\delta_x(t) \cdot \exp\Big(\frac{1 - \delta}{2} \int_0^t \frac{1}{\rho^\delta_x(r)^2} dr\Big).
\]
It therefore suffices for us to check that
\[
S(x) = \EE\left[\int_0^\infty e^{-\lambda t} 1_{\rho^\delta_x(t) < Z} \, D^\delta_x(t) dt\right]
\]
is bounded as a function of $x$.  Adopting the notations of Lemma \ref{lem:bessel-mart}, we have that
\begin{equation} \label{eq:rough-bound}
S(x) = \EE^\delta_x\left[\int_0^\infty e^{-\lambda t} 1_{\rho(t) < Z} \, D^\delta_x(t) dt\right] = x \, \EE^{\delta + 2}_x\left[\int_0^\infty e^{-\lambda t} 1_{\rho(t) < Z} 1_{t < T_0} dt \right] \leq \frac{x}{\lambda}.
\end{equation}
Now, let $T^{\delta + 2}_Z(x)$ denote the first hitting time of $\rho^{d + 2}_x(t)$ at $Z$.  By \cite[Theorem 2]{BMR13}, for $x > Z$, we find that $T^{\delta + 2}_Z(x)$ has density bounded above by
\begin{align*}
p(t) dt & < C_\delta \frac{Z^{-1} x - 1}{1 + Z^{-\delta} x^\delta} \frac{e^{-\frac{(x - Z)^2}{2 Z^3 t}}}{t^{\frac{3}{2}} Z^3} \frac{x^{|\delta| - 1}}{Z^{|\delta| - 1} t^{|\delta|/2 - 1/2} + Z^{1/2 - |\delta|/2} x^{|\delta|/2 - 1/2}} dt\\
& < C_{\delta, Z} x^{|\delta|/2 + 1/2} e^{-\frac{(x - Z)^2}{2 Z^3 t}} t^{-3/2}
\end{align*}
for some constants $C_\delta, C_{\delta, Z}$, where the first equality is the result of \cite[Theorem 2]{BMR13} and the second results from some crude estimates.  We therefore have that
\begin{align*}
\PP\Big(T^{\delta + 2}_Z(x) < T\Big) &< C_{\delta, Z} x^{|\delta|/2 + 1/2} \int_0^T e^{-\frac{(x - Z)^2}{2Z^3 t}} t^{-3/2}dt\\
&= C_{\delta, Z} x^{|\delta|/2 + 1/2} \int_{T^{-1}}^\infty e^{-\frac{(x - Z)^2}{2Z^3} u} u^{-1/2} du\\
&< 2Z^3 C_{\delta, Z} \frac{x^{|\delta|/2 + 1/2} T^{-1/2} e^{- \frac{(x - Z)^2}{2 Z^3 T}}}{(x - Z)^2}.
\end{align*}
We now have for each $T > 0$ and $x > Z$ the estimate
\begin{align*}
x\EE^{\delta + 2}_x\left[\int_0^\infty e^{-\lambda t} 1_{\rho(t) < Z} 1_{t < T_0} dt \right] &\leq \frac{x e^{-\lambda T}}{\lambda} + x \PP\Big(T^{\delta + 2}_Z(x) < T\Big)\\
&< \frac{x e^{-\lambda T}}{\lambda} + C_{\delta, Z}' \frac{x^{|\delta|/2 + 3/2} T^{-1/2} e^{- \frac{(x - Z)^2}{2 Z^3 T}}}{(x - Z)^2}.
\end{align*}
where $C_{\delta, Z}' = 2 Z^3 C_{\delta, Z}$. Choosing $T = x$ and recalling (\ref{eq:rough-bound}) implies that for $x > Z$ we have
\begin{equation} \label{eq:better-bound}
S(x) < \frac{x e^{-\lambda x}}{\lambda} + C_{\delta, Z}' \frac{x^{|\delta|/2 + 1} e^{- \frac{(x - Z)^2}{2 Z^3 x}}}{(x - Z)^2}.
\end{equation}
On the other hand, for all $x$ we have $S(x) \leq \frac{x}{\lambda}$.  Since the bound in (\ref{eq:better-bound}) tends to $0$ as $x \to \infty$, combining these two yields an upper bound on $S(x)$ independent of $x$, as needed.
\end{proof}

We now prove Theorem \ref{thm:lag-war-exist} by applying Theorem \ref{thm:lag-war-inter} to reduce (\ref{eq:lag-war-def}) to the case of adjacent levels, rephrasing that case in terms of a stochastic differential equation with reflection on a time dependent boundary as described in Section \ref{sec:sder}, and finally applying the the Yamada-Watanabe type exactness criterion for SDER's provided by Theorem \ref{thm:exist-crit}.  A crucial input is given by the fact proven in Proposition \ref{prop:lep} that the Laguerre eigenvalues process has no collisions; this corresponds to the fact that the time-dependent boundaries in our SDER never touch.  

\begin{proof}[Proof of Theorem \ref{thm:lag-war-exist}]
We proceed by induction on $m$.  For the base case $m = 1$, equation (\ref{eq:lag-war-def}) is the SDE with no reflection terms for a single copy of $\BESQ^{2p}(t)$, hence admits a unique strong solution which is Feller Markov. For the inductive step, suppose that the result holds for (\ref{eq:lag-war-def}) for $m - 1$ and consider (\ref{eq:lag-war-def}) for $m$. By Theorem \ref{thm:lag-war-inter} applied to the solution $\{l^n_i(t)\}_{1 \leq i \leq n, 1 \leq n \leq m - 1}$ for $m - 1$, the projection to $\{l^{m - 1}_i(t)\}_{1 \leq i \leq m - 1}$ is equal in law to the Laguerre eigenvalues process of rank $p$ and level $m - 1$.  In particular, by Proposition \ref{prop:lep}, we have a.s. that $0 < l^{m - 1}_i(t) < l^{m - 1}_{i + 1}(t)$ for $1 \leq i < m - 2$.  Therefore, by Theorem \ref{thm:exist-crit}, there exists a unique strong solution $\{l^m_i(t)\}_{1 \leq i \leq m}$ to the SDER
\[
dl^m_i(t) = 2 \sqrt{l_i^m(t)} dB^m_i(t) + 2\Big(p - m + 1\Big) dt + d\Phi^m_i(t) - d\Psi^m_i(t)
\]
with time-dependent boundaries given by $l^{m - 1}_{i - 1}(t) < l^{m - 1}_i(t)$.  To check that $\Phi^m_i(t) = \frac{1}{2}L^{m, +}_i(t)$, we note by the It\^o-Tanaka formula of \cite[Theorem VI.1.2]{RY} that
\begin{align*}
\frac{1}{2}L^{m, +}_i(t) &= (l^m_i(t) - l^{m - 1}_{i - 1}(t))^+ - (l^m_i(0) - l^{m - 1}_{i - 1}(0))^+ - \int_0^t 1_{l^m_i(s) > l^{m - 1}_{i - 1}(s)} d(l^m_i(s) - l^{m - 1}_{i - 1}(s))\\
&= l^m_i(t) - l^m_i(0) - \int_0^t 2 \sqrt{l_i^m(s)} dB^m_i(s) - \int_0^t 2(p - m + 1) ds + \Psi^m_i(t) \\
&\phantom{====} + \int_0^t 1_{l^m_i(s) = l^{m - 1}_{i - 1}(s)} \Big(2 \sqrt{l_i^m(s)} dB^m_i(s) + 2 (p - m + 1)dt\Big)\\
&= \Phi^m_i(t) + \int_0^t 1_{l^m_i(s) = l^{m - 1}_{i - 1}(s)} \Big(2 \sqrt{l_i^m(s)} dB^m_i(s) + 2 (p - m + 1) dt\Big).
\end{align*}
Now, by the occupation time formula of \cite[Corollary VI.1.6]{RY}, we see that
\[
\int_0^t 1_{l^m_i(s) = l^{m - 1}_{i - 1}(s)} (4 l_i^m(s) + 4 l_{i - 1}^{m - 1}(s)) ds = \int_{-\infty}^\infty 1_{a = 0}\, dL^{a}_t(l^m_i - l^{m - 1}_{i - 1}) = 0.
\]
We conclude that 
\[
\PP\Big(\int_0^t 1_{l^m_i(s) = l^{m - 1}_{i - 1}(s)} ds = 0\Big) = 1,
\]
and therefore that 
\[
\int_0^t 1_{l^m_i(s) = l^{m - 1}_{i - 1}(s)} (2 \sqrt{l_i^m(s)} dB^m_i(s) + 2 (p - m + 1)dt) = 0,
\]
so $\Phi^m_i(t) = \frac{1}{2}L^{m, +}_i(t)$.  A similar argument shows that $\Psi^m_i(t) = \frac{1}{2}L^{m, -}_i(t)$.  Combining $\{l^m_i(t)\}$ with the previous solution $\{l^{n}_i(t)\}_{1 \leq i \leq n, 1 \leq n \leq m - 1}$ yields the desired unique strong solution.

We now check that $\{l^n_i(t)\}_{1 \leq i \leq n, 1 \leq n \leq m}$ is a regular Feller process.  Because $\{l^n_i(t)\}$ has continuous trajectories, to check that it is Feller, it suffices to check that its generator $\AA$ preserves the space $C_0(\GT_{m, p})$ of continuous functions on $\GT_{m, p}$ vanishing at infinity.  For this, we induct on $m$; for $m = 1$, this follows because the squared Bessel process is Feller.  For the inductive step, it suffices to show that for any $t \geq 0$ and any bounded Borel set $A \times B \subset \GT_{m + 1, p}$ with $A \subset \GT_{m, p}$, we have 
\begin{equation} \label{eq:feller-traj}
\lim_{\{\mu^n_i\} \to \infty} \PP\Big(\{l^n_i(t)\}_{1 \leq n \leq m + 1} \in A \times B \mid l^n_i(0) = \mu^n_i\Big) = 0
\end{equation}
for any trajectory $\{\mu^n_i\}_{1 \leq n \leq m + 1} \to \infty$.  If $\{\mu^n_i\}_{1 \leq n \leq m} \to \infty$, (\ref{eq:feller-traj}) follows by the Feller property for $A$ and $\{l^n_i(t)\}_{1 \leq n \leq m}$ given by the inductive hypothesis.  Otherwise, if $m + 1 \leq p$, we must have either $\mu^{m+1}_1 \to 0$ or $\mu^{m+1}_{m + 1} \to \infty$ along the trajectory.  In this case, the law of $l^{m+1}_{m+1}(t)$ stochastically dominates the law of a squared Bessel process started at $\mu^{m+1}_{m+1}$ and the law of $l^{m+1}_1(t)$ is stochastically dominated by the law of a squared Bessel process started at $\mu^{m+1}_1$.  Therefore, (\ref{eq:feller-traj}) follows by the Feller property for the squared Bessel process applied to the projection of $B$ to either its first or last coordinate.  This completes the proof that $\{l^n_i(t)\}$ is Feller.  Finally, regularity follows from Proposition \ref{prop:lag-regular}. 
\end{proof}

\subsection{The Jacobi Warren process} \label{sec:jwp}

In this section, we couple the Jacobi eigenvalue processes at different levels.  This construction is the Laguerre analogue of the Warren process for Dyson Brownian motion of \cite{War}.  Define the the $[0, 1]$ Gelfand-Tsetlin polytope with parameters $(p, q)$ and level $k \leq \min\{p, q\}$ by 
\[
\GTO_{p, q, k} := \{j^n_i \mid 0 \leq j^{n - 1}_{i - 1} \leq j^n_i \leq j^{n - 1}_i \leq 1 \text{ and } j^{\min\{p, q\}}_1 < \cdots < j^{\min\{p, q\}}_{\min\{p, q\}}\}_{1 \leq i \leq n, 1 \leq n \leq k}
\]
and the $[0, 1]$ Gelfand-Tsetlin polytope with parameters $(p, q)$ and level $k$ subordinate to $\mu = (\mu_1 < \cdots < \mu_{\min\{p, q\}})$ by
\[
\GTO_{p, q, k}(\mu) := \{\{j^n_i\} \in \GT_{p, q} \mid j^{\min\{p, q\}} = \mu\}.
\]
We say that a probability distribution $\nu$ on $\GTO_{p, q, \min\{p, q\}}$ is Gibbs if for any $\mu = (\mu_1 < \cdots < \mu_{\min\{p, q\}})$ and any Borel $B\subset \GTO_{p, q, \min\{p, q\}}(\mu)$, we have that 
\[
\PP_\nu(\{j^n_i\} \in B \mid j^{\min\{p, q\}} = \mu) = (\min\{p, q\} - 1)! \cdots 1! \frac{\vol(B)}{\Delta(\mu)}.
\]
Consider the system of stochastic differential equations with reflection with domain $\GTO_{p, q, \min\{p, q\}}$ given by
\begin{multline} \label{eq:jac-war-def}
dj^n_i(t) = 2 \sqrt{j^n_i(t)(1 - j^n_i(t))} dB^n_i(t)\\ + 2 \Big((p - n + 1) - (p + q - 2n + 2) j^n_i(t)\Big) dt + \frac{1}{2}dL^{n, +}_i(t) - \frac{1}{2}dL^{n, -}_i(t), \qquad 1 \leq i \leq n, 1 \leq n \leq p, q,
\end{multline}
where $B^n_i(t)$ are standard real Brownian motions, $L^{n, +}_i$ is $0$ if $i = 1$ and the local time of $j^n_i(t) - j^{n - 1}_{i - 1}(t)$ at $0$ otherwise, and $L^{n, -}_i$ is $0$ if $i = n$ and the local time of $j^{n - 1}_i(t) - j^n_i(t)$ at $0$ otherwise.

\begin{remark}
Informally, a solution to (\ref{eq:jac-war-def}) may be described as follows.  At level $n$, it consists of independent univariate Jacobi processes with parameters $(p - n, q - n)$ interlacing with and reflecting off the processes at level $n - 1$.  As with the SDER (\ref{eq:lag-war-def}), this differs from the Warren process by replacing Brownian motions by univariate Jacobi processes and introducing different parameters on each level.
\end{remark}

The following two theorems are analogues of Theorems \ref{thm:lag-war-inter} and \ref{thm:lag-war-exist} and are our second set of main results; proofs will be given in the following subsections.  We show that (\ref{eq:jac-war-def}) admits a unique strong solution for any Gibbs initial condition and that this solution provides a coupling of the Jacobi eigenvalues processes on each level.  We call the resulting process the \textit{Jacobi Warren process}.

\begin{theorem} \label{thm:jac-war-inter}
For any Gibbs initial condition $\{j^n_i(0)\}$, if the SDER (\ref{eq:jac-war-def}) admits a unique weak solution which is a regular Feller process in the sense of Assumption \ref{ass:proc}, then for $1 < n \leq \min\{p, q\}$, its projection to levels $n$ and $n - 1$ is Markovian and is an intertwining of the Jacobi eigenvalues processes with parameters $(p, q)$ and levels $n$ and $n - 1$ in the sense of Definition \ref{def:ps-def}.
\end{theorem}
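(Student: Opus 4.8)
The plan is to run the argument of Section \ref{sec:lag-proof} (the proof of Theorem \ref{thm:lag-war-inter}) in its Jacobi incarnation, replacing squared Bessel processes of dimension $2(p-k+1)$ by univariate Jacobi processes $\JAC^{p-k,q-k}$ and Proposition \ref{prop:ko} by Proposition \ref{prop:jac-h}. One argues by induction on $n$, the base case $n=1$ being trivial. For the inductive step I would take $\XX := \GTO_{p,q,n-1}$, $\YY := \{0 \le y_1 \le \cdots \le y_n \le 1\}$, and
\[
D := \{(\{x^k\},y) \mid \{x^k\} \in \XX,\ y \in \YY,\ 0 \le y_1 \le x^{n-1}_1 \le y_2 \le \cdots \le x^{n-1}_{n-1} \le y_n \le 1\},
\]
let $X(t) := \{j^k_i(t)\}_{1 \le k \le n-1}$, let $Y(t)$ be the Jacobi eigenvalues process with parameters $(p,q)$ and level $n$, and let $Z(t) := \{j^k_i(t)\}_{1 \le k \le n}$ be the given weak solution to (\ref{eq:jac-war-def}) stopped at the non-smooth boundary. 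Take $\AA^X$ to be the generator of $n-1$ levels of independent univariate Jacobi processes ($\JAC^{p-k,q-k}$ on level $k$), $\AA^Y := \cJ^{p,q}_n$, and $\AA^Z := \AA^X + \sum_{i=1}^n\bigl(2y_i(1-y_i)\partial_{y_i}^2 + 2((p-n+1)-(p+q-2n+2)y_i)\partial_{y_i}\bigr)$, together with the link kernel
\[
\Lambda(y,x) := (n-1)! \cdots 1! \cdot \Delta(y)^{-1},
\]
which is the density of the uniform (Gibbs) conditional law of $\{j^k\}_{k<n}$ given $j^n = y$.

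The bulk of the work is the verification of the hypotheses of Theorem \ref{thm:ps-ext}. Assumption \ref{ass:proc} holds because $Y(t)$ is Feller as a continuous image, under the eigenvalue map, of the matrix Jacobi process of \cite{Dou}, and regular as the solution of its SDE, while $X(t)$ is regular Feller by hypothesis. Assumption \ref{ass:domain} follows from the Gelfand--Tsetlin parametrization $x^k_i = \xi^k_i x^{k+1}_{i-1} + (1-\xi^k_i)x^{k+1}_i$ with the conventions $x^n = y$, $x^k_0 = 0$, $x^k_k = 1$. Parts (a)--(c) of Assumption \ref{ass:link} are immediate from the form of $\Lambda$. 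For part (d) the new input is Proposition \ref{prop:jac-h}: in contrast to the Laguerre case, where $\Delta$ is harmonic, here $\Delta$ is merely an eigenfunction of the free generator, so $\cJ^{p,q}_n = \Delta^{-1}\circ(J^{p-n,q-n}_n - c_n)\circ\Delta$ with $c_n = \tfrac{n(n-1)(3p+3q-4n+2)}{3}$, and therefore $\cJ^{p,q,y}_n\Lambda(y,x) = -c_n\Lambda(y,x)$, which is still continuous and bounded on $D$.

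For Assumption \ref{ass:compat}, parts (a) and (d) are trivial since $Y(t)$ has no boundary reflection, and (b) is immediate. Part (c) I would deduce from Proposition \ref{prop:geo-cond} applied to the factorization $\Lambda = \Lambda_1\Lambda_2$, where $\Lambda_1(y,x^1) = (n-1)!\,\Delta(x^1)/\Delta(y)$ is the single-level Dixon--Anderson kernel (the conditional law of $j^{n-1}$ given $j^n=y$), $\Lambda_2(x^1,x^2) = (n-2)!\cdots1!\cdot\Delta(x^1)^{-1}$, and $\AA^{X_1} = \cJ^{p,q}_{n-1}$. Among the hypotheses of Proposition \ref{prop:geo-cond}: part (a) is the inductive hypothesis; parts (e)--(f) are the combinatorial statements about the faces $\{x^a_b = x^{a-1}_c\}$ with $c\in\{b-1,b\}$ and $\{x^1_i = y_i\}$, $\{x^1_i = y_{i+1}\}$, unchanged from Section \ref{sec:lag-proof}; part (b) again reduces to univariate computations, using that $\Delta$ is an eigenfunction and that the reciprocal free invariant density $\prod_i x_i^{-(p-n+1)}(1-x_i)^{-(q-n+1)}$ is likewise an eigenfunction of $J^{p-n+1,q-n+1}_{n-1}$; and parts (c)--(d) are the face-by-face identities matching drift and diffusion coefficients on $\{x^1_i = y_i\}$ and $\{x^1_i = y_{i+1}\}$, now carrying the extra $y$-linear drift terms. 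Finally, the $Z$-generator formula (\ref{eq:z-gen-def}), with $\partial_{y_i}\log\Lambda(y,x) = -\sum_{j\neq i}\frac{1}{y_i-y_j}$ and $a_{ii}\partial_{y_i}\log\Lambda = -\sum_{j\neq i}\frac{4y_i(1-y_i)}{y_i-y_j}$, turns the drift of $\cJ^{p,q}_n$ into the free Jacobi drift $2((p-n+1)-(p+q-2n+2)y_i)$ with Neumann conditions on $\partial D(y)_k$, recovering exactly (\ref{eq:jac-war-def}); Theorem \ref{thm:ps-ext} then yields the claimed intertwining.

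The step I expect to be the main obstacle is the verification of Assumption \ref{ass:compat}(c), i.e. the hypotheses of Proposition \ref{prop:geo-cond}. Relative to the Laguerre computation one must now carry through the eigenvalue constants $c_{n-1}$ and $c_n$ from Proposition \ref{prop:jac-h} as well as the $y$-linear drift terms $-2(p+q-2n+2)y_i$, and check that the resulting contributions — which are absent in the Brownian and squared-Bessel settings, where the analogous quantities vanish — reorganize consistently on each face of $D$; here the arithmetic identity $c_{n-1}-c_n = -2(n-1)(p+q-2n+2)$ relating the consecutive eigenvalues of Proposition \ref{prop:jac-h} will be the key bookkeeping input. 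Everything else is a faithful transcription of Section \ref{sec:lag-proof}.
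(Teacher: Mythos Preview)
Your proposal is correct and follows essentially the same approach as the paper: the same induction, the same spaces $\XX,\YY,D$, the same link kernel $\Lambda$ and factorization $\Lambda=\Lambda_1\Lambda_2$ with $\AA^{X_1}=\cJ^{p,q}_{n-1}$, and the same verification of the hypotheses of Theorem~\ref{thm:ps-ext} via Proposition~\ref{prop:geo-cond}. The only cosmetic difference is that the paper handles hypothesis~(b) of Proposition~\ref{prop:geo-cond} by computing the adjoint $(\cJ^{p,q,x^1}_{n-1})^*$ directly (which is where your identity $c_{n-1}-c_n=-2(n-1)(p+q-2n+2)$ enters), rather than via the reciprocal invariant density.
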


\begin{theorem} \label{thm:jac-war-exist}
For any Gibbs initial condition $\{j^n_i(0)\}$, the SDER (\ref{eq:jac-war-def}) admits a unique strong solution $\{j^n_i(t)\}_{1 \leq i \leq n, 1 \leq n \leq \min\{p, q\}}$ which is a regular Feller process and which we call the Jacobi Warren process.
\end{theorem}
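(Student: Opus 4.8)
The plan is to imitate the proof of Theorem~\ref{thm:lag-war-exist}, arguing by induction on the number of levels: for each $n \le \min\{p,q\}$ one shows that the restriction of (\ref{eq:jac-war-def}) to levels $1,\dots,n$ admits a unique strong solution which is a regular Feller process, the case $n = \min\{p,q\}$ being the theorem. For the base case $n=1$, equation (\ref{eq:jac-war-def}) is the reflectionless SDE for a single univariate Jacobi process $\JAC^{p-1,q-1}(t)$: its diffusion coefficient $2\sqrt{x(1-x)}$ is H\"older-$\tfrac12$ and its drift is affine, so it has a unique strong solution which is Feller.

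For the inductive step, assume the statement holds through level $n-1$. By Theorem~\ref{thm:jac-war-inter}, the top level $\{j^{n-1}_i(t)\}$ of that solution is in law the Jacobi eigenvalues process with parameters $(p,q)$ and level $n-1$, which by Proposition~\ref{prop:jep}(a) a.s.\ satisfies $0 < j^{n-1}_1(t) < \cdots < j^{n-1}_{n-1}(t) < 1$ for $t>0$; hence the time-dependent barriers $j^{n-1}_{i-1}(t) < j^{n-1}_i(t)$ never collide and stay strictly inside $(0,1)$, where $2\sqrt{x(1-x)}$ is smooth. Applying Theorem~\ref{thm:exist-crit} to the one-dimensional SDER for each $j^n_i(t)$ with these moving boundaries yields a unique strong solution with reflection terms $\Phi^n_i,\Psi^n_i$; the univariate Jacobi dynamics keep each coordinate in $[0,1]$, so the array lies in $\GTO_{p,q,n}$. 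To identify $\Phi^n_i = \tfrac12 L^{n,+}_i$ and $\Psi^n_i = \tfrac12 L^{n,-}_i$, I would reproduce the It\^o--Tanaka computation of the Laguerre proof verbatim: apply \cite[Theorem VI.1.2]{RY} to $(j^n_i - j^{n-1}_{i-1})^+$ to extract the local time, and then use the occupation time formula \cite[Corollary VI.1.6]{RY} together with $\int_0^t 1_{\{j^n_i(s) = j^{n-1}_{i-1}(s)\}}\bigl(4 j^n_i(s)(1 - j^n_i(s)) + 4 j^{n-1}_{i-1}(s)(1 - j^{n-1}_{i-1}(s))\bigr)\,ds = 0$ to conclude that $\{s : j^n_i(s) = j^{n-1}_{i-1}(s)\}$ is Lebesgue-null, which annihilates the extraneous martingale and drift integrals; the identification of $\Psi^n_i$ via $(j^{n-1}_i - j^n_i)^+$ is identical. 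Splicing $\{j^n_i(t)\}$ onto the level-$(n-1)$ solution gives the required unique strong solution.

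It remains to verify that this solution is a regular Feller process. Since $\GTO_{p,q,\min\{p,q\}}$ is compact, there is no behavior at infinity to rule out, so the Feller property follows from continuity of trajectories and continuous dependence of the SDER solution on its initial data (which follows from strong uniqueness by standard arguments), in contrast with the Laguerre case where a stochastic-domination estimate at infinity was needed. For regularity I would prove a Jacobi analogue of Proposition~\ref{prop:lag-regular} by checking the hypotheses of Theorem~\ref{thm:z-reg-cond}: Assumptions \ref{ass:domain}, \ref{ass:d-cone}, \ref{ass:diag}(a)--(f), \ref{ass:ref-geo}, and \ref{ass:estimate} follow from the polyhedral structure of the state space and the explicit form of $\AA^Z$ exactly as in Proposition~\ref{prop:lag-regular}, leaving Assumption~\ref{ass:diag}(g) (from a Jacobi existence statement analogous to Theorem~\ref{thm:existence-general-lw}) and Assumption~\ref{ass:drift-bound}. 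I expect the latter to be the main obstacle: it requires a Jacobi analogue of Lemma~\ref{lem:drift-lag-ver}, namely a bound, uniform in the starting point, for an expectation of $\partial h/\partial z^n_i$ weighted by an exponential functional of the form $\exp\!\bigl(c_0 \int_0^t \tfrac{dr}{j^n_i(r)} + c_1 \int_0^t \tfrac{dr}{1 - j^n_i(r)}\bigr)$ arising from the Jacobi drift, stopped at the first hitting of a lower level. I would obtain this by combining a Girsanov/time-change identity relating the univariate Jacobi process with parameters $(a,b)$ to those with parameters $(a+2,b)$ and $(a,b+2)$ --- the Jacobi counterpart of Lemma~\ref{lem:bessel-mart}, again available through the methods of \cite{Alt} --- with hitting-time density estimates for the univariate Jacobi process near $0$ and near $1$; compactness of the state space should make the resulting tail bounds simpler than in the Laguerre argument, where the analogue \cite[Theorem~2]{BMR13} for the Bessel process had to be controlled at infinity.
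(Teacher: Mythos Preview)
Your overall plan matches the paper's proof: the induction on the number of levels, the base case via the univariate Jacobi SDE, the inductive step via Theorem~\ref{thm:jac-war-inter} plus Proposition~\ref{prop:jep} plus Theorem~\ref{thm:exist-crit}, the It\^o--Tanaka identification of the reflection terms, and the Feller property via compactness of the state space are all exactly what the paper does.

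The one place where you diverge from the paper is the verification of Assumption~\ref{ass:drift-bound}, and there you are making the problem much harder than it is. You propose to build a Jacobi analogue of Lemma~\ref{lem:drift-lag-ver} using a change-of-measure identity in the style of Lemma~\ref{lem:bessel-mart} together with hitting-time density estimates near the endpoints. The paper instead observes that in the Jacobi case this assumption is essentially trivial. After the coordinate change $\chi(z)=\arcsin(\sqrt{z})$, the transformed drift at level $k$ is
\[
\wgamma^k_i(\chi(j)) = \frac{(p-k)(1-j) - (q-k)j}{2\sqrt{j(1-j)}},
\]
whose derivative with respect to $j$ equals
\[
-\,\frac{(p-k)(1-j) + (q-k)j}{4\,[j(1-j)]^{3/2}} \;\le\; 0
\]
because $k \le \min\{p,q\}$. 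By the chain rule the derivative with respect to the transformed variable is also nonpositive, so the exponential weight in~\eqref{eq:drift-exp} is bounded by $1$. The integrand is then dominated by $e^{-\lambda t}\sup|\nabla h|$, and the whole expectation is bounded by $\lambda^{-1}\sup|\nabla h|$ uniformly in the starting point. No absolute-continuity relation or hitting-time bound is needed; the Jacobi case is in fact \emph{easier} than the Laguerre case, where the analogous drift derivative changes sign for $k>p$ and forces the delicate argument of Lemma~\ref{lem:drift-lag-ver}. Assumption~\ref{ass:diag}(g) is handled, as you anticipate, by the Jacobi counterpart Theorem~\ref{thm:existence-general-jw} in the appendix.
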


\begin{corr} \label{corr:jac-war}
For $1 \leq n \leq \min\{p, q\}$, the projection of the Jacobi Warren process to level $n$ is Markovian and coincides in law with the Jacobi eigenvalues process with parameters $(p, q)$ and level $n$.
\end{corr}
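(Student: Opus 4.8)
The plan is to deduce this exactly as Corollary \ref{corr:lag-war} was deduced in the Laguerre case, namely by combining Theorems \ref{thm:jac-war-exist} and \ref{thm:jac-war-inter} with the definition of an intertwining. First I would invoke Theorem \ref{thm:jac-war-exist} to obtain the Jacobi Warren process $\{j^n_i(t)\}$ as the (unique strong, hence unique weak) solution to (\ref{eq:jac-war-def}), which is moreover a regular Feller process in the sense of Assumption \ref{ass:proc}. This regular Feller property is precisely the hypothesis under which Theorem \ref{thm:jac-war-inter} becomes applicable.

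Next, for each $n$ with $1 < n \leq \min\{p,q\}$, Theorem \ref{thm:jac-war-inter} asserts that the projection of $\{j^n_i(t)\}$ to levels $n-1$ and $n$ is an intertwining, in the sense of Definition \ref{def:ps-def}, of the Jacobi eigenvalues processes with parameters $(p,q)$ at levels $n$ and $n-1$, with the level-$n$ projection playing the role of $Z_2$. By part (i) of Definition \ref{def:ps-def} one has $Z_2 \overset{d}{=} Y$ as processes, where $Y$ is the Jacobi eigenvalues process with parameters $(p,q)$ and level $n$; thus the level-$n$ projection of the Jacobi Warren process coincides in law, on path space, with that eigenvalues process. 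Since $Y$ is a diffusion and hence Markov with respect to its own filtration, and the Markov property is a statement about the law on path space, this equality transfers the Markov property to the level-$n$ projection, yielding both assertions for $n>1$. The base case $n=1$ falls outside the range of Theorem \ref{thm:jac-war-inter} but is immediate: the level-$1$ projection is the single coordinate $j^1_1(t)$, both reflection terms in (\ref{eq:jac-war-def}) vanish when $i=1=n$, and the resulting univariate Jacobi SDE with parameters $(p-1,q-1)$ is the level-$1$ Jacobi eigenvalues process by Proposition \ref{prop:jac-h} (where $\Delta$ is an empty product), which is Markov by inspection.

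I do not expect any genuine obstacle in the corollary itself: it is a formal unwinding of Definition \ref{def:ps-def} once Theorems \ref{thm:jac-war-exist} and \ref{thm:jac-war-inter} are in hand, and all the substantive difficulty sits in those two theorems (the verification of Assumptions \ref{ass:proc}--\ref{ass:compat} through Proposition \ref{prop:geo-cond}, and the Yamada--Watanabe-type exactness argument via Theorem \ref{thm:exist-crit}). The only points demanding a little care are the logical step that equality in law with a Markov process on path space passes the Markov property to the projection, and the need to dispatch $n=1$ by hand since Theorem \ref{thm:jac-war-inter} is stated only for $n>1$.
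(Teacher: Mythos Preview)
Your proposal is correct and follows essentially the same approach as the paper, which simply states that the result follows by combining Theorem \ref{thm:jac-war-exist}, Theorem \ref{thm:jac-war-inter}, and the definition of intertwining. Your additional care with the $n=1$ base case and the remark on transferring the Markov property via equality in law are reasonable elaborations of what the paper leaves implicit.
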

\begin{proof}
This follows by combining Theorem \ref{thm:jac-war-exist}, Theorem \ref{thm:jac-war-inter}, and the definition of intertwining.
\end{proof}

\begin{corr} \label{corr:jac-inv}
The Jacobi Warren process admits an invariant measure proportional to
\[
\Delta(j^{\min\{p, q\}}) \prod_{i = 1}^{\min\{p, q\}} (j^{\min\{p, q\}}_i)^{p - \min\{p, q\}} (1 - j^{\min\{p, q\}}_i)^{q - \min\{p, q\}} \prod_{n = 1}^{\min\{p, q\}} \prod_{i = 1}^n dj^{n}_i.
\]
\end{corr}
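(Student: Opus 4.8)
The plan is to realize the displayed measure $\nu$ as the ``Gibbs lift'' of the invariant measure (\ref{eq:jac-inv}) of the top-level Jacobi eigenvalues process, and then to transport invariance from the top level up to the whole triangular array through the intertwining of Theorem \ref{thm:jac-war-inter}, following the template of the proof of Corollary \ref{corr:lag-entrance} (where the analogous role is played by an entrance law rather than an invariant measure).

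First I would check that $\nu$, after normalization, is a Gibbs probability measure on $\GTO_{p, q, \min\{p, q\}}$ whose marginal on level $\min\{p, q\}$ is proportional to (\ref{eq:jac-inv}). Writing $k = \min\{p, q\}$, this comes down to the volume identity $\vol(\GTO_{p, q, k}(\mu)) = \Delta(\mu)/((k - 1)! \cdots 1!)$, which follows by iterating the Dixon-Anderson integral with parameters $a = b = 0$, just as in the remark following (\ref{eq:l-gibbs-def}) (the Jacobi case being simpler since no power weight is present). Granting this, integrating the coordinates on levels $1, \ldots, k - 1$ out of $\nu$ with $j^k = \mu$ held fixed produces $\Delta(\mu)^2 \prod_i \mu_i^{p - k}(1 - \mu_i)^{q - k}\, d\mu$ up to a constant, i.e. (\ref{eq:jac-inv}), while the conditional law of levels $1, \ldots, k - 1$ given $j^k = \mu$ is uniform on $\GTO_{p, q, k}(\mu)$, which is exactly the Gibbs prescription.

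Next I would start the Jacobi Warren process from $\nu$ (legitimate by Theorem \ref{thm:jac-war-exist}, since $\nu$ is Gibbs). By Corollary \ref{corr:jac-war} its projection to level $k$ is the Jacobi eigenvalues process with parameters $(p, q)$ and level $k$, now initialized at (\ref{eq:jac-inv}); since (\ref{eq:jac-inv}) is invariant for that diffusion by Proposition \ref{prop:jep}(b), the level-$k$ marginal equals (\ref{eq:jac-inv}) at every time $t$. On the other hand, property (iv) of Definition \ref{def:ps-def} for the intertwining produced by Theorem \ref{thm:jac-war-inter} --- in which $Z_2$ is the level-$k$ process and $Z_1$ the array of levels $1, \ldots, k - 1$ --- gives that, conditionally on the level-$k$ configuration at time $t$, the lower levels are distributed according to the link kernel, that is, uniformly on the corresponding Gelfand-Tsetlin slice. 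Combining these two facts, the time-$t$ law of the Jacobi Warren process is again the Gibbs measure with level-$k$ marginal (\ref{eq:jac-inv}), which by the first step is $\nu$; hence $\nu$ is invariant.

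I expect the point requiring the most care to be the exact form of the intertwining used in the last step. Theorem \ref{thm:jac-war-inter} is stated as an intertwining of two adjacent levels, whereas the argument needs the conditional law of the \emph{entire} array of levels $1, \ldots, k - 1$ given level $k$ at a fixed time. This is resolved either by recording that the proof of Theorem \ref{thm:jac-war-inter} in fact establishes the multilevel intertwining --- just as the proof of Theorem \ref{thm:lag-war-inter} runs by induction on $n$ with $Z_1$ the whole lower array --- or by iterating Definition \ref{def:ps-def}(iv) down the levels, using that under any Gibbs measure the conditional law of level $n - 1$ given levels $\geq n$ depends only on level $n$; in the latter route one must check that this conditional independence survives conditioning on the trajectories of the higher levels, which is precisely the content of the multilevel version of Definition \ref{def:ps-def}(iv) rather than of its two-level specialization.
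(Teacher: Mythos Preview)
Your proposal is correct and follows essentially the same approach as the paper's proof: recognize $\nu$ as the Gibbs lift of the invariant measure (\ref{eq:jac-inv}), then combine the facts that the Jacobi Warren process preserves Gibbs measures and that its top-level projection is the Jacobi eigenvalues process (for which (\ref{eq:jac-inv}) is invariant). Your concern about multilevel versus two-level intertwining is well placed, and your resolution is the right one: although the statement of Theorem \ref{thm:jac-war-inter} refers to levels $n$ and $n-1$, its proof in fact runs by induction with $X(t) = \{j^k_i(t)\}_{1 \leq k \leq n-1}$ being the full lower array, exactly as in the proof of Theorem \ref{thm:lag-war-inter}, so Definition \ref{def:ps-def}(iv) applies to the whole triangular array below level $k$.
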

\begin{proof}
This measure is the Gibbs measure associated to the invariant measure (\ref{eq:jac-inv}) for the Jacobi eigenvalues process with parameters $(p, q)$ and level $\min\{p, q\}$ from Proposition \ref{prop:jep}(b).  By Theorem \ref{thm:jac-war-inter}, the Jacobi Warren process preserves Gibbs measures and projects to the Jacobi eigenvalues process.  Therefore, the claimed measure is invariant for the Jacobi Warren process because its projection to level $\min\{p, q\}$ is invariant for the Jacobi eigenvalues process.
\end{proof}

\subsection{Proof of Theorem \ref{thm:jac-war-inter}}

We prove by induction on $n$ that $\{j^k_i(t)\}_{1 \leq k \leq n - 1}$ is intertwined with the Jacobi eigenvalues process with parameters $(p, q)$ and level $n$.  The base case is trivial.  During the proof, we adopt the notations of Sections \ref{sec:ps-ext} and \ref{sec:core-crit}.  In particular, we consider the domains $\XX := \GTO_{p, q, n - 1}$, $\YY := \{0 \leq y_1 \leq \cdots \leq y_n\}$, and
\[
D := \{(\{x^k\}, y) \mid \{x^k\} \in \XX, y \in \YY, 0 \leq y_1 \leq x_1^{n-1} \leq y_2 \cdots \leq x_{n-1}^{n-1} \leq y_n \leq 1\}.
\]
On these domains, we let $X(t)$ denote $\{j^k_i(t)\}_{1 \leq k \leq n - 1}$, $Y(t)$ denote the Jacobi eigenvalues process of rank $p$ and level $n$, and $Z(t)$ denote $\{j^k_i(t)\}_{1 \leq k \leq n}$, the weak solution to (\ref{eq:jac-war-def}) stopped at the non-smooth parts of the boundary.  Let
\begin{align*}
\AA^X &:= \sum_{k = 1}^{n - 1} \sum_{i = 1}^{k} \Big(2 (1 - x^k_i)x^k_i \partial^2_{x^k_i} + 2 \Big((p - k + 1) - (p + q - 2k + 2)x^k_i\Big)\partial_{x^k_i}\Big)\\
\AA^Z &:= \AA^X + \sum_{i = 1}^n \Big[2 (1 - y_i)y_i \partial_{y_i}^2 + 2 \Big((p - n + 1) - (p + q - 2n + 2)y_i\Big)\partial_{y_i}\Big]
\end{align*}
denote the generators of $X(t)$ and $Z(t)$, and let $\AA^Y := \cJ^{p, q}_n$ denote the generator of $Y(t)$.  Define the kernel
\[
\Lambda(y, x) := (n - 1)! \cdots 1! \cdot \Delta(y)^{-1}.
\]
We will verify the hypotheses of Theorem \ref{thm:ps-ext} for $D$ and $\Lambda$. 
\begin{itemize}
\item \textbf{Assumption \ref{ass:proc}:} Note that the Jacobi eigenvalues processes at level $n$ have no boundary conditions and are Feller because they have compact domain and continuous trajectories.

\item \textbf{Assumption \ref{ass:domain}:} This follows in the same way as in the proof of Theorem \ref{thm:lag-war-inter}.

\item \textbf{Assumption \ref{ass:link}:} Points (a), (b), and (c) follow from the definition of $\Lambda(y, x)$.  For (d), by Proposition \ref{prop:jac-h}, we have that
\[
\J^{p, q, y}_{n} = \Delta(y)^{-1} \circ J^{p - n, q - n, y}_{n} \circ \Delta(y) + \frac{n(n - 1)(3p + 3q - 4n + 2)}{3},
\]
so we find that
\[
\J^{p, q, y}_{n} \Lambda(y, x) = \frac{n(n - 1)(3p + 3q - 4n + 2)}{3}\Lambda(y, x)
\]
is continuous and bounded on $\bigcup_{x \in K} D(x, -)$ for all compact $K$, giving (c).

\item \textbf{Assumption \ref{ass:compat}:}  Points (a) and (d) are trivial because $Y(t)$ has no reflecting boundary.  Point (b) follows by definition of the kernel.  For (c), we apply Proposition \ref{prop:geo-cond} with $\AA^{X_1} = \J^{p, q, x^1}_{n - 1}$ being the generator of the Jacobi eigenvalues process with parameters $(p, q)$ and level $n - 1$.  We consider the two kernels
\[
\Lambda_1(y, x^1) = (n - 1)! \frac{\Delta(x^1)}{\Delta(y)} \qquad \text{ and } \qquad \Lambda_2(x^1, x^2) = (n - 2)! \cdots 1! \frac{1}{\Delta(x^1)}.
\]
We now check the hypotheses of Proposition \ref{prop:geo-cond}.
\begin{itemize}
\item[(a)] This follows from the inductive hypothesis.

\item[(b)] By the same computation as for Assumption \ref{ass:link}, we find that $\AA^Y\Lambda_1 =  \frac{n(n - 1)(3p + 3q - 4n + 2)}{3}\Lambda_1$.  On the other hand, notice that 
\begin{align*}
(\J^{p, q, x^1}_{n - 1})^* &= \Delta(x^1) \circ J^{p - n + 1, q - n + 1, x}_{n - 1} \circ \Delta(x^1)^{-1} - 4(n - 1) \\
&\phantom{==}+ 2 (n - 1)\Big((p - n + 1) + (q - n + 1) + 2\Big) + \frac{(n - 1)(n - 2)(3p + 3q - 4n + 6)}{3}\\
&= \Delta(x^1) \circ J^{p - n + 1, q - n + 1, x^1}_{n - 1} \circ \Delta(x^1)^{-1} + \frac{n(n - 1)(3p + 3q - 4n + 2)}{3},
\end{align*}
from which we conclude that
\[
\frac{(\J^{p, q, x^1}_{n - 1})^* \Lambda_1}{\Lambda_1} = \frac{n(n - 1)(3p + 3q - 4n + 2)}{3},
\]
giving the desired equality.

\item[(c)] Faces of $\partial D_1(y)$ take the form $\{x_i^1 = y_i\}$ and $\{x_i^1 = y_{i+1}\}$.  On $\{x_i^1 = y_i\}$, we again have $\eta_k^1 = -1_i$ and $\langle \Psi^{j, 1}_k, \eta^1_k\rangle = - \delta_{ij}$, so the equality becomes 
\begin{multline*}
- 2((p - n + 2) - (p + q - 2n + 4) x^1_i) - \sum_{j \neq i} \frac{4 x^1_i(1 - x^1_i)}{x^1_i - x^1_j} + (2 - 4x^1_i) + \sum_{j \neq i} \frac{4 x^1_i(1 - x^1_i)}{x^1_i - x^1_j}\\
 = - 2((p - n + 1) - (p + q - 2n + 2) y_i) - \sum_{j \neq i} \frac{4 y_i(1 - y_i)}{y_i - y_j} + \sum_{j \neq i} \frac{4 y_i(1 - y_i)}{y_i - y_j}.
\end{multline*}
 On $\{x_i^1 = y_{i + 1}\}$, we have $\eta_k^1 = 1_i$ and $\langle \Psi^{j, 1}_k, \eta^1_k\rangle = \delta_{i + 1, j}$, so the equality becomes 
\begin{multline*}
2((p - n + 2) - (p + q - 2n + 4) x^1_i) + \sum_{j \neq i} \frac{4 x^1_i(1 - x^1_i)}{x^1_i - x^1_j} - (2 - 4x^1_i) - \sum_{j \neq i} \frac{4 x^1_i(1 - x^1_i)}{x^1_i - x^1_j}\\
 = 2((p - n + 1) - (p + q - 2n + 2) y_i) + \sum_{j \neq i+1} \frac{4 y_{i+1}(1 - y_{i+1})}{y_{i+1} - y_j} - \sum_{j \neq i+1} \frac{4 y_{i+1}(1 - y_{i+1})}{y_{i + 1} - y_j}.
\end{multline*}

\item[(d)] The condition reduces to to $-4x_i^1(1 - x_i^1) = - 4 y_i(1 - y_i)$ on $\{x_i^1 = y_i\}$ and $4x_i^1(1 - x_i^1) = 4 y_{i+1}(1 - y_{i+1})$ on $\{x_i^1 = y_{i+1}\}$. 

\item[(ef)] These conditions follow in the same way as in the proof of Theorem \ref{thm:lag-war-inter}.
\end{itemize}
Having checked all hypotheses of Proposition \ref{prop:geo-cond}, we deduce Assumption \ref{ass:compat}.
\end{itemize}
We conclude that Assumptions \ref{ass:proc}, \ref{ass:domain}, \ref{ass:link}, and \ref{ass:compat} hold for $(X(t), Y(t))$.  Now, the SDER of Theorem \ref{thm:ps-ext} is given by (\ref{eq:jac-war-def}) and by the given has a weak solution which is a regular Feller diffusion.  Its Markov generator (\ref{eq:z-gen-def}) is
\begin{align*}
\AA^Z &:= \AA^X + \J^{p, q, y}_{n} - \sum_{i = 1}^n \sum_{j \neq i} \frac{4y_i (1 - y_i)}{y_i - y_j}\\
&= \AA^X + \sum_{i = 1}^n 2y_i (1 - y_i) \partial_{y_i}^2 + \sum_{i = 1}^n 2(p - n + 1 - (p + q - 2n + 2)y_i)\partial_{y_i}
\end{align*}
with the claimed Neumann boundary conditions on $\partial D(y)_k$.  We may therefore apply Theorem \ref{thm:z-reg-cond} to conclude that $(X(t), Y(t))$ are intertwined by $Z(t)$ so that $(\{j^k(t)\}_{1 \leq k \leq n - 1}, j^n(t))$ is an intertwining of the Jacobi Warren process with parameters $(p, q)$ and level $n - 1$ and the Jacobi eigenvalues process with parameters $(p, q)$ and level $n$, completing the proof.

\subsection{Proof of Theorem \ref{thm:jac-war-exist}} \label{sec:jac-exist}

We now prove Theorem \ref{thm:jac-war-exist} in parallel to the proof of Theorem \ref{thm:lag-war-exist}.  Again, we require the fact from Proposition \ref{prop:jep} that the Jacobi eigenvalues process has no collisions and hence the time-dependent boundaries in our SDER never touch.  We begin again by verifying that $\AA^Z$ has core $\cD(\AA^Z)$.

\begin{prop} \label{prop:jac-regular}
The generator $\AA^Z$ of the Jacobi Warren process has core $\cD(\AA^Z)$.
\end{prop}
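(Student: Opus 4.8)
The plan is to invoke Theorem~\ref{thm:z-reg-cond} and verify its hypotheses---Assumptions \ref{ass:domain}, \ref{ass:d-cone}, \ref{ass:diag}, \ref{ass:ref-geo}, \ref{ass:estimate}, and \ref{ass:drift-bound}---for the Jacobi Warren process, following the template of Proposition~\ref{prop:lag-regular}. Assumptions \ref{ass:domain} and \ref{ass:d-cone}(b) will hold because the faces of $\GTO_{p, q, \min\{p, q\}}$ are cut out by (in)equalities between coordinates together with the constraints $0 \le j^n_i$ and $j^n_i \le 1$. Assumptions \ref{ass:diag}(a)--(f), \ref{ass:ref-geo}(b)--(c), and \ref{ass:estimate} will be read off from the explicit form of $\AA^Z$: the diffusion coefficients $2 j^n_i(1 - j^n_i)$ are smooth, bounded on the compact domain, and vanish to first order precisely along the boundary hyperplanes $\{j^n_i = 0\}$ and $\{j^n_i = 1\}$, while the drift coefficients are smooth and bounded there. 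For Assumption~\ref{ass:ref-geo}(a) I would argue exactly as in the Laguerre case: at a point $z$ on a face $F$ of minimal dimension, $F$ is defined by equalities between coordinates, the span of the reflection directions $\wU(z)$ consists only of vectors vanishing in the coordinate indexing the lowest level present, while every non-zero vector of $T_z F$ is non-zero in that coordinate, so the two subspaces meet trivially.

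It then remains to check Assumptions \ref{ass:diag}(g) and \ref{ass:drift-bound}, the Jacobi analogues of Theorem~\ref{thm:existence-general-lw} and Lemma~\ref{lem:drift-lag-ver}. The first will follow from the strong existence of solutions to the SDER (\ref{eq:jac-war-def}) specialized to Gibbs initial conditions (see also Appendix~\ref{sec:appendix}). The substantive point---and the main obstacle---is the Jacobi analogue of Lemma~\ref{lem:drift-lag-ver}: for $\lambda$ sufficiently large and any compactly supported $C^2$ test function $h$ that is locally constant near the boundary, one must bound, uniformly in the initial condition, the quantity
\[
\EE\left[\int_0^{\tau^k_i} e^{-\lambda t}\, \frac{\partial h}{\partial z^k_i}(\cdots)\, \exp\Big(c_k \int_0^t \frac{dr}{j^k_i(r)\,(1 - j^k_i(r))}\Big)\, dt\right],
\]
where $\tau^k_i$ is the first collision time of $j^k_i$ with a particle on a lower level, the change of coordinates is the one used in Theorem~\ref{thm:z-reg-cond}, and $c_k$ is the resulting constant. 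The new feature relative to the Laguerre case is that the exponential weight is now singular at \emph{both} endpoints $\{j^k_i = 0\}$ and $\{j^k_i = 1\}$ rather than at a single point, so both singularities must be controlled simultaneously.

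The approach to this bound is to absorb the two singular factors using a Girsanov-type absolute-continuity relation for the univariate Jacobi process analogous to Lemma~\ref{lem:bessel-mart}: reweighting the Jacobi process of parameters $(a, b)$ by an appropriate power of $j(1-j)$ times an exponential produces the Jacobi process with shifted parameters $(a+1, b)$, respectively $(a, b+1)$, so the singularity near $\{j = 0\}$ and the singularity near $\{j = 1\}$ are each removed at the cost of a parameter shift; here Proposition~\ref{prop:jac-h} identifies $j^k_i$ along $[0, \tau^k_i)$ with a univariate Jacobi process of parameters $(p - k, q - k)$, and Proposition~\ref{prop:jep} guarantees this process never collides with its neighbors so $\tau^k_i$ is a.s. positive. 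Because $h$ is locally constant near the boundary, $|\partial h / \partial z^k_i|$ is dominated on $\supp(h)$ by a constant multiple of $j^k_i(1 - j^k_i)$, which cancels exactly the factor produced by the parameter shifts, leaving a bounded integrand; and since the Jacobi domain $[0, 1]$ is compact, the resulting expectation is at most $C/\lambda$ with $C$ independent of the initial condition---so, unlike the Laguerre case, no refined hitting-time tail estimates in the spirit of \cite{BMR13} are needed. Once every hypothesis has been verified, Theorem~\ref{thm:z-reg-cond} yields that $\cD(\AA^Z)$ is a core for $\AA^Z$, as claimed.
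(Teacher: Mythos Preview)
Your structural plan---invoke Theorem~\ref{thm:z-reg-cond} and verify Assumptions \ref{ass:domain}, \ref{ass:d-cone}, \ref{ass:diag}, \ref{ass:ref-geo}, \ref{ass:estimate}, \ref{ass:drift-bound} exactly as in Proposition~\ref{prop:lag-regular}---is correct and matches the paper. The verification of everything except Assumption~\ref{ass:drift-bound} is parallel to the Laguerre case, and Assumption~\ref{ass:diag}(g) indeed comes from Theorem~\ref{thm:existence-general-jw}.

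Where you diverge from the paper is in the treatment of Assumption~\ref{ass:drift-bound}, and here you overcomplicate matters. The paper does not develop a Jacobi analogue of Lemma~\ref{lem:bessel-mart}. Instead it computes the transformed drift directly: with $\chi(j)=\arcsin(\sqrt{j})$, one finds
\[
\frac{\partial \wgamma^k_i}{\partial \wtilde{j}} \;=\; -\,\frac{(p-k)(1-j) + (q-k)j}{2\,j(1-j)} \;\le\; 0,
\]
since $k\le \min\{p,q\}$ throughout the Jacobi construction. Hence the exponential factor in~\eqref{eq:drift-exp} is at most $1$, the integrand is bounded by $\sup|\nabla h|\,e^{-\lambda t}$, and the expectation is at most $\sup|\nabla h|/\lambda$ uniformly in the initial condition. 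This is exactly the ``easy'' branch $k\le p$ of Lemma~\ref{lem:drift-lag-ver}; the Girsanov device of Lemma~\ref{lem:bessel-mart} and the hitting-time estimates from \cite{BMR13} were needed in the Laguerre case only when $k>p$ makes the exponent positive, a regime that never occurs here.

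Your proposed Girsanov route, besides being unnecessary, has a gap: Assumption~\ref{ass:drift-bound} asks only that $h$ be locally constant near the \emph{vertices} of $\wD$, not near the entire boundary, so your claimed bound $|\partial h/\partial z^k_i|\le C\,j^k_i(1-j^k_i)$ does not follow from the hypothesis. (Away from vertices, a face such as $\{j^k_1=0\}$ can meet $\supp(\nabla h)$.) The sign observation above sidesteps this issue entirely.
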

\begin{proof}
The proof is exactly parallel to that of Proposition \ref{prop:lag-regular}, with two modifications.  First, Assumption \ref{ass:diag}(g) follows from Theorem \ref{thm:existence-general-jw} instead of Theorem \ref{thm:existence-general-lw}.  Second, Assumption \ref{ass:drift-bound} follows from the following argument instead of Lemma \ref{lem:drift-lag-ver}.  Note that the coordinate transform for the Jacobi Warren process is $\chi(z) = \arcsin(\sqrt{z})$, so the transformed drift for the $i^\text{th}$ particle at level $k$ is
\[
\wgamma^k_i(\chi(j)) := \frac{(p - k)(1 - j) - (q - k) j}{2 \sqrt{j(1 - j)}}
\]
with derivative
\[
\frac{\partial \wgamma^k_i(\chi(j))}{\partial j} = - \frac{(p - k)(1 - j) + (q - k)j}{4 [(1 - j) j]^{3/2}} \leq 0.
\]
By the chain rule, this implies that $\frac{\partial \wgamma^n_i}{\partial \wtilde{j}} \leq 0$ and therefore that the quantity (\ref{eq:drift-exp}) is bounded above by 
\[
\EE\left[\int_0^\infty e^{-\lambda t} |\partial_ih(\chi(\{j^k_i(t)\}))| \exp\Big(\int_0^t  \frac{\partial \wgamma^n_i}{\partial \wtilde{j}}(\chi(j^k_i(r))) dr\Big) dt\right].
\]
The quantity inside the expectation is uniformly bounded in $\{j^k_i(0)\}$, hence the expectation itself is, yielding Assumption \ref{ass:drift-bound}.
\end{proof}

\begin{proof}[Proof of Theorem \ref{thm:jac-war-exist}]
We proceed by induction on $n$ to show that a strong solution exists for the first $n$ levels of (\ref{eq:jac-war-def}).  For $n = 1$, the equation is the SDE with no reflection terms for a single copy of $\JAC^{p - 1, q - 1}(t)$, hence admits a unique strong solution which is Feller Markov.  For the inductive step, suppose that the result holds for (\ref{eq:jac-war-def}) for $n - 1$ and consider (\ref{eq:jac-war-def}) for $n$.  By Theorem \ref{thm:jac-war-inter} applied to the solution $\{j^k_i(t)\}_{1 \leq i \leq k, 1 \leq k \leq n - 1}$ for $n - 1$, the projection to $\{j^{n - 1}_i(t)\}$ is equal in law to the Jacobi eigenvalues process with parameters $(p, q)$ and level $n - 1$.  In particular, we have a.s. that $0 < j^{n - 1}_i(t) < j^{n - 1}_{i + 1}(t) < 1$ for $1 \leq i \leq n - 2$.  Therefore, by Theorem \ref{thm:exist-crit} there exists a unique strong solution $\{j^n_i(t)\}$ to the SDER
\[
dj^n_i(t) = 2 \sqrt{j^n_i(t)(1 - j^n_i(t))} dB^n_i(t) + 2\Big((p - n + 1) + (p + q - 2n + 2)j^n_i(t)\Big) dt + d\Phi^n_i(t) - d\Psi^n_i(t)
\]
with time-dependent boundaries given by $j^{n - 1}_{i-1}(t) < j^{n - 1}_i(t)$.  The proof that $\Phi^n_i(t) = \frac{1}{2} L^{m, +}_i(t)$ and $\Psi^n_i(t) = \frac{1}{2}l^{m, -}_i(t)$ now follows in the exact same way as in the proof of Theorem \ref{thm:lag-war-exist}. Combining $\{j^n_i(t)\}$ with $\{j^k_i(t)\}_{1 \leq i \leq k, 1 \leq k \leq n - 1}$ yields the desired strong solution.  The Feller property follows because the state space of $\{j^k_i(t)\}$ is compact and trajectories are continuous, and regularity follows from Proposition \ref{prop:jac-regular}.
\end{proof}

\section{Strong existence and uniqueness for SDER's with time-dependent boundary} \label{sec:sder}

In this section, we gather existing results to prove a criterion for strong existence and uniqueness of solutions to one-dimensional SDE's with reflection on two time-dependent boundaries.  In particular, we handle the case of Lipschitz drift and Holder diffusion coefficient which is used in our applications.

\subsection{Statement of the SDER}

By a \textit{context}, we mean a complete probability space $(\Omega, \FF, \PP)$ equipped with an increasing family $\{\FF_t\}_{t \geq 0}$ of sub-$\sigma$-fields of $\FF$ and an $\FF_t$-adapted Brownian motion $B_t$.  Given such a context, let $(x_0, L_t, U_t)$ be $\FF_0$-measurable random variables on $\RR \times C[0, \infty) \times C[0, \infty)$ so that $L_t < U_t$ and $L_0 < x_0 < U_0$ a.s..  For diffusion and drift coefficients $\sigma$ and $b$, we consider the SDE with reflection
\begin{equation} \label{eq:ref-sde}
dX_t = \sigma(X_t) dB_t + b(X_t) dt + d\Phi_t - d\Psi_t.
\end{equation}
A strong solution to (\ref{eq:ref-sde}) with initial condition $x_0$ is a triple of continuous $\FF_t$-adapted processes $(X_t, \Phi_t, \Psi_t)$ so that:
\begin{itemize}
\item $X_t = \int_0^t \sigma(X_s) dB_s + \int_0^t b(X_s) ds + \Phi_t - \Psi_t$ and $X_0 = x_0$;

\item $L_t \leq X_t \leq U_t$ for all $t$;

\item $\Phi_t$ and $\Psi_t$ are non-decreasing with bounded variation and $\Phi_0 = \Psi_0 = 0$;

\item $\int_0^\infty 1_{X_t \neq L_t} d\Phi_t = \int_0^\infty 1_{X_t \neq U_t} d\Psi_t = 0$.
\end{itemize}
We say that the SDER (\ref{eq:ref-sde}) is \textit{exact} if for every choice of initial-boundary conditions $(x_0, L_t, U_t)$ there is a unique strong solution to (\ref{eq:ref-sde}).

\begin{theorem}[{\cite[Theorem 3.3]{SW13} and \cite[Corollary 2.4]{BKR}}] \label{thm:sde-exact}
If $\sigma$ and $b$ are Lipschitz, then (\ref{eq:ref-sde}) is exact.
\end{theorem}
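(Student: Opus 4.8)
The plan is to reduce (\ref{eq:ref-sde}) to a pathwise fixed‑point problem for the two‑sided Skorokhod map on the random time‑dependent interval $[L_\cdot, U_\cdot]$, and then run a Picard iteration, just as in the classical Lipschitz theory for reflected SDEs on a constant interval. The one deterministic ingredient needed is the following. For fixed continuous paths with $L_t < U_t$ for all $t$, the Skorokhod problem on $[L_\cdot,U_\cdot]$ assigns to each $y \in C[0,\infty)$ with $L_0 \le y_0 \le U_0$ a unique triple $(z,\phi,\psi) = \Gamma(y)$ such that $z$ stays in $[L_\cdot,U_\cdot]$, the processes $\phi,\psi$ are nondecreasing and locally of bounded variation with $\phi_0 = \psi_0 = 0$, $z = y + \phi - \psi$, and the complementarity relations $\int 1_{z_t \neq L_t}\,d\phi_t = \int 1_{z_t \neq U_t}\,d\psi_t = 0$ hold. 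By \cite[Corollary 2.4]{BKR}, $y \mapsto \Gamma(y)$ is Lipschitz continuous in the sup‑norm on compact time intervals, with a constant depending only on the endpoint paths; in particular $\Gamma$ is jointly measurable in $(y,L,U)$, so it may be applied path by path when $(L_\cdot,U_\cdot)$ are $\FF_0$‑measurable random paths, and it maps $\FF_t$‑adapted continuous inputs to $\FF_t$‑adapted continuous outputs.

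Next I would set up the iteration. Put $X^{(0)}_t \equiv x_0$ and define recursively
\[
\big(X^{(n+1)},\Phi^{(n+1)},\Psi^{(n+1)}\big)_t \ := \ \Gamma\!\Big(x_0 + \int_0^{\cdot}\sigma(X^{(n)}_s)\,dB_s + \int_0^{\cdot} b(X^{(n)}_s)\,ds\Big)_t .
\]
Combining the Lipschitz property of $\Gamma$, the Burkholder--Davis--Gundy and Doob inequalities, the Lipschitz bounds on $\sigma$ and $b$, and Gronwall's lemma, one gets a contraction estimate of the form
\[
\EE\Big[\sup_{t \le T_0}\big|X^{(n+1)}_t - X^{(n)}_t\big|^2\Big] \ \le \ \tfrac{1}{2}\,\EE\Big[\sup_{t \le T_0}\big|X^{(n)}_t - X^{(n-1)}_t\big|^2\Big]
\]
on a short enough deterministic interval $[0,T_0]$. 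The iterates then converge in this norm to a strong solution on $[0,T_0]$, and the same estimate applied to two solutions gives pathwise uniqueness there. Since $T_0$ depends only on the Lipschitz constants of $\sigma$, $b$ and $\Gamma$, it may be kept uniform, so iterating over $[kT_0,(k+1)T_0]$ extends the solution to all of $[0,\infty)$ and establishes exactness. Equivalently, this whole construction is packaged in \cite[Theorem 3.3]{SW13}, whose hypotheses are met precisely once the Skorokhod‑map Lipschitz estimate of \cite[Corollary 2.4]{BKR} is available, so the theorem follows by assembling these two references.

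The main obstacle I expect is that the barriers $L_t,U_t$ are random and may a priori come arbitrarily close together, so the Lipschitz constant of $\Gamma$ need not be globally bounded. I would handle this by localization: on the event $\{\inf_{t \le T}(U_t - L_t) \ge \varepsilon\}$ the constant is controlled and the argument above applies, and since $L_t < U_t$ for every $t$ almost surely, letting $\varepsilon \downarrow 0$ and $T \uparrow \infty$ exhausts $\Omega$ up to a null set; patching the resulting solutions across these events is straightforward because uniqueness forces them to agree on overlaps. (In the applications in this paper the barriers are eigenvalue trajectories with no collisions, so $\inf_{t \le T}(U_t - L_t) > 0$ automatically and no localization is needed.) The remaining points — that the limiting $\Phi_t,\Psi_t$ stay nondecreasing of bounded variation and satisfy the complementarity conditions, and that the limit still lies in $[L_t,U_t]$ — are immediate from the stability of the Skorokhod map under sup‑norm limits, which is part of the statement of \cite[Corollary 2.4]{BKR}.
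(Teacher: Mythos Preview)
The paper gives no proof of this theorem beyond the two citations in its statement; the accompanying remark explains that in the present setting the $\FF_0$-measurable barriers $L_t, U_t$ are ``constant Lipschitz operators in the sense of \cite[Section 3]{SW13}'', so \cite[Theorem 3.3]{SW13} applies once the Skorokhod map on $[L_\cdot, U_\cdot]$ is known to be Lipschitz, which is supplied by \cite[Corollary 2.4]{BKR}. Your sketch unpacks exactly this chain of reasoning---Lipschitz Skorokhod map plus Picard iteration---and arrives at the same conclusion, so the approach matches.

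One minor point: your localization step to handle $\inf_{t \le T}(U_t - L_t)$ small is harmless but likely unnecessary. The Lipschitz constant for the two-sided Skorokhod map on a time-dependent interval with strictly separated barriers is in fact universal (it does not blow up as the gap shrinks, only the total variation of $\Phi,\Psi$ does), which is part of the content of \cite[Corollary 2.4]{BKR}. So the contraction estimate works directly on any $[0,T_0]$ without first restricting to $\{\inf(U-L) \ge \varepsilon\}$.
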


\begin{remark}
We allow $U_t \equiv \infty$ or $L_t \equiv - \infty$, in which case we have $\Psi_t \equiv 0$ or $\Phi_t \equiv 0$, respectively, and this setting coincides with that of \cite[Definition III.1.6]{Sou}.
\end{remark}

\begin{remark}
This definition is a generalization of \cite[Definition III.1.6]{Sou} and a specialization of the definition in \cite[Section 3]{SW13} to the case where $L_t < U_t$ for all $t$ via \cite[Corollary 2.4]{BKR}.  In our setting, the random variables $L_t$ and $U_t$ do not depend on the past trajectory of $X_t$, hence are constant Lipschitz operators in the sense of \cite[Section 3]{SW13}.
\end{remark}

\subsection{Pathwise uniqueness and strong existence of solutions}

We have the following result on pathwise uniqueness for (\ref{eq:ref-sde})  following the modifications to the proof of \cite[Proposition III.5.2]{Sou} discussed in \cite[Section IV.3.1]{Sou}, 

\begin{prop} \label{prop:sou-unique}
Let $\rho: \RR_+ \to \RR$ be a function so that $\int_{0^+} \frac{1}{\rho(s)} ds = \infty$.  Suppose that $\sigma$ satisfies
\[
|\sigma(x) - \sigma(x')|^2 \leq \rho(|x - x'|)
\]
for $x, x' \in \RR$ and $b$ is Lipschitz.  Then pathwise uniqueness holds for (\ref{eq:ref-sde}).
\end{prop}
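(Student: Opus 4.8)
Following \cite[Section~IV.3.1]{Sou}, the plan is to run the Yamada--Watanabe argument adapted to reflection on two moving barriers. Fix a context and two strong solutions $(X_t,\Phi_t,\Psi_t)$ and $(X'_t,\Phi'_t,\Psi'_t)$ of (\ref{eq:ref-sde}) driven by the same Brownian motion $B_t$ and sharing the same $\FF_0$-measurable data $(x_0,L_t,U_t)$; put $\Delta_t := X_t - X'_t$, so $\Delta_0 = 0$. Using $\int_{0^+}\rho(s)^{-1}\,ds = \infty$, choose $1 = a_0 > a_1 > \cdots \downarrow 0$ with $\int_{a_n}^{a_{n-1}}\rho(s)^{-1}\,ds = n$ and build the standard even functions $\phi_n \in C^2(\RR)$ with $\phi_n(x)\uparrow|x|$, $|\phi_n'|\le 1$, $\phi_n'(x)$ of the sign of $x$, and $0\le\phi_n''(x)\le\frac{2}{n\rho(|x|)}\mathbf{1}_{a_n\le|x|\le a_{n-1}}$.

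Next I would apply It\^o's formula to $\phi_n(\Delta_t)$, stopped at $T_N := \inf\{t : |X_t|+|X'_t|\ge N\}$ so that the stochastic integral is a genuine martingale (continuity of paths gives $T_N\uparrow\infty$, and on $[0,T_N]$ everything is bounded). The second-order term is at most $\tfrac12\phi_n''(\Delta_s)(\sigma(X_s)-\sigma(X'_s))^2 \le \tfrac12\phi_n''(\Delta_s)\rho(|\Delta_s|)\le\tfrac1n$, and the drift term is at most $\phi_n'(\Delta_s)\bigl(b(X_s)-b(X'_s)\bigr)\le K|\Delta_s|$ with $K$ the Lipschitz constant of $b$; the $dB_s$ term has zero expectation. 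The crucial point is that the four reflection contributions $\phi_n'(\Delta_s)\,(d\Phi_s - d\Phi'_s - d\Psi_s + d\Psi'_s)$ are each nonpositive: $d\Phi_s$ is carried by $\{X_s = L_s\}$, where $X'_s\ge L_s = X_s$ forces $\Delta_s\le 0$ and hence $\phi_n'(\Delta_s)\le0$; $d\Phi'_s$ is carried by $\{X'_s = L_s\le X_s\}$, where $\Delta_s\ge0$, so $-\phi_n'(\Delta_s)\,d\Phi'_s\le0$; and the two $\Psi$-terms are handled identically using $\{X_s = U_s\ge X'_s\}$ and $\{X'_s = U_s\ge X_s\}$. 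Taking expectations, letting $N\to\infty$ and then $n\to\infty$ (monotone convergence, $\phi_n(\Delta)\uparrow|\Delta|$) gives $\EE|\Delta_t|\le K\int_0^t\EE|\Delta_s|\,ds$; since $\Delta_0=0$, Gr\"onwall yields $\EE|\Delta_t|=0$ for all $t$, hence $X\equiv X'$ a.s. by path continuity.

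With $X\equiv X'$, the two input paths $x_0+\int_0^{\cdot}\sigma(X_s)\,dB_s+\int_0^{\cdot}b(X_s)\,ds$ agree, so $(\Phi,\Psi)$ and $(\Phi',\Psi')$ both solve the (now deterministic, given the trajectory) two-sided Skorokhod problem of reflecting this continuous input onto the moving interval $[L_t,U_t]$ with $L_t<U_t$, whose solution is unique within the framework of \cite{SW13}; therefore $\Phi=\Phi'$ and $\Psi=\Psi'$, completing the proof of pathwise uniqueness. The only delicate point is the bookkeeping for the reflection terms --- pinning down the sign of each of the four contributions and checking that the time-dependence and $\FF_0$-randomness of $L_t,U_t$ do not interfere --- since the remainder is the textbook Yamada--Watanabe scheme of \cite[Proposition~III.5.2]{Sou} combined with uniqueness for the two-sided Skorokhod map.
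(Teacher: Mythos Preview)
Your proof is correct and follows essentially the same Yamada--Watanabe strategy as the paper, with one technical variation worth noting. The paper applies Tanaka's formula directly to $(X_t-X_t')_+$, which forces an extra step: one must show via the occupation time formula that the local time $L_t^0$ of $X-X'$ at $0$ vanishes, and then uses the symmetry $X\leftrightarrow X'$ to conclude. You instead use the smooth approximating functions $\phi_n\uparrow|\cdot|$ on the full difference $\Delta_t$, which sidesteps the local-time argument entirely; the diffusion hypothesis $|\sigma(x)-\sigma(x')|^2\le\rho(|x-x'|)$ is absorbed into the bound $\phi_n''(\Delta_s)\rho(|\Delta_s|)\le 2/n$. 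Your sign analysis of the four reflection contributions is exactly right and matches the paper's reasoning on the set $\{X_s>X_s'\}$. For the final identification $\Phi=\Phi'$, $\Psi=\Psi'$, the paper simply observes that since $L_t<U_t$ the processes $\Phi$ and $\Psi$ never increase simultaneously, so they are the positive and negative variation parts of $X-x_0-\int\sigma\,dB-\int b\,ds$ and hence determined by $X$; your appeal to uniqueness of the two-sided Skorokhod map is an equally valid (and perhaps cleaner) way to close this.
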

\begin{proof}
If $L_t \equiv -\infty$ or $U_t \equiv \infty$, this is exactly \cite[Proposition III.5.2]{Sou}. Consider now the case where $-\infty < L_t < U_t < \infty$.  Suppose that $(X_t, \Phi_t, \Psi_t)$ and $(X_t', \Phi_t', \Psi_t')$ are two solutions to (\ref{eq:ref-sde}) defined on the same context.  By Tanaka's formula applied to $X_t - X_t'$, we find that 
\begin{align} \label{eq:pos-decomp}
(X_t - X_t')_+ &= \int_0^t 1_{X_s > X_s'} d(X - X')_s + \frac{1}{2} L_t^0 \\ \nonumber
&= \int_0^t 1_{X_s > X_s'}(\sigma(X_s) - \sigma(X_s')) dB_s + \int_0^t1_{X_s > X_s'} (b(X_s) - b(X_s')) ds\\ \nonumber
&\phantom{=} + \int_0^t 1_{X_s > X_s'}(d\Phi_s - d\Phi_s') - \int_0^t1_{X_s > X_s'}(d\Psi_s- d\Psi_s') + \frac{1}{2}L_t^0,
\end{align}
where $L_t^a$ is the local time of $X_t - X_t'$ at $a$.  We first claim that $L_t^0 = 0$ for all $t$.  By the occupation time formula, we have that 
\[
\int_{0}^\infty \frac{1}{\rho(a)} L^a_t da = \int_0^t 1_{X_s > X_s'} \frac{1}{\rho(X_s - X_s')} d\langle X - X', X - X'\rangle_s
= \int_0^t 1_{X_s > X_s'} \frac{(\sigma(X_s) - \sigma(X_s'))^2}{\rho(X_s - X_s')} ds \leq t.
\]
On the other hand, if $L^0_t > \eps$ occurs with positive probability for some $t$ and some $\eps > 0$, then because $\lim_{a \to 0} L^a_t = L^0_t$, we may find $\delta > 0$ sufficiently small so that $L^a_t > \eps/2$ for $a < \delta$ with positive probability.  This implies that with positive probability we have
\[
t \geq \int_0^\infty \frac{1}{\rho(a)} L_t^a da > \frac{\eps}{2} \int_0^\delta \frac{1}{\rho(a)} da,
\]
a contradiction.  We conclude that $L^0_t = 0$ for all $t$.  

Now, notice that $X_s > X_s'$ implies that $X_s > L_s$ and that $X_s' < U_s$, hence we find that
\[
\int_0^t 1_{X_s > X_s'} d\Phi_s = \int_0^t 1_{X_s > X_s'} d\Psi_s' = 0.
\]
On the other hand, because $\Phi_t, \Phi_t', \Psi_t$, and $\Psi_t'$ are non-decreasing, we have that 
\[
\int_0^t 1_{X_s > X_s'} d\Phi_s' \geq 0 \qquad \text{ and } \qquad \int_0^t 1_{X_s > X_s'} d\Psi_s' \geq 0.
\]
Since $-\infty < L_s < U_s < \infty$, for $0 \leq s \leq t$, we see that $\sigma(X_s) - \sigma(X_s')$ is bounded on $[0, t]$ and hence $\int_0^t 1_{X_s > X_s'} (\sigma(X_s) - \sigma(X_s')) dB_s$ is a martingale.  Taking expectations in (\ref{eq:pos-decomp}), we conclude that 
\begin{multline*}
\EE[(X_t - X_t')_+] \leq \EE\left[\int_0^t 1_{X_s > X_s'} |b(X_s) - b(X_s')| ds\right]\\ \leq K \, \EE\left[\int_0^t 1_{X_s > X_s'} |X_s - X_s'| ds\right] = K\, \EE\left[\int_0^t (X_s - X_s')_+ ds\right],
\end{multline*}
where $K$ is a Lipschitz constant for $b$.  By Gronwall's Lemma, we conclude that $\EE[(X_t - X_t')_+] = 0$ and hence that $X_t \leq X_t'$ a.s..  On the other hand, exchanging the roles of $X_t$ and $X_t'$ implies that $X_t' \leq X_t$ and hence $X_t = X_t'$ a.s..  Since $\Phi_t$ and $\Psi_t$ never increase at the same time, they are determined by $X_t$, hence we conclude $(X_t, \Phi_t, \Psi_t) = (X_t', \Phi_t', \Psi_t')$ a.s., as desired.
\end{proof}

We now give a criterion for strong existence of solutions to (\ref{eq:ref-sde}) based on localization and Theorem \ref{thm:sde-exact}.  Let $D \subset \RR$ be a connected domain.  We say that (\ref{eq:ref-sde}) has a strong solution in $D$ if for any $x_0 \in D$ and $L_t, U_t$ lying in $D$ for all $t$, there is a strong solution to (\ref{eq:ref-sde}) for which $X_t$ lies in $D$ for all $t$.

\begin{prop} \label{prop:strong-exist}
Suppose $\sigma$ and $b$ are locally Lipschitz on $D$ and pathwise uniqueness in $D$ holds for (\ref{eq:ref-sde}) for all initial-boundary conditions.  We have the following.
\begin{enumerate}
\item[(a)] For any initial-boundary condition $(x_0, L_t, U_t)$ there exists a strong solution to (\ref{eq:ref-sde}) in $D$ up to some explosion time $\tau_E$;

\item[(b)] If strong existence in $D$ holds for the classical SDE
\[
dX_t = \sigma(X_t) dB_t + b(X_t) dt \qquad X_0 = x_0
\]
for each $x_0 \in D$, then we have $\tau_E = \infty$ a.s.
\end{enumerate}
\end{prop}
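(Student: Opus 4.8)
The plan is to prove (a) by a localization argument that reduces to the globally Lipschitz case covered by Theorem~\ref{thm:sde-exact}, and to prove (b) by showing that the only way the explosion time can be reached is by $X_t$ running off to an end of $D$ on which there is no reflecting barrier, which is ruled out by comparison with the non-exploding free diffusion.

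For (a): since $D\subset\RR$ is a connected domain, write it as an increasing union of bounded open intervals $D_k$ with $\overline{D_k}\subset D_{k+1}\subset D$. As $\sigma$ and $b$ are locally Lipschitz on $D$, they are Lipschitz on each $\overline{D_k}$, so I can choose globally Lipschitz $\sigma_k,b_k\colon\RR\to\RR$ agreeing with $\sigma,b$ on $\overline{D_k}$ (extend by the McShane formula after truncating). By Theorem~\ref{thm:sde-exact} (and the accompanying remark, which allows $U_t\equiv\infty$ or $L_t\equiv-\infty$), the SDER~(\ref{eq:ref-sde}) with coefficients $\sigma_k,b_k$, barriers $L_t,U_t$, and initial condition $x_0$ is exact; let $(X^k_t,\Phi^k_t,\Psi^k_t)$ be its unique strong solution and set $\tau_k:=\inf\{t\colon X^k_t\notin D_k\}$. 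On $[0,\tau_k)$ we have $\sigma_k(X^k_s)=\sigma(X^k_s)$ and $b_k(X^k_s)=b(X^k_s)$, so $X^k$ solves the original equation~(\ref{eq:ref-sde}) there; and since $\sigma_{k+1},b_{k+1}$ also agree with $\sigma,b$ on $D_k$, pathwise uniqueness for the exact equation with coefficients $\sigma_k,b_k$ forces $X^{k+1}=X^k$ on $[0,\tau_k)$, whence $\tau_k\le\tau_{k+1}$. Setting $\tau_E:=\lim_k\tau_k$ and $X_t:=X^k_t$, $\Phi_t:=\Phi^k_t$, $\Psi_t:=\Psi^k_t$ for $t<\tau_k$ yields a strong solution of~(\ref{eq:ref-sde}) on $[0,\tau_E)$ taking values in $D$, which is (a).

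For (b): fix $T>0$; it suffices to show $\PP(\tau_k\le T)\to0$ as $k\to\infty$. On $[0,T]$ each finite barrier takes values in a compact subinterval $[\ell,u]\subset D$, and since $L_s<X_s<U_s$, the process stays bounded away from any finite endpoint of $D$ at which reflection occurs. Hence if both $L_t,U_t$ are finite then $X_s\in[\ell,u]\Subset D$ on $[0,T]$, so $\tau_k\ge T$ for large $k$ and $\tau_E=\infty$ with no further input; and if $L\equiv-\infty$ and $U\equiv\infty$ then $\Phi\equiv\Psi\equiv0$, so $X$ is the free SDE, which is non-exploding in $D$ by hypothesis. In the remaining case there is reflection on exactly one side, say the lower one, so $U\equiv\infty$; then $X_s\ge\ell$ on $[0,T]$, and $\tau_k\le T$ can occur only by $X$ reaching the cutoffs $b_k:=\sup D_k\uparrow+\infty$. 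The key observation is that whenever $X_t>u':=\sup_{s\le T}L_s+1$ the reflection term $\Phi$ is inactive, so there $X$ coincides locally with the free diffusion. On $\{\tau_E\le T\}$, among the excursions of $X$ above level $u'$ that return to $u'$ after first reaching $u'+1$ — of which there are a.s.\ only finitely many in $[0,T]$, since each has positive expected duration by the strong Markov property — at least one must climb all the way to $b_k$ within time $T$; each such excursion is a segment of the free diffusion started from $u'$, and by the non-explosion hypothesis that diffusion has $\sup_{s\le T}$ finite a.s., so the probability that any of these finitely many excursions reaches $b_k$ before time $T$ tends to $0$ as $k\to\infty$. Hence $\PP(\tau_k\le T)\to0$ and $\tau_E=\infty$ a.s.; reflection only at the upper barrier is symmetric.

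The main obstacle is the excursion step in (b): making rigorous that, away from the barrier, $X$ behaves as a copy of the non-exploding free diffusion, and that this precludes $\tau_E<\infty$ despite the possibility of infinitely many small excursions against the barrier. The plan is to isolate the finitely many excursions that travel a fixed macroscopic distance from the barrier (using that each has positive expected length to bound their number on $[0,T]$), and then transfer the non-explosion of the free diffusion restarted from a fixed level to $X$ via the strong Markov property. Everything else—the exhaustion of $D$, the Lipschitz extension, and the patching via pathwise uniqueness—is routine.
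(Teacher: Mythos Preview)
Your localization argument for (a) and your treatment of the two-sided case in (b) are essentially identical to the paper's proof: exhaust $D$ by intervals, extend $\sigma,b$ to globally Lipschitz coefficients, invoke Theorem~\ref{thm:sde-exact}, and patch using uniqueness. The paper also disposes of the two-sided case exactly as you do, by observing that the solution is trapped between the continuous barriers and hence confined to a fixed $D_k$ on $[0,T]$.

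Where you diverge is the one-sided case of (b). The paper does not argue this at all; it simply cites \cite[Lemma~III.4.4]{Sou}. Your excursion sketch is the right idea, but two points need tightening. First, the reason there are only finitely many excursions from $u'$ to $u'+1$ on $[0,T]$ is uniform continuity of the (continuous) path on compacts, not ``positive expected duration'': for each $\omega$ there is a $\delta>0$ with $|X_t-X_s|<1$ whenever $|t-s|<\delta$, so each such excursion takes time at least $\delta$. Second, the excursion during which explosion occurs does \emph{not} return to $u'$, so it is not among the ``returning'' excursions you enumerate; and its starting time (the last upcrossing of $u'$ before $\tau_E$) is not a stopping time, so you cannot apply the strong Markov property to it directly. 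The fix is to index excursions by the stopping times $\gamma_n=$ ($n$th upcrossing of $u'$), note that on each $[\gamma_n,\cdot)$ the process solves the free SDE, and then split $\{\tau_k\le T\}$ according to $\{N\le M\}$ versus $\{N>M\}$ (with $N$ the number of upcrossings): the first piece is bounded by $(M{+}1)\,\PP(\sup_{s\le T}\tilde X^{u'}_s\ge b_k)\to 0$ as $k\to\infty$ by non-explosion of the free diffusion, and the second by $\PP(N>M)\to 0$ as $M\to\infty$. Conditioning first on $\FF_0$ makes $u'$ deterministic and this goes through cleanly. With these adjustments your direct argument is complete, and arguably more self-contained than the paper's appeal to \cite{Sou}.
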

\begin{proof}
For (a), let $D_1 \subset D_2 \subset \cdots D$ be a sequence of connected subdomains which exhaust $D$ and on which $\sigma, b$ are Lipschitz.  Let $D_k = (l_k, u_k)$, and define the modified functions
\[
\sigma^k(x) := \begin{cases} \sigma(x) & x \in D_k \\ \sigma(l_k) & x \leq l_k \\ \sigma(u_k) & x \geq u_k \end{cases} \text{ and } b^k(x) := \begin{cases} b(x) & x \in D_k \\ b(l_k) & x \leq l_k \\ b(u_k) & x \geq u_k. \end{cases}
\]
Observe that $\sigma^k, b^k$ are Lipschitz on $\RR$, so by Theorem \ref{thm:sde-exact} for every initial-boundary condition, there is a unique strong solution $(X^k_t, \Phi^k_t, \Psi^k_t)$ to the reflected SDE with Lipschitz coefficients
\begin{equation} \label{eq:sde-mod}
dX^k_t = \sigma^k(X^k_t) dB_t + b^k(X^k_t) dt + d\Phi^k_t - d\Psi^k_t
\end{equation}
and initial-boundary condition $(x_0, L_t, U_t)$.  Define the stopping times $\tau_k := \inf\{t \mid X^k_t \notin D_k\}$ so that $\tau_1 \leq \tau_2 \leq \cdots$.  By strong uniqueness, the restriction of $(X^k_t, \Phi^k_t, \Psi^k_t)$ to times in $[0, \tau_l)$ for $l \leq k$ coincides with $(X^l_t, \Phi^l_t, \Psi^l_t)$, so we may glue together all such solutions to obtain a strong solution $(X^*_t, \Phi^*_t, \Psi^*_t)$ in $D$ valid until the explosion time $\tau_E := \lim_{k \to \infty} \tau_k$.

For (b), if $U_t \equiv \infty$ or $L_t \equiv -\infty$, then this follows from \cite[Lemma III.4.4]{Sou}.  Otherwise, for any $T > 0$, we may find some $k$ so that $L_t, U_t \in D_k$ for $t \in [0, T]$.  The solution $(X^k_t, \Phi^k_t, \Psi^k_t)$ to (\ref{eq:sde-mod}) given by Theorem \ref{thm:sde-exact} satisfies $L_t \leq X^k_t \leq U_t$, meaning that $X^k_t \in D_k$ for $0 \leq t \leq T$ and hence that $\tau_k > T$ and hence $\tau_E > T$.  We conclude that $\tau_E = \infty$ a.s., as desired.
\end{proof}

\subsection{An exactness criterion for reflected SDE's}

We now assemble the previous results to prove the goal of this section, the following refinement of Theorem \ref{thm:sde-exact} which provides a criterion of Yamada-Watanabe type for exactness of SDER's applicable to the setting of squared Bessel and univariate Jacobi generators.

\begin{theorem} \label{thm:exist-crit}
Suppose that $b$ is Lipschitz and for $x \neq x' \in \RR$ the diffusion $\sigma$ satisfies
\[
|\sigma(x) - \sigma(x')|^2 \leq \rho(|x - x'|)
\]
for some $\rho: \RR_+ \to \RR$ so that $\int_{0^+} \frac{1}{\rho(s)} ds = \infty$.  If $\sigma$ is locally Lipschitz and for all $x \in \RR$ we have 
\[
|\sigma(x)|^2 + |b(x)|^2 \leq K (1 + |x|^2)
\]
for some constant $K$, then (\ref{eq:ref-sde}) is exact.
\end{theorem}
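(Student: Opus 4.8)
The plan is to deduce exactness from the pathwise uniqueness criterion of Proposition \ref{prop:sou-unique} together with the localization scheme of Proposition \ref{prop:strong-exist}, with the linear growth bound entering only to rule out explosion. Since $b$ is Lipschitz and $\sigma$ obeys $|\sigma(x) - \sigma(x')|^2 \le \rho(|x - x'|)$ with $\int_{0^+} \rho(s)^{-1}\,ds = \infty$, Proposition \ref{prop:sou-unique} immediately gives pathwise uniqueness for \eqref{eq:ref-sde} for every initial-boundary condition $(x_0, L_t, U_t)$. This both supplies half of the definition of exactness and verifies the pathwise-uniqueness hypothesis needed to invoke Proposition \ref{prop:strong-exist}, so it remains only to produce a strong solution for each such condition.

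For existence, note that $b$ Lipschitz is in particular locally Lipschitz and $\sigma$ is locally Lipschitz by hypothesis, so Proposition \ref{prop:strong-exist}(a) applied with $D = \RR$ yields a strong solution $(X^*_t, \Phi^*_t, \Psi^*_t)$ of \eqref{eq:ref-sde} defined up to an explosion time $\tau_E$. By Proposition \ref{prop:strong-exist}(b), to upgrade this to a global solution it suffices to check that the classical unreflected SDE $dX_t = \sigma(X_t)\,dB_t + b(X_t)\,dt$ has a non-exploding strong solution in $\RR$ from every starting point $x_0$.

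I would establish this last point by the standard localization argument used in the proof of Proposition \ref{prop:strong-exist}(a): truncating $\sigma$ and $b$ outside an exhausting sequence of bounded intervals $D_k$ produces globally Lipschitz coefficients with unique strong solutions, which glue along the stopping times $\tau_k = \inf\{t \mid X^k_t \notin D_k\}$ by pathwise uniqueness (itself the case $L_t \equiv -\infty$, $U_t \equiv \infty$ of Proposition \ref{prop:sou-unique}) to give a local strong solution. The growth bound $|\sigma(x)|^2 + |b(x)|^2 \le K(1 + |x|^2)$ then forces $\tau_k \to \infty$: applying It\^o's formula to the Lyapunov function $V(x) = 1 + |x|^2$ gives a drift dominated by a constant multiple of $V$, so Gronwall's lemma yields $\EE[V(X_{t \wedge \tau_k})] \le V(x_0)e^{Ct}$, which rules out explosion. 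Hence the classical SDE has a global strong solution in $\RR$, Proposition \ref{prop:strong-exist}(b) gives $\tau_E = \infty$ a.s., and so $(X^*_t, \Phi^*_t, \Psi^*_t)$ is a global strong solution of \eqref{eq:ref-sde}; combined with the pathwise uniqueness of the first paragraph, this shows \eqref{eq:ref-sde} is exact.

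I expect no genuine obstacle. All the substantive analysis — handling the merely H\"older-type diffusion coefficient through the Yamada--Watanabe-type condition $\int_{0^+} \rho^{-1} = \infty$, and dealing with reflection on two moving barriers — has already been carried out in Propositions \ref{prop:sou-unique} and \ref{prop:strong-exist}. The only new input is the no-explosion estimate for the classical SDE, and this is precisely the routine Lyapunov computation where the linear growth hypothesis $|\sigma(x)|^2 + |b(x)|^2 \le K(1 + |x|^2)$ is used; I anticipate it is the single place requiring any real (if standard) argument.
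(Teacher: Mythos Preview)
Your proposal is correct and follows essentially the same route as the paper: pathwise uniqueness from Proposition~\ref{prop:sou-unique}, local strong existence from Proposition~\ref{prop:strong-exist}(a), and then Proposition~\ref{prop:strong-exist}(b) to rule out explosion via global strong existence for the unreflected SDE. The only difference is that where you sketch the Lyapunov/Gronwall no-explosion argument for the classical SDE directly, the paper simply cites \cite[Theorem IV.2.4]{IW81}, which packages exactly that argument under the same linear-growth hypothesis.
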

\begin{proof}
First, pathwise uniqueness holds by Proposition \ref{prop:sou-unique}.  Now, strong existence holds up to some explosion time $\tau_E$ by Proposition \ref{prop:strong-exist}(a).  By \cite[Theorem IV.2.4]{IW81}, under these assumptions strong existence holds for the classical SDE associated to $\sigma, b$ for all initial conditions, so we may apply Proposition \ref{prop:strong-exist}(b) to conclude that $\tau_E = \infty$, giving strong existence and hence exactness.
\end{proof}

\section{An extension of the Pal-Shkolnikov approach to intertwining diffusions} \label{sec:ps-ext}

The goal of this section is to introduce the Pal-Shkolnikov approach to intertwining diffusions given in \cite{PS} and to prove Theorem \ref{thm:ps-ext}, which provides an extension of \cite[Theorem 3]{PS} to non-Brownian diffusion terms required in our work.

\subsection{Preliminaries on diffusion processes} \label{sec:reg}

In this section, we review a few notions on diffusion processes which will be used in the rest of the section.  For a domain $\cZ \subset \RR^m$, denote by $B(\cZ)$ the space of bounded measurable functions on $\cZ$.  Denote by $C_0(\cZ)$ the space of continuous functions $f: \cZ \to \RR$ vanishing at infinity, meaning for any $\eps > 0$, there exists a compact set $K \subset \cZ$ so that $f(z) < \eps$ on $K^c$; equip $C_0(\cZ)$ with the uniform norm.  Finally, denote by $C^2_c(\cZ)$ the space of smooth compactly supported functions on $\cZ$ which are continuously differentiable to order at least $2$.

For a diffusion $Z(t)$ on $\cZ$ with semigroup $P^Z_t$ acting on $B(\cZ)$, we recall that $Z(t)$ is Feller if $P^Z_t$ preserves $C_0(\cZ)$ for $t \geq 0$ and $\lim_{t \to 0} T_tf(x) = f(x)$ for all $x \in \cZ$ and $f \in C_0(\cZ)$.  For a Feller diffusion $Z(t)$, the generator $\AA^Z$ is a closed operator on its domain $\dom(\AA^Z) \subset C_0(\cZ)$, and $\dom(\AA^Z)$ is dense in $C_0(\cZ)$.  Recall that a linear subspace $\cD \subset \dom(\AA^Z)$ is a core for $\AA^Z$ if the closure of $\AA^Z|_\cD$ is $\AA^Z$.

\subsection{The setting of Pal-Shkolnikov}

Consider two diffusions $X$ and $Y$ on domains $\XX \subset \RR^m$ and $\YY \subset \RR^n$ with generators
\begin{align*}
\AA^X &= \frac{1}{2} \sum_{i = 1}^m \sum_{j = 1}^m a_{ij}(x) \partial_{x_i}\partial_{x_j} + \sum_{i = 1}^m b_i(x) \partial_{x_i}\\
\AA^Y &= \frac{1}{2} \sum_{i = 1}^n \sum_{j = 1}^n \rho_{ij}(y) \partial_{y_i}\partial_{y_j} + \sum_{i = 1}^n \gamma_i(y) \partial_{y_i},
\end{align*}
where $(a_{ij}), (\rho_{ij})$ are continuous on the interiors of $\XX$ and $\YY$ and take values in $m \times m$ and $n \times n$ positive semidefinite matrices and $b, \gamma$ are continuous on the interiors of $\XX$ and $\YY$. Let $L$ be a stochastic transition operator mapping $C_0(\XX)$ to $C_0(\YY)$.  We now recall the notion of an intertwining of diffusions given in \cite{PS}.  
\begin{define}[{\cite[Definition 2]{PS}}] \label{def:ps-def}
A process $Z = (Z_1, Z_2)$ is an intertwining of diffusions $X$ and $Y$ with link operator $L$ if:
\begin{itemize}
\item[(i)] $Z_1 \overset{d} = X$ and $Z_2 \overset{d}= Y$, where $\overset{d}=$ denotes equality in law, and 
\[
\EE[f(Z_1(0)) \mid Z_2(0) = y] = (Lf)(y),
\]
for all bounded Borel measurable functions $f$ on $D(y)$.

\item[(ii)] The transition semigroups $P_t$ and $Q_t$ of $Z_1$ and $Z_2$ are intertwined, meaning that $Q_t L = L P_t$ for all $t \geq 0$.

\item[(iii)] The process $Z_1$ is Markovian with respect to the joint filtration generated by $(Z_1, Z_2)$.

\item[(iv)] For any $s \geq 0$, conditional on $Z_2(s)$, the random variable $Z_1(s)$ is independent of $\{Z_2(u), 0 \leq u \leq s\}$ and is conditionally distributed according to $L$.
\end{itemize}
\end{define}

\begin{remark}
In the December 2015 version of \cite{PS}, there is the additional condition
\begin{itemize}
\item[(v)] For any $t \geq 0$, conditional on $Z_2(0)$ and $Z_1(t)$, the random variables $Z_1(0)$ and $Z_2(t)$ are independent.
\end{itemize}
in the definition of intertwining diffusion.  However, this condition will be removed in an update to \cite{PS} (private communication), and therefore we omit it here.
\end{remark}

Let $D \subset \XX \times \YY$ be a domain with polyhedral closure, and let $D(x, -) := \{y \mid (x, y) \in D\}$ and $D(y) := \{x \mid (x, y) \in D\}$.  We are interested in cases where $Z$ takes values in the domain $D$.  We consider the following three assumptions on $X$, $Y$, $D$, and $L$.

\begin{assump} \label{ass:proc}
One of the following conditions holds for processes $X$ and $Y$.  In both cases, we say that the resulting diffusion is \textit{regular}.
\begin{itemize}
\item[(a)] \textbf{No boundary conditions:} The martingale problem corresponding to $\AA^X$ on $\XX$ with no boundary conditions is well-posed in the sense of \cite{SV1, SV2}, and its solution $X$ is a Feller process whose generator admits the core $\cD(\AA^X) := C^2_c(\XX)$.

\item[(b)] \textbf{Neumann boundary conditions:} The domain $\XX$ is either (i) smooth or (ii) a convex polytope.  With $n(x)$ denoting the set of inward normal vectors to $\partial \XX$ at $x \in \partial \XX$.  For $x \in \partial \XX$, let $U_1(x)$ denote a cone in $\RR^m$ so that $\langle u, n(x) \rangle > 0$ for each $u \in U_1(x)$ which in case (i) is the span of a smooth and nowhere vanishing vector field and in case (ii) is the span of a smooth and nowhere vanishing vector field for each face of $\XX$.  The submartingale problem corresponding to $\AA^X$ and reflection cone defined by $U_1$ is well-posed in the sense of \cite{KR}, and its solution is a Feller process whose generator $\AA^X$ admits the core 
\[
\cD(\AA^X) := \{f \in C^2_c(\XX) \mid \langle \nabla f(x), u\rangle = 0 \text{ for $x\in \partial \XX$ and $u \in U_1(x)$}\}.
\]
\end{itemize}
For $Y$, replace $\AA^X$ by $\AA^Y$, $\XX$ by $\YY$, and $U_1$ by $U_2$ and impose case (i) in (b). 
\end{assump}

\begin{assump} \label{ass:domain}
We consider the following assumptions on the domain $D$.
\begin{itemize}
\item[(a)] The projection of $D$ on $\RR^m$ is $\XX$, and the projection of $D$ on $\RR^n$ is $\YY$.  

\item[(b)] For each $y \in \YY$, the domain $D(y) := \{x \mid (x,y) \in D\}$ has piecewise smooth boundary $\partial D(y)$.  Denote the pieces by $\partial D(y)_i$ so that $\partial D(y) = \bigcup_k \partial D(y)_k$. On each $\partial D(y)_k$, the projection to the $x$-coordinate of $\partial D(y)_k$ can be parametrized as the diffeomorphic image $(x_k(y, \xi), y)$ of a smooth function $x_k(y, -)$.

\item[(c)] At each point $x \in \partial D(y)_k$, the directional derivatives $\Psi^j_k$ of the boundary point $x = x_k(y, \xi)$ with respect to changes in the coordinates $y_j$ exist.  Let $\eta_k$ be the unit outward normal vector on $\partial D(y)_k$.
\end{itemize}
\end{assump}

\begin{assump} \label{ass:link}
We consider the following assumptions on the link operator $L$.
\begin{itemize}
\item[(a)] The link operator $L$ maps $C_0(\XX)$ to $C_0(\YY)$ and is given by the integral kernel
\[
(Lf)(y) := \int_{D(y)} f(x) \Lambda(y, x) dx
\]
for a non-negative link function $\Lambda(y, x): D \to \RR$.

\item[(b)] The link function $\Lambda$ is continuously differentiable in $x$ on a neighborhood of $\partial \XX \times \YY \cup \partial D$.

\item[(c)] The link function $\Lambda$ is twice continuously differentiable in $y$ on a neighborhood $U_\partial$ of $\partial D$ in $\XX \times \YY$.

\item[(d)] For every $x \in \XX$, $\Lambda(-, x)$ may be extended to a twice continuously differentiable function on $\YY$ so that $\AA^Y \Lambda$ is continuous on $D$ and bounded on $\bigcup_{x \in K} D(x, -)$ for any compact $K \subset \XX$.
\end{itemize}
\end{assump}

\subsection{Formulation of the intertwining diffusion process}

We now formulate a SDER whose weak solution will provide an intertwining diffusion of $X$ and $Y$ with link $L$ under an additional compatibility condition to be formulated later.  For $x \in \partial D(y)_k$, define the vector
\begin{equation} \label{eq:sder-push}
u_k(x, y) := \sum_{i, j = 1}^n \rho_{ij}(y) \langle \Psi^i_k, \eta_k \rangle 1_j.
\end{equation}
Define the set-valued mapping $U(x, y)$ on $\partial D(x, -)$ to be the cone with vertex at $0$ spanned by $u_k(x, y)$ for all $k$ so that $x \in \partial D(y)_k$.  Consider the SDER on domain $D$ for $Z = (Z_1, Z_2)$ given by 
\begin{align} \label{eq:sder-ps}
dZ_1(t) &= \sigma_X(Z_1(t)) dB_X(t) + b(Z_1(t)) dt + d\Phi_1(t)\\ \nonumber
dZ_2(t) &= \sigma_Y(Z_2(t)) dB_Y(t) + \Big(\gamma(Z_2(t)) + \langle \rho(Z_2(t)), \nabla_{z_2} [\log \Lambda(Z_2(t), Z_1(t))] \rangle\Big) dt + d\Phi_2(t) + d\Phi(t),
\end{align}
with the quantities satisfying
\begin{itemize}
\item $a(x) = \sigma_X(x) \sigma_X(x)^T$;
\item $\rho(y) = \sigma_Y(y) \sigma_Y(y)^T$;
\item $\Phi_1(t)$ is $0$ if Assumption \ref{ass:proc}(a) holds and otherwise is a bounded variation process with an auxiliary function $\phi_1(s)$ so that $\Phi_1(0) = 0$, $\int_0^\infty 1_{Z_1(t) \notin\partial \XX} d|\Phi_1|(t) = 0$, $\Phi_1(t) = \int_0^t \phi_1(s) d|\Phi_1|(s)$, and $\phi_1(s)$ is in the same direction as $U_1(Z_1(s))$;
\item $\Phi_2(t)$ satisfies the same conditions as $\Phi_1(t)$ with $U_1$ replaced by $U_2$ and $\XX$ replaced by $\YY$;
\item $\Phi(t)$ is a bounded variation process with an auxiliary function $\phi(s)$ so that it satisfies $\Phi(0) = 0$, $\int_0^\infty 1_{Z_2(t) \notin \partial D(Z_1(t), -)} d|\Phi|(t) = 0$, $\Phi(t) = \int_0^t \phi(s) d|\Phi|(s)$, and $\phi(s) \in U(Z_1(s), Z_2(s))$.
\end{itemize}
Note that the SDER (\ref{eq:sder-ps}) corresponds to the formulation of the Skorokhod problem given in \cite[Remark 2.3]{KR}; in particular, we require that $\Phi_1, \Phi_2, \Phi$ be bounded variation and admit auxiliary functions $\phi_1, \phi_2, \phi$. For $z = (x, y) \in \partial D$, define $\wtilde{U}(z)$ to be the cone spanned by the elements of $U(z)$, $U_1(x)$, and $U_2(y)$ which are well-defined at $z$.  Following \cite{KR}, we define the domain
\[
\cV := \partial D - \{z \mid \text{there is an inward normal vector $n$ to $\partial D$ at $z$ so that $\langle n, u \rangle > 0$ for non-zero $u \in \wtilde{U}(z)$}\}.
\]
Define now the spaces of functions
\begin{align} \nonumber
\cH(\AA^Z) &:= \{f \in C^2_c(D) \oplus \RR \mid \langle u, \nabla f(z)\rangle \geq 0 \text{ for $u \in \wtilde{U}(z)$, $z \in \partial D$, $f$ locally constant at $\cV$}\}\\ \label{eq:wch-def}
\cD(\AA^Z) &:= \{f \in C^2_c(D) \mid \langle u, \nabla f(z) \rangle = 0 \text{ for $u \in \wtilde{U}(z)$, $z \in \partial D$}\}.
\end{align}
Explicitly, functions in $\cD(\AA^Z)$ satisfy Neumann boundary conditions of $X$ on $\partial \XX \times \YY \cap D$, Neumann boundary conditions of $Y$ on $\XX \times \partial \YY \cap D$, and Neumann boundary conditions on $\partial D(x, -)$ given by the vector field
\begin{equation} \label{eq:ps-ref-bound}
\sum_{i, j = 1}^n \rho_{ij} \langle \Psi^i_k, \eta_k\rangle \partial_{y_j} \text{ at $y \in \partial D(x, -)_k$}.
\end{equation}
Following \cite[Definition 2.9]{KR}, we say the law of the process $Z(t)$ solves the submartingale problem corresponding to (\ref{eq:sder-ps}) with generator
\begin{equation} \label{eq:z-gen-def}
\AA^Z := \AA^X + \AA^Y + \sum_{i = 1}^n \sum_{j = 1}^n \rho_{ij}(y) \partial_{y_i}[\log \Lambda(y, x)] \partial_{y_j}.
\end{equation}
and initial condition $z_0$ if
\begin{itemize}
\item we have $Z(0) = z_0$ a.s.;

\item we have $Z(t) \in \overline{D}$ a.s. for all $t \geq 0$;

\item for every $f \in \cH(\AA^Z)$, the process
\[
f(Z(t)) - f(Z(0)) - \int_0^t \AA^Zf(Z(s)) ds
\]
is a submartingale;

\item we have $\int_0^\infty 1_{\cV}(X(s)) ds = 0$ a.s..
\end{itemize}
In \cite[Theorem 2]{KR}, a criterion is given for a weak solution to (\ref{eq:sder-ps}) to be a solution of the submartingale problem.  We adapt this criterion to our setting and use it to characterize the weak solution as a diffusion process in Proposition \ref{prop:kr-domain} below.

\begin{prop}[{\cite[Theorem 2]{KR}}] \label{prop:kr-domain}
Let $Z(t)$ be a weak solution to (\ref{eq:sder-ps}).  If $\cV$ is the union of finitely many closed connected sets, then $Z(t)$ is a solution to the submartingale problem with initial condition $Z(0)$.  Further, if $f \in \cD(\AA^Z)$, then
\[
f(Z(t)) - f(Z(0)) - \int_0^t \AA^Z f(Z(s)) ds
\]
is a martingale.  In particular $Z(t)$ is a diffusion with generator given by $\AA^Z$ on $\cD(\AA^Z) \subset \dom(\AA^Z)$.
\end{prop}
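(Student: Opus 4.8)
The plan is to deduce the first assertion from \cite[Theorem 2]{KR} once the data of the SDER (\ref{eq:sder-ps}) is matched to their framework, and then to obtain the martingale refinement, together with the identification of the generator, by a short It\^o-formula computation in which the reflection terms drop out.

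The first step is to recognize (\ref{eq:sder-ps}) as a constrained diffusion in the sense of \cite{KR} on the polyhedral closure $\overline D \subset \RR^{m+n}$. Its unconstrained generator is $\AA^Z$ of (\ref{eq:z-gen-def}): the second-order coefficients come from $a = \sigma_X\sigma_X^T$ and $\rho = \sigma_Y\sigma_Y^T$, and the drift is assembled from $b$, $\gamma$, and the extra term $\langle\rho,\nabla_{z_2}\log\Lambda\rangle$, where Assumption \ref{ass:link}(b)--(c) is precisely what makes $\log\Lambda$, and hence this modified drift, $C^1$ up to the constrained part of $\partial D$. The constraining processes $\Phi_1$, $\Phi_2$, $\Phi$ push in the directions of $U_1$ on $\partial\XX\times\YY\cap D$, of $U_2$ on $\XX\times\partial\YY\cap D$, and of the cone $U(x,y)$ spanned by the vectors $u_k(x,y)$ of (\ref{eq:sder-push}) on the moving faces $\partial D(x,-)_k$; thus the reflection cone at a boundary point $z$ is exactly $\wtilde{U}(z)$, and $\cH(\AA^Z)$ and $\cD(\AA^Z)$ of (\ref{eq:wch-def}) are the function classes associated to this reflection data. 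The hypothesis that $\cV$ is a finite union of closed connected sets is the geometric regularity condition under which \cite[Theorem 2]{KR} applies to this data; granting it, that theorem gives that $Z$ solves the submartingale problem for $\AA^Z$ with initial condition $Z(0)$, which includes both $\int_0^\infty 1_{\cV}(Z(s))\,ds = 0$ and the submartingale property for every $f\in\cH(\AA^Z)$.

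For the refinement I would apply It\^o's formula to $f(Z(t))$ for $f\in\cD(\AA^Z)\subset C^2_c(D)$. Collecting the second-order terms and the drift contributions of both components produces the absolutely continuous part $\int_0^t\AA^Zf(Z(s))\,ds$, while the bounded-variation reflection produces, for each of $\Phi_1$, $\Phi_2$, $\Phi$, a term of the form $\int_0^t\langle\nabla f(Z(s)),\phi_\bullet(s)\rangle\,d|\Phi_\bullet|(s)$. Each of these vanishes identically: by the defining constraints of (\ref{eq:sder-ps}), $d|\Phi_\bullet|$ is carried by $\partial D$, there $\phi_\bullet(s)$ lies in $\wtilde{U}(Z(s))$, and $f\in\cD(\AA^Z)$ satisfies $\langle u,\nabla f(z)\rangle = 0$ for all $u\in\wtilde{U}(z)$, $z\in\partial D$. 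Hence $f(Z(t)) - f(Z(0)) - \int_0^t\AA^Zf(Z(s))\,ds$ reduces to the stochastic integral against $\sigma_X\,dB_X$ and $\sigma_Y\,dB_Y$, a local martingale; a standard localization, using that $f$ has bounded support and the coefficients of $\AA^Z$ are locally controlled near $\partial D$ via Assumption \ref{ass:link}, upgrades it to a true martingale. This is Dynkin's formula for $Z$ on $\cD(\AA^Z)$, so each $f\in\cD(\AA^Z)$ lies in $\dom(\AA^Z)$ with generator action $\AA^Zf$; together with the submartingale characterization this exhibits $Z$ as a diffusion with the stated generator.

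The step I expect to be the main obstacle is the matching in the second paragraph: verifying, stratum by stratum of the polyhedral boundary $\partial D$, that the reflection data built from $U_1$, $U_2$, and the moving-face cones $U(x,y)$ meets all the hypotheses of \cite[Theorem 2]{KR} --- in particular that $\wtilde{U}$ is the correct combined reflection cone at the lower-dimensional edges and corners where $\partial\XX$, $\partial\YY$, and several faces $\partial D(x,-)_k$ meet, and that the ``bad'' set produced there coincides with the $\cV$ appearing in the hypothesis. Once that dictionary is in place the analytic content --- the semimartingale decomposition and Dynkin's formula --- is routine.
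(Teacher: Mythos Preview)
Your proposal is correct and matches the paper's approach. The paper does not give a detailed proof: it simply cites \cite[Theorem 2]{KR} for the submartingale assertion and, in the remark immediately following the proposition, observes that because the SDER (\ref{eq:sder-ps}) is formulated as a Skorokhod (not extended Skorokhod) problem with bounded-variation reflection, the It\^o-formula computation of \cite[Section 6]{KR} yields the martingale property for $f\in\cD(\AA^Z)$ --- which is exactly the argument you spell out in your third paragraph, where the reflection integrals vanish by the Neumann condition $\langle u,\nabla f(z)\rangle=0$.
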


\begin{remark}
While \cite[Theorem 2]{KR} does not address the martingale property for $f \in \cD(\AA^Z)$ explicitly, our formulation of (\ref{eq:sder-ps}) requires the solution to solve the Skorokhod problem instead of the extended Skorokhod problem.  As a result, the application of It\^o's lemma given in \cite[Section 6]{KR} yields this extension.
\end{remark}

\begin{remark}
As discussed in \cite[Remark 3]{PS}, for $x \in \partial D(y)_k$, there exists an outward normal vector $\widehat{\eta}_k$ to $\partial D(x, -)$ and some $c > 0$ so that $- \eta_k - c \widehat{\eta}_k$ is an inward normal vector to $\partial D$.  Further, for each $j$, the vector $1_{y_j} + \Psi^j_k$ is tangent to $\partial D$, meaning that $\langle \Psi^j_k, \eta_k\rangle = - c \langle \widehat{\eta}_k, 1_{y_j}\rangle$.  We conclude that
\[
\langle u_k(x, y), - \eta - c \widehat{\eta}_k \rangle = c^2 \sum_{i, j = 1}^n \rho_{ij}(y) \langle \Psi^i_k, \eta_k\rangle \langle \Psi^j_k, \eta_k\rangle \geq 0,
\]
since $(\rho_{ij})$ is positive semi-definite.  Therefore, if $(\rho_{ij})$ is strictly positive definite, then the domain $D$ and boundary conditions $U_1(x), U_2(y), U(z)$ are piecewise $C^2$ with continuous reflection as in \cite[Definition 2.11]{KR}, and \cite[Theorem 3]{KR} implies that any solution to the submartingale problem yields a weak solution to the SDER (\ref{eq:sder-ps}) stopped at the first hitting time of $\cV$.  To avoid complications with this stopping, we work with weak solutions to the SDER directly in what follows.
\end{remark}

\subsection{Existence of intertwiners for general diffusions}

We give conditions on $D, \Lambda, X, Y$ under which a weak solution to (\ref{eq:sder-ps}) will give an intertwining diffusion of $X$ and $Y$ with link $L$.
\begin{assump} \label{ass:compat}
We consider the following compatibility conditions between $X$ and $Y$ given $D$ and $\Lambda$.
\begin{itemize}
\item[(a)] We have $\sum_{j = 1}^n \langle \Psi^j_k, \eta_k\rangle U_{2, j} = 0$ on $\partial D(y)_k$ for $y \in \partial \YY$, where $U_{2, j}$ are the coordinates of $U_2$.

\item[(b)] For every $y \in \YY$, $\Lambda(y, \cdot)$ is a probability density on $D(y)$.

\item[(c)] The function $\Lambda$ is twice continuously differentiable on $\overline{D}$ and for any $f \in \cD(\AA^Z)$ satisfies
\begin{multline} \label{eq:int-eq}
\int_{D(y)} \Lambda (\AA^Xf) dx = \int_{D(y)} (\AA^Y\Lambda) f dx + \sum_{j = 1}^n \sum_k \gamma_j \int_{\partial D(y)_k} \Lambda f \langle \Psi^j_k, \eta_k\rangle d \theta(x)\\ + \frac{1}{2} \sum_{i, j = 1}^n \rho_{ij} \sum_k \int_{\partial D(y)_k}\Big(\div_{\partial x}(\Lambda f \Psi^j_k) + 2 \partial_{y_j}(\Lambda f)\Big) \langle \Psi^i_k, \eta_k\rangle d\theta(x),
\end{multline}
where $\div_{\partial x}(\Lambda f \Psi^j_k)$ is interpreted as the divergence on the subspace $\partial D(y)_k$.

\item[(d)] The function $\Lambda$ satisfies the boundary conditions $\langle \nabla_y \Lambda, U_2 \rangle = 0$ on $\partial \YY$.
\end{itemize}
\end{assump}

\begin{remark}
In Assumption \ref{ass:compat}, condition (\ref{eq:int-eq}) generalizes a combination of \cite[Equation (2.23)]{PS}, \cite[Equation (2.24)]{PS}, and the condition
\[
\eta_k = \sum_j \Psi^j_k \langle \Psi^j_k, \eta_k\rangle \qquad \text{ on $\partial D(y)_k$}
\]
to non-Brownian diffusion terms.  In particular, if we have the pointwise equality
\[
\AA^Y(\Lambda) = (\AA^X)^*\Lambda \qquad \text{ on $D(y)$}
\]
and the boundary compatibility conditions
\begin{multline} \label{eq:ps-bound}
\Lambda \langle b, \eta_k \rangle - \frac{1}{2} \Lambda \langle \div_x a, \eta_k \rangle - \langle \langle a, \eta_k \rangle, \nabla_x \Lambda\rangle\\ = \sum_j \Lambda \gamma_j \langle \Psi^j_k, \eta_k \rangle + \sum_{i, j = 1}^n \rho_{ij} \partial_{y_j}(\Lambda)\langle \Psi^i_k, \eta_k\rangle \text{ on $\partial D(y)_k$ for each $y \in \YY$}
\end{multline}
and
\begin{equation} \label{eq:ps-ref-bound2}
\langle a, \eta_k\rangle = \sum_{i, j = 1}^n \rho_{ij} \Psi^j_k \langle \Psi^i_k, \eta_k\rangle, \qquad \text{ on $\partial D(y)_k$ for $y \in \YY$},
\end{equation}
then (\ref{eq:int-eq}) is a consequence of repeated applications of the divergence theorem and the multidimensional Leibniz rule.
\end{remark}

In the following Theorem \ref{thm:ps-ext}, we give a criterion for a solution to (\ref{eq:z-gen-def}) to be an intertwining diffusion of $X$ and $Y$ under the hypothesis that $Z$ is a regular Feller diffusion.  The remainder of this section is devoted to the statement and proof of Theorem \ref{thm:ps-ext} and Proposition \ref{prop:geo-cond}, which gives conditions under which Assumption \ref{ass:compat}(c) holds.

\begin{theorem} \label{thm:ps-ext}
Suppose that $D, L, X, Y$ satisfy Assumptions \ref{ass:proc}, \ref{ass:domain}, \ref{ass:link}, and \ref{ass:compat} and that the SDER (\ref{eq:sder-ps}) has a weak solution $Z$ which is a regular Feller diffusion with generator $\AA^Z$ as defined in Assumption \ref{ass:proc}.  If the resulting process satisfies the initial condition
\[
\PP(Z_1(0) \in B \mid Z_2(0) = y) = \int_B \Lambda(y, x) dx \text{ for Borel $B \subset D(y)$},
\]
then $Z$ is an intertwining of $X$ and $Y$ with link $L$.
\end{theorem}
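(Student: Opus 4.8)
The plan is to follow the route of \cite[Theorem 3]{PS}, replacing the Brownian-specific compatibility conditions there by Assumption \ref{ass:compat}. Concretely, I would reduce the statement to three facts and then run the intertwining argument as in the proof of \cite[Theorem 3]{PS}: (A) $Z$ is a Markov process whose generator acts as $\AA^Z$ on $\cD(\AA^Z)$, and $Z_1$ is by itself Markovian with respect to the \emph{joint} filtration of $(Z_1,Z_2)$, with generator $\AA^X$; (B) the generator-level intertwining $\AA^Y(Lf)=L(\AA^X f)$ holds for all $f$ in the core $\cD(\AA^X)$ of $\AA^X$, and $Lf\in\dom(\AA^Y)$ for such $f$; (C) the stated initial condition, that conditionally on $Z_2(0)=y$ the law of $Z_1(0)$ is $\Lambda(y,\cdot)\,dx$. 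Granting these, one first upgrades (B) to the semigroup identity $Q_tL=LP_t$ of Definition \ref{def:ps-def}(ii), where $P_t$ is the semigroup of $Z_1\overset{d}=X$ supplied by (A) and $Q_t$ is that of $Y$: the curves $t\mapsto Q_tLf$ and $t\mapsto LP_tf$ both solve the $\AA^Y$-Cauchy problem with datum $Lf\in\dom(\AA^Y)$, hence agree by uniqueness. One then propagates the conditional law by induction on the number of sampling times: using the Markov property of $Z_1$ in the joint filtration (from (A)) together with (B) and (C), one identifies the joint law of $\big(Z_2(t_1),\dots,Z_2(t_k),Z_1(t_k)\big)$ with that of the $Y$-process sampled at $t_1<\dots<t_k$ augmented by an independent draw of $Z_1(t_k)$ from $\Lambda(Z_2(t_k),\cdot)$. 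Ranging over all finite time-sets yields conditions (iii) and (iv), and forces $Z_2\overset{d}=Y$, giving (i) and completing the proof.

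For (A), that the weak solution $Z$ of (\ref{eq:sder-ps}) is a diffusion with generator acting as $\AA^Z$ on $\cD(\AA^Z)$ is precisely Proposition \ref{prop:kr-domain}, and the hypothesis that $Z$ is a \emph{regular} Feller diffusion ensures that $\cD(\AA^Z)$ is a core, so that this martingale identity pins down the semigroup of $Z$. To see that $Z_1$ is Markov in the joint filtration with generator $\AA^X$, note that the first line of (\ref{eq:sder-ps}) does not involve $Z_2$, and that for $f\in\cD(\AA^X)$ the function $(x,y)\mapsto f(x)$ (after a routine cutoff in the $y$-variable if $D$ is unbounded) lies in $\cD(\AA^Z)$: its Neumann conditions on $\XX\times\partial\YY\cap D$ and on the moving faces $\partial D(x,-)$ are vacuous because it is constant in $y$, while its conditions on $\partial\XX\times\YY\cap D$ are exactly those of $\cD(\AA^X)$, and $\AA^Z$ acts on it as $\AA^Xf$. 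By Proposition \ref{prop:kr-domain} the process $f(Z_1(t))-f(Z_1(0))-\int_0^t\AA^Xf(Z_1(s))\,ds$ is a martingale in the joint filtration; since $\cD(\AA^X)$ is a core for the well-posed $\AA^X$-problem of Assumption \ref{ass:proc}, $Z_1$ solves that problem relative to the joint filtration, hence is Markovian there, and with the compatible initialization $Z_1\overset{d}=X$.

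The analytic heart is (B). I would compute $\AA^Y(Lf)(y)=\AA^Y_y\!\int_{D(y)}\Lambda(y,x)f(x)\,dx$ by differentiating the moving-domain integral: the drift part $\gamma\cdot\nabla_y$ of $\AA^Y$ produces an interior term plus the boundary flux $\sum_{j}\sum_k\gamma_j\int_{\partial D(y)_k}\Lambda f\,\langle\Psi^j_k,\eta_k\rangle\,d\theta(x)$, and the diffusion part $\tfrac12\rho_{ij}\partial_{y_i}\partial_{y_j}$ produces, via the Reynolds transport theorem applied twice and the Leibniz rule, an interior term plus $\tfrac12\sum_{i,j}\rho_{ij}\sum_k\int_{\partial D(y)_k}\big(\div_{\partial x}(\Lambda f\,\Psi^j_k)+2\partial_{y_j}(\Lambda f)\big)\langle\Psi^i_k,\eta_k\rangle\,d\theta(x)$; the $y$-regularity of $\Lambda$ needed for these manipulations is Assumption \ref{ass:link}(b)--(c) and Assumption \ref{ass:compat}(c). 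Collecting everything, $\AA^Y(Lf)(y)$ equals $\int_{D(y)}(\AA^Y\Lambda)f\,dx$ plus exactly the boundary terms appearing on the right of (\ref{eq:int-eq}); hence by Assumption \ref{ass:compat}(c) it equals $\int_{D(y)}\Lambda\,(\AA^Xf)\,dx=L(\AA^Xf)(y)$, the desired identity. Along the way, Assumptions \ref{ass:link}(d) and \ref{ass:compat}(a),(b),(d) are used to check that $\AA^Y\Lambda$ is bounded on the relevant sets, that $Lf$ satisfies the Neumann conditions of $Y$ on $\partial\YY$ (so indeed $Lf\in\dom(\AA^Y)$), and that $\Lambda(y,\cdot)$ is a genuine probability density, so that the Cauchy-problem uniqueness argument for $Q_tL=LP_t$ is legitimate.

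The step I expect to be the main obstacle is the passage in the first paragraph from the generator identity of (B) to the pathwise conditional-law statement (iv): one must show that the conditional law $\Pi_t$ of $Z_1(t)$ given $\mathcal{F}^{Z_2}_t$ equals $\Lambda(Z_2(t),\cdot)$ for every $t$, and this is where both the \emph{core} hypothesis and the new boundary terms of Assumption \ref{ass:compat}(c) are really used. The identity $\AA^Y(Lf)=L(\AA^Xf)$ is available only for $f\in\cD(\AA^X)$, so one must extend it by cores and bounded-pointwise limits to run the inductive identification of finite-dimensional distributions; and one must verify that the reflection term $\Phi$ in (\ref{eq:sder-ps}) off the moving boundary $\partial D(x,-)$, together with the extra drift $\langle\rho(Z_2),\nabla_{z_2}\log\Lambda(Z_2,Z_1)\rangle$, reassemble---after conditioning on the $Z_2$-past and averaging $Z_1$ against $\Lambda$---precisely into the unconstrained generator $\AA^Y$. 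This reassembly is exactly what the boundary integrals of (\ref{eq:int-eq}) encode, and it is the point where the non-Brownian modification of the Pal--Shkolnikov conditions enters. A secondary, purely technical, point is the validity of the boundary integrations by parts on the polyhedral, time-varying domain $D(y)$, including near its lower-dimensional faces; this is handled by working face by face on the smooth pieces $\partial D(y)_k$ as in Assumption \ref{ass:domain} and by the stopping at $\cV$ built into the submartingale problem.
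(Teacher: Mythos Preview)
Your strategy matches the paper's: (A) is its Claims~1--2, your Reynolds-transport computation is the ``$\AA^Y u_l$'' calculation in the proof of Lemma~\ref{lem:ibv-prob}, and your inductive identification of finite-dimensional distributions is its Claims~4--5. The gap is the scope of (B). You state the generator identity $\AA^Y(Lf)=L(\AA^Xf)$ only for $f\in\cD(\AA^X)$, i.e.\ functions of $x$ alone; this does give $Q_tL=LP_t$ (condition~(ii)), but it is not enough to start your induction for (iv) or to prove $Z_2\overset{d}{=}Y$. The base case of that induction asks for
\[
\EE\bigl[f(Z_1(t))\,g(Z_2(t))\mid Z_2(0)=y\bigr]
\]
to be expressed via the $Y$-semigroup. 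After invoking (C) this becomes $\int_{D(y)}\Lambda(y,x)\,R_t[(x',y')\mapsto f(x')g(y')](x,y)\,dx$, where $R_t$ is the semigroup of $Z$; there is no way to collapse this to $Q_t$ using an identity that sees only functions of $x$.

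The paper's resolution is Lemma~\ref{lem:ibv-prob}: for every $h\in\cD(\AA^Z)$ (functions of \emph{both} variables), the map $u(t,y)=\EE[h(Z(t))\mid Z_2(0)=y]$ solves the Kolmogorov equation $\partial_t u=\AA^Yu$ with datum $\tilde h(y)=\int_{D(y)}\Lambda(y,x)h(x,y)\,dx$, hence equals $Q_t\tilde h(y)$. This is why Assumption~\ref{ass:compat}(c) is stated for $f\in\cD(\AA^Z)$ rather than $\cD(\AA^X)$, and why the regularity hypothesis on $Z$ (that $\cD(\AA^Z)$ is a core) is essential: one must approximate $v(t,\cdot,\cdot)=\EE[h(Z(t))\mid Z(0)=\cdot]\in\dom(\AA^Z)$ by elements $v_l\in\cD(\AA^Z)$ to which~(\ref{eq:int-eq}) applies, then match $\partial_t u=\lim_l\int\Lambda\,\AA^Zv_l\,dx$ against $\AA^Yu_l$ computed by your Reynolds calculation. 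Your final paragraph correctly flags this as the main obstacle and even describes the mechanism informally (``reassemble \ldots\ into the unconstrained generator $\AA^Y$''), but the concrete statement you need is the two-variable Lemma~\ref{lem:ibv-prob}, not the one-variable (B). Once that lemma is in place, Claims~3--5 (your (ii), (iv), and $Z_2\overset{d}{=}Y$) follow exactly as you outline.
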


\begin{remark}
Theorem \ref{thm:ps-ext} generalizes \cite[Theorem 3]{PS}, which addresses the case where $a_{ij} = \delta_{ij}$ and $\rho_{kl} = \delta_{kl}$.
\end{remark}

\begin{prop} \label{prop:geo-cond}
Suppose that Assumptions \ref{ass:domain} and \ref{ass:link} hold.  Suppose that there exist domains $D_1(y) \subset \RR^{n_1}$ and $D_2(x^1) \subset \RR^{n_2}$ so that
\[
D(y) = \{(x^2, x^1) \mid x^1 \in D_1(y), x^2 \in D_2(x^1)\},
\]
that $\Lambda(y, x) = \Lambda_1(y, x^1) \Lambda_2(x^1, x^2)$ for some non-negative link functions $\Lambda_1, \Lambda_2$ which are twice continuously differentiable, and that there is a differential operator $\AA^{X_1}$ on $\RR^{n_1}$ given by
\[
\AA^{X_1} := \frac{1}{2} \sum_{i, j = 1}^{n_1} a_{ij}^1 \partial_{x^1_i} \partial_{x^1_j} + \sum_{i = 1}^{n_1} b_i^1 \partial_{x^1_i}.
\]
Let the faces of $\partial D_1(y)$ be given by $\{\partial D_1(y)_k\}_{k \in K}$, let $\eta^1_k$ denote the unit outward normal vector to $\partial D_1(y)_k$, and let $\Psi^{j, 1}_k$ denote the derivative of a boundary point on $\partial D_1(y)_k$ with respect to $y_j$.  For a face $\partial D_1(y)_k$ of $D_1(y)$, let 
\[
\partial D(y)_{\wk} = \{(x^2, x^1) \mid x^1 \in \partial D_1(y)_k\}
\]
be the corresponding face of $\partial D(y)$.  For $x^1 \in \partial D_1(y)_k$, let $\Psi^{\bullet, 2}_k$ be the gradient of a point in $D_2(x^1)_k$ with respect to $x^1$, and let $\Psi^{j, 2}_k$ be the gradient of that point with respect to $y_j$.  If these satisfy the conditions
\begin{itemize}
\item[(a)] for any $f \in \cD(\AA^X)$, we have pointwise that
\[
\int_{D_2(x^1)} \Lambda_2 (\AA^X f) dx^2 = \AA^{X_1} \int_{D_2(x^1)} \Lambda_2 f dx^2;
\]

\item[(b)] we have $(\AA^{X_1})^*\Lambda_1 = \AA^Y \Lambda_1$ pointwise;

\item[(c)] on $\partial D_1(y)_k$ we have
\[
\langle b^1, \eta^1_k\rangle - \frac{1}{2} \langle \div_{x^1}(a^1), \eta^1_k\rangle - \langle \langle a^1, \eta^1_k\rangle, \nabla_{x^1} \log \Lambda_1\rangle = \sum_{j = 1}^n \gamma_j \langle \Psi^{j, 1}_k, \eta_k^1\rangle + \sum_{i, j = 1}^n \rho_{ij} \partial_{y_j} \log \Lambda_1 \langle \Psi^{i, 1}_k, \eta_k^1\rangle;
\]

\item[(d)] on $\partial D_1(y)_k$ we have 
\[
\langle a^1, \eta^1_k\rangle = \sum_{i, j = 1}^n \rho_{ij} \Psi^{j, 1}_k \langle \Psi^{i, 1}_k, \eta^1_k\rangle;
\]

\item[(e)] on any face of $\partial D(y)$ which does not come from a face of $\partial D_1(y)$, we have $\langle \Psi^i_k, \eta_k\rangle = 0$ for all $i$;

\item[(f)] on the face $\partial D(y)_{\wk}$ coming from $\partial D_1(y)_k$, $\langle \Psi^i_k, \eta_k\rangle$ is piecewise constant for all $i$.
\end{itemize}
then Assumption \ref{ass:compat}(c) is satisfied for $X$ and $Y$.
\end{prop}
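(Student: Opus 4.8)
The plan is to verify Assumption~\ref{ass:compat}(c), that is, the integral identity~(\ref{eq:int-eq}), by reducing it for the factored data $(\AA^X,\AA^Y,\Lambda,D)$ to its single-level instance for $(\AA^{X_1},\AA^Y,\Lambda_1,D_1)$, which by the Remark following Assumption~\ref{ass:compat} is in turn a consequence of conditions (b), (c), and (d). Throughout, fix $y\in\YY$ and $f\in\cD(\AA^Z)$, write $x=(x^2,x^1)$, and set
\[
g(x^1):=\int_{D_2(x^1)}\Lambda_2(x^1,x^2)\,f(x^1,x^2)\,dx^2,
\]
a twice continuously differentiable, compactly supported function on $D_1(y)$ by the regularity hypotheses on $\Lambda_2$ and $f$.

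First I would reduce the left-hand side of~(\ref{eq:int-eq}). Since $D(y)$ fibers over $D_1(y)$ with fiber $D_2(x^1)$, Fubini's theorem together with condition (a) (applicable because the $x$-restriction of $f\in\cD(\AA^Z)$ satisfies the Neumann conditions of $\AA^X$) gives
\begin{align*}
\int_{D(y)}\Lambda\,(\AA^X f)\,dx &= \int_{D_1(y)}\Lambda_1(y,x^1)\Big(\int_{D_2(x^1)}\Lambda_2\,(\AA^X f)\,dx^2\Big)\,dx^1\\
&= \int_{D_1(y)}\Lambda_1(y,x^1)\,\AA^{X_1}g(x^1)\,dx^1.
\end{align*}
Conditions (b), (c), and (d) are exactly the pointwise equality $(\AA^{X_1})^*\Lambda_1=\AA^Y\Lambda_1$, the boundary compatibility~(\ref{eq:ps-bound}), and the reflection identity~(\ref{eq:ps-ref-bound2}) for the data $(\AA^{X_1},\AA^Y,\Lambda_1,D_1)$. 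Applying, in the $x^1$-variables and with $g$ as the test function, the divergence-theorem and Leibniz-rule computation that establishes the implication recorded in the Remark following Assumption~\ref{ass:compat}, we obtain the single-level identity
\begin{multline*}
\int_{D_1(y)}\Lambda_1\,\AA^{X_1}g\,dx^1 = \int_{D_1(y)}(\AA^Y\Lambda_1)\,g\,dx^1 + \sum_{j,k}\gamma_j\int_{\partial D_1(y)_k}\Lambda_1 g\,\langle\Psi^{j,1}_k,\eta^1_k\rangle\,d\theta(x^1)\\ + \frac12\sum_{i,j}\rho_{ij}\sum_k\int_{\partial D_1(y)_k}\Big(\div_{\partial x^1}(\Lambda_1 g\,\Psi^{j,1}_k)+2\partial_{y_j}(\Lambda_1 g)\Big)\langle\Psi^{i,1}_k,\eta^1_k\rangle\,d\theta(x^1).
\end{multline*}

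It then remains to re-expand this back into the full identity. For the volume term, $\AA^Y$ differentiates only in $y$ while $\Lambda_2$ is $y$-independent, so $\AA^Y\Lambda=\Lambda_2\,(\AA^Y\Lambda_1)$ and $\int_{D_1(y)}(\AA^Y\Lambda_1)\,g\,dx^1=\int_{D(y)}(\AA^Y\Lambda)\,f\,dx$ by Fubini. For the boundary terms, condition (e) shows that the faces of $\partial D(y)$ not of the form $\partial D(y)_{\wk}$ make no contribution to~(\ref{eq:int-eq}), since $\langle\Psi^i_k,\eta_k\rangle=0$ on them. On a face $\partial D(y)_{\wk}=\{(x^2,x^1)\mid x^1\in\partial D_1(y)_k\}$ the outward normal is $\eta_k=(0,\eta^1_k)$, so $\langle\Psi^j_k,\eta_k\rangle=\langle\Psi^{j,1}_k,\eta^1_k\rangle$; the surface measure factors as $d\theta(x)=dx^2\,d\theta(x^1)$ over the fibration $\partial D(y)_{\wk}\to\partial D_1(y)_k$ because $\partial D_1(y)_k$ is flat; and by condition (f) the scalar $\langle\Psi^i_k,\eta_k\rangle$ is constant along each fiber. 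Integrating out $x^2$ in each boundary integral, using $\int_{D_2(x^1)}\Lambda_2 f\,dx^2=g$ and the splitting of $\div_{\partial x}$ on $\partial D(y)_{\wk}$ into its fiber ($x^2$) part and its base part along $\partial D_1(y)_k$, collapses the boundary terms of~(\ref{eq:int-eq}) onto those of the displayed single-level identity. Combining the three steps yields~(\ref{eq:int-eq}), which is Assumption~\ref{ass:compat}(c).

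I expect this last reconciliation to be the main obstacle. The $x^2$-integral of $\div_{\partial x}(\Lambda f\,\Psi^j_k)$ must be matched with $\div_{\partial x^1}(\Lambda_1 g\,\Psi^{j,1}_k)$, but because the fiber $D_2(x^1)$ moves with $x^1$ and the boundary point within the fiber moves with $y$ through $x^1$, i.e.\ $\Psi^{j,2}_k=\Psi^{\bullet,2}_k\,\Psi^{j,1}_k$, two extra pieces appear: the Leibniz correction from differentiating $\int_{D_2(x^1)}(\cdot)\,dx^2$ in $x^1$, and the fiber term $\int_{D_2(x^1)}\langle\nabla_{x^2}(\Lambda f),\Psi^{j,2}_k\rangle\,dx^2$ coming from the moving-boundary interpretation of $\partial_{y_j}(\Lambda f)$. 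Both are integrals over $\partial D_2(x^1)$, and the crux of the argument is that they cancel; this is precisely where the decomposition $\Psi^j_k=(\Psi^{j,2}_k,\Psi^{j,1}_k)$ and conditions (e) and (f) are used. A minor final point is to confirm that $g$ inherits from $f$ whatever Neumann condition $\AA^{X_1}$ may carry on the boundary of its domain (vacuous in the applications, where the level-one process lives in the interior of its state space), so that the single-level divergence-theorem argument runs with no further boundary terms.
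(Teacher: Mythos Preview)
Your approach is essentially the paper's, reorganized: you package the explicit integration-by-parts in $x^1$ as an invocation of the single-level instance of~(\ref{eq:int-eq}) for $g=\wf$, and then re-expand. The paper instead writes out the two divergence-theorem applications to $\int_{D_1(y)}\Lambda_1\,\AA^{X_1}\wf\,dx^1$ explicitly, uses (b), (c), (d) to convert the resulting terms, and then runs the Leibniz/chain-rule identity you allude to (with $\Psi^{j,2}_k=\Psi^{\bullet,2}_k\Psi^{j,1}_k$) to rewrite $\div_{\partial x^1}(\wf\Lambda_1\Psi^{j,1}_k)$ as $\int_{D_2(x^1)}\div_{\partial x}(\Lambda f\Psi^j_k)\,dx^2$. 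Your identification of that step as the crux, and of the role of (e) and (f) in restricting to the faces $\partial D(y)_{\wk}$ and pulling $\langle\Psi^i_k,\eta_k\rangle$ through the $x^2$-integral, is accurate.

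There is one real gap. The single-level identity you display does not follow from (b)--(d) alone: the divergence-theorem computation produces the boundary term $\sum_{i,j}\rho_{ij}\int_{\partial D_1(y)_k} g\,(\partial_{y_j}\Lambda_1)\,\langle\Psi^{i,1}_k,\eta^1_k\rangle\,d\theta(x^1)$, not $\partial_{y_j}(\Lambda_1 g)$. Closing the gap requires
\[
\sum_{i,j}\rho_{ij}\,\langle\Psi^{i,1}_k,\eta^1_k\rangle\,\partial_{y_j}g=0\quad\text{on }\partial D_1(y)_k,
\]
i.e.\ that $g$ satisfies the reflecting ($U$-type) Neumann condition of the single-level problem on $\partial D_1(y)_k$. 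This is what you need to check, and it does hold: since $\langle\Psi^i_k,\eta_k\rangle=\langle\Psi^{i,1}_k,\eta^1_k\rangle$ on $\partial D(y)_{\wk}$ and $D_2(x^1),\Lambda_2$ are $y$-independent, integrating the Neumann condition on $f\in\cD(\AA^Z)$ over $D_2(x^1)$ yields the displayed identity for $g$. The Neumann condition you flag in your closing sentence (the one carried by $\AA^{X_1}$ on $\partial\XX_1$) is a different, and here inessential, condition; the paper uses the $U$-type condition on $f$ at exactly this point (``Adding this in\dots'').
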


\begin{remark}
If there is a diffusion $X_1$ with generator $\AA^{X_1}$, then Conditions (c) and (d) of Proposition \ref{prop:geo-cond} correspond to conditions (\ref{eq:ps-bound}) and (\ref{eq:ps-ref-bound2}) for the intertwining of $X_1$ and $Y$.  If $X_1$ is regular Feller, then we could simply apply Theorem \ref{thm:ps-ext} with $X_1$ instead of $X$. In our applications, we require Proposition \ref{prop:geo-cond} to handle cases where we are able to establish the regularity of $X$ but not the regularity of $X_1$.
\end{remark}

\subsection{Proof of Theorem \ref{thm:ps-ext}}

We follow the same steps as in the proof of \cite[Theorem 3]{PS}.  We work over a probability space on $C([0, \infty), D)$, the space of continuous paths $[0, \infty) \to D$, with the standard Borel $\sigma$-algebra and a probability measure $\PP$ given by the law of $Z$.  Filter this probability space by $\{\FF_t, t \geq 0\}$, the filtration generated by the coordinate maps and augmented by the common null sets under $(\PP_z, z \in D)$.  We consider also $\{\FF^X_t, t \geq 0\}$ and $\{\FF^Y_t, t \geq 0\}$, the right-continuous complete subfiltrations of $\{\FF_t, t \geq 0\}$ generated by the first $m$ and last $n$ coordinate processes in $C([0, \infty), D)$, respectively.  We first prove Lemma \ref{lem:dom-trans} on the compatibility of the Gibbs measure with domains and Lemma \ref{lem:ibv-prob}, which generalizes Step 2 in the proof of \cite[Theorem 3]{PS}.  
        
\begin{lemma} \label{lem:dom-trans}
For any $f \in \cD(\AA^Z)$, the function
\[
\wtilde{f}(y) = \int_{D(y)} \Lambda(y, x) f(x, y) dx
\]
lies in $\cD(\AA^Y)$.
\end{lemma}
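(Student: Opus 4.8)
The plan is to verify the two conditions that characterize membership in $\cD(\AA^Y)$ under Assumption \ref{ass:proc}: first, that $\wtilde f$ is a compactly supported function of class $C^2$ on $\YY$, and second, when $Y$ satisfies Assumption \ref{ass:proc}(b), that $\wtilde f$ obeys the Neumann condition $\langle \nabla_y \wtilde f(y), u\rangle = 0$ for every $y \in \partial \YY$ and $u \in U_2(y)$ (when $Y$ falls under Assumption \ref{ass:proc}(a) there is nothing beyond $C^2_c$ to check). Both are obtained by differentiating $\wtilde f(y) = \int_{D(y)} \Lambda(y,x) f(x,y)\, dx$ under the integral sign and feeding in the compatibility conditions on $\Lambda$ and $f$.

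Compact support is immediate: since $f \in \cD(\AA^Z) \subset C^2_c(D)$, the support of $f$ is a compact subset of $D \subset \XX \times \YY$, so its projection onto $\YY$ is a compact set containing $\supp \wtilde f$. For $C^2$-regularity, note that the integrand $\Lambda(y,x) f(x,y)$ is twice continuously differentiable on $\overline D$ — $\Lambda$ by Assumption \ref{ass:compat}(c) and $f$ by the definition of $\cD(\AA^Z)$ — while by Assumption \ref{ass:domain}(b) the region $D(y)$ has piecewise smooth boundary $\partial D(y) = \bigcup_k \partial D(y)_k$ depending smoothly on $y$, with the derivatives of boundary points in the coordinate directions $y_j$ given by the vectors $\Psi^j_k$ of Assumption \ref{ass:domain}(c). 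The Leibniz rule for differentiating an integral over a moving region then yields
\[
\partial_{y_j} \wtilde f(y) = \int_{D(y)} \partial_{y_j}\big(\Lambda(y,x) f(x,y)\big)\, dx + \sum_k \int_{\partial D(y)_k} \Lambda(y,x) f(x,y)\, \langle \Psi^j_k, \eta_k\rangle\, d\theta(x),
\]
where $\eta_k$ is the unit outward normal to $\partial D(y)_k$ and $d\theta$ its surface measure. Each term on the right is again differentiable in $y$ by the same reasoning applied face by face, since the faces are smoothly parametrized and $\Lambda$, $f$, $\Psi^j_k$, $\eta_k$ are $C^1$; hence $\wtilde f \in C^2(\YY)$. (In the geometric setting of our applications one may instead change variables to realize $D(y)$ as the image of a fixed polytope under a map depending smoothly on $y$, after which $C^2$-regularity is transparent; I would remark on this but run the abstract argument as above.)

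For the Neumann condition, I contract the displayed identity against the generating vector $u = (u_1, \dots, u_n)$ of $U_2(y)$ at a point $y \in \partial\YY$, obtaining
\[
\langle \nabla_y \wtilde f(y), u\rangle = \int_{D(y)} \Big( \langle \nabla_y \Lambda, u\rangle\, f + \Lambda\, \langle \nabla_y f, u\rangle \Big)\, dx + \sum_k \int_{\partial D(y)_k} \Lambda f \Big( \sum_{j=1}^n u_j \langle \Psi^j_k, \eta_k\rangle \Big)\, d\theta(x).
\]
Three inputs annihilate the right-hand side. First, $\langle \nabla_y \Lambda, u\rangle = 0$ on $\partial\YY$ by Assumption \ref{ass:compat}(d). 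Second, for $z = (x,y)$ with $y \in \partial\YY$ the vector $(0,u) \in \RR^m \times \RR^n$ lies in $\wtilde U(z)$ — which by its construction contains $U_2(y)$ — so $f \in \cD(\AA^Z)$ forces $\langle \nabla_y f(x,y), u\rangle = \langle (0,u), \nabla f(z)\rangle = 0$. Third, $\sum_{j} u_j \langle \Psi^j_k, \eta_k\rangle = 0$ on $\partial D(y)_k$ for $y \in \partial\YY$ by Assumption \ref{ass:compat}(a). Hence $\langle \nabla_y \wtilde f(y), u\rangle = 0$, and together with the previous paragraph this gives $\wtilde f \in \cD(\AA^Y)$.

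The main obstacle is the $C^2$-regularity step: one must be sure the transport formula is legitimate across the (finitely many) lower-dimensional strata of $\YY$ where faces of $\partial D(y)$ meet or degenerate — for instance where coordinates of $y$ collide, at which points $\Lambda(y,\cdot)$ typically blows up while $D(y)$ collapses. This is exactly where Assumption \ref{ass:domain} is used: it supplies smooth parametrizations of the individual faces, so the boundary integrals produced by differentiation are themselves smooth in $y$ (the same mechanism underlying the Dixon--Anderson-type identities invoked elsewhere in the paper), and differentiating once more is routine.
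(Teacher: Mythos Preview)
Your proof is correct and follows essentially the same route as the paper: differentiate $\wtilde f(y)$ via the Leibniz (transport) formula over the moving region $D(y)$, then on $\partial\YY$ kill the three resulting terms using Assumption~\ref{ass:compat}(d), the Neumann condition on $f\in\cD(\AA^Z)$, and Assumption~\ref{ass:compat}(a), respectively. The paper's only additional gesture is to rewrite the integral over $\supp(f)\cap D(y)$ to make the $C^2_c$ property immediate from compactness of $\supp(f)$, whereas you invoke Assumption~\ref{ass:compat}(c) and Assumption~\ref{ass:domain} directly; both justifications are fine.
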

\begin{proof}
Because $f \in C_c^2(D)$, we see that $\supp(f)$ is a compact subset of $D$ with $C^2$ boundary.  In particular, this implies that
\[
\wtilde{f}(y) = \int_{\supp(f) \cap D(y)} \wLambda(y, x) f(x, y) dx
\]
lies in $C_c^2(\YY)$.  In addition, on $\partial \YY$, we have that
\begin{multline*}
\langle \nabla \wtilde f, U_2(y) \rangle = \int_{\supp(f) \cap D(y)} \langle \nabla_y \Lambda, U_2(y)\rangle f dx + \int_{\supp(f) \cap D(y)} \Lambda \langle \nabla_y f, U_2(y)\rangle dx\\ + \sum_{j, k} \int_{\partial D(y)_k} \Lambda f \langle \Psi^j_k, \eta_k\rangle, U_{2, j}(y) d\theta(x) = 0,
\end{multline*}
where the terms in the sum vanish by Assumption \ref{ass:compat}(d), the fact that $f \in \cD(\AA^Z)$, and Assumption \ref{ass:compat}(a), respectively.  We conclude that $\wtilde{f} \in \cD(\AA^Y)$ as desired.
\end{proof}

\begin{lemma} \label{lem:ibv-prob}
For any $f \in \cD(\AA^Z)$, the functions
\[
u(t, y) := \EE\left[f(Z_1(t), Z_2(t)) \mid Z_2(0) = y\right], \qquad (t, y) \in [0, \infty) \times \YY
\]
and
\[
u'(t, y) := \EE\left[\int_{D(Y(t))} \Lambda(Y(t), x) f(x, Y(t)) dx \mid Y(0) = y\right], \qquad (t, y) \in [0, \infty) \times \YY
\]
coincide.
\end{lemma}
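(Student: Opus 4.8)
The plan is to show that $u(t,\cdot)$ and $u'(t,\cdot)$ are both the classical solution in $C_0(\YY)$ of the abstract Cauchy problem $\partial_t v = \AA^Y v$ with initial datum $\wtilde{f}(y) := \int_{D(y)} \Lambda(y,x) f(x,y)\, dx$, so that uniqueness for the Feller semigroup of $Y$ forces $u = u'$. For $u'$ this is immediate: conditioning the inner expectation on $Y(t)$ gives $u'(t,y) = \EE[\wtilde{f}(Y(t)) \mid Y(0) = y]$, i.e. $u'(t,\cdot)$ is the semigroup of $Y$ applied to $\wtilde{f}$, and $\wtilde{f} \in \dom(\AA^Y)$ by Lemma \ref{lem:dom-trans}. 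For $u$, the initial-condition hypothesis of Theorem \ref{thm:ps-ext} says that, conditionally on $Z_2(0) = y$, the law of $Z_1(0)$ is $\Lambda(y,\cdot)\, dx$; combined with the Markov property of $Z$ this gives $u(t,y) = \int_{D(y)} \Lambda(y,x)\, (P^Z_t f)(x,y)\, dx =: (L^{(y)} P^Z_t f)(y)$, where $L^{(y)} g(y) := \int_{D(y)} \Lambda(y,x)\, g(x,y)\, dx$ defines a bounded operator $C_0(D) \to C_0(\YY)$ of norm $\leq 1$, since $\Lambda(y,\cdot)$ is a probability density on $D(y)$ by Assumption \ref{ass:compat}(b); in particular $u(0,\cdot) = L^{(y)} f = \wtilde{f}$.

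The heart of the argument is the operator intertwining
\[
L^{(y)} \AA^Z g = \AA^Y L^{(y)} g, \qquad L^{(y)} g \in \dom(\AA^Y), \qquad \text{for every } g \in \dom(\AA^Z).
\]
I would first establish this for $g = f \in \cD(\AA^Z)$. Writing $\AA^Z = \AA^X + \AA^Y + \sum_{i,j}\rho_{ij}(\partial_{y_i}\log\Lambda)\partial_{y_j}$ as in \eqref{eq:z-gen-def} and integrating against $\Lambda$ over $D(y)$, one splits $L^{(y)}\AA^Z f$ into three pieces. Separately, one computes $\AA^Y\wtilde{f}$ by differentiating the moving-domain integral $\int_{D(y)}\Lambda f\, dx$ via the Reynolds transport theorem: each $\partial_{y_j}$ produces a bulk term $\int_{D(y)}\partial_{y_j}(\Lambda f)\, dx$ together with boundary contributions $\sum_k\int_{\partial D(y)_k}\Lambda f\,\langle\Psi^j_k,\eta_k\rangle\, d\theta(x)$, the parametrizations and the $\Psi^j_k$ being supplied by Assumption \ref{ass:domain}(b)--(c) and everything staying compactly supported because $f\in C^2_c(D)$. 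Feeding this through the first- and second-order parts of $\AA^Y$ and expanding $\AA^Y(\Lambda f) = (\AA^Y\Lambda)f + \Lambda\AA^Y f + \Lambda\sum_{i,j}\rho_{ij}(\partial_{y_i}\log\Lambda)(\partial_{y_j}f)$, the expressions $L^{(y)}\AA^Z f$ and $\AA^Y\wtilde{f}$ agree term by term except that the former contributes $\int_{D(y)}\Lambda\,\AA^X f\, dx$ where the latter contributes $\int_{D(y)}(\AA^Y\Lambda)f\, dx$ plus the surface integrals produced by the transport theorem — and the equality of these leftover terms is exactly condition \eqref{eq:int-eq} of Assumption \ref{ass:compat}(c). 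Together with $\wtilde{f}\in\dom(\AA^Y)$ this gives the intertwining on $\cD(\AA^Z)$.

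To extend it to all of $\dom(\AA^Z)$, use that $\cD(\AA^Z)$ is a core for $\AA^Z$ (part of the ``regular Feller'' hypothesis on $Z$ through Assumption \ref{ass:proc}): for $g\in\dom(\AA^Z)$ choose $f_n\in\cD(\AA^Z)$ with $f_n\to g$ and $\AA^Z f_n\to\AA^Z g$ in $C_0(D)$; then $L^{(y)}f_n\to L^{(y)}g$ and $\AA^Y L^{(y)}f_n = L^{(y)}\AA^Z f_n\to L^{(y)}\AA^Z g$ in $C_0(\YY)$, so closedness of $\AA^Y$ yields $L^{(y)}g\in\dom(\AA^Y)$ with $\AA^Y L^{(y)}g = L^{(y)}\AA^Z g$. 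Applying this to $g = P^Z_t f\in\dom(\AA^Z)$ and using that $L^{(y)}$ is bounded while $t\mapsto P^Z_t f$ is $C^1$ in $C_0(D)$ with $\tfrac{d}{dt}P^Z_t f = \AA^Z P^Z_t f$, one gets $\tfrac{d}{dt}u(t,\cdot) = L^{(y)}\AA^Z P^Z_t f = \AA^Y u(t,\cdot)$ with $u(0,\cdot)=\wtilde{f}$; hence $u$ and $u'$ solve the same Cauchy problem for the Feller generator $\AA^Y$ and therefore coincide. I expect the main obstacle to be the second-order part of the Reynolds transport computation — tracking how $y$-differentiation of the surface integrals $\int_{\partial D(y)_k}$ generates the surface divergence $\div_{\partial x}(\Lambda f\,\Psi^j_k)$ and the extra $2\partial_{y_j}(\Lambda f)$ term, using the tangency relations among $\Psi^j_k$, $\eta_k$ and $\partial D$ recorded in the discussion after Proposition \ref{prop:kr-domain}; this is precisely the bookkeeping that \eqref{eq:int-eq} is engineered to absorb, after which the remaining steps are routine consequences of the quoted semigroup facts.
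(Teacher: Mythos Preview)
Your proposal is correct and follows essentially the same approach as the paper: both arguments reduce to showing that $u$ and $u'$ solve the same abstract Cauchy problem $\partial_t v = \AA^Y v$ with initial datum $\wtilde f$, then invoke uniqueness; both compute $\AA^Y$ of the moving-domain integral via the Reynolds transport / multidimensional Leibniz rule and match the result against $\int_{D(y)}\Lambda\,\AA^Z f\,dx$ using exactly condition \eqref{eq:int-eq}; and both pass from $\cD(\AA^Z)$ to $\dom(\AA^Z)$ via the core property and closedness of $\AA^Y$. The only organizational difference is that you first isolate the operator intertwining $L^{(y)}\AA^Z = \AA^Y L^{(y)}$ on $\dom(\AA^Z)$ and then specialize to $g = P^Z_t f$, whereas the paper works directly with the approximants $v_l(t)\to v(t)=P^Z_t f$ and computes $\partial_t u$ and $\AA^Y u_l$ side by side before taking limits; the content is identical.
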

\begin{proof}
We check that both $u(t, y)$ and $u'(t, y)$ lie in $\dom(\AA^Y)$,  are continuously differentiable with respect to the uniform norm on $C_0(\YY)$, and solve the Kolmogorov forward equation $\partial_t u = \AA^Y u$ with $u(0, y) = \int_{D(y)} \Lambda(y, x) f(x, y) dx$.  The desired equality then follows from the uniqueness of \cite[Proposition II.6.2]{EN}.  

First, notice that
\[
u(0, y) = \int_{D(y)} \Lambda(y, x) f(x, y) dx = u'(0, y),
\]
and that this lies in $\cD(\AA^Y)$ by Lemma \ref{lem:dom-trans}.  Now, we have $u'(t, y) := Q_t \wtilde{f}(y)$, hence by \cite[Proposition II.6.2]{EN}, $u'(t, y)$ lies in $\dom(\AA^Y)$, is continuously differentiable, and satisfies $\partial_t u' = \AA^Y u'$. For $u(t, y)$, define
\[
v(t, x, y) := \EE[f(Z_1(t), Z_2(t)) \mid Z_1(0) = x, Z_2(0) = y].
\]
By definition, we have that $v(t, x, y) \in \dom(\AA^Z)$ and that
\begin{equation} \label{eq:uv-rel}
u(t, y) = \int_{D(y)} \Lambda(y, x) v(t, x, y) dx.
\end{equation}
Because $\cD(\AA^Z)$ is a core for $\dom(\AA^Z)$, we may find a sequence $v_l(t) \in \cD(\AA^Z)$ so that $v_l(t) \to v(t)$ and $\AA^Z v_l(t) \to \AA^Z v(t)$ uniformly.  Define the function
\[
u_l(t, y) = \int_{D(y)} \Lambda(y, x) v_l(t, x, y) dx,
\]
which lies in $\cD(\AA^Y)$ by Lemma \ref{lem:dom-trans}; observe that $u_l(t) \to u(t)$ uniformly. It suffices now to check that $\lim_{l \to \infty} \AA^Y u_l$ exists and equals $\partial_t u$, as the closedness of $\AA^Y$ would then imply that $u = \lim_{l \to \infty} u_l$ lies in $\dom(\AA^Y)$ and solves $\partial_t u = \AA^Y u$.  The remainder of the proof is devoted to checking this fact.

\noindent \textbf{Computing the time derivative:}  Because $v$ solves the Kolmogorov forward equation for $Z$ and $\AA^Z$ is closed, we have that
\begin{equation} \label{eq:u-t-dev1}
\partial_t u = \int_{D(y)} \Lambda(y, x) \partial_t v(t, x, y) dx = \int_{D(y)} \Lambda(y, x) \AA^Z v(t, x, y) dx = \lim_{l \to \infty} \int_{D(y)} \Lambda(y, x) \AA^Z v_l(t, x, y) dx
\end{equation}
Recalling Assumption \ref{ass:compat}(c) and the fact that
\[
\AA^Z = \AA^X + \AA^Y + \sum_{i = 1}^n \sum_{j = 1}^n \rho_{ij} \partial_{y_i}[\log \Lambda] \partial_{y_j},
\]
we conclude that
\begin{multline} \label{eq:u-t-dev2}
\partial_t u = \int_{D(y)} \Big((\AA^Y \Lambda) v_l + \Lambda (\AA^Y v_l) + \Lambda \sum_{i, j = 1}^n \rho_{ij} \partial_{y_i}[\log \Lambda] \partial_{y_j} v_l\Big) dx \\
+ \sum_{j = 1}^n \sum_k \gamma_j \int_{\partial D(y)_k} \Lambda v_l \langle \Psi^j_k, \eta_k\rangle d \theta(x)\\ + \frac{1}{2} \sum_{i, j = 1}^n \rho_{ij} \sum_k \int_{\partial D(y)_k}\Big(\div_{\partial x}(\Lambda v_l \Psi^j_k) + 2 \partial_{y_j}(\Lambda v_l)\Big) \langle \Psi^i_k, \eta_k\rangle d\theta(x).
\end{multline}

\noindent \textbf{Computing the action of the generator:} We now compute $\AA^Y u_l(t)$; since $u_l(t) \in \cD(\AA^Y)$, we may compute with $\AA^Y$ as a differential operator. By the multidimensional Leibniz rule, we have 
\begin{align*}
\partial_{y_j} u_l &= \sum_k \int_{\partial D(y)_k} \Lambda v_l \langle \Psi^j_k, \eta_k\rangle d\theta(x) + \int_{D(y)} \partial_{y_j}(\Lambda v_l) dx\\
\partial_{y_i} \partial_{y_j} u_l &= \sum_k \int_{\partial D(y)_k} \div_{\partial x}(\Lambda v_l\langle \Psi^j_k, \eta_k\rangle \Psi^i_k) d\theta(x) +\sum_k \int_{\partial D(y)_k} \partial_{y_i}(\Lambda v_l \langle \Psi^j_k, \eta_k\rangle) d\theta(x)\\
&\phantom{=} + \sum_k \int_{\partial D(y)_k} \partial_{y_j}(\Lambda v_l)\langle \Psi^i_k, \eta_k\rangle d\theta(x) + \int_{D(y)} \partial_{y_i}\partial_{y_j}(\Lambda v_l) dx.
\end{align*}
We conclude that
\begin{multline*}
\AA^Y u_l = \int_{D(y)} \AA^Y(\Lambda v_l) dx + \sum_{j = 1}^n \gamma_j \sum_k \int_{\partial D(y)_k} \Lambda v_l \langle \Psi^j_k, \eta_k\rangle d\theta(x) \\
+ \frac{1}{2} \sum_{i, j = 1}^n \rho_{ij} \sum_k \int_{\partial D(y)_k} \Big(\div_{\partial x}(\Lambda v_l \langle \Psi^j_k, \eta_k\rangle \Psi^i_k) + 2 \partial_{y_j}(\Lambda v_l \langle \Psi^i_k, \eta_k\rangle)\Big) d\theta(x).
\end{multline*}
Note that 
\[
\AA^Y(\Lambda v_l) = (\AA^Y \Lambda) v_l + \Lambda (\AA^Y v_l) + \sum_{i, j = 1}^n \rho_{ij} \partial_{y_i} \Lambda \partial_{y_j} v_l = (\AA^Y \Lambda) v_l + \Lambda (\AA^Y v_l) + \Lambda \sum_{i, j = 1}^n \rho_{ij} \partial_{y_i}[\log \Lambda] \partial_{y_j} v_l,
\]
which implies that
\begin{multline} \label{eq:u-y-dev2}
\AA^Y u_l = \int_{D(y)} \Big((\AA^Y\Lambda) v_l + \Lambda \AA^Yv_l + \Lambda \sum_{i, j = 1}^n \rho_{ij} \partial_{y_i}[\log \Lambda] \partial_{y_j} v_l\Big)  dx\\ + \sum_{j = 1}^n \gamma_j \sum_k \int_{\partial D(y)_k} \Lambda v_l \langle \Psi^j_k, \eta_k\rangle d\theta(x) \\
 + \frac{1}{2} \sum_{i, j = 1}^n \rho_{ij} \sum_k \int_{\partial D(y)_k} \Big(\div_{\partial x}(\Lambda v_l \Psi^j_k) + 2 \partial_{y_j}(\Lambda v_l) \Big) \langle \Psi^i_k, \eta_k\rangle d\theta(x).
\end{multline}

\noindent \textbf{Comparing the two sides:} Comparing (\ref{eq:u-t-dev2}) and (\ref{eq:u-y-dev2}), we conclude that
\[
\AA^Y u_l = \int_{D(y)} \Lambda \AA^Z v_l dx.
\]
Recalling (\ref{eq:u-t-dev1}) and taking the limit as $l \to \infty$, we conclude that $\lim_{l \to \infty} \AA^Y u_l$ exists.  Since $\AA^Y$ is closed and $u_l \to u$, we conclude that $u \in \dom(\AA^Y)$ and that
\[
\AA^Y u = \lim_{l \to \infty} \AA^Y u_l = \lim_{l \to \infty} \int_{D(y)} \Lambda \AA^Z v_l dx = \partial_t u,
\]
which completes the proof.
\end{proof}

To complete the proof, we now verify conditions (i) to (iv) in the definition of an intertwining diffusion.

\noindent \textbf{Claim 1:} We claim in (i) that $Z_1$ is Markov with respect to its own filtration and that $Z_1 \overset{d} = X$.  The proof is the same as in Step 1 of \cite[Theorem 1]{PS}.

\noindent \textbf{Claim 2:} We now establish (iii).  Because $\cD(\AA^X)$ is a core for $\AA^X$, by the monotone class and convergence theorems, it suffices to check that for $f \in \cD(\AA^X)$, we have
\[
\EE[f(Z_1(t)) \mid Z_1(0) = x, Z_2(0) = y] = \EE[f(Z_1(t)) \mid Z_1(0) = x], \qquad (t, x, y) \in [0, \infty) \times D.
\]
Write $v(t, x, y)$ for the LHS and $u(t, x)$ for the RHS.  Notice that $u(t, x) \in \dom(\AA^X)$ solves the Kolmogorov forward equation $\partial_t u = \AA^X u$ with initial condition $u(0, x) = f(x)$.  Because $\cD(\AA^X)$ is a core for $\AA^X$, for each $t$, we may choose a uniformly converging sequence $u_l(t) \to u(t)$ with $u_l(t) \in \cD(\AA^X)$ so that $\AA^X u_l(t) \to \AA^X u(t)$.  Viewed as a function on $D$, $u_l(t)$ lies in $\cD(\AA^Z)$, since the additional Neumann boundary conditions hold trivially.  Therefore, we see that $\AA^Z u_l(t) = \AA^X u_l(t)$, hence $\lim_{l \to \infty} \AA^Z u_l(t)$ exists and equals $\AA^X u(t)$.  Because $\AA^Z(t)$ is closed, we conclude that $u(t) \in \dom(\AA^Z)$, $\AA^Z u(t) = \AA^X u(t)$, and hence that $\wtilde{u}(t, x, y) = u(t, x)$ solves the Kolmogorov forward equation $\partial_t \wtilde{u} = \AA^Z \wtilde{u}$ with initial condition $\wtilde{u}(0, x, y) = f(x)$.  On the other hand, $v(t, x, y) \in \dom(\AA^Z)$ is another solution to this equation with $v(0, x, y) = f(x)$, so we conclude that $v(t, x, y) = \wtilde{u}(t, x, y) = u(t, x)$ by uniqueness of solutions to the Kolmogorov forward equation given by \cite[Proposition II.6.2]{EN}.

\noindent \textbf{Claim 3:} We now establish (ii).  Because $\cD(\AA^X)$ is a core for $\AA^X$, it suffices to check that $Q_t Lf = LP_tf$ for all $f \in \cD(\AA^X)$ and $t \geq 0$.  By (\ref{eq:uv-rel}) and Claim 2, we find that
\begin{align*}
Q_t Lf = \EE\left[\int_{D(Z_2(t))} \Lambda(Z_2(t), x) f(x) dx \mid Z_2(0) = y\right] &= \int_{D(y)} \Lambda(y, x) \EE[f(Z_1(t)) \mid Z_1(0) = x, Z_2(0) = y] dx \\
&= \int_{D(y)} \Lambda(y, x) \EE[f(Z_1(t)) \mid Z_1(0) = x] dx = LP_tf.
\end{align*}

\noindent \textbf{Claim 4:} We now establish (iv). It suffices to check that for any bounded measurable function $f$ on $\XX$, $k \in \NN$, and distinct times $0 = t_0 < t_1 < \cdots < t_k = t$, we have
\[
\EE[f(Z_1(t_k)) \mid Z_2(t_0), \ldots, Z_2(t_k)] = (Lf)(Z_2(t_k)).
\]
This is equivalent to the fact that for any bounded Borel measurable function $g$ on $\YY^{k}$ and $y \in \YY$, we have 
\[
\EE[f(Z_1(t_k)) g(Z_2(t_1), \ldots, Z_2(t_k)) \mid Z_2(t_0) = y] = \EE[(Lf)(Z_2(t_k)) g(Z_2(t_1), \ldots, Z_2(t_k)) \mid Z_2(t_0) = y].
\]
We proceed by induction on $k$.  For $k = 1$, by the monotone class theorem, it suffices to replace $f$ and $g$ by indicator functions $f(x) = 1_A(x)$ and $g(y) = 1_B(y)$ of open boxes $A \subset \XX$ and $B \subset \YY$.  By Lemma \ref{lem:ibv-prob}, for any $h \in \cD(\AA^Z)$, we have
\[
\EE[h(Z_1(t_1), Z_2(t_1)) \mid Z_2(t_0) = y] = \EE\left[ \int_{D(Z_2(t_1))} \Lambda(Z_2(t_1), z_1) h(z_1, Z_2(t_1)) dz_1 \mid Z_2(t_0) = y \right].
\]
Since $\cD(\AA^Z)$ is a core for $\AA^Z$ and $\dom(\AA^Z)$ is dense in $C_0(D)$, this holds also for any $h \in C_0(D)$ and hence for any bounded measurable $h$ on $D$.  In particular, it holds for $h(x, y) = 1_{(A \times B) \cap D}(x, y) = 1_A(x) 1_B(y)$, giving the claim.  For the inductive step, notice that 
\begin{align*}
\EE[f(Z_1(t_{k + 1})) \mid Z_2(t_0), \ldots, Z_2(t_{k+1})] &= \int_{D(Z_2(t_k))} \Lambda(Z_2(t_k), x) \EE[f(Z_1(t_{k+1})) \mid Z_1(t_k) = x, Z_2(t_k), Z_2(t_{k+1})] dx\\
&= \EE[f(Z_1(t_{k+1})) \mid Z_2(t_k), Z_2(t_{k+1})]\\
&= (Lf)(Z_2(t_{k+1})).
\end{align*}
where in the first step we use that the law of $Z_1(t_k)$ given $Z_2(t_0), \ldots, Z_2(t_k)$ is $\Lambda(Z_2(t_k), \cdot)$ by the inductive hypothesis and in the last step we use time homogeneity and the $k = 1$ case.

\noindent \textbf{Claim 5:} We show in (i) that $Z_2 \overset{d} = Y$.  It suffices to check that for any $k \in \NN$ and distinct times $0 = t_0 < t_1 < \cdots < t_k = t$ we have
\[
\EE[f(Z_2(t)) \mid Z_2(t_0), \ldots, Z_2(t_{k-1})] = Q_{t - t_{k-1}} f(Z_2(t_{k - 1})).
\]
We induct on $k$.  For $k = 1$, by the monotone class theorem, it suffices to check the condition for $f(y) = 1_A(y)$ for any open $A \subset \YY$.  By Lemma \ref{lem:ibv-prob}, for any $h \in \cD(\AA^Z)$ we have
\[
\EE[h(Z_1(t), Z_2(t)) \mid Z_2(t_0) = y] = \EE\left[\int_{D(Y(t))} \Lambda(Y(t), x) h(x, Y(t)) dx \mid Y(t_0) = y\right].
\]
Now, since $\cD(\AA^Z)$ is a core for $\AA^Z$ and $\dom(\AA^Z)$ is dense in $C_0(D)$, this holds for all $h \in C_0(D)$ and hence for all bounded measurable $h$ and in particular $h(x, y) = 1_A(y)$, giving the desired. For the inductive step, notice that 
\begin{align*}
\EE[f(Z_2(t)) \mid Z_2(t_0), \ldots, Z_2(t_{k})] &= \int_{D(Z_2(t_k))} \Lambda(Z_2(t_k), x) \EE[f(Z_2(t)) \mid Z_1(t_k) = x, Z_2(t_0), \ldots, Z_2(t_k)]\\
&= \int_{D(Z_2(t_k))} \Lambda(Z_2(t_k), x) \EE[f(Z_2(t)) \mid Z_1(t_k) = x, Z_2(t_k)]\\
&= \EE[f(Z_2(t)) \mid Z_2(t_k)]\\
&= Q_{t - t_k} f(Z_2(t_k)),
\end{align*}
where in the first equality we apply Claim 4 and in the second we use the Markov property of $Z$.

\subsection{Proof of Proposition \ref{prop:geo-cond}}

Applying given condition (a) and noting that any $f \in \cD(\AA^Z)$ lies in $\cD(\AA^X)$ when evaluated at a fixed $y$, we see that
\[
\int_{D(y)} \Lambda (\AA^X f) dx = \int_{D_1(y)} \Lambda_1 \int_{D_2(x^1)} \Lambda_2 (\AA^X f) dx^2 dx^1= \int_{D_1(y)} \Lambda_1 \AA^{X_1} \int_{D_2(x^1)} \Lambda_2 f dx^2 dx^1.
\]
Now, let $d\theta(x^1)$ be the Lebesgue surface measure on $\partial D_1(y)$.  By the divergence theorem, we find that for 
\[
\wf(x^1) := \int_{D_2(x^1)} \Lambda_2 f dx^2,
\]
we have
\[
\int_{D_1(y)} \Lambda_1 \langle b^1, \nabla_{x^1} \wf\rangle dx_1 = - \int_{D_1(y)} \Lambda_1 \wf \div_{x_1} (b^1) dx_1 - \int_{D_1(y)} \wf \langle b^1, \nabla_{x^1} \Lambda_1\rangle dx^1 + \sum_{k \in K} \int_{\partial D_1(y)_k} \wf \Lambda_1 \langle b^1, \eta^1_k\rangle d\theta(x^1)
\]
and that 
\begin{multline*}
\frac{1}{2} \sum_{i, j = 1}^{n_1} \int_{D_1(y)} \Lambda_1 a^1_{ij} \partial_{x^1_i} \partial_{x^1_j} \wf dx^1 = \frac{1}{2} \int_{D_1(y)} \wf \langle a^1, \nabla_{x^1}^2 \Lambda_1\rangle dx^1  + \frac{1}{2} \int_{D_1(y)} \Lambda_1 \wf \div_{x^1}(\div_{x^1}(a)) dx^1\\
 + \int_{D_1(y)} \wf \langle \div_{x^1}(a^1), \nabla_{x^1} \Lambda_1 \rangle dx^1
 -\frac{1}{2} \sum_{k \in K} \int_{\partial D_1(y)_k} \wf \Lambda_1 \langle \div_{x^1}(a^1), \eta^1_k\rangle d \theta(x^1)\\ 
- \frac{1}{2} \sum_{k \in K} \int_{\partial D_1(y)_k} \wf\langle \langle a^1, \eta^1_k\rangle, \nabla_{x^1} \Lambda_1 \rangle d \theta(x) - \frac{1}{2}\sum_{k \in K} \int_{\partial D_1(y)_k} \wf \langle \nabla_{x^1}\Lambda_1, \langle a^1, \eta^1_k \rangle \rangle d \theta(x^1) \\
+ \frac{1}{2} \sum_{k \in K} \int_{\partial D_1(y)_k} \div_{\partial x^1}(\wf \Lambda_1 \langle a^1, \eta_k^1\rangle) d\theta(x^1).
\end{multline*}
Putting these together, we conclude that
\begin{multline*}
\int_{D_1(y)} \Lambda_1 \AA^{X_1} \wf dx^1 = \int_{D_1(y)} (\AA^{X_1})^*(\Lambda_1) \wf dx^1 + \sum_{k \in K} \int_{\partial D_1(y)_k} \wf \Lambda_1 \langle b^1, \eta^1_k\rangle d\theta(x^1)\\ -\frac{1}{2} \sum_{k \in K} \int_{\partial D_1(y)_k} \wf \Lambda_1 \langle \div_{x^1}(a^1), \eta^1_k\rangle d \theta(x^1)- \sum_{k \in K} \int_{\partial D_1(y)_k} \wf\langle \langle a^1, \eta^1_k\rangle, \nabla_{x^1} \Lambda_1 \rangle d \theta(x^1)\\
+ \frac{1}{2} \sum_{k \in K} \int_{\partial D_1(y)_k} \div_{\partial x^1}(\wf \Lambda_1 \langle a^1, \eta_k^1\rangle) d\theta(x^1).
\end{multline*}
Noting now that $(\AA^{X_1})^*\Lambda_1 = \AA^Y\Lambda_1 = \Lambda_2^{-1} \AA^Y \Lambda$ by condition (b), applying condition (d), and substituting in the definition of $\wf$, we find that 
\begin{multline} \label{eq:mid-exp}
\int_{D_1(y)} \Lambda_1 \AA^{X_1} \wf dx^1 = \int_{D(y)} (\AA^{Y}\Lambda) f dx + \sum_{k \in K} \int_{\partial D(y)_{\wk}} \Lambda f \langle b^1, \eta^1_k\rangle d\theta(x)\\  
- \frac{1}{2} \sum_{k \in K} \int_{\partial D(y)_{\wk}} \Lambda f \langle \div_{x^1}(a^1), \eta^1_k\rangle d \theta(x)  - \sum_{k \in K} \int_{\partial D(y)_{\wk}} \Lambda f \langle \langle a^1, \eta^1_k\rangle, \nabla_{x^1} \log \Lambda_1 \rangle d \theta(x)  \\
 + \frac{1}{2}\sum_{i, j = 1}^n \rho_{ij} \sum_{k \in K} \int_{\partial D_1(y)_{k}} \div_{\partial x^1}(\wf \Lambda_1 \Psi^{j, 1}_k) \langle \Psi^{i, 1}_k, \eta^1_k\rangle d\theta(x^1).
\end{multline}
Applying the multidimensional Leibniz rule, we find that the last term in (\ref{eq:mid-exp}) is given by 
\begin{multline*}
\frac{1}{2} \sum_{i, j = 1}^n \rho_{ij} \sum_{k \in K} \int_{\partial D(y)_{\wk}} \Big(f \Lambda_2 \div_{\partial x^1}(\Lambda_1 \Psi^{j, 1}_k) + \langle \nabla_{\partial x^1}(f\Lambda_2),\Lambda_1 \Psi^{j, 1}_k\rangle\\ + \Lambda_1 \langle \div_{x^2}(\Lambda_2 f \Psi^{\bullet, 2}_k), \Psi^{j, 1}_k\rangle\Big) \langle \Psi^i_k, \eta_k\rangle d\theta(x) = \frac{1}{2} \sum_{i, j = 1}^n \rho_{ij} \sum_{k \in K} \int_{\partial D(y)_{\wk}} \div_{\partial x}(\Lambda f \Psi^j_k) \langle \Psi^i_k, \eta_k\rangle d\theta(x),
\end{multline*}
where we note that $\langle \Psi^{\bullet, 2}_k, \Psi^{j, 1}_k\rangle = \Psi^{j, 2}_k$ by the chain rule.  In addition, by condition (c), terms 2-4 of (\ref{eq:mid-exp}) are equal to 
\[
\sum_{j = 1}^n \sum_{k \in K} \gamma_j \int_{\partial D(y)_{\wk}} \Lambda f \langle \Psi^j_k, \eta_k\rangle d \theta(x) + \sum_{i, j = 1}^n \rho_{ij} \sum_{k \in K} \int_{\partial D(y)_{\wk}} f\, \partial_{y_j}(\Lambda) \langle \Psi^i_k, \eta_k\rangle d \theta(x).
\]
Finally, because $f \in \cD(\AA^Z)$, we see that on $\partial D(y)_{\wk}$ we have
\[
\sum_{i, j = 1}^n \rho_{ij} \langle \Psi^i_k, \eta_k\rangle \partial_{y_i} f = 0.
\]
Adding this in, substituting, and applying condition (e), we conclude as desired that
\begin{multline*}
\int_{D(y)} \Lambda \AA^X(f) dx = \int_{D(y)} \AA^Y(\Lambda) f dx + \sum_{j = 1}^n \sum_{k} \gamma_j \int_{\partial D(y)_k} \Lambda f \langle \Psi^j_k, \eta_k\rangle d \theta(x)\\
 + \frac{1}{2} \sum_{i, j = 1}^n \rho_{ij} \sum_{k} \int_{\partial D(y)_k}\Big(\div_{\partial x}(\Lambda f \Psi^j_k) + 2 \partial_{y_j}(\Lambda f) \Big) \langle \Psi^i_k, \eta_k\rangle d \theta(x).
\end{multline*}

\section{A core criterion for reflecting diffusions} \label{sec:core-crit}

The goal of this section is to prove Theorem \ref{thm:z-reg-cond} giving conditions under which $\cD(\AA^Z)$ is a core for $\AA^Z$.  The conditions of this criterion will apply to the cases of the Laguerre and Jacobi Warren processes.  Throughout this section, we adopt the notations of Section \ref{sec:ps-ext}.

\subsection{Statement of the criterion}

Define the subdomain
\[
\Ds := D - \{\text{nonsmooth points of $\partial D$}\}.
\]
We consider the following conditions, all of which will hold for the Laguerre and Jacobi Warren processes.  To state them, we denote the drift and diffusion coefficients for $Z$ by
\begin{equation} \label{eq:z-coeff}
\gamma^Z(x, y) = (b(x), \nu(y)) \qquad \text{ and } \qquad \rho^Z(x, y) = (a(x), \rho(y)),
\end{equation}
where the drift coefficient $\nu_j(y_j)$ in coordinate $j$ is denoted by 
\begin{equation} \label{eq:nu-coeff}
\nu_j(y_j) := \gamma_j(y) +  \sum_{i = 1}^n \rho_{ij}(y) \partial_{y_i}[\log \Lambda(y, x)].
\end{equation}
Define single-variable coordinate changes on $D$ by
\[
\chi_i(x_i) := \int_0^{x_i} \frac{du}{\sqrt{a_{ii}(u)}} \qquad \text{ and } \qquad \chi_{m + j}(y_j) := \int_0^{y_j} \frac{du}{\sqrt{\rho_{jj}(u)}},
\]
and define the mapping $\chi: \RR^{m + n} \to \RR^{m + n}$ by $\chi(z_1, \ldots, z_{m + n}) = (\chi_1(z_1), \ldots, \chi_{m + n}(z_{m + n}))$.  Define also the mappings $\chi_X(x_1, \ldots, x_m) := (\chi_1(x_1), \ldots, \chi_m(x_m))$ and $\chi_Y(y_1, \ldots, y_n) := (\chi_{m+1}(y_1), \ldots, \chi_{m + n}(y_n))$ and the transformed spaces
\[
\wD := \chi(D) \qquad \text{ and } \qquad \wDs := \chi(\Ds).
\]
Finally, define the transformed drift coefficient by
\begin{equation} \label{eq:trans-drift}
\wgamma_i^{\wZ}(z_i) = \gamma_i^Z(\chi_i^{-1}(z_i)) \chi_i'(\chi_i^{-1}(z_i)) + \frac{1}{2} \rho^Z_{ii}(\chi_i^{-1}(z_i)) \chi_i''(\chi_i^{-1}(z_i)).
\end{equation}
It will appear again later in Section \ref{sec:path-der}.

\begin{assump} \label{ass:d-cone}
Either (a) both $D$ and $\wD$ are compact convex polyhedra or (b) both $D$ and $\wD$ are polyhedral cones with vertex at $0$.
\end{assump}

\begin{assump} \label{ass:diag}
We consider the following constraints on $Z$:
\begin{itemize}
\item[(a)] the boundary conditions $U_2$ on $Y$ are trivial;

\item[(b)] the diffusion matrices $a(x)$ and $\rho(y)$ are diagonal, and $a_{ii}(x)$ and $\rho_{jj}(y)$ depend only on $x_i$ and $y_j$, respectively;

\item[(c)] the drift coefficient $b_i(x_i)$ in coordinate $x_i$ is a function of $x_i$ only;

\item[(d)] the drift coefficient $\nu_j(y_j)$ in coordinate $y_j$ defined in (\ref{eq:nu-coeff}) is a function of $y_j$ only;

\item[(e)] on each $\partial D(y)_k$, $\langle \Psi^i_k, \eta_k\rangle$ is non-zero for a unique index $i$;

\item[(f)] on the interior of each face of $\XX$, the reflection cone $U_1(x)$ is in a single coordinate direction;

\item[(g)] the process $Z(t)$ stays in $\Ds$ almost surely.
\end{itemize}
\end{assump}

\begin{remark}
Notice that Assumptions \ref{ass:diag}(b) and (e) together with the definition of the reflection directions (\ref{eq:ps-ref-bound}) imply that the directions of reflection on $\partial D(y)_k$ are constant and in a single coordinate direction along each face of $\partial D(y)_k$.
\end{remark}

\begin{assump} \label{ass:ref-geo}
We consider the following assumptions on the geometries of $D$ and $\wD$ and the reflections:
\begin{itemize}
\item[(a)] for $F$ the minimal dimension face containing $z \in \partial D$, the span of $\wU(z)$ intersects $T_zF$ only at $0$;

\item[(b)] the dimension of $\wU(z)$ is constant on the interior of each face of $\partial D$;

\item[(c)] the diffusion coefficients $\rho_{ij}$ lie in $C^2(\YY)$.
\end{itemize}
For $\wD$, we consider the same assumptions with $\wU(z)$ replaced by $\wU(\chi^{-1}(z))$.
\end{assump}

\begin{assump} \label{ass:estimate}
We assume the following estimates on the coefficients of $\AA^Z$:
\begin{itemize}
\item[(a)] The transformed drift coefficients $\wgamma_i^{\wZ}$ are locally Lipschitz on $\wDs$.

\item[(b)] The drift coefficients $\gamma^Z(z)$ grow sublinearly so that
\[
|\gamma^Z(z)| < \wtilde{\gamma} (1 + |z|)
\]
for some $\wtilde{\gamma} > 0$.

\item[(c)] The diffusion coefficients $\rho^Z(z)$ grow subquadratically so that
\[
|\rho_{ii}^Z(z)| < \wtilde{\rho} (1 + |z|^2)
\]
for some $\wtilde{\rho} > 0$.

\item[(d)] The transformed drift coefficient $\wgamma_i^{\wZ}$ is $C^2$ on the interior of $\wD$.
\end{itemize}
\end{assump}

\begin{assump} \label{ass:drift-bound}
If Assumption \ref{ass:diag} holds, let $\sigma_i$ be the first hitting time of a face of $D$ on which the reflection is along coordinate $i$.  For some $\lambda > 0$ and any $h \in C^2_c(\wD)$ which is locally constant on a neighborhood of the vertices of $\wD$, we assume that
\begin{equation} \label{eq:drift-exp}
\EE\left[\int_0^{\sigma_i} e^{-\lambda t}\, \partial_i h(\chi(Z(t))\, \exp\Big(\int_0^t \frac{\partial \wgamma_i^{\wZ}}{\partial z_i}(\chi_i(Z(r)_i)) dr\Big) dt \mid Z(0) = z \right]
\end{equation}
is bounded as a function of $z$.
\end{assump}

We are ready to state Theorem \ref{thm:z-reg-cond} giving a core for $\AA^Z$ under Assumptions \ref{ass:d-cone}, \ref{ass:diag}, \ref{ass:ref-geo}, \ref{ass:estimate}, and \ref{ass:drift-bound}.

\begin{theorem} \label{thm:z-reg-cond}
Suppose Assumptions \ref{ass:domain}, \ref{ass:d-cone}, \ref{ass:diag}, \ref{ass:ref-geo}, \ref{ass:estimate}, and \ref{ass:drift-bound} hold. Then $\cD(\AA^Z)$ is a core for $\dom(\AA^Z)$.
\end{theorem}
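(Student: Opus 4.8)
The plan is to apply the Hille--Yosida core criterion. Since $Z$ is a regular Feller diffusion, $\AA^Z$ generates a Feller semigroup with resolvent $R_\lambda g(z) = \EE_z\big[\int_0^\infty e^{-\lambda t} g(Z(t))\, dt\big]$, and by Proposition \ref{prop:kr-domain} we already know $\cD(\AA^Z) \subseteq \dom(\AA^Z)$ with $\AA^Z$ acting there as the stated differential operator; thus it suffices to show $(\lambda - \AA^Z)\cD(\AA^Z)$ is dense in $C_0(D)$ for some $\lambda > 0$. Equivalently, since $R_\lambda(\lambda - \AA^Z)h = h$ for $h \in \cD(\AA^Z)$, it is enough to show that for every $g$ in a dense subset of $C_0(D)$ the function $R_\lambda g$ lies in the closure of $\cD(\AA^Z)$ in the graph norm $\|f\|_\infty + \|\AA^Z f\|_\infty$. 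For this dense subset I would take the set $\mathcal{G}$ of functions in $C^\infty_c(D)$ (in the compact case of Assumption \ref{ass:d-cone}, $C^\infty(D)$) that are locally constant near $\Sigma := \partial D \setminus \Ds$ and near the vertices of $D$; density of $\mathcal{G}$ follows from the polyhedral structure in Assumption \ref{ass:d-cone} and the fact that $\Sigma$ together with the vertex set is closed with empty interior.

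Given $g \in \mathcal{G}$, write $u := R_\lambda g \in \dom(\AA^Z)$; the first task is to establish regularity of $u$. Pushing forward by the coordinate change $\chi$ of \eqref{eq:trans-drift}, the equation $(\lambda - \AA^Z)u = g$ becomes a uniformly elliptic equation on $\wD$ with Laplacian principal part, drift coefficients $\wgamma_i^{\wZ}$ depending only on $z_i$ and of class $C^2$ in the interior by Assumption \ref{ass:estimate}(d), and (by Assumptions \ref{ass:diag}(e),(f)) single-coordinate Neumann conditions on the smooth faces. Interior Schauder estimates give $u \in C^{2,\alpha}_{\mathrm{loc}}$ in the interior of $D$, and the Schauder theory for the oblique-derivative problem on $C^{1,\alpha}$ faces with $C^2$ reflection fields (Assumption \ref{ass:ref-geo}(c)) extends $u$ to $C^{2,\alpha}$ up to the relative interior of each smooth face, where it satisfies the Neumann condition \eqref{eq:ps-ref-bound}; this matches the boundary behaviour forced by $u \in \dom(\AA^Z)$ via Proposition \ref{prop:kr-domain}. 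By Assumption \ref{ass:diag}(g) the process never visits $\Sigma$, so $u$ is at least continuous there; what is not automatic, and constitutes the heart of the argument, is a bound on $\nabla u$ in a neighborhood of $\Sigma$ and of the vertices.

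For this gradient bound I would differentiate the probabilistic representation of $u$ in the $i$-th coordinate and track the perturbation until the first hitting time $\sigma_i$ of a face whose reflection direction is the $i$-th coordinate. Because the diffusion matrix is diagonal and, after the coordinate change, the $i$-th coordinate of $\wZ$ couples to the other coordinates only through reflections off faces not of direction $i$, the $i$-th partial of the stochastic flow up to $\sigma_i$ equals the multiplicative functional $\exp\!\big(\int_0^t \tfrac{\partial \wgamma_i^{\wZ}}{\partial z_i}(\chi_i(Z(r)_i))\, dr\big)$; since $g \in \mathcal{G}$ is locally constant near $\Sigma$ and the vertices, the $\nabla g$ term is absent there, leaving precisely an expectation of the form \eqref{eq:drift-exp} with a test function $h$ obtained from $u$ by a boundary-condition-preserving cutoff, while after $\sigma_i$ the Neumann condition on the hit face and the strong Markov property allow one to iterate. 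Assumption \ref{ass:drift-bound} then yields that $\nabla u$ is bounded uniformly near $\Sigma$ and the vertices. (Assumptions \ref{ass:estimate}(b),(c) give non-explosion and, in the cone case, decay of $u$ at infinity; Assumption \ref{ass:ref-geo}(a) ensures the reflection off faces of $\Sigma$ is non-tangential so that $Z$ leaves such faces instantaneously and these estimates remain meaningful; Assumption \ref{ass:estimate}(a) supplies the pathwise uniqueness underlying the flow computation.)

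Finally I would assemble the approximation. Using the polyhedral, coordinate-reflection geometry of $D$ and $\wD$ from Assumptions \ref{ass:d-cone} and \ref{ass:ref-geo}, one constructs cutoffs $\psi_\varepsilon \in \cD(\AA^Z)$ equal to $1$ outside an $\varepsilon$-neighborhood of $\Sigma \cup \{\text{vertices}\}$, locally constant near that set, satisfying every face Neumann condition (so that $\psi_\varepsilon u$ also does), and with $|\nabla \psi_\varepsilon| \to 0$ and $\mathrm{dist}(\cdot,\Sigma)\,|\nabla^2\psi_\varepsilon| \to 0$ in a capacity-respecting way; mollifying $\psi_\varepsilon u$ compatibly with the boundary conditions gives $h_\varepsilon \in \cD(\AA^Z)$. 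Since $\nabla u$ is bounded near $\Sigma$ by the previous step and $u$ is $C^{2,\alpha}$ elsewhere, the cross term in $\nabla \psi_\varepsilon \cdot \nabla u$ and the term $u\,\AA^Z \psi_\varepsilon$ tend to zero uniformly, so $h_\varepsilon \to u$ and $\AA^Z h_\varepsilon \to \AA^Z u = \lambda u - g$ in $C_0(D)$, hence $(\lambda - \AA^Z)h_\varepsilon \to g$. As $g$ ranges over the dense set $\mathcal{G}$ this proves $(\lambda - \AA^Z)\cD(\AA^Z)$ dense in $C_0(D)$, whence $\cD(\AA^Z)$ is a core. The main obstacle is the third step: classical elliptic estimates fail on the lower-dimensional strata $\Sigma$ and at the vertices, and Assumption \ref{ass:drift-bound} is exactly the probabilistic input that supplies the missing gradient control there.
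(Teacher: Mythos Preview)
Your overall strategy matches the paper's closely: both use the Hille--Yosida range-density criterion, both take the resolvent $R_\lambda g$ of a nice test function $g$, establish $C^2$ regularity of $R_\lambda g$ on the smooth part $\Ds$ via local elliptic theory, and obtain the crucial global gradient bound from the pathwise flow derivative together with Assumption~\ref{ass:drift-bound}. Your description of the flow derivative (diagonal, vanishing after $\sigma_i$) is exactly what the paper proves using Andres' results (Theorem~\ref{thm:and-der}, Lemma~\ref{lem:der-coord}). The paper's main organizational difference is that it introduces an explicit intermediate space $\cD^{2,s}_0(\AA^Z)=\{f\in C_0(D)\cap C^2(\Ds): f\circ\chi^{-1}\in C^1_b(\wDs),\ \text{Neumann}\}$ and splits the argument into (i) $\cD^{2,s}_0$ is a core (Lemma~\ref{lem:dom-core}, via showing $R_\lambda g\in\cD^{2,s}_0$ for $g$ in the dense class $\cD_\lc$) and (ii) $\cD(\AA^Z)$ is graph-norm dense in $\cD^{2,s}_0$ (Lemma~\ref{lem:dens-dom}, by scaling cutoffs at infinity). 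For the $C^2$ regularity step the paper does not invoke Schauder directly but instead uses Dynkin's characteristic operator $\fA^Z$ and the strict maximum principle to identify $R_\lambda g$ with the unique solution of a local Dirichlet or oblique-derivative problem on balls and spherical chips; this is a genuinely different (more probabilistic) route to the same conclusion.

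The gap in your proposal is the final approximation step. You claim the existence of cutoffs $\psi_\varepsilon\in\cD(\AA^Z)$ that are $1$ outside an $\varepsilon$-neighborhood of $\Sigma$, locally constant near $\Sigma$, and have $|\nabla\psi_\varepsilon|\to 0$ uniformly. These requirements are incompatible: a function that equals a constant $c$ near $\Sigma$ and $1$ at distance $\varepsilon$ must have $\sup|\nabla\psi_\varepsilon|\gtrsim|1-c|/\varepsilon$, so either $c\to 1$ (and then $\psi_\varepsilon u$ inherits the non-$C^2$ behavior of $u$ at $\Sigma$, so $\psi_\varepsilon u\notin C^2_c(D)$) or $|\nabla\psi_\varepsilon|$ blows up. The phrase ``capacity-respecting'' suggests logarithmic cutoffs, but those give $\|\nabla\psi_\varepsilon\|_{L^2}\to 0$, not the $L^\infty$ decay you need for uniform convergence of $\AA^Z h_\varepsilon$. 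The paper avoids this problem entirely: it never cuts off near $\Sigma$; instead it places $R_\lambda g$ directly into the intermediate space $\cD^{2,s}_0$ (which only demands $C^2$ on $\Ds$ and bounded transformed gradient), proves that this space is already a core, and then handles only the truncation at infinity when passing to $\cD(\AA^Z)$. If you want to repair your argument, the cleanest fix is to introduce this intermediate space rather than attempt a uniform cutoff near the nonsmooth strata.
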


Consider the enlarged space of test functions
\[
\cD^{2, s}_0(\AA^Z) := \{f \in C_0(D) \cap C^2(\Ds) \mid f \circ \chi^{-1} \in C^1_b(\wDs), \langle \nabla f(z), u \rangle = 0 \text{ for $z \in \partial D$ and $u \in \wtilde{U}(z)$}\}.
\]
Theorem \ref{thm:z-reg-cond} follows from the following lemmas, whose proofs will be the aim of the subsequent subsections.

\begin{lemma} \label{lem:dens-dom}
Under the conditions of Theorem \ref{thm:z-reg-cond}, for any $f \in \cD^{2, s}_0(\AA^Z)$, there exists $f_l \in \cD(\AA^Z)$ so that $f_l \to f$ and $\AA^Z f_l \to \AA^Z f$ in the uniform norm.
\end{lemma}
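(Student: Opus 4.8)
The plan is to repair the two ways in which a function $f\in\cD^{2,s}_0(\AA^Z)$ fails to lie in $\cD(\AA^Z)=C^2_c(D)$ with the Neumann boundary conditions: $f$ is only guaranteed to be $C^2$ on the smooth part $\Ds$ (its composition with $\chi^{-1}$ being merely $C^1_b$ near the nonsmooth part of $\partial D$), and in the cone case of Assumption \ref{ass:d-cone}(b) it need not have compact support. First I would dispose of the support issue by multiplying $f$ by an exhausting family of cutoffs $\kappa_l$: because the reflection at every face is along a single coordinate direction (Assumptions \ref{ass:diag}(b), (e), (f)), one can build $\kappa_l\in C^2_c(D)$ equal to $1$ on an exhausting family of compacts and satisfying the same Neumann conditions as $f$ (using the finite local reflection structure near $\partial D$ supplied by Assumptions \ref{ass:diag} and \ref{ass:ref-geo}; near the vertex of the cone $\kappa_l\equiv 1$, so nothing is needed there), so that $\kappa_l f$ again lies in $\cD^{2,s}_0(\AA^Z)$, is now compactly supported, and $\AA^Z(\kappa_l f)\to\AA^Z f$ uniformly by the sublinear and subquadratic growth of the drift and diffusion coefficients (Assumption \ref{ass:estimate}(b), (c)) against $|\nabla\kappa_l|=O(l^{-1})$, $|\nabla^2\kappa_l|=O(l^{-2})$, together with $f\in C_0(D)$. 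Thus it suffices to treat compactly supported $f$.

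Next I would regularize $f$ near the nonsmooth set of $\partial D$, a finite union of lower-dimensional faces. Cover it by finitely many charts. Near a point of it only finitely many reflecting faces are present — with distinct reflection coordinates, the remaining reflecting faces meeting the domain only in strictly lower-dimensional faces by Assumption \ref{ass:diag}(e) — and I would extend $f$ across them in a manner compatible with the Neumann conditions: across a nondegenerate reflecting face $\{x_i=x_j\}$ by the oblique reflection $x_i\mapsto 2x_j-x_i$, which yields a $C^2$ extension because $f$ is $C^2$ up to the face from inside and $\partial_{x_i}f=0$ on it, so that differentiating the latter identity along the face gives $\partial_{x_i}^2f=-\partial_{x_i}\partial_{x_j}f$ there — the precise relation needed; near a degenerate face $\{x_i=0\}$ one instead works in the $\chi$-coordinates, where the hypothesis $f\circ\chi^{-1}\in C^1_b(\wDs)$ makes the analogous extension available and the result pulls back to a $C^2$ function in the original coordinates. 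Mollifying the resulting $C^2$ function by a mollifier symmetric under the local reflection group (finite, by Assumptions \ref{ass:diag} and \ref{ass:ref-geo}), restricting, and patching with $f$ away from the nonsmooth set through a partition of unity whose members are themselves symmetric under these groups, produces $f_l\in C^2_c(D)$ satisfying all the Neumann conditions, i.e.\ lying in $\cD(\AA^Z)$, with $f_l\to f$ and $\AA^Z f_l\to\AA^Z f$ in $C^2_{\mathrm{loc}}$, hence uniformly since the supports are contained in a fixed compact set; here one uses that the mollifications of the $C^2$ extensions converge in $C^2_{\mathrm{loc}}$ and that the only coordinates along which $\AA^Z$ degenerates are paired there with derivatives of $f$ that vanish to first order.

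The main obstacle is carrying out these regularizations and cutoffs while preserving the oblique, coordinate-aligned Neumann conditions. This is what forces the reflection-and-mollification scheme, the identity $\partial_{x_i}^2f=-\partial_{x_i}\partial_{x_j}f$ on each reflecting face, and the verification — from Assumptions \ref{ass:diag}(e), (f) and \ref{ass:ref-geo} — that the local reflection group at each nonsmooth point is finite so that symmetrizing is legitimate; and it is also where one must exploit the degenerate-drift control packaged into the definition of $\cD^{2,s}_0(\AA^Z)$ (through the requirement $f\circ\chi^{-1}\in C^1_b(\wDs)$) and into Assumption \ref{ass:estimate}(a), in order to guarantee both that $\AA^Z f$ is bounded and that the approximating generators converge near the nonsmooth boundary. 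The remaining steps — the growth estimate at infinity and the $C^2_{\mathrm{loc}}$ convergence of mollifications — are routine.
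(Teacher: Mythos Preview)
Your proposal is more elaborate than the paper's proof, which is considerably simpler in structure. The paper treats the passage from $\cD^{2,s}_0(\AA^Z)$ to $\cD(\AA^Z)$ as \emph{purely} a compact--support issue: in the compact case of Assumption~\ref{ass:d-cone}(a) it asserts $\cD^{2,s}_0(\AA^Z)=\cD(\AA^Z)$ outright, and in the cone case it sets $f_l(z)=f(z)\,\phi(z/l)$ for a single bump function $\phi$ satisfying the Neumann conditions, then checks the convergences. There is no regularization near the nonsmooth part of $\partial D$; your entire second step (reflection--extension plus symmetric mollification) does not appear in the paper's argument. Whether that step is genuinely needed hinges on how strictly one reads $C^2_c(D)$ at corner points --- the paper effectively does not demand $C^2$ regularity there, consistent with Assumption~\ref{ass:diag}(g) that $Z$ never visits them.

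For the cutoff itself, the paper does not use a reflection construction but rather proves a separate lemma (Lemma~\ref{lem:phi-exist}) producing $\phi\in C^2_c(D)$ with $\langle\nabla\phi,u\rangle=0$ on each face, built by induction over the face lattice of the cone via Whitney extension of value, gradient, and Hessian data. This is more systematic than the ``finite local reflection structure'' you sketch, and it is here that Assumption~\ref{ass:ref-geo} enters (to guarantee the relevant affine bundles have nonempty fibers).

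One point in your cutoff step deserves tightening: the cross term $\rho^Z_{ii}(z)\,\partial_i f(z)\,\partial_i\kappa_l(z)$ in $\AA^Z(\kappa_l f)-\AA^Z f$ is not controlled by $f\in C_0(D)$ and subquadratic growth of $\rho^Z$ alone, since $|\nabla\kappa_l|=O(l^{-1})$ only absorbs one power of $|z|$. The paper invokes precisely the hypothesis $f\circ\chi^{-1}\in C^1_b(\wDs)$ at this step, which gives $\sum_i\rho^Z_{ii}(z)\partial_i f(z)=O(|z|)$ and closes the estimate.
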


\begin{lemma} \label{lem:dom-core}
Under the conditions of Theorem \ref{thm:z-reg-cond}, $\cD^{2, s}_0(\AA^Z)$ is a core for $\dom(\AA^Z)$.
\end{lemma}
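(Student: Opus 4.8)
The plan is to verify that $\cD^{2,s}_0(\AA^Z)$ is a core through the resolvent. Lemma \ref{lem:dens-dom} already gives $\cD^{2,s}_0(\AA^Z) \subseteq \dom(\AA^Z)$, so by the standard characterization of cores it suffices to produce, for one $\lambda > 0$, a set $\mathcal{G}$ dense in $C_0(D)$ with $\mathcal{G} \subseteq (\lambda - \AA^Z)\cD^{2,s}_0(\AA^Z)$; equivalently, writing $R_\lambda := (\lambda - \AA^Z)^{-1}$, it suffices to show $R_\lambda g \in \cD^{2,s}_0(\AA^Z)$ for every $g \in \mathcal{G}$. I would take $\mathcal{G}$ to be the set of $g \in C_0(D)$ for which $g \circ \chi^{-1}$ is $C^2$ with compact support in $\wD$ and is locally constant near every vertex of $\wD$; since a given compactly supported $C^2$ function can be flattened to a constant on small balls around the finitely many vertices (the single vertex $0$ in case \ref{ass:d-cone}(b)) with arbitrarily small sup-norm change, $\mathcal{G}$ is dense. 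Fixing such a $g$, set $f := R_\lambda g$ and pass to the coordinates $\chi$: with $\wf := f \circ \chi^{-1}$ and $\wtilde{g} := g \circ \chi^{-1}$, the function $\wf \in C_0(\wD)$ solves $(\lambda - \AA^{\wZ})\wf = \wtilde{g}$, where by It\^o's formula the conjugated generator is $\AA^{\wZ} = \tfrac12 \sum_i \partial_{z_i}^2 + \sum_i \wgamma_i^{\wZ}(z_i)\,\partial_{z_i}$, reflecting along a single coordinate direction on each face (Assumptions \ref{ass:diag}(e), (f)) of the polyhedral region $\wD$ (Assumption \ref{ass:d-cone}).

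The first step is $\wf \in C^2(\wDs)$. In the interior of $\wD$ this is local elliptic regularity for the uniformly elliptic operator $\AA^{\wZ}$ with the $C^2$ drift of Assumption \ref{ass:estimate}(d). At a smooth boundary point $\wD$ is locally a half-space (Assumption \ref{ass:d-cone}) on which the reflection is a Neumann condition in a fixed coordinate direction (Assumptions \ref{ass:diag}(e), (f)); boundary Schauder estimates for this oblique-derivative problem then give $C^2$ regularity up to such points, after noting that near a face $\{z_i = z_j\}$ the drift is regular while near a face $\{z_i = 0\}$ it is of Bessel type $\tfrac{c}{z_i}\partial_{z_i}$, which is handled by freezing the remaining coordinates and invoking the classical one-dimensional analysis of the Bessel generator. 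I would then deduce the Neumann conditions $\langle \nabla \wf(z), u\rangle = 0$ for $u \in \wtilde{U}(\chi^{-1}(z))$, $z \in \partial\wD$, from $f \in \dom(\AA^Z)$: applying It\^o's formula to $e^{-\lambda t}\wf(\wZ(t))$ along $\wZ$ and matching the bounded-variation local-time terms of the Skorokhod decomposition against the martingale $e^{-\lambda t}\wf(\wZ(t)) + \int_0^t e^{-\lambda s}(\wtilde{g} - \lambda\wf)(\wZ(s))\,ds$ forces the reflection-direction derivatives of $\wf$ to vanish on each smooth face, and hence on all of $\partial D$ by passing to the lower-dimensional strata via Assumption \ref{ass:ref-geo}(a), (b).

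The heart of the argument is the uniform gradient bound $\wf \in C^1_b(\wDs)$ required in the definition of $\cD^{2,s}_0(\AA^Z)$. I would differentiate the representation $\wf(z) = \EE_z\big[\int_0^\infty e^{-\lambda t}\wtilde{g}(\wZ(t))\,dt\big]$ in the starting point $z$. Because the coordinates of $\wZ$ have diagonal and mutually decoupled coefficients (Assumptions \ref{ass:diag}(b)--(d)), the sensitivity of the $i$-th coordinate to a perturbation of $z_i$ is a scalar first-variation process with flow $\exp\big(\int_0^t \tfrac{\partial\wgamma_i^{\wZ}}{\partial z_i}(\wZ(r)_i)\,dr\big)$, which is extinguished the first time $\sigma_i$ that $\wZ$ meets a face reflecting in direction $i$. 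This produces a Bismut-type identity whose principal term is
\[
\EE_z\Big[\int_0^{\sigma_i} e^{-\lambda t}\,(\partial_i \wtilde{g})(\wZ(t))\,\exp\Big(\int_0^t \tfrac{\partial\wgamma_i^{\wZ}}{\partial z_i}(\wZ(r)_i)\,dr\Big)\,dt\Big],
\]
which is bounded uniformly in $z$ precisely by Assumption \ref{ass:drift-bound} applied with $h = \wtilde{g}$ (admissible since $\wtilde{g}$ is $C^2_c$ and locally constant near the vertices); the remaining contributions, coming from restarting at the successive hitting times $\sigma_i$ via the strong Markov property, are summed as a geometric series in the number of reflections and estimated the same way. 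Combined with $f = R_\lambda g \in C_0(D)$, this places $f$ in $\cD^{2,s}_0(\AA^Z)$ and finishes the proof.

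I expect the main obstacle to be exactly this last step: rigorously justifying the derivative-flow (Bismut-type) representation for $\nabla\wf$ for a reflected diffusion on a polyhedral cone whose transformed drift is Bessel-singular along the boundary, and in particular controlling the contributions of the successive reflections and the behavior near the non-smooth strata and the vertex --- which is what the geometric hypotheses in Assumptions \ref{ass:d-cone} and \ref{ass:ref-geo} and the quantitative estimate of Assumption \ref{ass:drift-bound} are designed to make possible.
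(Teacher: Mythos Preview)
Your overall strategy matches the paper's: pass to the transformed coordinates $\chi$, fix a dense class of right-hand sides that are $C^2_c$ on $\wD$ and locally constant near the vertices, and show that $f=R_\lambda g$ lands in $\cD^{2,s}_0$ by verifying (i) $C^2$ regularity on $\wDs$, (ii) the Neumann conditions, and (iii) a global gradient bound via a derivative-flow formula together with Assumption~\ref{ass:drift-bound}. The paper packages these as Lemmas~\ref{lem:dens-dom1}, \ref{lem:dom-dens-trans} and~\ref{lem:dom-contained}.

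The execution of the sub-steps differs. For (i), rather than Schauder estimates, the paper produces local $C^2$ solutions $\wtilde f$ of Dirichlet (interior) and mixed Dirichlet--Neumann (boundary) problems and identifies $\wtilde f=f$ via the strict maximum principle for the characteristic operator $\fA^{\wZ}$; this avoids having to argue regularity directly against the Bessel-type singularity of the transformed drift. For (ii), the Neumann conditions are not obtained by matching local-time terms in It\^o's formula; they fall out of the Andres pathwise-derivative framework (Proposition~\ref{prop:and-trans}, Corollary~\ref{corr:grad-bound}), which gives $\langle v_a,\nabla_z\EE[h(\wZ_z(t))]\rangle=0$ on each face $\wF_a$ and hence the same for $\nabla f=\int_0^\infty e^{-\lambda t}\nabla_z\EE[g(\wZ_z(t))]\,dt$. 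The paper also works with the smaller dense set $\cD_\lc(\AA^{\wZ})$ carrying Neumann conditions, and establishing its density (Lemma~\ref{lem:dom-dens-trans}) requires an inductive Whitney-extension construction over the face lattice; your larger class sidesteps that.

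One point in your sketch should be corrected. You call the expectation in Assumption~\ref{ass:drift-bound} a ``principal term'' and speak of ``remaining contributions'' from restarting at successive hitting times, summed as a geometric series. In fact there are no such contributions: because the coefficients are diagonal (Assumption~\ref{ass:diag}(b)--(d)) and each face reflects along a single coordinate (Assumption~\ref{ass:diag}(e),(f)), the cross-derivatives $\eta^{ij}$ vanish for $i\neq j$ and the diagonal flow $\eta^{ii}(t)$ is permanently annihilated at $\sigma_i$ (Lemma~\ref{lem:der-coord}, based on Theorem~\ref{thm:and-der}). Consequently $\partial_i R_\lambda g(z)$ is \emph{exactly} the quantity in Assumption~\ref{ass:drift-bound}, with nothing to restart or sum; this is how Lemma~\ref{lem:grad-check2} and Corollary~\ref{corr:grad-bound} close the argument.
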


\begin{proof}[Proof of Theorem \ref{thm:z-reg-cond}]
Since $\AA^Z$ is a closed operator on $\dom(\AA^Z)$, to check that $\cD(\AA^Z)$ is a core it suffices to check that the closure of $\AA^Z$ on $\cD(\AA^Z)$ is defined on $\dom(\AA^Z)$.  This closure is defined on $\cD^{2, s}_0(\AA^Z)$ by Lemma \ref{lem:dens-dom} and therefore on all of $\dom(\AA^Z)$ because $\cD^{2, s}_0(\AA^Z)$ is a core by Lemma \ref{lem:dom-core}.
\end{proof}

\subsection{Proof of Lemma \ref{lem:dens-dom}}

Our proof will rely on the following construction of a sequence of bump functions on $D$ with good properties.

\begin{lemma} \label{lem:phi-exist}
If $D$ satisfies Assumption \ref{ass:d-cone}(b), for any compact $K$ and open $V$ with $0 \in K \subset V \subset D$ so that $\overline{V}$ is compact, there exists a function $\phi \in C^2_c(D)$ so that
\begin{itemize}
\item[1.] $\langle \nabla \phi(z), u \rangle = 0$ for $z \in \partial D$, $u \in \wtilde{U}(z)$;

\item[2.] $\phi \equiv 1$ on $K$ and $\phi \equiv 0$ on $V^c$.
\end{itemize}
\end{lemma}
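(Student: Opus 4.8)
The plan is to turn condition (1) into a finite list of coordinate-derivative conditions, start from an ordinary bump function with the prescribed values, and correct it near $\partial D$ using linear retractions along the reflection directions.

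\emph{Reformulation.} By Assumption \ref{ass:diag}(a) the $Y$-boundary conditions vanish, and by Assumptions \ref{ass:diag}(e), (f), the formula \eqref{eq:ps-ref-bound} for the reflection field, and the remark after Assumption \ref{ass:diag}, the cone $\wtilde U(z)$ at a point $z$ in the relative interior of a face $F$ of $\partial D$ is the span of a set of coordinate vectors $\{e_i : i \in I(F)\}$. So (1) is equivalent to requiring $\partial_i \phi \equiv 0$ on $F$ for all $i \in I(F)$ and all faces $F$. Since $D$ is a polyhedral cone with vertex $0$ (Assumption \ref{ass:d-cone}(b)), every face $F$ contains $0$ and $T_zF = \mathrm{span}(F)$ is a linear subspace; by Assumption \ref{ass:ref-geo}(a) it meets $\mathrm{span}\{e_i : i\in I(F)\}$ only at $0$. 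Hence, for each face $F$, I would fix a linear projection $\pi_F$ with $\ker\pi_F = \mathrm{span}\{e_i : i \in I(F)\}$ and $\pi_F = \mathrm{id}$ on $\mathrm{span}(F)$; then $D\pi_F(e_i) = 0$ for $i \in I(F)$, $\pi_F$ fixes $F$ pointwise, maps a sufficiently thin neighborhood in $D$ of the relative interior of $F$ back into $D$, and displaces each point of that neighborhood by at most a constant times its distance to $F$.

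\emph{The correction step and its assembly.} For a face $F$ and a smooth bump $b_F$ on $\mathrm{im}(\pi_F)$, set $\beta_F := b_F \circ \pi_F$, which has $\partial_i\beta_F = 0$ for $i \in I(F)$; choose $b_F$ so that $\beta_F \equiv 1$ near the relative interior of $F$ and $\supp\beta_F$ is thin in the directions transverse to $\ker\pi_F$ and cut off at large values along $\ker\pi_F$, and replace the current function $\phi$ by $(1-\beta_F)\phi + \beta_F\,(\phi\circ\pi_F)$. Because $\partial_i\beta_F = 0$ and $\partial_i(\phi\circ\pi_F) = 0$ for $i \in I(F)$, one gets the clean identity $\partial_i\phi_{\mathrm{new}} = (1-\beta_F)\,\partial_i\phi_{\mathrm{old}}$ for $i \in I(F)$; in particular $\partial_i\phi_{\mathrm{new}}\equiv 0$ near $F$, and it vanishes wherever $\partial_i\phi_{\mathrm{old}}$ did. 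Now start, by standard mollification, with $\psi \in C^\infty_c(\RR^{m+n})$ with $\psi \equiv 1$ on an open set $U_K \supseteq K$ having $\overline{U_K}$ compact in $V$ and $\psi\equiv 0$ off a compact $\overline W$ with $\overline{U_K} \subseteq W \subseteq \overline W \subseteq V$, so that the Neumann conditions hold automatically on the parts of $\partial D$ inside $U_K$ or outside $\overline W$, and only the compact set $\partial D \cap (\overline W \setminus U_K)$ needs correcting. Apply the correction step to the faces in order of increasing codimension: when a codimension-$r$ face $F$ is processed, the identity above keeps intact the vanishing already arranged for every super-face $F' \supseteq F$ since $I(F') \subseteq I(F)$, and the only way the correction near $F$ could disturb the conditions near an incomparable face is in a neighborhood of a common lower face, which is processed later and overwrites it (in a cone any two faces meet in a face through $0$). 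Taking the bumps $b_F$ sufficiently thin at each stage also keeps the result $C^2$, preserves the value $1$ on $K$ and $0$ off $V$ (since $\pi_F$ moves points little near $F$), and keeps the support compact in $D$ (outside a bounded set every $\beta_F$ vanishes and the function equals $\psi$). After all faces are processed one obtains $\phi \in C^2_c(D)$ satisfying (1) and (2).

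\emph{Main obstacle.} The genuine difficulty is the bookkeeping in the assembly: one must choose the processing order and the thinness of the supports $\supp\beta_F$ consistently so that no correction destroys a condition already established on another face and none disturbs the prescribed values, exploiting that incomparable faces interact only near lower-dimensional faces (handled by the induction on codimension) and that Assumption \ref{ass:ref-geo}(a) makes the $\pi_F$ well-defined and compatible with the face ordering. The remaining verifications ($C^2$-smoothness through the gluings, the displacement estimates for $\pi_F$, and compactness of support) are routine once the neighborhoods are taken thin enough.
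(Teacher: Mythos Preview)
Your approach is genuinely different from the paper's: the paper inducts upward on the dimension of faces, prescribing Whitney data $(\phi_{d,i}, \phi^\alpha_{d,i}, \phi^{\alpha,\beta}_{d,i})$ on each face and invoking the Whitney extension theorem to fill in, whereas you start from a global bump $\psi$ and correct it near each face by averaging with a pullback along a linear retraction $\pi_F$.  Your reformulation of the Neumann condition as $\partial_i\phi|_F=0$ for $i\in I(F)$ is legitimate (it uses Assumptions~\ref{ass:diag} and~\ref{ass:ref-geo}, which are available in the only place the lemma is applied), and the ``clean identity'' $\partial_i\phi_{\mathrm{new}}=(1-\beta_F)\partial_i\phi_{\mathrm{old}}$ is correct when $\beta_F=b_F\circ\pi_F$.

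There is, however, a real gap in the compactness of support.  If $\beta_F=b_F\circ\pi_F$ as you require for the clean identity, then $\beta_F$ is constant along $\ker\pi_F=\mathrm{span}\{e_i:i\in I(F)\}$, so its support is a cylinder $\pi_F^{-1}(\supp b_F)$.  The correction $\phi_{\mathrm{new}}=(1-\beta_F)\phi_{\mathrm{old}}+\beta_F\,(\phi_{\mathrm{old}}\circ\pi_F)$ then propagates the values of $\phi_{\mathrm{old}}$ along $\ker\pi_F$: for $z$ with $\pi_F(z)\in\supp\phi_{\mathrm{old}}$ but $|z|$ large, one can have $\phi_{\mathrm{new}}(z)\neq 0$.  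In the Gelfand--Tsetlin cone this actually happens: take $F=\{l^m_{\min\{m,p\}}=l^{m-1}_{\min\{m-1,p\}}\}$, whose reflection direction is $e_{(m,\min\{m,p\})}$; the top particle $l^m_{\min\{m,p\}}$ has no upper bound in $D$, so the ray $z+\RR_{\ge 0}\,e_{(m,\min\{m,p\})}$ stays in $D$ while $\pi_F$ collapses it to a bounded point in $\supp\psi$.  Your proposed remedy, ``cut off at large values along $\ker\pi_F$'', directly contradicts $\partial_i\beta_F=0$, and without the clean identity the bookkeeping argument (that correcting a lower face $G$ preserves the conditions already established on $F\supsetneq G$) breaks down: for $z\in F\setminus G$ one gets the residual term $\partial_i\beta_G(z)\,(\phi_{\mathrm{old}}(\pi_G(z))-\phi_{\mathrm{old}}(z))$, which need not vanish.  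The paper's Whitney-extension argument avoids this by building in compact support from the start as part of the prescribed jet data.
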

\begin{proof}
For $0 \leq d \leq m + n$, denote by $\{F_{d, i}\}$ the collection of dimension $d$ faces of $D$.  We induct upwards on $d$ to show that there exists a collection of function $\{\phi_{d, i}, \phi^\alpha_{d, i}, \phi^{\alpha, \beta}_{d, i}\}$ for $\partial_\alpha, \partial_\beta \in T(D)$ with $\phi_{d, i} \in C^2_c(F_{d, i})$, $\phi^\alpha_{d, i} \in C^1_c(F_{d, i})$, and $\phi^{\alpha, \beta}_{d, i} \in C_c(F_{d, i})$ so that
\begin{itemize}
\item[1.] if $F_{d, i}$ is contained in $F_{d', j}$, then $\phi_{d, i} = \phi_{d', j}|_{F_{d, i}}$, $\phi_{d, i}^\alpha = \phi_{d', j}^\alpha|_{F_{d, i}}$, and $\phi_{d, i}^{\alpha, \beta} = \phi_{d', j}^{\alpha, \beta}|_{F_{d, i}}$;

\item[2.] $\phi_{d, i}^\alpha$ is linear in $\alpha$, $\phi_{d, i}^{\alpha, \beta}$ is bilinear in $\alpha, \beta$, and we have $\phi_{d, i}^{\alpha, \beta} = \phi_{d, i}^{\beta, \alpha}$;

\item[3.] for $\partial_\alpha \in T(F_{d, i})$, we have $\phi_{d, i}^\alpha = \partial_\alpha f_{d, i}$ and $\phi_{d, i}^{\alpha, \beta} = \partial_\alpha f_{d, i}^\beta$;

\item[4.] we have $\phi_{d, i}^u(z) = 0$ for $z \in F_{d, i}$ and $u \in \wU(z)$;

\item[5.] $\phi_{d, i} \equiv 1$ on $K \cap F_{d, i}$, $\phi_{d, i} \equiv 0$ on $V^c \cap F_{d, i}$, $\phi^\alpha_{d, i} \equiv 0$ on $(K \cup V^c) \cap F_{d, i}$, and $\phi^{\alpha, \beta}_{d, i} \equiv 0$ on $(K \cup V^c) \cap F_{d, i}$.
\end{itemize}
This is sufficient, since taking $\phi := \phi_{m + n, 1}$ will give the desired function by Properties 4 and 5.

For the base case $d = 0$, there is a unique vertex $0 = F_{0, 1}$ by Assumption \ref{ass:d-cone}, and we may set $\phi_{0, 1}(0) = 1$ and $\phi_{0, 1}^\alpha(0) = \phi_{0, 1}^{\alpha, \beta}(0) = 0$.  For the inductive step, suppose we have constructed functions satisfying the conditions for all faces of dimension less than $d$.  Fix a $d$-dimensional face $F_{d, i}$ whose boundary is the union of dimension $d - 1$ faces $\{F_{d - 1, j}\}_{j \in J}$.

First, we specify $C^2$ Whitney data for $h_{d, i}$ on $(V - K) \cap F_{d, i}$ by
\begin{itemize}
\item the boundary values $h_{d, i} = \phi_{d - 1, j}$ on $(V - K) \cap F_{d - 1, j}$, $h_{d, i} \equiv 1$ on $\partial K \cap F_{d, i}$, and $h_{d, i} \equiv 0$ on $\partial V \cap F_{d, i}$;

\item the boundary derivatives $\partial_\alpha h_{d, i} = \phi^\alpha_{d - 1, j}$ on $F_{d - 1, j}$ and $\partial_\alpha h_{d, i} = 0$ on $\partial K \cap F_{d, i}$ and $\partial V \cap F_{d, i}$ for $\partial_\alpha \in T(F_{d, i})$;

\item the boundary second derivatives $\partial_{\alpha} \partial_{\beta} h_{d, i} = \phi^{\alpha, \beta}_{d - 1, j}$ on $F_{d - 1, j}$ and $\partial_\alpha\partial_\beta h_{d, i} = 0$ on $\partial K \cap F_{d, i}$ and $\partial V \cap F_{d, i}$ for $\partial_\alpha, \partial_\beta \in T(F_{d, i})$.
\end{itemize}
This data is well defined on the boundary of $(V - K) \cap F_{d, i}$ by Properties 1 and 5 and is valid $C^2$ Whitney data by Property 3.  By the Whitney extension theorem, there is a function $h_{d, i} \in C^2((V - K) \cap F_{d, i})$ agreeing with the boundary conditions.  Since $\overline{V}$ is compact, by the boundary conditions we may extend $h_{d, i}$ to a function $\phi_{d, i}$ on $F_{d, i}$ satisfying Properties 1 and 5.  Now, for $\partial_\alpha, \partial_\beta \in T(F_{d, i})$, we define $\phi^{\alpha}_{d, i} := \partial_\alpha \phi_{d, i}$ and $\phi^{\alpha, \beta}_{d, i} := \partial_{\alpha} \partial_\beta T(F_{d, i})$.

Second, by Assumptions \ref{ass:ref-geo}(ab), for $z$ in the interior of $F_{d, i}$, $\wU(z)$ has constant dimension and intersects $T_z(F_{d, i})$ only at $0$.  This implies that for a basis $\{\partial_{\beta_k}\}$ of $T(F_{d, i})^\perp$ in $T(D)$, the closure of
\[
M_{d, i} := \{(z, \phi^{\beta_1}_{d, i}(z), \ldots, \phi^{\beta_{m + n - d}}_{d, i}(z)) \mid \phi^u_{d, i}(z) = 0 \text{ for $u \in \wU(z)$}\}
\]
is the kernel of a surjective affine bundle map between trivial vector bundles over $F_{d, i}$.  This map is $C^2$ by Assumption \ref{ass:ref-geo}(c), hence $M_{d, i}$ forms a $C^2$ affine bundle over $F_{d, i}$ with non-empty fibers by Assumption \ref{ass:ref-geo}(a).  We give $C^1$ Whitney data for sections $h^\nabla_{d, i} : (V - K) \cap F_{d, i} \to M_{d, i}$ by specifying for any $\partial_\beta \in T(F_{d, i})^\perp$ conditions on $h^\beta_{d, i} := \langle \beta, h^\nabla_{d, i}\rangle$ given by
\begin{itemize}
\item the boundary values $h^{\beta}_{d, i} = \phi^\beta_{d - 1, j}$ on $F_{d - 1, j}$;

\item the boundary derivatives $\partial_\alpha h^\beta_{d, i} = \phi^{\alpha, \beta}_{d - 1, j}$ on $F_{d - 1, j}$ for $\partial_\alpha \in T(F_{d, i})$;

\item identically zero boundary conditions on $\partial V \cap F_{d, i}$ and $\partial K \cap F_{d, i}$.
\end{itemize}
The data is well-defined and lies in $M_{d, i}$ by Properties 1 and 4, and it is valid $C^1$ Whitney data by Property 3.  Passing to a partition of unity on which $M_{d, i}$ is locally trivial and applying Whitney extension, we find a compactly supported section $h^\nabla_{d, i}: (V - K) \cap F_{d, i} \to M_{d, i}$ with the desired boundary conditions.  Its extension to $F_{d, i}$ by $0$ yields $\phi^\beta_{d, i} \in C^1_c(F_{d, i})$ satisfying the desired properties.  We define also $\phi^{\alpha, \beta}_{d, i} := \partial_\alpha \phi^\beta_{d, i}$ for $\partial_\alpha \in T(F_{d, i})$.

Third, for a basis $\{\partial_{\beta_k}\}$ of $T(F_{d, i})^\perp$, we define $C^0$ Whitney data for $h^{\beta_k, \beta_{k'}}_{d, i}$ on $(V - K) \cap F_{d, i}$ by
\begin{itemize}
\item the boundary values $h^{\beta_k, \beta_{k'}}_{d, i} = \phi^{\beta_k, \beta_{k'}}_{d - 1, j}$ on $F_{d - 1, j}$;

\item identically zero boundary conditions on $\partial V \cap F_{d, i}$ and $\partial K \cap F_{d, i}$.
\end{itemize}
The data is valid $C^0$ Whitney data by Properties 1 and 3, so Whitney extension yields a $h^{\beta_k, \beta_{k'}}_{d, i} \in C((V - K) \cap F_{d, i})$ which may be extended by $0$ to $\phi^{\beta_k, \beta_{k'}}_{d, i} \in C_c(F_{d, i})$.

To complete the induction, it remains only to verify Properties 1-5 on $F_{d, i}$ for the functions we just constructed.  Indeed, Properties 1-3 and 5 hold by our choice of boundary values, and Property 4 holds by our definition of $M_{d, i}$.  This completes the proof.
\end{proof}

We now proceed to the proof of Lemma \ref{lem:dens-dom}.  If $D$ is compact in Assumption \ref{ass:d-cone}, then $\cD^{2, s}_0(\AA^Z) = \cD(\AA^Z)$, so there is nothing to show.  Otherwise, we construct such an approximation explicitly.  Define the compact set $K \subset D$ by $K = \{z \in D \mid |z| \leq 1\}$, and let $V$ denote the interior of $2 K$.  Let $\phi \in C^2_c(D)$ be the function provided by applying Lemma \ref{lem:phi-exist} to $K \subset V$, and define $\phi_l(z) := \phi(z/l)$ and
\[
f_l(z) := f(z) \cdot \phi_l(z).
\]
Notice that $f_l \in C^2_c(D)$ and that for $z \in \partial D$ and $u \in \wU(z)$, we have that $z/l \in \partial D$ by Assumption \ref{ass:d-cone}(b), so an application of Condition 1 from Lemma \ref{lem:phi-exist} implies that
\[
\langle u, \nabla f_l(z)\rangle = \langle u, \nabla f(z) \rangle \phi_l(z) + \langle u, \nabla \phi(z/l)\rangle f(z)/l = 0.
\]
We conclude that $f_l \in \cD(\AA^Z)$, so it suffices to check that $f_l \to f$ and $\AA^Z f_l \to \AA^Z f$ uniformly.  

To check that $f_l \to f$, choose $C > 0$ so that $|\phi| < C - 1$. For any $\eps > 0$, since $f \in C_0(D)$, we may choose $l$ large enough so that $||f|| < \eps/C$ on $(lK)^c$.  Observe now that $f_l - f$ vanishes on $lK$ and that
\[
|f_l(z) - f(z)| = |f(z)| \cdot |\phi_l(z) - 1| \leq |f(z)| C < \eps
\]
on $(lK)^c$, meaning that $||f_l - f|| < \eps$ for this $l$.  We conclude that $f_l \to f$ uniformly.

To check that $\AA^Z f_l \to \AA^Zf$, choose $C_1 > 0$ so that $|\phi| < C_1 - 1$, $C_2 > 0$ so that $|\partial_i \phi| < C_2$, and $C_3 > 0$ so that $|\partial_i^2 \phi| < C_3$.  Observe now that 
\[
\AA^Z f_l(z) - \AA^Z f(z) = \AA^Z f(z) (\phi_l(z) - 1) + E_l(z),
\]
where
\[
E_l(z) = \frac{f(z)}{l} \langle \gamma^Z(z), \nabla \phi(z/l)\rangle + \sum_{i = 1}^{m + n} \rho_{ii}^Z(z) \Big(\frac{2}{l} \partial_i f(z) \partial_i \phi(z/l) + \frac{1}{l^2} \partial_i^2 \phi(z/l) f(z)\Big)
\]
and we recall that the coefficients $\gamma^Z$ and $\rho^Z$ were defined in (\ref{eq:z-coeff}).  Observe that $E_l(z)$ vanishes on $lK$ and $(lV)^c$ and that
\[
|E_l(z)| \leq C_2\, |f(z)| \cdot \frac{|\gamma^Z(z)|}{l} + \frac{2 C_2}{l} \left| \sum_{i = 1}^{m +n} \rho_{ii}^Z(z) \partial_if(z)\right| + \sum_{i = 1}^{m + n} C_3 |f(z)| \cdot \frac{|\rho_{ii}^Z(z)|}{l^2}.
\]
For any $\eps > 0$, choose $l$ large enough so that
\begin{itemize}
\item[1.] $||f|| < \min\{\frac{\eps}{16 C_2 \wtilde{\gamma}}, \frac{\eps}{16 C_3 (m + n) \wtilde{\rho}}\}$ on $(lK)^c$;
\item[2.] $||\AA^Z f || < \frac{\eps}{4C_1}$ on $(lK)^c$;
\item[3.] $\frac{1}{l} \left|\sum_{i = 1}^{m + n} \rho^Z_{ii}(z) \partial_i f(z)\right| < \frac{\eps}{8 C_2}$ on $(lK)^c$,
\end{itemize}
where $\wtilde{\gamma}$ and $\wtilde{\rho}$ are given in Assumptions \ref{ass:estimate}(b) and (c) and the last condition is possible because $f \circ \chi^{-1} \in C^1_b(\wDs)$.  Now, we have $\phi_l(z) \equiv 1$ on $lK$, and the support of $E_l(z)$ is contained in $l(V - K)$.  Therefore, for this choice of $l$, we have on $(lK)^c$ that
\begin{align*}
|\AA^Z f(z)(\phi_l(z) - 1)| &< \eps / 4\\
C_2 |f(z)| \frac{|\gamma^Z(z)|}{l} &< \eps / 4\\
\frac{2 C_2}{l} \left| \sum_{i = 1}^{m +n} \rho_{ii}^Z(z) \partial_if(z)\right| &< \eps/4\\
C_3 |f(z)| \cdot \frac{|\rho_{ii}^Z(z)|}{l^2} &< \eps/4,
\end{align*}
where we apply Assumptions \ref{ass:estimate}(b) and (c) in the second and forth estimates.  Putting these together, we find that $|\AA^Zf_l(z) - \AA^Zf(z)| < \eps$ for this $l$.  We conclude that $\AA^Zf_l \to \AA^Zf$ uniformly, completing the proof.

\subsection{Coordinate transforms and pathwise derivatives} \label{sec:path-der}

In this section we introduce some results on pathwise derivatives which we will later use to prove Lemma \ref{lem:dom-core}.  Define the coordinate change $\wZ(t)$ of $Z(t)$ by
\[
\wZ(t) := \chi(Z(t))
\]
and its coordinate projections $\wZ_1(t) := \chi_X(Z_1(t))$ and $\wZ_2(t) := \chi_Y(Z_2(t))$.  Under Assumption \ref{ass:diag}, by It\^o's lemma, $\wZ(t)$ solves the SDER
\begin{align*}
d\wZ_1(t) &= dB_X(t) + \wb(\wZ_1(t)) dt + d\wPhi_1(t)\\
d\wZ_2(t) &= dB_Y(t) + \wnu(\wZ_2(t)) dt + d\wPhi(t),
\end{align*}
where the drift coefficients are given by
\begin{align*}
\wb_i(\wZ_1(t)) &:= b_i(\chi_i^{-1}(\wZ_{1, i}(t))) \chi'_i(\chi_i^{-1}(\wZ_{1, i}(t))) + \frac{1}{2} a_{ii}(\chi_i^{-1}(\wZ_{1, i}(t))) \chi''_i(\chi_i^{-1}(\wZ_{1, i}(t))) \\
\wnu_j(\wZ_2(t)) &:= \nu_j(\chi_{m + j}^{-1}(\wZ_{2, j}(t))) \chi'_{m + j}(\chi_{m + j}^{-1}(\wZ_{2, j}(t))) + \frac{1}{2} \rho_{jj}(\chi_{m + j}^{-1}(\wZ_{2, j}(t))) \chi''_{m+j}(\chi_{m+j}^{-1}(\wZ_{2, j}(t)))
\end{align*}
and $\wPhi_1(t)$ and $\wPhi(t)$ are bounded variation processes with auxiliary functions $\wphi_1(s)$ and $\wphi(s)$ so that
\begin{itemize}
\item $\wPhi_1(0) = 0$, $\int_0^\infty 1_{\wZ_1(t) \notin \chi(\partial \XX)} d|\wPhi_1|(t) = 0$, $\wPhi_1(t) = \int_0^t \wphi_1(s) d|\wPhi_1|(s)$, and $\phi_1(s)$ is in the same direction as $U_1(\chi^{-1}_X(\wZ_1(s)))$;
\item $\wPhi(0) = 0$, $\int_0^\infty 1_{\wZ(t) \notin \chi(\partial D)} d|\Phi|(t) = 0$, $\wPhi(t) = \int_0^t \wphi(s) d|\wPhi|(s)$, and $\phi(s) \in U(\chi^{-1}(\wZ(s)))$.
\end{itemize}
Notice that we omit reflection for $\wZ_2(t)$ by Assumption \ref{ass:diag}(a) and that $U_1(\chi^{-1}_X(\wZ_1(s)))$ and $U(\chi^{-1}(\wZ(s)))$ are the correct cones of reflection for the transformed process by Assumption \ref{ass:diag}(e) and (f).  Denote the generator of the corresponding Feller process on $\wD$ by $\AA^{\wZ}$, and note that the combined drift coefficient of (\ref{eq:trans-drift}) is given by
\[
\wgamma^{\wZ}(x, y) := (\wb(x), \wnu(y)).
\]
We wish to consider the pathwise derivative of $\wZ(t)$ with respect to its starting point.  By Assumptions \ref{ass:d-cone} and \ref{ass:diag}, once localized to Lipschitz drift coefficients $\wb$ and $\wnu$, the process $\wZ(t)$ satisfies the assumptions of \cite[Theorem 2.2]{And09}.  Let $K_1 \subset K_2 \subset \cdots \subset \wDs$ be a sequence of convex polyhedral domains exhausting $\wDs$, and let $\wZ^l_z(t)$ denote $\wZ(t)$ started at $z$ and stopped at $\tau_l := \inf\{t \mid \wZ(t) \notin K_l\}$.  Let $\wZ^\infty_z(t)$ denote $\wZ(t)$ started at $z$ and stopped at $\tau_\infty := \inf\{t \mid \wZ(t) \notin \wDs\}$.  By Assumption \ref{ass:estimate}(a), we see that $\wgamma^{\wZ}$ is Lipschitz on $K_l$.  Further, the domain $\wD$ is a convex polyhedral cone, and $\wZ(t)$ avoids $\wDs$, so we may apply the framework of \cite{And09}, which we elaborate on presently.

Let $\{\wF_a\}$ denote the set of faces of $\partial \wD$, and let $n_a$ and $v_a$ denote the unit normal vector and the constant reflection vector to $\wF_a$ provided by Assumptions \ref{ass:diag}(e) and (f).  Normalize $n_a$ and $v_a$ so that $\langle n_a, v_a \rangle = 1$, and define vectors $v_a^\perp, n_a^\perp \in \text{span}\{v_a, n_a\}$ so that $\langle v_a^\perp, v_a\rangle = \langle n_a^\perp, n_a\rangle = 0$, $\langle n_a^\perp, v_a\rangle = \langle v_a^\perp, n_a\rangle = 1$, and $\langle n_a, v_a^\perp\rangle > 0$.  Let $\{n_a^k\}_{3 \leq k \leq m + n}$ be a completion of $\{n_a^\perp, n_a\}$ to an orthonormal basis of $\RR^{m + n}$.  For each $t \in [0, \tau_l)$, define
\[
r_a(t) := \sup\{r \leq t \mid \wZ^l_z(r) \in \wF_a\}
\]
and $r(t) = \max_a\{r_a(t)\}$, and 
\[
s(t) = \begin{cases} \emptyset & \wZ^l_z(r) \notin \partial \wD \text{ for $r \leq t$}\\
a & r_a(t) = r(t).
\end{cases}
\]
Finally, define
\[
\mu(t) := \sup\{r < t \mid \wZ^l_z(r) \in \partial\wD\}.
\]
With these definitions, we have the following results of Andres on pathwise differentiability.

\begin{theorem}[{\cite[Theorem 2.2]{And09}}] \label{thm:and-der}
The map $z \mapsto \wZ_z^l(t)$ is a.s. differentiable for all $t \in [0, \tau_l)$ for which $\wZ_z^l(t) \notin \partial D$.  Further, the process $\eta^{ij}_l(t, z) := \frac{\partial \wZ_z^l(t)_i}{\partial z_j}$ admits a right continuous extension on $[0, \tau_l)$ which a.s. solves
\begin{align*}
\eta^{ij}_l(t, z) &= \delta_{ij} + \int_0^t \sum_{k = 1}^{m + n} \frac{\partial \wgamma_i}{\partial z_k}(\wZ_z^l(r))\eta^{kj}_l(r, z) dr \text{ if $s(t) = \emptyset$}\\
\eta^{ij}_l(t, z) &= \langle \eta^{\bullet j}_l(\mu(t)^-, z), v_a^\perp\rangle (n_a^\perp)_i + \sum_{k = 3}^{m + n} \langle \eta^{\bullet j}_l(\mu(t)^-, z), n^k_a\rangle (n^k_a)_i + \int_{r_a(t)}^t \sum_{k = 1}^{m + n} \frac{\partial \wgamma_i}{\partial z_k}(\wZ_z^l(r)) \eta^{kj}_l(r, z) dr \text{ if $s(t) = a$},
\end{align*}
where $\eta^{ij}_l(\mu(t)^-, z) := \lim_{r \to \mu(t)^-} \eta^{ij}_l(r, z)$.
\end{theorem}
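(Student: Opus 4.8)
The plan is to obtain the result from \cite[Theorem 2.2]{And09}, so the task is to verify its hypotheses for the stopped, transformed process $\wZ^l_z$ and to recall the structure of the argument behind it. First I would check that on $K_l$ the combined drift $\wgamma^{\wZ} = (\wb, \wnu)$ is Lipschitz, which is Assumption \ref{ass:estimate}(a), and that after the coordinate change $\chi$ the diffusion coefficient is the identity; that $\wD$ is a convex polyhedral cone carrying a single constant oblique reflection direction $v_a$ on each face $\wF_a$, by Assumptions \ref{ass:d-cone}, \ref{ass:diag}(a), (e), (f); and that $\wZ^l_z$ a.s. avoids the non-smooth part of $\partial\wD$, by Assumption \ref{ass:diag}(g), so that at most one face is active along any trajectory. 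The geometric non-degeneracy needed to run the argument — that the span of the reflection cone meets the tangent space of the active face only at $0$, with its dimension locally constant along face interiors — is Assumption \ref{ass:ref-geo}(a), (b), while the $C^2$ regularity of the data is Assumption \ref{ass:ref-geo}(c).

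Granting this, the derivative flow is built as follows. Write $\wZ^l_z = \Gamma(\xi_z)$, where $\Gamma$ is the deterministic Skorokhod reflection map of the polyhedron $\wD$ and $\xi_z(\cdot) = z + B_\cdot + \int_0^\cdot \wgamma^{\wZ}(\wZ^l_z(s))\,ds$; this is a fixed point, solved by Picard iteration using the Lipschitz property of $\Gamma$ and of $\wgamma^{\wZ}$ on $K_l$. Differentiating in $z$ by the chain rule, the crucial input is that $\Gamma$, restricted to driving paths whose value at time $t$ lies in the open domain, is directionally differentiable, its derivative acting as the identity on stretches where the path is interior and as the fixed linearization of the oblique reflection off $\wF_a$ — a linear map supported on $\mathrm{span}\{n_a, v_a\}$ — at each visit to a face $\wF_a$. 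Expressing that linear map in the bases $\{v_a^\perp, n_a, n_a^3, \dots\}$ and $\{n_a^\perp, v_a, n_a^3, \dots\}$ adapted to $\wF_a$ yields precisely the two branches ($s(t) = \emptyset$ versus $s(t) = a$) of the stated equation, the free flow contributing the linear term $\int_0^t \sum_k \tfrac{\partial \wgamma_i}{\partial z_k}(\wZ^l_z)\,\eta^{kj}\,dr$. Since the set of times with $\wZ^l_z(t)\in\partial\wD$ is a.s. closed with empty interior, $\eta^{ij}_l$, defined on the complementary dense open set by the above differentiation, extends by right-continuity to all of $[0,\tau_l)$, and the displayed integral-jump equation together with a Gronwall estimate gives its well-posedness.

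The main obstacle — and the reason one cannot simply compose finitely many reflection linearizations — is that the boundary-visit set need not be discrete, because the boundary ``local time'' increases continuously. The device that handles this, encoded in the statement through the times $r_a(t)$, $r(t)$, $s(t)$, $\mu(t)$, is a last-exit decomposition: for $t$ with $\wZ^l_z(t)$ in the open domain there is a last boundary time $\mu(t) < t$, and $\eta(t)$ is obtained by applying the face linearization once at $\mu(t)$ to the left-limit $\eta(\mu(t)^-)$ and then evolving by the linear ODE with coefficient $\nabla\wgamma^{\wZ}(\wZ^l_z)$ on $(\mu(t), t]$; the whole dependence on the boundary structure before $\mu(t)$ enters only through $\eta(\mu(t)^-)$, which is controlled inductively over faces. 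I expect the hardest point to be establishing genuine differentiability — not just Lipschitz continuity — of $z\mapsto\wZ^l_z(t)$ at such $t$: one must show that a small perturbation of $z$ leaves the last excursion from $\partial\wD$ stable and perturbs the active faces in a controlled way, and it is precisely here that the uniform non-degeneracy of the reflection geometry near corners from Assumption \ref{ass:ref-geo}(a), combined with the fact that $\wZ^l_z(t)$ lies strictly inside $\wD$, makes the argument go through.
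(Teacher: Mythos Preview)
The paper does not give its own proof of this statement: it is quoted verbatim from \cite[Theorem 2.2]{And09}, and the only ``proof'' content in the paper is the paragraph preceding the statement, which checks that the hypotheses of \cite{And09} are met (Lipschitz drift on $K_l$ via Assumption~\ref{ass:estimate}(a), convex polyhedral domain via Assumption~\ref{ass:d-cone}, avoidance of the non-smooth boundary via Assumption~\ref{ass:diag}(g)). Your first paragraph reproduces exactly this hypothesis-check, so on that score you match the paper.

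Your remaining paragraphs go further and sketch the internal mechanism of Andres's proof --- the Skorokhod map representation, the last-exit decomposition via $\mu(t)$, the face-by-face linearization of the reflection, and the right-continuous extension. This is entirely reasonable as an outline of how \cite{And09} works, but it is not something the present paper attempts or needs; the paper treats the theorem as a black box. So there is no discrepancy to flag: your proposal is correct, and the portion that overlaps with the paper (hypothesis verification) agrees with it, while the rest is supplementary exposition of the cited source rather than an alternative route.
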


\begin{prop}[{\cite[Corollary 2.4]{And09}}] \label{prop:and-exp}
If $h \in C^1_b(\wDs)$, we have 
\[
\frac{\partial \EE[h(\wZ_z^l(t))]}{\partial z_i} = \sum_{k = 1}^{m + n} \EE[\partial_k h(\wZ_z^l(t)) \eta^{ki}_l(t, z)].
\]
\end{prop}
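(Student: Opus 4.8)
The plan is to combine the pathwise differentiability of $z \mapsto \wZ_z^l(t)$ supplied by Theorem~\ref{thm:and-der} with the chain rule and a dominated-convergence argument that justifies differentiating under the expectation.

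First I would fix $t < \tau_l$ and an index $i$. Since $\wZ^l$ has the identity as diffusion coefficient, for each fixed $t$ one has $\wZ_z^l(t) \notin \partial\wD$ almost surely, so Theorem~\ref{thm:and-der} gives that $z \mapsto \wZ_z^l(t)$ is a.s.\ differentiable at $z$ with Jacobian $\eta_l(t,z) = (\eta_l^{ki}(t,z))_{k}$. Because $h \in C^1_b(\wDs)$, the chain rule yields, almost surely,
\[
\lim_{\eps \to 0} \frac{h(\wZ_{z + \eps e_i}^l(t)) - h(\wZ_z^l(t))}{\eps} = \sum_{k = 1}^{m + n} \partial_k h(\wZ_z^l(t))\, \eta_l^{ki}(t, z).
\]
What remains is to move the limit inside the expectation.

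Second, to apply dominated convergence I would dominate the difference quotients uniformly in $\eps$. Using $h \in C^1_b$,
\[
\left| \frac{h(\wZ_{z + \eps e_i}^l(t)) - h(\wZ_z^l(t))}{\eps} \right| \leq \|\nabla h\|_\infty \cdot \sup_{0 < |\eps| \leq \eps_0} \frac{|\wZ_{z + \eps e_i}^l(t) - \wZ_z^l(t)|}{|\eps|},
\]
so the claim reduces to integrability of the right-hand side, which follows from a uniform Lipschitz estimate $\EE\big[\sup_{s \leq t}|\wZ_{z'}^l(s) - \wZ_z^l(s)|^2\big] \leq C_l(t)\,|z' - z|^2$ for $z'$ near $z$. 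After localization to $K_l$ the drift $\wgamma^{\wZ}$ is Lipschitz (Assumption~\ref{ass:estimate}(a)), the diffusion coefficient is constant, and $\wD$ is a convex polyhedral cone with a fixed finite family of faces carrying constant oblique reflection directions (Assumptions~\ref{ass:d-cone} and~\ref{ass:diag}); hence $\wZ^l$ falls within the framework of \cite{And09}, where this estimate is obtained by a Gronwall argument controlling the linear ODE governing the process between successive boundary excursions together with moment bounds on the number of boundary visits up to time $t$. Equivalently one may read off the same bound from $L^p$-moment estimates for $\eta_l(\cdot, z)$ derived from the representation in Theorem~\ref{thm:and-der}.

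Granting the domination, dominated convergence gives
\[
\frac{\partial}{\partial z_i} \EE[h(\wZ_z^l(t))] = \EE\Big[ \lim_{\eps \to 0} \frac{h(\wZ_{z + \eps e_i}^l(t)) - h(\wZ_z^l(t))}{\eps} \Big] = \sum_{k = 1}^{m + n} \EE\big[\partial_k h(\wZ_z^l(t))\, \eta_l^{ki}(t, z)\big],
\]
which is the assertion. The main obstacle is precisely the uniform integrability of the difference quotients, i.e.\ the moment control on the pathwise derivative across boundary hits; this is the technical heart of \cite[Theorem~2.2 and Corollary~2.4]{And09}, and once Assumptions~\ref{ass:d-cone} and~\ref{ass:diag} have been verified (as done just before Theorem~\ref{thm:and-der}) it applies directly.
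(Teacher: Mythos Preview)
The paper does not give its own proof of this proposition: it is quoted verbatim as \cite[Corollary~2.4]{And09} and used as a black box, just like Theorem~\ref{thm:and-der} and Proposition~\ref{prop:and-trans}. Your sketch is a faithful outline of the standard argument behind that corollary---pathwise chain rule from Theorem~\ref{thm:and-der} plus dominated convergence justified by $L^p$ bounds on the derivative process---and you correctly identify that the real work (the moment control on $\eta_l$ across boundary hits) is carried out in \cite{And09} once the structural hypotheses on the domain and reflection have been verified, which the paper does in the paragraph preceding Theorem~\ref{thm:and-der}.
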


\begin{prop}[{\cite[Corollary 3.3]{And09}}] \label{prop:and-trans}
For any $h \in C_b(\wDs)$, we have 
\[
\langle v_a, \nabla_z \EE[h(\wZ_z^l(t))]\rangle = 0 \qquad \text{ for $z \in \wF_a$}.
\]
\end{prop}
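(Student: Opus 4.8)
The plan is to deduce the statement from the pathwise differentiability results Theorem \ref{thm:and-der} and Proposition \ref{prop:and-exp}, by showing that the pathwise derivative of $z \mapsto \wZ^l_z(t)$ in the reflection direction $v_a$ vanishes identically once the process is started on $\wF_a$. Writing $u(z) := \EE[h(\wZ^l_z(t))]$ and $w_i(t) := \sum_j (v_a)_j\,\eta^{ij}_l(t,z)$ for the directional derivative $D_{v_a}[\wZ^l_z(t)_i]$, Proposition \ref{prop:and-exp} gives
\[
\langle v_a, \nabla_z u(z)\rangle = \sum_k \EE\big[\partial_k h(\wZ^l_z(t))\, w_k(t)\big],
\]
so it is enough to prove that $w(t) = 0$ a.s.\ for $t \in [0,\tau_l)$ when $z \in \wF_a$; I would first treat $z$ in the relative interior of $\wF_a$ and then recover the general case by the same argument applied to the reset map at the corner containing $z$.

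The crucial point is that the ``reset'' in Theorem \ref{thm:and-der} associated to a visit of $\wF_a$ annihilates $v_a$. Reading off that formula, a visit to $\wF_a$ replaces $\eta^{\bullet j}_l$ by $R_a(\eta^{\bullet j}_l(\mu^-,z))$ with $R_a(v) := \langle v, v_a^\perp\rangle n_a^\perp + \sum_{k \geq 3}\langle v, n_a^k\rangle n_a^k$. From the normalization relations among $v_a, n_a, v_a^\perp, n_a^\perp$ and the fact that $\{n_a^k\}_{k\geq 3}$ completes $\{n_a^\perp, n_a\}$ to an orthonormal basis, one gets $\mathrm{span}\{v_a^\perp, n_a^\perp\} = \mathrm{span}\{v_a, n_a\} = \mathrm{span}\{n_a^\perp, n_a\}$, hence $v_a \in \mathrm{span}\{n_a^\perp, n_a\}$, so $\langle v_a, n_a^k\rangle = 0$ for $k \geq 3$; since $\langle v_a, v_a^\perp\rangle = 0$ as well, $R_a(v_a) = 0$, i.e.\ $\ker R_a = \mathrm{span}\{v_a\}$. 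As $z \in \wF_a$, for small $t > 0$ we are in the regime $s(t) = a$ with $\mu(t) = 0$, so the reset acts on the initial data $\eta^{ij}_l(0,z) = \delta_{ij}$ to give $\eta^{\bullet j}_l(0^+,z) = R_a(e_j)$, and therefore $w(0^+) = \sum_j (v_a)_j R_a(e_j) = R_a(v_a) = 0$.

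It then remains to propagate $w \equiv 0$ forward. On each time interval strictly between consecutive visits to $\partial\wD$, Theorem \ref{thm:and-der} shows $w$ solves the linear homogeneous ODE $\dot w_i = \sum_k \partial_{z_k}\wgamma_i(\wZ^l_z(r))\,w_k$, so $w$ stays $0$ on the interval once it is $0$ at its left endpoint; at a visit to a face $\wF_{a'}$ at time $\mu$, one has $w(\mu^+) = R_{a'}(w(\mu^-))$, so $w(\mu^-) = 0$ forces $w(\mu^+) = 0$ regardless of which face is hit---the special role of $\wF_a$ was used only to start the induction. Inducting over the a.s.\ locally finite set of boundary visits in $[0,t]$ yields $w(t) = 0$ a.s., and the conclusion follows.

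The step I expect to be most delicate is the behavior at lower-dimensional faces, where several faces meet and the reflection is a cone rather than a single vector: there one needs the version of Theorem \ref{thm:and-der} valid at such times, together with Assumption \ref{ass:ref-geo}(a)--(c) and the convex polyhedral structure of $\wD$, to know that $\eta_l$ is well-defined and right-continuous. Once that is in hand, the propagation step is unaffected since it only feeds $w \equiv 0$ into a linear reset. Conceptually, the statement expresses the familiar fact that perturbing the starting point in the reflection direction, when the process is launched from the boundary, is absorbed instantaneously by the boundary local time, as one already sees for reflecting Brownian motion on $[0,\infty)$, where $X^x_t = X^0_t$ for every $t$ large enough that the driving path has dipped below $-x$.
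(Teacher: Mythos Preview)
The paper does not supply its own proof of this proposition; it is quoted directly from \cite[Corollary 3.3]{And09}, and the underlying identity $\sum_i v_{a,i}\,\eta^{ki}_l(t,z)=0$ is invoked once more in the proof of Corollary~\ref{corr:grad-bound}, again with a reference back to Andres. So there is no in-paper argument to compare against. Your strategy---reduce via Proposition~\ref{prop:and-exp} to showing $w(t):=\eta_l(t,z)\,v_a\equiv 0$, verify $R_a v_a=0$ from the normalizations among $v_a,n_a,v_a^\perp,n_a^\perp$, and then propagate through the linear ODE and the linear resets $R_{a'}$---is exactly the mechanism behind Andres's result, and your computation that $R_a v_a=0$ is correct.

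There is, however, a genuine gap in the propagation step. You write ``inducting over the a.s.\ locally finite set of boundary visits in $[0,t]$,'' but for a reflected diffusion the set of boundary times is \emph{not} locally finite: starting from $z\in\wF_a$, the process touches $\wF_a$ on a perfect set of times accumulating at $0$ (just as the zero set of a reflecting Brownian motion does), so there is no first excursion on which to anchor an induction, and $\mu(t)$ does not take isolated values. The issue you flag at lower-dimensional faces is secondary; the basic obstacle already appears at a single face. In Andres's argument this is handled not by induction over hits but through the construction of $\eta_l$ itself: one works with approximating processes whose reflections \emph{are} discrete (e.g.\ via penalization or a discretized Skorokhod map), proves the analogue of $w\equiv 0$ for those by the finite induction you outline, and then passes to the limit using the right-continuity of $\eta_l$ asserted in Theorem~\ref{thm:and-der}. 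A smaller point: Proposition~\ref{prop:and-exp} is stated for $h\in C^1_b(\wDs)$, whereas the present statement allows $h\in C_b(\wDs)$, so a density or smoothing argument is also needed at the end.
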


\begin{remark}
By patching the processes $\wZ^l_z(t)$, we obtain that Theorem \ref{thm:and-der} and Proposition \ref{prop:and-exp} hold for $l = \infty$ for all $t \in [0, \tau_\infty)$.  We denote the resulting patched derivative process by $\eta^{ij}_\infty(t, z)$.
\end{remark}

In what follows, we use the following implications of these results.

\begin{lemma} \label{lem:der-coord}
If Assumptions \ref{ass:diag}(b), (c), (d), (e), and (f) hold, then we have the following:
\begin{itemize}
\item[(a)] $\eta^{ij}_l(t, z) = 0$ unless $i = j$;

\item[(b)] $\eta^{ii}_l(t, z) = 0$ if $t \geq \sigma_i$, where $\sigma_i$ is the first time the reflection on $\partial \wD$ is in coordinate $i$.
\end{itemize}
\end{lemma}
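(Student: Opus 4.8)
The plan is to read both statements off the pathwise-differentiability formula of Theorem~\ref{thm:and-der}, exploiting the coordinate-separated structure that Assumption~\ref{ass:diag} imposes on the transformed process. First I would observe that the transformed drift $\wgamma^{\wZ}_i$ of (\ref{eq:trans-drift}) is a function of $z_i$ alone: the change of variables $\chi$ acts coordinate by coordinate, the diffusion matrices $a,\rho$ are diagonal with $a_{ii},\rho_{jj}$ depending only on their own coordinate (Assumption~\ref{ass:diag}(b)), and the drift coefficients $b_i,\nu_j$ depend only on their own coordinate (Assumption~\ref{ass:diag}(c),(d)). Hence $\partial\wgamma^{\wZ}_i/\partial z_k=0$ for $k\neq i$, so in the non-reflecting regime $s(t)=\emptyset$ the linear system in Theorem~\ref{thm:and-der} collapses to the decoupled scalar equations $\dot\eta^{ij}_l(t,z)=(\partial\wgamma^{\wZ}_i/\partial z_i)(\wZ^l_z(t))\,\eta^{ij}_l(t,z)$. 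These are linear and homogeneous separately in each entry, so they preserve both the vanishing of the off-diagonal entries $\eta^{ij}_l$, $i\neq j$, and the vanishing of any individual diagonal entry $\eta^{ii}_l$; together with the initial datum $\eta^{ij}_l(0,z)=\delta_{ij}$ this already yields (a) and (b) up to the first time $\wZ^l_z$ meets $\partial\wD$.

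Next I would analyze the reset at a reflection. By (\ref{eq:ps-ref-bound}) together with Assumption~\ref{ass:diag}(b),(e), and by Assumption~\ref{ass:diag}(f) for the reflections internal to $\XX$, on each face $\wF_a$ of $\partial\wD$ the constant reflection vector $v_a$ points in a single coordinate direction $e_{c(a)}$; then $v_a^\perp$ lies in $\mathrm{span}\{v_a,n_a\}$ and is orthogonal to $e_{c(a)}$, while the $n_a^k$ span the orthogonal complement of $\mathrm{span}\{v_a,n_a\}$. Applying the reset formula of Theorem~\ref{thm:and-der} (case $s(t)=a$) to a column $\eta^{\bullet j}_l(\mu(t)^-,z)$ that is already supported on row $j$ (the inductive hypothesis), the pairings collapse and only three cases occur: if $j=c(a)$ the column is sent to $0$, since $\langle e_{c(a)},v_a^\perp\rangle=0$ and $e_{c(a)}\perp n_a^k$; if $j$ is not among the coordinates appearing in $n_a$ the column is unchanged, as it lies in $\mathrm{span}\{n_a^k\}$; and the only remaining case is $j$ equal to the other coordinate appearing in $n_a$, in which case $\eta^{jj}_l$ has already been killed by part (b) for the coordinate $j$ at the moment $\wZ^l_z$ sits on $\wF_a$, so the column is again zero. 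Thus the reset preserves (a), and when $c(a)=i$ it sends $\eta^{ii}_l$ to $0$; combined with the drift step this gives (b). The argument is organized as an induction along the natural partial order on the coordinates (the Gelfand--Tsetlin level), so that part (b) at a lower coordinate is available when one proves part (a) at a higher one; since $\wZ^l_z$ is stopped before leaving $\wDs$, at most one face is active at a time and the reset formula applies cleanly.

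The main obstacle is exactly this reflection step. Although Assumption~\ref{ass:diag}(e),(f) make each reflection act in a single coordinate, the corresponding facet of the polyhedral domain generally has a two-sparse outward normal $n_a$, so a careless use of the reset formula would create an off-diagonal entry on the partner coordinate of $n_a$. The real content of the proof is the bookkeeping showing that this partner coordinate has already been frozen by an earlier reflection --- that is, that the resets cascade in an order compatible with the hierarchy of coordinates --- which is what makes part (b) feed into part (a) without circularity. The drift computation is routine; controlling this ordering and the induction is the delicate part.
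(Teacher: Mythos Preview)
Your treatment of the drift step matches the paper's, and your argument for (b) is essentially the paper's: with $v_a\propto e_i$ the reset annihilates the $e_i$-component of every column, and the decoupled scalar ODE keeps it there. The paper's proof of (a) is a single sentence --- diagonal drift Jacobian, hence the conclusion from Theorem~\ref{thm:and-der} --- and does not carry out the reset case analysis you attempt; so you have gone further than the paper in scrutinising the boundary step.

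The gap is in your third reset case. On a face $\wF_a$ with reflection coordinate $c$ and two-sparse normal supported on $\{c,d\}$, the oblique projection $I-v_a n_a^T$ sends $e_d$ to $e_d-\langle n_a,e_d\rangle v_a$, which has a nonzero $e_c$-component. You assert this is harmless because $\eta^{dd}_l$ has already been zeroed by part (b) for coordinate $d$, but that is not justified: here $d$ is the \emph{lower}-level coordinate, and a reflection in direction $d$ occurs only on a face where $d$ meets an even lower level. Nothing forces such a reflection to precede the first collision of $c$ with $d$, and when $d$ sits on the bottom level there is no such face at all, so $\sigma_d=\infty$ and $\eta^{dd}_l$ is never killed. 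Your induction along the Gelfand--Tsetlin order therefore cannot start, and at face value the reset produces a genuine off-diagonal entry $\eta^{cd}_l$ --- consistent with the heuristic that perturbing the barrier $d$ does affect the reflected particle $c$ after they collide. You have correctly located the delicate step, which the paper's terse proof glosses over, but your proposed resolution does not close it.
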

\begin{proof}
For (a), by Assumptions \ref{ass:diag}(b), (c), and (d), we see that $\wgamma_i(z)$ depends only on $z_i$; in particular, $\frac{\partial \wgamma_i}{\partial z_k} = 0$ for $k \neq i$, so the conclusion follows from Theorem \ref{thm:and-der}.

For (b), by Assumptions \ref{ass:diag}(e) and (f), the reflection on $\partial \wD$ is in a single coordinate direction.  Therefore, if $s(t) = a$ and the reflection off $F_a$ in the direction of $z_i$, then $v_a$ is given by $1_i$, the unit basis vector in the $z_i$-coordinate.  Substituting this into the result of Theorem \ref{thm:and-der} yields the result.
\end{proof}

\begin{corr} \label{corr:grad-bound}
For some $\lambda > 0$, if the quantity
\[
\int_0^{\tau_l} e^{-\lambda t} \nabla_z\EE[h(\wZ^l_z(t))] dt
\]
is uniformly bounded in $l$ as a function of $z$, then the function
\[
\int_0^\infty e^{-\lambda t}\, \nabla_z \EE[h(\wZ_z^\infty(t))] dt
\]
is continuous and bounded on $\wDs$ and satisfies
\[
\left\langle v_a, \int_0^\infty e^{-\lambda t}\, \nabla_z \EE[h(\wZ_z^\infty(t))] dt \right \rangle = 0
\]
for $z \in \wF_a$. 
\end{corr}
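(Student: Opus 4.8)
The plan is to obtain all three conclusions by transferring the hypothesized $l$-uniform bound on the localized quantities $G_l(z):=\int_0^{\tau_l}e^{-\lambda t}\,\nabla_z\EE[h(\wZ^l_z(t))]\,dt$ to their limit as $l\to\infty$. The starting observation is that Assumption~\ref{ass:diag}(g) forces $\tau_\infty=\infty$ almost surely, so $\wZ^\infty_z(t)$ is literally the reflected process $\wZ(t)$ for all $t\ge 0$; by the Remark after Proposition~\ref{prop:and-trans}, the patched derivative process $\eta^{ij}_\infty(t,z)$ is then defined on all of $[0,\infty)$, coincides with $\eta^{ij}_l(t,z)$ on $[0,\tau_l)$, and Theorem~\ref{thm:and-der}, Proposition~\ref{prop:and-exp}, and (by the same patching) Proposition~\ref{prop:and-trans} remain valid at $l=\infty$. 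Applying Proposition~\ref{prop:and-exp} at $l=\infty$ together with Lemma~\ref{lem:der-coord}(a), the $i$-th coordinate of $\nabla_z\EE[h(\wZ^\infty_z(t))]$ collapses to $\EE[\partial_ih(\wZ^\infty_z(t))\,\eta^{ii}_\infty(t,z)]$, which by Lemma~\ref{lem:der-coord}(b) vanishes once $t\ge\sigma_i$. Between reflections $\eta^{ii}_\infty$ solves the scalar linear equation $\dot\eta=(\partial_{z_i}\wgamma_i^{\wZ})\,\eta$, and one checks from the jump formula of Theorem~\ref{thm:and-der}, using Lemma~\ref{lem:der-coord} and the coordinate-adapted geometry of Assumption~\ref{ass:diag}, that a reflection along a coordinate $\ne i$ does not increase $|\eta^{ii}_\infty|$ while a reflection along coordinate $i$ sets it to zero, so that
\[
|\eta^{ii}_\infty(t,z)|\ \le\ \exp\!\Big(\int_0^t(\partial_{z_i}\wgamma_i^{\wZ})(\chi_i(Z(r)_i))\,dr\Big)\,\mathbf 1_{t<\sigma_i}.
\]
This reduces every integral in sight to the finite-variation estimate packaged in the hypothesis, which is precisely the content of Assumption~\ref{ass:drift-bound} (compare Lemma~\ref{lem:drift-lag-ver}).

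With this in hand I would pass to the limit. Interpreting $G_l(z)$ via Proposition~\ref{prop:and-exp}, Lemma~\ref{lem:der-coord}, and Fubini as $\EE\big[\int_0^{\sigma_i\wedge\tau_l}e^{-\lambda t}\,\partial_ih(\wZ_z(t))\,\eta^{ii}_\infty(t,z)\,dt\big]$ in each coordinate $i$, and noting $\sigma_i\wedge\tau_l\uparrow\sigma_i$ almost surely because $\tau_l\uparrow\infty$, the displayed bound on $|\eta^{ii}_\infty|$ --- multiplied by $\|\partial_ih\|_\infty$, and in fact vanishing near the vertices where $h$ is locally constant --- supplies a dominating function whose $e^{-\lambda t}$-weighted time-integral has finite expectation uniformly in $z$; this is exactly what the hypothesis, read together with Lemma~\ref{lem:der-coord}(b), provides, and it is what makes $G_\infty(z):=\int_0^\infty e^{-\lambda t}\nabla_z\EE[h(\wZ^\infty_z(t))]\,dt$ an absolutely convergent integral in the first place. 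Dominated convergence in $(\omega,t)$ then gives $G_\infty(z)=\lim_{l\to\infty}G_l(z)$ for every $z\in\wDs$; since each $G_l$ is bounded on $\wDs$ by the hypothesized constant uniformly in $l$, so is $G_\infty$.

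For continuity I would note that each $G_l$ is continuous on $\wDs$ --- the reflected flow $z\mapsto\wZ^l_z(t)$ and the derivatives $\eta^{ij}_l(t,z)$ depend continuously on $z$ by Theorem~\ref{thm:and-der} and the Lipschitz Skorokhod-map estimates underlying \cite{And09}, and the $t$-integral converges locally uniformly in $z$ thanks to the $e^{-\lambda t}$ weight and the uniform integrability just obtained --- and that moreover $\|G_l-G_\infty\|_\infty\le\sum_i\|\partial_ih\|_\infty\,\sup_z\EE\big[\int_{\tau_l}^\infty e^{-\lambda t}\exp\big(\int_0^t(\partial_{z_i}\wgamma_i^{\wZ})(\chi_i(Z(r)_i))\,dr\big)\mathbf 1_{t<\sigma_i}\,dt\big]\to 0$, so that $G_l\to G_\infty$ uniformly and $G_\infty$ is continuous. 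The face condition is then immediate: Proposition~\ref{prop:and-trans} at $l=\infty$ gives $\langle v_a,\nabla_z\EE[h(\wZ^\infty_z(t))]\rangle=0$ for every $t\ge0$ and every $z\in\wF_a$, and integrating this identity against $e^{-\lambda t}\,dt$ yields $\langle v_a,G_\infty(z)\rangle=0$. The main obstacle is the interchange of the $l\to\infty$ limit with the $t$-integral (and, for continuity, with the $z$-limit): this rests entirely on producing the $l$-uniform, exponentially weighted, integrable dominating function for $\eta^{ii}_l(t,z)$ above, for which Lemma~\ref{lem:der-coord}(b) --- the vanishing of the derivative past the first reflection along coordinate $i$ --- is the decisive input, together with the check that the reflection jumps do not spoil this bound.
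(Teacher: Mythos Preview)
Your overall strategy matches the paper's: use the pathwise derivative representation from Proposition~\ref{prop:and-exp} and Lemma~\ref{lem:der-coord} to get a dominating function, pass to the limit $l\to\infty$ via dominated convergence, and deduce the face condition from the identity $\sum_i v_{a,i}\eta^{ki}_l(t,z)=0$ inside the proof of \cite[Corollary~3.3]{And09}. Boundedness and the face condition go through as you describe; the paper's argument for the latter is essentially identical, though it is slightly more careful in noting that what patches to $l=\infty$ is the \emph{pathwise} identity on $\eta$ extracted from the proof of \cite[Corollary~3.3]{And09}, not the statement of Proposition~\ref{prop:and-trans} itself (the Remark after Proposition~\ref{prop:and-trans} only asserts patching for Theorem~\ref{thm:and-der} and Proposition~\ref{prop:and-exp}).

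There is, however, a gap in your continuity argument. You claim $\|G_l-G_\infty\|_\infty\to 0$ by bounding the difference by $\sup_z\EE\big[\int_{\tau_l}^\infty e^{-\lambda t}\exp(\cdots)\mathbf 1_{t<\sigma_i}\,dt\big]$ and asserting this tends to zero. But $\tau_l=\tau_l(z)$ is the exit time from $K_l$ of the process started at $z$; for $z\notin K_l$ one has $\tau_l(z)=0$, so the tail integral equals the full integral, which is merely bounded (by the hypothesis) and need not be small. Hence global uniform convergence of $G_l$ to $G_\infty$ fails as stated. The paper sidesteps this by applying dominated convergence directly to the parameter-dependent integral $\int_0^\infty e^{-\lambda t}\nabla_z\EE[h(\wZ^\infty_z(t))]\,dt$: the integrand is continuous in $z$ on $\wDs$ (from the patched versions of Theorem~\ref{thm:and-der} and Proposition~\ref{prop:and-exp}), and the hypothesis supplies an integrable dominating function uniformly over $z$, so continuity of the integral follows without ever invoking uniform convergence of the $G_l$. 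You can repair your argument either this way, or by restricting to local uniform convergence on compacta of $\wDs$ and arguing more carefully that the tail expectations vanish uniformly there.
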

\begin{proof}
First, by the dominated convergence theorem, we conclude that
\[
\int_0^\infty e^{-\lambda t}\, \nabla_{z} \EE[h(\wZ_z^\infty(t))] dt
\]
is bounded and continuous on $\wDs$.  Now, for $z \in F_a$ notice that
\begin{align*}
\left\langle v_a, \int_0^\infty e^{-\lambda t}\, \nabla_z \EE[h(\wZ_z^\infty(t))] dt\right\rangle
&= \sum_{k, i = 1}^{m + n}  v_{a, i} \EE\left[\int_0^\infty e^{-\lambda t} \partial_k h(\wZ_z^\infty(t)) \eta^{ki}_\infty(t, z) dt\right]\\
&= \int_0^\infty e^{-\lambda t} \EE\left[ \sum_{k, i = 1}^{m + n} \partial_k h(\wZ_z^\infty(t)) v_{a, i} \eta^{ki}_\infty(t, z) \right] dt,
\end{align*}
so it suffices to check that $\sum_{i = 1}^{m + n} v_{a, i} \eta^{ki}_\infty(t, z) = 0$, which again follows by patching from the fact that $\sum_{i = 1}^{m + n} v_{a, i} \eta^{ki}_l(t, z) = 0$ from the proof of Proposition \ref{prop:and-trans} in \cite[Corollary 3.3]{And09}.
\end{proof}

\begin{lemma} \label{lem:grad-check2}
If Assumption \ref{ass:drift-bound} holds for some $\lambda > 0$, then for any $h \in C^2_c(\wD)$ which is locally constant near the vertices of $\wD$, the quantity
\[
\int_0^{\tau_l} e^{-\lambda t} \nabla_z \EE[h(\wZ^l_z(t))] dt
\]
is uniformly bounded in $l$ as a function of $z$.
\end{lemma}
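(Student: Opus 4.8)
The plan is to make the gradient $\nabla_z\EE[h(\wZ^l_z(t))]$ explicit using the pathwise‑derivative results of Section~\ref{sec:path-der}, to recognize the outcome as a truncated version of the quantity appearing in Assumption~\ref{ass:drift-bound}, and then to dispose of the truncation. First I would apply Proposition~\ref{prop:and-exp} (valid for each finite $l$, and for $l=\infty$ by the patching remark after Proposition~\ref{prop:and-trans}) to write the $i$‑th component of $\nabla_z\EE[h(\wZ^l_z(t))]$ as $\sum_{k}\EE[\partial_k h(\wZ^l_z(t))\,\eta^{ki}_l(t,z)]$, and then invoke Lemma~\ref{lem:der-coord}: part~(a) removes the off‑diagonal terms, leaving $\EE[\partial_i h(\wZ^l_z(t))\,\eta^{ii}_l(t,z)]$, and part~(b) gives $\eta^{ii}_l(t,z)=0$ for $t\ge\sigma_i$. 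On $[0,\sigma_i)$, Assumptions~\ref{ass:diag}(b)--(f) reduce the first ODE of Theorem~\ref{thm:and-der} to a scalar linear ODE which is undisturbed by the reflections (these act in coordinate directions $\neq i$), so it integrates to $\eta^{ii}_l(t,z)=\exp\!\big(\int_0^t\frac{\partial\wgamma_i^{\wZ}}{\partial z_i}(\wZ^l_z(r)_i)\,dr\big)$ for $t<\sigma_i$. Since $\wZ^l_z$ coincides with $\wZ_z=\chi(Z)$, started from $Z(0)=\chi^{-1}(z)$, up to the exit time $\tau_l$ of $K_l$, and $\tau_\infty=\infty$ a.s.\ by Assumption~\ref{ass:diag}(g), the $i$‑th component of the quantity in question equals
\[
F_l^i(z):=\EE_{\chi^{-1}(z)}\!\left[\int_0^{\tau_l\wedge\sigma_i}e^{-\lambda t}\,\partial_i h(\chi(Z(t)))\exp\!\Big(\int_0^t\tfrac{\partial\wgamma_i^{\wZ}}{\partial z_i}(\chi_i(Z(r)_i))\,dr\Big)dt\right].
\]

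Next I would remove the extra stopping at $\tau_l$. Because the factors $e^{-\lambda t}$, $|\partial_i h(\chi(Z(t)))|$ and $\exp(\int_0^t\frac{\partial\wgamma_i^{\wZ}}{\partial z_i}\,dr)$ are nonnegative, enlarging the interval of integration from $[0,\tau_l\wedge\sigma_i]$ to $[0,\sigma_i]$ can only increase the modulus, so for every $l$ we have $|F_l^i(z)|\le\overline B^i(\chi^{-1}(z))$, where $\overline B^i$ is the expression of Assumption~\ref{ass:drift-bound} with $\partial_i h$ replaced by $|\partial_i h|$. (Alternatively, a strong Markov decomposition at $\tau_l$ writes $F_l^i=B^i-\mathrm{tail}_l$, with $B^i$ the untruncated expression — bounded by Assumption~\ref{ass:drift-bound} — and $|\mathrm{tail}_l|\le\overline B^i$ by the same positivity.) It therefore suffices to bound $\overline B^i$ as a function of the starting point, uniformly. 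Here the hypotheses on $h$ enter: $h\in C^2_c(\wD)$ is locally constant near the vertices of $\wD$, so $\partial_i h$ is carried by the fixed compact set $\chi^{-1}(\supp(\partial_i h))$, which lies at positive distance from the vertices; combined with the $e^{-\lambda t}$ damping and the control of $\wgamma^{\wZ}$ near that set, this keeps the exponential multiplicative functional integrable against $e^{-\lambda t}|\partial_i h|$ uniformly in the starting point. This bound on $\overline B^i$ is precisely the absolute‑value form of Assumption~\ref{ass:drift-bound}, and it is exactly what the verifications of that assumption establish — e.g.\ Lemma~\ref{lem:drift-lag-ver}, whose proof bounds $|\partial_i h|$ crudely and then estimates the resulting positive‑integrand quantity $S(x)$ via a Bessel martingale (Lemma~\ref{lem:bessel-mart}) and hitting‑time estimates.

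The main obstacle is exactly this last bound on $\overline B^i$: the exponential functional $\exp(\int_0^t\frac{\partial\wgamma_i^{\wZ}}{\partial z_i}\,dr)$ need not be bounded, since $\frac{\partial\wgamma_i^{\wZ}}{\partial z_i}$ can be large and positive near a reflecting face — in the Laguerre case $k>p$ it blows up like $\rho^{-2}$ near $\{\rho=0\}$ — so it cannot be estimated crudely, and one cannot simply pull the bound on $B^i$ out as a constant. The point is that the region near the reflecting face contributes little, either because the process reaches $\sigma_i$ after spending only a short time there (so the accumulated exponent stays $O(1)$) or because $\partial_i h$ vanishes there; turning this into a bound uniform over starting points, and hence over $l$, is what Assumption~\ref{ass:drift-bound} — stated uniformly over all starting points — is designed to furnish, and is the only nontrivial input needed.
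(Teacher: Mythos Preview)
Your proposal is correct and follows essentially the same route as the paper: apply Proposition~\ref{prop:and-exp} and Lemma~\ref{lem:der-coord} to reduce $\partial_{z_i}\EE[h(\wZ^l_z(t))]$ to $\EE[\partial_i h(\wZ^l_z(t))\,\eta^{ii}_l(t,z)]$, integrate the scalar ODE from Theorem~\ref{thm:and-der} to get the exponential formula for $\eta^{ii}_l$ on $[0,\sigma_i)$, and then bound the truncated integral by the $|\partial_i h|$ version of the quantity in Assumption~\ref{ass:drift-bound}. The paper inserts the extra step of writing $h=h_1-h_2$ with $h_1,h_2\ge 0$ before passing to absolute values, but as you observe this is immaterial since the verifications of Assumption~\ref{ass:drift-bound} (e.g.\ Lemma~\ref{lem:drift-lag-ver}) already control the absolute-value form.
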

\begin{proof}
By Theorem \ref{thm:and-der} and Lemma \ref{lem:der-coord}, we see that $\eta^{ki}_l(t, z) = 0$ if $k \neq i$ and
\[
\eta^{ii}_l(t, z) = \exp\Big(\int_0^t \frac{\partial\wgamma_i^{\wZ}}{\partial z_i}(\wZ^l_z(r)_i) dr\Big) 1_{t < \sigma_i},
\]
where we recall that $\sigma_i$ is the first time that the reflection on $\partial \wD$ is in the $z_i$ coordinate direction.  In these terms, by Theorem \ref{thm:and-der}, Proposition \ref{prop:and-exp}, and Lemma \ref{lem:der-coord}, we have
\begin{align*}
\int_0^{\tau_l} e^{-\lambda t} \partial_{z_i} \EE[h(\wZ^l_z(t))] dt &= \sum_{k = 1}^{m + n} \int_0^{\tau_l} e^{-\lambda t} \EE[\partial_k h(\wZ^l_z(t)) \eta^{ki}_l(t, z)]\\
&= \EE\left[ \int_0^{\tau_l \wedge \sigma_i} e^{-\lambda t} \partial_i h(\wZ^l_z(t)) \exp\Big(\int_0^t \frac{\partial \wgamma_i^{\wZ}}{\partial z_i}(\wZ^l_z(r)_i)) dr \Big) dt \right].
\end{align*}
Now, write $h = h_1 - h_2$ with $h_1, h_2 \in C^2_c(\wD)$ locally constant at the vertices of $\wD$ and $h_1, h_2$ non-negative.  Noting the bound
\begin{multline*}
\int_0^{\tau_l} e^{-\lambda t} \partial_{z_i} \EE[h(\wZ^l_z(t))] dt \leq \EE\left[ \int_0^{\sigma_i} e^{-\lambda t} |\partial_i h_1(\wZ^\infty_z(t))| \exp\Big(\int_0^t \frac{\partial \wgamma_i^{\wZ}}{\partial z_i}(\wZ^\infty_z(r)_i)) dr \Big) dt \right]\\ +\EE\left[ \int_0^{\sigma_i} e^{-\lambda t} |\partial_i h_2(\wZ^\infty_z(t))| \exp\Big(\int_0^t \frac{\partial \wgamma_i^{\wZ}}{\partial z_i}(\wZ^\infty_z(r)_i)) dr \Big) dt \right]
\end{multline*}
and applying Assumption \ref{ass:drift-bound} to $h_1$ and $h_2$ separately, we conclude that 
\[
\int_0^{\tau_l} e^{-\lambda t} \partial_{z_i} \EE[h(\wZ^l_z(t))] dt
\]
is uniformly bounded in $l$ as a function of $z$, as needed.
\end{proof}

\subsection{A density property for $C_0(\wD)$}

We now identify a particular dense subset of $C_0(\wD)$.  Define
\[
\cD_\lc(\AA^{\wZ}) := \{\wf \in C^2_c(\wD) \mid \langle \nabla \wf, u\rangle = 0 \text{ for $u \in \wU(\chi^{-1}(z))$ and $z \in \partial \wD$, $f$ is locally constant near vertices of $\wD$}\}.
\]
The goal of this section is to prove the following lemma.

\begin{lemma} \label{lem:dom-dens-trans}
If $\wD$ satisfies Assumptions \ref{ass:d-cone} and \ref{ass:ref-geo}, then $\cD_\lc(\AA^{\wZ})$ is dense in $C_0(\wD)$ in the uniform norm.
\end{lemma}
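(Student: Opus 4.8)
The plan is to deduce density directly from the Stone--Weierstrass theorem for $C_0$ of a locally compact Hausdorff space, applied to $\cD_\lc(\AA^{\wZ})$ regarded as a subalgebra of $C_0(\wD)$. Note first that $\wD$ is a closed subset of $\RR^{m+n}$ and hence locally compact Hausdorff, and that $\cD_\lc(\AA^{\wZ}) \subset C^2_c(\wD) \subset C_0(\wD)$. The key structural point is that $\cD_\lc(\AA^{\wZ})$ is closed under multiplication: if $\wf_1, \wf_2 \in \cD_\lc(\AA^{\wZ})$, then $\wf_1\wf_2 \in C^2_c(\wD)$, is locally constant near each vertex of $\wD$, and satisfies
\[
\langle \nabla(\wf_1\wf_2)(z), u\rangle = \wf_2(z)\langle \nabla \wf_1(z), u\rangle + \wf_1(z)\langle \nabla \wf_2(z), u\rangle = 0
\]
for $z \in \partial\wD$ and $u \in \wU(\chi^{-1}(z))$, since the defining boundary condition is linear in the function; together with the obvious fact that it is a linear subspace, this makes $\cD_\lc(\AA^{\wZ})$ a subalgebra. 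It therefore remains to check that $\cD_\lc(\AA^{\wZ})$ separates the points of $\wD$ and vanishes nowhere.

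Both conditions follow from producing suitable bump functions inside $\cD_\lc(\AA^{\wZ})$, and Lemma \ref{lem:phi-exist} (applied to the domain $\wD$, whose hypotheses hold with $\wU$ replaced by $\wU(\chi^{-1}(\cdot))$ in view of Assumption \ref{ass:ref-geo}) supplies exactly these. In the cone case of Assumption \ref{ass:d-cone}, $\wD$ has a unique vertex $0$; given $z_1 \neq z_2$ in $\wD$ both nonzero, choose $\eps > 0$ small enough that $z_2 \notin \overline{B_{2\eps}(0)} \cup \overline{B_{2\eps}(z_1)}$ and apply Lemma \ref{lem:phi-exist} with $K = (\overline{B_\eps(0)} \cup \overline{B_\eps(z_1)}) \cap \wD$ and $V = (B_{2\eps}(0) \cup B_{2\eps}(z_1)) \cap \wD$ to obtain $\phi \in C^2_c(\wD)$ satisfying the Neumann boundary condition with $\phi \equiv 1$ on $K$ and $\phi \equiv 0$ off $V$; since $\phi \equiv 1$ on a neighborhood of $0$ it lies in $\cD_\lc(\AA^{\wZ})$, and $\phi(z_1) = 1 \neq 0 = \phi(z_2)$. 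If instead $z_2 = 0$ (or $z_1 = 0$), take $K = \overline{B_\eps(0)} \cap \wD$ with $\eps < |z_1|/2$, so that the resulting $\phi$ has $\phi(0) = 1 \neq 0 = \phi(z_1)$. The same bumps with $z_1$ in the interior of $K$ show that $\cD_\lc(\AA^{\wZ})$ vanishes nowhere. In the compact case of Assumption \ref{ass:d-cone}, $\cD_\lc(\AA^{\wZ})$ contains the constant functions (which are trivially $C^2$ on the compact set $\wD$, locally constant everywhere, and satisfy the boundary condition), so it vanishes nowhere, and point separation is obtained from the analogous bump functions constructed by the same inductive Whitney-extension argument as in the proof of Lemma \ref{lem:phi-exist}, now carried out near the two prescribed points and with the base case performed at each vertex of the polytope.

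The only real obstacle is bookkeeping rather than analysis: one must ensure that the bump functions supplied by Lemma \ref{lem:phi-exist} and its compact-polytope analogue are genuinely locally constant near every vertex of $\wD$. In the cone case this is immediate because the single vertex $0$ lies in the interior of $K$, on which $\phi \equiv 1$; in the compact case it requires prescribing, at the base case of the Whitney induction, $0$-jet data at each vertex that is constant on a small neighborhood within each incident face and then propagating this through the induction exactly as in Lemma \ref{lem:phi-exist}. Beyond this, the argument uses no analytic input other than Lemma \ref{lem:phi-exist} and Assumptions \ref{ass:d-cone} and \ref{ass:ref-geo}.
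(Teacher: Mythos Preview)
Your approach is correct and genuinely different from the paper's.

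The paper argues directly: given $g \in C^2_c(\wD)$ and $\eps > 0$, it builds an approximant $f \in \cD_\lc(\AA^{\wZ})$ with $\|f - g\| < \eps$ by the same inductive Whitney-extension machinery used in Lemma \ref{lem:phi-exist}, constructing $f$ and its jet data face by face, interpolating between $g$ (in the interior of each face) and the already-constructed boundary data, while enforcing the Neumann condition via the affine bundle $\wM_{d,i}$ and local constancy near the vertices.

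You instead observe that $\cD_\lc(\AA^{\wZ})$ is a subalgebra of $C_0(\wD)$ (the Neumann condition and local constancy are preserved under products) and invoke Stone--Weierstrass, reducing the problem to producing a handful of bump functions that separate points and vanish nowhere. In the cone case these come essentially for free from Lemma \ref{lem:phi-exist}, applied to $\wD$ with $K$ chosen as a union of small balls around $0$ and the point to be isolated; the requirement $0 \in K$ forces you to always include a ball at the vertex, but this is harmless for separation and guarantees local constancy there automatically. Your argument is cleaner and shorter in the cone case precisely because it recycles Lemma \ref{lem:phi-exist} wholesale rather than re-running the full Whitney induction tuned to a given target $g$. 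In the compact-polytope case the advantage is smaller: you still need the multi-vertex analogue of Lemma \ref{lem:phi-exist}, which is essentially the same inductive construction the paper carries out, so the two proofs converge in effort there. The paper's direct construction, on the other hand, gives slightly more explicit control (an $\eps_d$ estimate on each $d$-face) and treats both cases of Assumption \ref{ass:d-cone} uniformly.
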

\begin{proof}
Because $C^2_c(\wD)$ is dense in $C_0(\wD)$, it suffices to show that $\cD_\lc(\AA^{\wZ})$ is dense in $C^2_c(\wD)$ in the uniform norm.  More precisely, for any $g \in C^2_c(\wD)$ and any $\eps > 0$, it suffices to show that there exists $f \in \cD_\lc(\AA^{\wZ})$ so that $||f - g|| \leq \eps$.  For this, our approach is similar to the proof of Lemma \ref{lem:phi-exist}.

For $0 \leq d \leq m + n$, let $\{\wF_{d, i}\}$ denote the collection of dimension $d$ faces of $\wD$.  Choose $\eps = \eps_{m + n} > \cdots > \eps_1 > \eps_0 = 0$.  For each vertex $\wF_{0, i}$ of $\wD$, truncate the convex polyhedral cone $\wD$ with a face $\wF_i'$ of dimension $m + n - 1$ which intersects only the faces meeting at $\wF_{0, i}$ and defines an open neighborhood $V_i \ni F_{0, i}$.  Since $g$ is continuous, we may choose $\wF_i'$ so that
\begin{itemize}
\item the closures of the $V_i$ do not intersect;

\item we have $|g(z) - g(F_{0, i})| < \eps_1$ for $z \in V_i$.
\end{itemize}
We claim by induction on $d$ that there exists a collection of functions $\{f_{d, i}, f_{d, i}^\alpha, f_{d, i}^{\alpha, \beta}\}$ for $0 \leq d \leq m + n$ and $\partial_\alpha, \partial_\beta \in T(\wF_{d, i})$ with $f_{d, i} \in C^2_c(\wF_{d, i}), f_{d, i}^\alpha \in C^1_c(\wF_{d, i}), f_{d, i}^{\alpha, \beta} \in C_c(\wF_{d, i})$ so that
\begin{itemize}
\item[1.] if $\wF_{d, i}$ is contained in $\wF_{d', j}$, then $f_{d, i} = f_{d', j}|_{\wF_{d, i}}$, $f_{d, i}^\alpha = f_{d', j}^\alpha|_{\wF_{d, i}}$, and $f_{d, i}^{\alpha, \beta} = f_{d', j}^{\alpha, \beta}|_{\wF_{d, i}}$;

\item[2.] $f_{l, i}^\alpha$ is linear in $\alpha$, $f_{l, i}^{\alpha, \beta}$ is bilinear in $\alpha, \beta$, and we have $f_{l, i}^{\alpha, \beta} = f_{l, i}^{\beta, \alpha}$;

\item[3.] for $\partial_\alpha \in T(\wF_{l, i})$, we have $f_{l, i}^\alpha = \partial_\alpha f_{l, i}$ and $f_{l, i}^{\alpha, \beta} = \partial_\alpha f_{l, i}^\beta$;

\item[4.] we have $f_{l, i}^u(z) = 0$ for $z \in \wF_{l, i}$ and $u \in \wU(\chi^{-1}(z))$;

\item[5.] we have $||f_{d, i} - g|| \leq \eps_d$ on $\wF_{d, i}$;

\item[6.] each $f_{d, i}$ is constant on all the neighborhoods $V_j$.
\end{itemize}
Taking $f := f_{m + n, 1}$ will then yield the claim.

For the base case $d = 0$, the faces $\wF_{0, i}$ are vertices, and for any vertex $v = \wF_{0, i}$ we may set $f_{0, i}(v) = g(v)$ and $f_{0, i}^\alpha(v) = f_{0, i}^{\alpha, \alpha'}(v) = 0$.  For the inductive step, suppose we have constructed functions satisfying the conditions for all dimensions less than $d$.  Fix a face $\wF_{d, i}$ of dimension $d$ whose boundary consists of the dimension $d - 1$ faces $\{\wF_{d - 1, j}\}_{j \in J}$.

First, let $U_{d, i}$ be a bounded open subset in $\RR^{m + n}$ containing $\supp(g) \cap \wF_{d, i}$ and the supports of all $f_{d - 1, j}, f_{d - 1, j}^\alpha, f_{d - 1, j}^{\alpha, \beta}$.  We specify $C^2$ Whitney data for $h_{d, i}$ on $U_{d, i} \cap (\wF_{d, i} - \bigcup_k V_k)$ by
\begin{itemize}
\item the boundary values $h_{d, i} = f_{d - 1, j}$ on $\wF_{d - 1, j}$ and $h_{d, i} = g(\wF_{0, k})$ on $\wF_k$;

\item the boundary derivatives $\partial_\alpha h_{d, i} = f^\alpha_{d - 1, j}$ on $\wF_{d - 1, j}$ and $\partial_\alpha h_{d, i} = 0$ on $\wF_k$ for $\partial_\alpha \in T(\wF_{d, i})$;

\item the boundary second derivatives $\partial_\alpha \partial_{\alpha'} h_{d, i} = f^{\alpha, \alpha'}_{d - 1, j}$ on $\wF_{d - 1, j}$ and $\partial_\alpha \partial_{\alpha'} h_{d, i} = 0$ on $\wF'_k$ for $\partial_\alpha, \partial_{\alpha'} \in T(\wF_{d, i})$;

\item identically zero boundary conditions on $\partial U_{d, i} \cap \wF_{d, i}$.
\end{itemize}
This data is well-defined on $\partial \wF_{d, i}$ by Properties 1 and 6 and the definition of $U_{d, i}$; it is valid $C^2$ Whitney data by Properties 3 and 6.  The Whitney extension theorem therefore yields a function $h_{d, i} \in C^2_c(\wF_{d, i} - \bigcup_k V_k)$ agreeing with the boundary conditions; extend it to a function in $C^2_c(\wF_{d, i})$ by setting it to be constant and equal to $g(\wF_{0, k})$ on $V_k$.  Now, let $\phi_r: \RR \to [0, 1]$ be a smooth cutoff function so that $\phi_r(x) = 1$ for $x \leq 0$ and $\phi_r(x) = 0$ for $x \geq r$, let $\wF_{d, i}^r \subset \wF_{d, i}$ be a subset of the interior of $\wF_{d, i}$ with smooth boundary containing all $z \in \wF_{d, i}$ of distance at least $r$ from $\partial \wF_{d, i}$ and no $z \in \wF_{d, i}$ of distance at most $r/2$ from $\partial \wF_{d, i}$, and let $\rho^r_{d, i}(z)$ denote the distance from $z$ to $\wF_{d, i}^r$. Define $f^r_{d, i} \in C^2_c(\wF_{d, i})$ by
\begin{equation} \label{eq:def-f}
f_{d, i}^r(z) := \phi_{r/2}(\rho^r_{d, i}(z)) g(z) + (1 - \phi_{r / 2}(\rho^r_{d, i}(z))) h_{d, i}(z)
\end{equation}
so that $f_{d, i}^r \equiv h_{d, i}$ on $\partial \wF_{d, i}$ and $f_{d, i}^r(z) = g(z)$ for $z$ at distance at least $r$ from $\partial \wF_{d, i}$.  Since $h_{d, i} \in C^2_c(\wF_{d, i})$ and $\eps_d > \eps_{d - 1}$, by the inductive hypothesis we may choose some $\delta > r^* > 0$ so that $||f^{r^*}_{d, i} - g|| < \eps_d$ on $\wF_{d, i}$.  We define $f_{d, i} := f^{r^*}_{d, i}$, $f^\alpha_{d, i} = \partial_\alpha f_{d, i}$, and $f^{\alpha, \alpha'}_{d, i} = \partial_{\alpha}\partial_{\alpha'} f_{d, i}$ for $\partial_\alpha, \partial_{\alpha'} \in T(\wF_{d, i})$.

Second, by Assumption \ref{ass:ref-geo}(ab), for $z$ in the interior of $\wF_{d, i} - \bigcup_k V_k$, $\wU(z)$ has constant dimension and intersects $T_z(\wF_{d, i})$ only at $0$.  Further, for a basis $\{\partial_{\beta_k}\}$ of $T(\wF_{d, i})^\perp$, the closure of the set 
\begin{equation} \label{eq:def-aff-bundle}
\wM_{d, i} := \{(z, f^{\beta_1}_{d, i}(z), \ldots, f^{\beta_{m + n - d}}_{d, i}(z)) \mid f^u_{l, i}(z) = 0 \text{ for $u \in U(z)$}\}
\end{equation}
is the kernel of a surjective affine map between trivial vector bundles over $\wF_{d, i} - \bigcup_k V_k$ which is $C^2$ by Assumption \ref{ass:ref-geo}(c).  Therefore, it forms a $C^2$ affine bundle over $\wF_{d, i} - \bigcup_k V_k$ whose fibers are non-empty by Assumption \ref{ass:ref-geo}(a).  We give $C^1$ Whitney data for sections $h_{d, i}^\nabla: U_{d, i} \cap \wF_{d, i} \to \wM_{d, i}$ on $U_{d, i} \cap (\wF_{d, i} - \bigcup_k V_k)$ by specifying for any $\partial_\beta \in T(\wF_{d, i})^\perp$ conditions on $h^\beta_{d, i} := \langle \beta, h^\nabla_{d, i}\rangle$ given by
\begin{itemize}
\item the boundary values $h_{d, i}^\beta = f_{d - 1, j}^\beta$ on $\wF_{d - 1, j}$ and $h_{d, i}^\beta = 0$ on $\wF_k$;

\item the boundary derivatives $\partial_\alpha h_{d, i}^\beta = f_{d - 1, j}^{\alpha, \beta}$ on $\wF_{d - 1, j}$ and $\partial_\alpha h_{d, i}^\beta = 0$ on $\wF_k$ for $\partial_\alpha \in T(\wF_{d, i})$;

\item identically zero boundary conditions on $\partial U_{d, i} \cap \wF_{d, i}$.
\end{itemize}
By Property 4, the boundary values lie in $\wM_{d, i}$.  The data is well-defined by Properties 1 and 6 and is valid $C^1$ Whitney data by Properties $3$ and 6.  Passing to a partition of unity on which $\wM_{d, i}$ is locally trivial and applying the Whitney extension theorem, there exists a compactly supported $C^1$ section $h^\nabla_{d, i}: \wF_{d, i} - \bigcup_k V_k \to \wM_{d, i}$ satisfying the boundary conditions; extend it by $0$ to all of $\wF_{d, i}$.  It gives rise to $h^\beta_{d, i} := \langle \beta, h^\nabla_{d, i}\rangle \in C^1_c(\wF_{d, i})$ satisfying the boundary conditions.  We define $f^\beta_{d, i} := h^\beta_{d, i}$ and $f^{\alpha, \beta}_{d, i} := \partial_\alpha h^\beta_{d, i}$ for $\partial_\alpha \in T(\wF_{d, i})$.

Third, for a basis $\{\partial_{\beta_k}\}$ of $T(\wF_{d, i})^\perp$ and $k \leq k'$, define the $C^0$ Whitney data for $h^{\beta_k, \beta_{k'}}_{d, i}$ on $U_{d, i} \cap (\wF_{d, i} - \bigcup_k V_k)$ by
\begin{itemize}
\item the boundary values $h_{d, i}^{\beta_k, \beta_{k'}} = f_{d - 1, j}^{\beta_k, \beta_{k'}}$ on $\wF_{d - 1, j}$ and $h_{d, i}^{\beta_k, \beta_{k'}} = 0$ on $\wF_k'$;

\item identically zero boundary conditions on $\partial U_{d, i} \cap \wF_{d, i}$.
\end{itemize}
The data is well-defined by Properties 1 and 6, so Whitney extension gives a $h^{\beta_{k}, \beta_{k'}}_{d, i} \in C_c(\wF_{d, i} - \bigcup_k V_k)$ satisfying the boundary conditions.  We may again extend this by $0$ to all of $\wF_{d, i}$ and define $f^{\beta_{k}, \beta_{k'}}_{d, i} := h^{\beta_{k}, \beta_{k'}}_{d, i}$.

It remains only to verify Properties 1-6 on $F_{d, i}$ for the functions we have just constructed.  Properties 1, 2, 3, and 6 hold by the boundary values we chose in our applications of the Whitney extension theorem.  Property 4 holds because we defined the domain of $f^\beta_{d, i}$ for $\partial_\beta \in T(\wF_{d, i})^\perp$ to be $\wM_{d, i}$ in (\ref{eq:def-aff-bundle}).  Finally, Property 5 holds by the definition of $f_{d, i}$ in (\ref{eq:def-f}), which completes the proof.
\end{proof}

\subsection{Proof of Lemma \ref{lem:dom-core}}

Notice that $f \in \dom(\AA^Z)$ if and only if $f \circ \chi^{-1} \in \dom(\AA^{\wZ})$.  Therefore, it suffices to check that
\[
\cD^{2, s}_0(\AA^{\wZ}) := \{\wf \in C_0(\wD) \cap C^2(\wDs) \mid \wf \in C^1_b(\wDs), \langle \nabla \wf, u\rangle = 0 \text{ for $u \in \wU(\chi^{-1}(z))$}\}
\]
is a core for $\AA^{\wZ}$.  First, we show that it lies in $\dom(\AA^{\wZ})$.  

\begin{lemma} \label{lem:dens-dom1}
Under the conditions of Theorem \ref{thm:z-reg-cond}, we have $\cD^{2, s}_0(\AA^{\wZ}) \subset \dom(\AA^{\wZ})$.
\end{lemma}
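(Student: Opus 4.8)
The plan is to pass to the original coordinates, where $\AA^Z$ has a bounded drift and a diffusion coefficient that merely \emph{degenerates}, rather than blows up, along the boundary faces at which the coordinate change $\chi$ fails to be a smooth diffeomorphism, and then to apply It\^o's formula together with the martingale characterization of the Feller generator. Since $f \mapsto f \circ \chi^{-1}$ is a bijection with $f \in \dom(\AA^Z)$ iff $f \circ \chi^{-1} \in \dom(\AA^{\wZ})$ and $\AA^{\wZ}(f\circ\chi^{-1}) = (\AA^Z f)\circ\chi^{-1}$, it is equivalent to show that each $f \in \cD^{2,s}_0(\AA^Z)$ lies in $\dom(\AA^Z)$. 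The gain from working with $Z$ rather than $\wZ$ is that near a face where $\chi$ degenerates the transformed drift $\wgamma^{\wZ}$ is singular, whereas in the original coordinates the corresponding diffusion coefficient $\rho^Z_{ii}$ simply vanishes and the drift $\gamma^Z$ stays locally bounded; the apparent singularity of $\AA^{\wZ}\wf$ there is thus fictitious.

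First I would apply It\^o's formula to $f(Z(t))$. By Assumption \ref{ass:diag}(g) the process $Z(t)$ stays in $\Ds$ for all $t$ almost surely, so $f$ is $C^2$ along every trajectory and It\^o applies, giving
\[
f(Z(t)) = f(Z(0)) + \int_0^t \nabla f(Z(s)) \cdot \sigma^Z(Z(s))\, dB(s) + \int_0^t \AA^Z f(Z(s))\, ds + \int_0^t \nabla f(Z(s)) \cdot d\Phi^Z(s),
\]
where $\Phi^Z$ collects all the reflection terms in the SDER and $\AA^Z f = \tfrac12\sum_i \rho^Z_{ii}(z_i)\,\partial_{z_i}^2 f + \sum_i \gamma^Z_i(z_i)\,\partial_{z_i} f$ is read as a differential operator on $\Ds$. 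By the description of the reflection directions in Section \ref{sec:path-der} (using Assumptions \ref{ass:diag}(e),(f)), whenever $Z(s) \in \partial D$ the increment $d\Phi^Z(s)$ lies in $\wtilde{U}(Z(s))$, so the Neumann conditions defining $\cD^{2,s}_0(\AA^Z)$ force $\nabla f(Z(s))\cdot d\Phi^Z(s) = 0$. Moreover the integrand of the stochastic integral has $i$th coordinate $\sqrt{\rho^Z_{ii}}\,\partial_{z_i} f$, which equals the $i$th coordinate of $\nabla(f\circ\chi^{-1})$ transported along $Z$ and is therefore bounded since $f\circ\chi^{-1} \in C^1_b(\wDs)$; hence the stochastic integral is a true martingale. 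Taking $\EE_z$ yields $P^Z_t f(z) - f(z) = \EE_z\!\left[\int_0^t \AA^Z f(Z(s))\,ds\right]$ for all $z$.

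It then remains to show that $g := \AA^Z f$ extends to an element of $C_0(D)$; granting this, the identity above rewrites (by Fubini and the Markov property) as $P^Z_t f - f = \int_0^t P^Z_s g\, ds$, and strong continuity of the Feller semigroup gives $\tfrac1t(P^Z_t f - f) = \tfrac1t\int_0^t P^Z_s g\,ds \to g$ in the uniform norm as $t \downarrow 0$, so that $f \in \dom(\AA^Z)$ with $\AA^Z f = g$, and hence the corresponding element of $\cD^{2,s}_0(\AA^{\wZ})$ lies in $\dom(\AA^{\wZ})$. For $g \in C_0(D)$: on $\Ds$ it is continuous; along a face where $\chi$ degenerates one has $\rho^Z_{ii}(z_i) \to 0$ while $\partial_{z_i}^2 f$ and $\partial_{z_i} f$ stay bounded (as $f\in C^2(\Ds)$ there) and $\gamma^Z$ is locally bounded by the sublinear growth bound of Assumption \ref{ass:estimate}(b), so $g$ extends continuously across such faces; boundedness near the remaining (non-smooth) parts of $\partial D$ and decay at infinity are then obtained from the polyhedral structure of $D$ (Assumption \ref{ass:diag}), the growth bounds of Assumption \ref{ass:estimate}(b),(c), and $f\circ\chi^{-1} \in C^1_b(\wDs) \cap C_0(\wD)$. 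This last verification — that $\AA^Z f$ is bounded, continuous, and vanishing at infinity, in particular near the non-smooth boundary of $D$, where second derivatives of $f$ are a priori uncontrolled — is the main obstacle, and it is exactly what the hypotheses (Assumptions \ref{ass:diag} and \ref{ass:estimate}, together with the fact from Assumption \ref{ass:diag}(g) that $Z$ avoids that bad set) are tailored to supply.
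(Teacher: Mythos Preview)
Your approach is essentially the same as the paper's: apply It\^o's formula and use that the integrand of the stochastic integral is bounded (because $f\circ\chi^{-1}\in C^1_b(\wDs)$) so that it is a true martingale, with the reflection contributions killed by the Neumann conditions. The paper's proof is a single sentence --- ``Since $\nabla\wf$ is bounded for $\wf\in\cD^{2,s}_0(\AA^{\wZ})$, the claim follows by It\^o's lemma'' --- carried out directly in the transformed coordinates, where the diffusion is Brownian; your detour through the original coordinates leads to the same observation, since $\sqrt{\rho^Z_{ii}}\,\partial_{z_i}f$ is exactly the $i$th partial of $f\circ\chi^{-1}$.

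One comment: you correctly flag, and then leave open, the question of whether $\AA^Z f$ extends to an element of $C_0(D)$, in particular near the non-smooth boundary where second derivatives of $f$ are not controlled. The paper's one-line proof does not address this point explicitly either, so you are not missing an idea present in the paper; but your closing sentence (``it is exactly what the hypotheses \ldots\ are tailored to supply'') is not an argument. If you want a cleaner route, note that because $Z(t)\in\Ds$ a.s.\ (Assumption~\ref{ass:diag}(g)), the Dynkin identity $P^Z_tf - f = \int_0^t P^Z_s g\,ds$ only needs $g$ to be bounded and continuous on $\Ds$, and the uniform limit then follows from strong continuity of $P^Z_s$ on $C_0(D)$ applied to any $C_0$ extension of $g$ off the null set $D\setminus\Ds$.
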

\begin{proof}
Since $\nabla \wf$ is bounded for $\wf \in \cD^{2, s}_0(\AA^{\wZ})$, the claim follows by It\^o's lemma.
\end{proof}

To establish that $\cD^{2, s}_0(\AA^{\wZ})$ is a core, it suffices by \cite[Lemma 17.8]{Kal}, to check that $(\lambda - \AA^{\wZ})\cD^{2, s}_0(\AA^{\wZ})$ is dense in $C_0(\wD)$ for some $\lambda > 0$. Since $\cD_\lc(\AA^{\wZ})$ is dense in $C_0(\wD)$ by Lemma \ref{lem:dom-dens-trans}, it suffices for us to prove the following lemma, to which the remainder of this subsection is devoted.

\begin{lemma} \label{lem:dom-contained}
The space $\cD_\lc(\AA^{\wZ})$ is contained in $(\lambda - \AA^{\wZ})\cD^{2, s}_0(\AA^{\wZ})$.
\end{lemma}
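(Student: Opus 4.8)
The plan is to produce the required preimage by solving the resolvent equation. Fix $\lambda > 0$ as in Assumption \ref{ass:drift-bound}, let $\wf \in \cD_\lc(\AA^{\wZ})$, and set
\[
g(z) := \int_0^\infty e^{-\lambda t}\, \EE[\wf(\wZ_z(t))]\, dt,
\]
the value at $\wf$ of the $\lambda$-resolvent of the Feller process $\wZ$ on $\wD$. By standard Feller theory, $g \in \dom(\AA^{\wZ})$, $g \in C_0(\wD)$, and $(\lambda - \AA^{\wZ})g = \wf$, so it suffices to show $g \in \cD^{2,s}_0(\AA^{\wZ})$. Because $\wZ(t)$ a.s.\ never leaves $\wDs$ (Assumption \ref{ass:diag}(g), transported through $\chi$, so that $\tau_\infty = \infty$ a.s.), we may replace $\wZ_z$ by the stopped process $\wZ^\infty_z$ of Section \ref{sec:path-der} throughout.

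First I would establish that $\nabla g$ exists, is continuous and bounded on $\wDs$, and satisfies the Neumann conditions. Since $\wf \in \cD_\lc(\AA^{\wZ})$ lies in $C^2_c(\wD)$ and is locally constant near the vertices of $\wD$, Assumption \ref{ass:drift-bound} applies with $h = \wf$, so Lemma \ref{lem:grad-check2} gives that $\int_0^{\tau_l} e^{-\lambda t}\, \nabla_z \EE[\wf(\wZ^l_z(t))]\, dt$ is uniformly bounded in $l$. Corollary \ref{corr:grad-bound} then shows that $G(z) := \int_0^\infty e^{-\lambda t}\, \nabla_z \EE[\wf(\wZ^\infty_z(t))]\, dt$ is continuous and bounded on $\wDs$ and satisfies $\langle v_a, G(z)\rangle = 0$ for $z \in \wF_a$. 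Using Proposition \ref{prop:and-exp} (in the patched form $l = \infty$ from the remark following it) to write $\nabla_z \EE[\wf(\wZ^\infty_z(t))] = \EE[\nabla \wf(\wZ^\infty_z(t))\, \eta_\infty(t, z)]$, together with the uniform bound and dominated convergence, I would justify differentiating under the integral sign to conclude $g \in C^1(\wDs)$ with $\nabla g = G$. Recalling that in the $\chi$-coordinates the reflection cones are spanned by the constant vectors $v_a$ (Assumption \ref{ass:diag}(e),(f)), this gives $g \in C^1_b(\wDs)$ with $\langle \nabla g(z), u\rangle = 0$ for every $u \in \wU(\chi^{-1}(z))$ and $z \in \partial \wD$.

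It then remains to upgrade $g$ to $C^2(\wDs)$. In the $\chi$-coordinates the diffusion matrix of $\wZ$ is the identity, so on the smooth part of $\wD$ the generator reads $\AA^{\wZ} = \tfrac12 \Delta + \langle \wgamma^{\wZ}, \nabla\rangle$ with an oblique (Neumann) boundary condition in the constant directions $v_a$ along the faces, and by Assumptions \ref{ass:estimate}(a),(d) the drift $\wgamma^{\wZ}$ is locally Lipschitz and $C^2$ on the interior, hence locally H\"older on $\wDs$. Since $g \in \dom(\AA^{\wZ})$ with $\AA^{\wZ} g = \lambda g - \wf$, the function $g$ is a weak solution of the uniformly elliptic equation $\tfrac12 \Delta g + \langle \wgamma^{\wZ}, \nabla g\rangle = \lambda g - \wf$ with the conormal boundary condition on the smooth faces; as $g \in C^1_b(\wDs)$ and $\wf \in C^2_c(\wD)$, the right-hand side is locally H\"older, so interior and oblique-derivative boundary Schauder estimates applied on a sequence of subdomains exhausting $\wDs$ and patched together yield $g \in C^2(\wDs)$ (indeed $C^{2,\alpha}_{\mathrm{loc}}$). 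Thus $g \in \cD^{2,s}_0(\AA^{\wZ})$, and $\wf = (\lambda - \AA^{\wZ}) g \in (\lambda - \AA^{\wZ})\cD^{2,s}_0(\AA^{\wZ})$, as claimed.

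The step I expect to be the \textbf{main obstacle} is the $C^2$ regularity near the boundary: one must run the oblique-derivative Schauder machinery on the smooth faces of $\wD$ while having control of $\wgamma^{\wZ}$ only on $\wDs$ — it may be singular at the nonsmooth stratum, which is precisely why $\wDs$ rather than $\wD$ appears in the definition of $\cD^{2,s}_0(\AA^{\wZ})$ — so the estimates must be localized away from that stratum and then patched. A secondary technical point is rigorously justifying the interchange of $\nabla_z$ with $\int_0^\infty e^{-\lambda t}\EE[\,\cdot\,]\,dt$ and the passage $l \to \infty$ in the pathwise-differentiation results of Andres, both of which rest on the uniform bound of Lemma \ref{lem:grad-check2}.
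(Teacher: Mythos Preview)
Your overall strategy---apply the resolvent $\cR_\lambda$ to $\wf \in \cD_\lc(\AA^{\wZ})$ and then verify that the image lies in $\cD^{2,s}_0(\AA^{\wZ})$---is exactly what the paper does, and your treatment of the $C^1_b$ property together with the Neumann boundary conditions via Lemma~\ref{lem:grad-check2} and Corollary~\ref{corr:grad-bound} matches the paper's argument essentially verbatim.

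The difference, and the genuine gap, is in the $C^2(\wDs)$ step. You assert that because $g \in \dom(\AA^{\wZ})$ with $\AA^{\wZ} g = \lambda g - \wf$, the function $g$ is automatically a weak solution of the elliptic PDE $\tfrac12\Delta g + \langle \wgamma^{\wZ}, \nabla g\rangle = \lambda g - \wf$, after which Schauder theory would apply. But $\AA^{\wZ}$ here is the \emph{abstract} Feller generator, defined as the $C_0$-limit of $t^{-1}(P_t g - g)$; membership in its domain does not by itself say that $g$ satisfies the differential equation in any distributional sense, and this identification is precisely the point at issue. Without it you cannot feed $g$ into interior or oblique-derivative Schauder estimates.

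The paper closes this gap by a different route. It passes to Dynkin's \emph{characteristic operator} $\fA^{\wZ}$, which is defined probabilistically via first-exit expectations and is known to extend $\AA^{\wZ}$, so that $(\lambda - \fA^{\wZ})g = \wf$ holds pointwise. Then, locally near each $z \in \wDs$, it solves the Dirichlet problem (for interior $z$) or the mixed oblique-derivative problem (for $z$ on a smooth face) for $(\lambda - L)\wtilde f = \wf$ with boundary data $g$, obtaining a classical $C^2$ solution $\wtilde f$ from standard potential theory. Finally the strict maximum principle for $\lambda - \fA^{\wZ}$ forces $\wtilde f = g$ on the local patch, which is what yields $g \in C^2(\wDs)$. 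This maximum-principle identification is the bridge between the probabilistic and PDE pictures that your Schauder route is missing; you correctly flagged the $C^2$ regularity as the main obstacle, but the mechanism you propose to overcome it does not supply that bridge.
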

\begin{proof}
Recall by \cite[Theorem 17.4]{Kal} that $\lambda - \AA^{\wZ}$ is the inverse of the resolvent
\[
\cR_\lambda \wf (z) := \int_0^\infty e^{-\lambda t} \EE[\wf(\wZ(t)) \mid \wZ(0) = z] dt.
\]
Choose any $g \in \cD_\lc(\AA^{\wZ})$, and let $f = \cR_\lambda g$ so that $(\lambda - \AA^{\wZ})f = g$; we wish to check that $f \in \cD^{2, s}_0(\AA^{\wZ})$.

First, we check that $f \in C^2(\wDs)$.  For a function $\varphi$ on open sets, we write
\[
a = \lim_{U \downarrow z} \varphi(U)
\]
if for any $\eps > 0$ there exists an open set $U \ni z$ so that for any open set $V$ with $z \in V \subseteq U$ we have $|\varphi(U) - a| < \eps$.  Following \cite[Section 5.7]{Dyn}, define the characteristic operator
\[
\fA^{\wZ} f (z) := \lim_{U \downarrow z} \frac{\EE[f(\wZ(\tau_U)) \mid \wZ(0) = z] - f(z)}{\EE[\tau_U \mid \wZ(0) = z]}
\]
acting on $C_0(\wD)$, where $\tau_U$ is the first exit time from $U$. Notice that $\lambda - \fA^{\wZ}$ satisfies the strict maximum principle, meaning that if $f(z) > 0$ is a local maximum, then $(\lambda - \fA^{\wZ})f(z) > 0$.  Further, by \cite[Theorem 5.5]{Dyn}, the operator $\fA^{\wZ}$ extends $\AA^{\wZ}$, so we have that $(\lambda - \fA^{\wZ})f = g$.  We now check for each $z \in \wDs$ that $f$ is $C^2$ in a neighborhood of $z$.  We now consider two cases.

\noindent \textbf{Case 1:} Suppose $z$ lies in the interior of $\wDs$.  Choose a neighborhood $U \ni z$ which is a closed ball satisfying $U \subset \wDs$, and consider the Dirichlet problem
\[
(\lambda - \AA^Z) \wtilde{f} = g
\]
on $U$ with boundary condition $\wtilde{f} = f$ on $\partial U$. By \cite[Theorem 11.2.8]{Hel}, it has solution $\wtilde{f} \in C^2(U)$, where the regularity conditions of \cite[Theorem 11.2.8]{Hel} hold by Assumption \ref{ass:estimate}(d) and the fact that $g \in C^2_c(\wD)$.  Again using that $\fA^Z$ extends $\AA^Z$, we conclude that on $U$ we have
\[
(\lambda - \fA^Z) (\wtilde{f} - f) = 0
\]
with $\wtilde{f} - f = 0$ on $\partial U$.  Applying the strict maximum principle to any potential interior maxima of $\wtilde{f} - f$ or $f - \wtilde{f} = 0$, we conclude that $\wtilde{f} = f$ on $U$, implying that $f$ is $C^2$ on $U$.  

\noindent \textbf{Case 2:} Suppose $z$ lies on $\partial \wDs$.  Choose a neighborhood $\wDs \supset U \ni z$ which is an admissible spherical chip in the sense of \cite[Definition 11.2.3]{Hel} which intersects only the face of $\wD$ containing $z$.  Consider the mixed problem
\[
(\lambda - \AA^Z)\wtilde{f} = g
\]
on $U$ with mixed boundary condition $\wtilde{f} = f$ on $\partial U - (\partial \wD \cap U)$ and $\langle \nabla f(z), u \rangle = 0$ for $z \in \partial \wD \cap U$ and $u \in \wU(z)$.  By \cite[Theorem 11.2.8]{Hel}, it has a solution $\wtilde{f} \in C^2(U^{\text{int}} \cup (\partial \wD \cap U))$, where $U^\text{int}$ denotes the interior of $U$ and again the regularity conditions hold by Assumption \ref{ass:estimate}(d) and the fact that $g \in C^2_c(\wD)$.  As in Case 1, we conclude on $U$ that
\[
(\lambda - \fA^Z)(\wtilde{f} - f) = 0
\]
with $\wtilde{f} - f = 0$ on $\partial U - (\partial \wD \cap U)$ and $\langle \nabla (\wtilde{f} - f)(z), u\rangle = 0$ for $z \in \partial \wD \cap U$ and $u \in \wU(z)$.  Applying the strict maximum principle again implies that $\wtilde{f} = f$ on $U$ and hence that $f$ is $C^2$ on $U$, as desired.

We now check that $f \in C^1_b(\wDs)$ and that $\langle u, \nabla f(z) \rangle = 0$ for $u \in \wU(\chi^{-1}(z))$ and $z \in \partial \wDs$, which will complete the proof.  First, since Assumption \ref{ass:drift-bound} holds, by Lemma \ref{lem:grad-check2} the hypothesis of Corollary \ref{corr:grad-bound} holds for $g$. Now, notice that $e^{-\lambda t} \EE[g(\wZ(t)) \mid \wZ(0) = z]$ is $C^1$ in $z$ by Proposition \ref{prop:and-exp} and is integrable with integrable $z$-derivative by the conclusions of Corollary \ref{corr:grad-bound}.  We may therefore differentiate under the integral to see that
\[
\nabla f (z) = \int_0^\infty e^{-\lambda t} \nabla_z \EE[g(\wZ(t)) \mid \wZ(0) = z] dt.
\]
The conclusions of Corollary \ref{corr:grad-bound} now imply that $f \in C^1_b(\wDs)$ and $\langle u, \nabla f(z) \rangle = 0$ for $u \in \wU(\chi^{-1}(z))$ and $z \in \partial \wDs$, completing the proof.
\end{proof}

\appendix

{
\newcommand{\mr}{\mathbf}
\newcommand{\MZ}{\mathbb{Z}}
\newcommand{\vk}{\varkappa}
\newcommand{\BQ}{\mathbb{Q}}
\newcommand{\BR}{\mathbb{R}}
\newcommand{\BC}{\mathbb{C}}
\newcommand{\SL}{\sum\limits}
\newcommand{\IL}{\int\limits}
\newcommand{\al}{\alpha}
\newcommand{\be}{\beta}
\newcommand{\ga}{\gamma}
\newcommand{\de}{\delta}
\newcommand{\De}{\Delta}
\newcommand{\La}{\Lambda}
\newcommand{\ME}{\mathbf E}
\newcommand{\vn}{\varnothing}
\newcommand{\ls}{\varlimsup}
\newcommand{\CF}{\mathcal F}
\newcommand{\CH}{\mathcal H}
\newcommand{\CG}{\mathcal G}
\newcommand{\MP}{\mathbf P}
\newcommand{\CE}{\mathcal E}
\newcommand{\CA}{\mathcal A}
\newcommand{\CL}{\mathcal L}
\newcommand{\CS}{\mathcal S}
\newcommand{\CD}{\mathcal D}
\newcommand{\CK}{\mathcal K}
\newcommand{\CN}{\mathcal N}
\newcommand{\cl}{\mathcal I}
\newcommand{\MQ}{\mathbb Q}
\newcommand{\MY}{\mathcal Y}
\newcommand{\CW}{\mathcal W}
\newcommand{\CX}{\mathcal X}
\newcommand{\CY}{\mathcal Y}
\newcommand{\CP}{\mathcal P}
\newcommand{\MU}{\mathbf U}
\newcommand{\Oa}{\Omega}
\newcommand{\oa}{\omega}
\newcommand{\si}{\sigma}
\newcommand{\Ga}{\Gamma}
\newcommand{\pa}{\partial}
\renewcommand{\phi}{\varphi}
\newcommand{\ta}{\theta}
\newcommand{\la}{\lambda}
\newcommand{\fn}{\mathfrak{n}}
\newcommand{\Ra}{\Rightarrow}
\newcommand{\Lra}{\Leftrightarrow}
\newcommand{\TC}{\tilde{C}}
\renewcommand{\Re}{\mathop{\mathrm{Re}}\nolimits}
\renewcommand{\Im}{\mathop{\mathrm{Im}}\nolimits}
\newcommand{\ol}{\overline}
\newcommand{\oR}{\overline{R}}
\newcommand{\om}{\overline{R}^{-1}}
\newcommand{\Tau}{\mathcal T}
\newcommand{\CM}{\mathcal M}
\newcommand{\Si}{\Sigma}
\newcommand{\norm}[1]{\lVert#1\rVert}
\newcommand{\mU}{\mathbf{U}}
\newcommand{\MC}{\mathcal C}
\newcommand{\dd}{\mathrm{d}}
\newcommand{\md}{\mathrm{d}}
\newcommand{\CV}{\mathcal V}
\newcommand{\munit}{\mathbf{1}}
\newcommand{\mP}{\mathbf{p}}
\newcommand{\tr}{\text{tr}}

\section{Existence and uniqueness for systems~\eqref{eq:lw-eq} and~\eqref{eq:jw-eq}} \label{sec:appendix}

In this appendix, authored by Andrey Sarantsev, we prove strong existence and pathwise uniqueness results for reflected systems given by~\eqref{eq:lw-eq}  and~\eqref{eq:jw-eq} for general initial conditions. We stress that these initial conditions may not satisfy the Gibbs condition, and therefore fixed-time marginals of these systems no longer coincide with the distribution of eigenvalues of certain random matrix models.  More precisely, we consider initial conditions for which no particle lies on the boundary (at $0$ for the Laguerre case and at $0$ or $1$ for the Jacobi case) and no two particles on the same or adjacent levels of the Gelfand-Tsetlin pattern coincide. Our goal is to establish the following two theorems.

\begin{theorem} If the initial conditions $\{l^n_i(0)\}$ for the system~\eqref{eq:lw-eq} satisfy
$$
0 < l^n_{i}(0) < l^{n-1}_i(0) < l^{n}_{i+1}(0) \ \ \mbox{for all}\ \ n, i,
$$
then the SDER~\eqref{eq:lw-eq} admits a unique strong solution.  Further, at most one equality of the form $l^n_i(t) = l^{n-i}_i(t)$ or $l^{n-1}_i(t) = l^n_{i+1}(t)$ holds at once.  
\label{thm:existence-general-lw}
\end{theorem}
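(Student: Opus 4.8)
The plan is to induct on the number of levels $m$, using the triangular structure of~\eqref{eq:lw-eq}: the level-$m$ particles $\{l^m_i\}$ are driven by independent Brownian motions and interact only through reflections off the level-$(m-1)$ particles, which have already been built. We carry along, for the first $m$ levels, the inductive invariants that there is a unique strong solution, every particle stays strictly positive, and at most one of the equalities $l^n_i(t)=l^{n-1}_i(t)$, $l^{n-1}_i(t)=l^n_{i+1}(t)$ (over all $n\le m$) holds at any given time. The last invariant forces consecutive particles on each level to stay strictly separated: since $l^n_i(t)\le l^{n-1}_i(t)\le l^n_{i+1}(t)$ always holds, a coincidence $l^n_i(t)=l^n_{i+1}(t)$ would entail the two forbidden equalities $l^n_i(t)=l^{n-1}_i(t)$ and $l^{n-1}_i(t)=l^n_{i+1}(t)$ simultaneously. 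The base case $m=1$ is the single equation $dl^1_1=2\sqrt{l^1_1}\,dB^1_1+2p\,dt$, a $\BESQ^{2p}$: since $2p\ge2$ it stays strictly positive from a positive start, and strong existence and pathwise uniqueness are classical (and also follow from Theorem~\ref{thm:exist-crit}).

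Assume the invariants for the first $m-1$ levels. Each level-$m$ particle $l^m_i$ then satisfies a one-dimensional SDE with reflection on two time-dependent barriers, namely its two interlacing neighbours on level $m-1$ (a one-sided barrier when $i=1$ and $m\le p$), which by the inductive invariant are continuous, strictly positive, and never touch; its drift $2(p-m+1)$ is constant and its diffusion coefficient $2\sqrt{x}$ is locally Lipschitz on $(0,\infty)$, satisfies $|2\sqrt{x}-2\sqrt{x'}|^2\le 4|x-x'|$ with $\int_{0^+}\tfrac{du}{4u}=\infty$, and grows linearly. Theorem~\ref{thm:exist-crit}, applied on the domain $(0,\infty)$, therefore yields a unique strong solution for $l^m_i$ (the one-sided case being handled directly by the Skorokhod map, using that a $\BESQ^{2(p-m+1)}$ with $2(p-m+1)\ge2$ started positive never reaches $0$); that the Skorokhod reflection terms coincide with $\tfrac12 L^{m,\pm}_i$ is verified by the It\^o--Tanaka formula and the occupation-time identity, exactly as in the proof of Theorem~\ref{thm:lag-war-exist}. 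As the level-$m$ particles do not interact with one another, solving each this way and appending to the level-$\le m-1$ solution gives a unique strong solution of the first $m$ levels. It remains to verify the invariants at level $m$: strict positivity holds because between reflections $l^m_i$ is a $\BESQ^{2(p-m+1)}$ with $2(p-m+1)\ge2$ when $m\le p$, and is pushed upward off $l^{m-1}_i>0$ when $m>p$; and $l^m_i$ cannot touch both its barriers at once, as that would force its two level-$(m-1)$ neighbours to coincide.

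The crux is to rule out the remaining ways for two of the forbidden equalities to hold at the same instant once level $m$ is present. Modulo the inductive invariant, these reduce to configurations in which two non-negative gaps $G_1(t),G_2(t)$ vanish simultaneously: a chain of three particles on two or three consecutive levels $\le m$ (the gaps flanking the shared particle), or two forbidden pairs on disjoint particle sets (the two independent gaps). In each case, near the configuration all particles involved are bounded and bounded away from $0$, the only active local-time terms are those reflecting $G_1$ and $G_2$ at $0$, and computing the semimartingale decompositions shows that $(G_1,G_2)$ is locally a non-degenerate planar reflected diffusion in a fixed two-dimensional wedge with constant reflection directions on the two faces; its covariance is uniformly comparable to a fixed positive-definite matrix because any shared noise enters at most one of $G_1,G_2$ (in the chain case, the noise of the shared particle) while the remaining particles contribute independent noise. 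A Bessel-type comparison then shows that the corner of the wedge --- the simultaneous-vanishing locus --- is polar, hence a.s.\ never reached; a union bound over the finitely many such configurations completes the induction and produces the unique global strong solution with the asserted one-equality-at-a-time property.

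I expect this last step to be the main obstacle. Its delicate parts are: writing out the gap decompositions while checking that reflections of the particles involved off still-lower levels are inactive near the configuration in question, so that they do not spoil the wedge picture; determining the correct (in the chain case, oblique) reflection directions at the two faces and the resulting wedge geometry, and verifying uniform non-degeneracy of the $2\times2$ covariance on the relevant excursion; and running the comparison with those reflection directions, via the criterion distinguishing which corners of an obliquely reflected planar diffusion are polar. Once simultaneous forbidden equalities are excluded, accumulation of contact times causes no trouble, since between them the process is the well-behaved diffusion constructed above and the strong Markov property applies.
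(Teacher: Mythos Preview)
Your plan is correct and follows essentially the same inductive architecture as the paper: build level by level via Theorem~\ref{thm:exist-crit} on non-touching time-dependent barriers, and rule out simultaneous forbidden equalities by analyzing the relevant two-dimensional gap process near the corner. Your case split (three-particle chains versus two disjoint pairs) matches the paper's types {\bf A}, {\bf B1/B2}, {\bf C1/C2}, {\bf D} exactly.

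The one place where the paper is more concrete than your outline is the execution of the corner-avoidance step, which you flag as the main obstacle. Rather than a direct ``Bessel-type comparison'' with state-dependent diffusion and oblique reflection, the paper first localizes to a compact interval bounded away from the boundary, applies the Lamperti-type coordinate change $f(x)=\int_0^x \phi(u)^{-1}\,du$ (here $\phi(u)=2\sqrt{u}$) to each particle to make the diffusion coefficients identically one, and then uses Girsanov to remove the now-bounded drifts. After this reduction the gap pair $(Z_1,Z_2)$ is \emph{exactly} a reflected Brownian motion in the quadrant with an explicit constant covariance matrix and an explicit constant reflection matrix, and the corner is shown to be polar by invoking known criteria for RBM in a quadrant (\cite{Sarantsev15}, \cite{BruggemanSarantsev}). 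This sidesteps the need to run any comparison with variable coefficients or to verify polar-corner criteria for general obliquely reflected diffusions, and is the cleanest way to close the argument you have in mind.
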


\begin{theorem} If the initial conditions $\{j^n_i(0)\}$ for the system~\eqref{eq:jw-eq}, satisfy
$$
0 < j_i^{n}(0) < j_i^{n-1}(0) < j_{i+1}^{n}(0) < 1,\ \ \mbox{for all}\ \ n, i,
$$
then the SDER~\eqref{eq:jw-eq} admits a unique strong solution.  Further, at most one equality of the form $j^n_i(t) = j^{n-i}_i(t)$ or $j^{n-1}_i(t) = j^n_{i+1}(t)$ holds at once.  
\label{thm:existence-general-jw}
\end{theorem}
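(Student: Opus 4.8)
The plan is to follow the proof of Theorem~\ref{thm:existence-general-lw}, with squared Bessel particles replaced by univariate Jacobi particles and the half-line replaced by the compact interval $[0,1]$. The structural point, exactly as for Warren's process, is that~\eqref{eq:jw-eq} is \emph{triangular}: the local times $L^{n,\pm}_i$ act only on the level-$n$ particle $j^n_i$, so levels $1,\dots,n-1$ evolve autonomously of level $n$, and one may induct on the number of levels. On level $1$, equation~\eqref{eq:jw-eq} is an autonomous univariate Jacobi SDE with parameters $(p-1,q-1)$; since $p-1\ge 0$ and $q-1\ge 0$ (as $n\le\min\{p,q\}$ forces $p,q\ge 1$), Feller's test shows it never reaches $\{0,1\}$, while Theorem~\ref{thm:exist-crit} applies with $\rho(s)=Cs$ — using that $x\mapsto\sqrt{x(1-x)}$ is $\tfrac12$-Hölder on $[0,1]$, is locally Lipschitz on $(0,1)$, and, like the Jacobi drift, is bounded on the compact interval so the linear-growth hypothesis is automatic — giving a unique strong solution.

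For the inductive step, assume a strong solution $\{j^k_i(t)\}_{k\le n-1}$ is already built, and suppose for the moment that up to time $t$ every particle lies in $(0,1)$ and at most one equality of the allowed form holds. The latter forces consecutive level-$(n-1)$ particles to be strictly separated, since an equality $j^{n-1}_{i-1}(t)=j^{n-1}_i(t)$ would pinch the particle $j^{n-2}_{i-1}$ between them and thereby produce the two simultaneous equalities $j^{n-1}_{i-1}=j^{n-2}_{i-1}$ and $j^{n-2}_{i-1}=j^{n-1}_i$. Hence the two time-dependent barriers $j^{n-1}_{i-1}(t)<j^{n-1}_i(t)$ that confine $j^n_i$ never meet and remain in $(0,1)$ (for the edge indices $i=1,n$ one barrier is absent and confinement to $(0,1)$ is instead furnished by Feller's test on the Jacobi generator). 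Since the level-$n$ particles do not interact with one another, Theorem~\ref{thm:exist-crit} — in its time-dependent-barrier form — produces for each $i$ a unique strong solution $j^n_i$ confined to $[j^{n-1}_{i-1}(t),j^{n-1}_i(t)]\subset(0,1)$; the identification of the resulting reflection terms with $\tfrac12 L^{n,\pm}_i$ is the same Itô–Tanaka and occupation-time argument used in the proof of Theorem~\ref{thm:lag-war-exist}. Running this for $n=1,\dots,\min\{p,q\}$ and assembling the levels yields a unique strong solution, and establishes pathwise uniqueness, on the random interval $[0,\tau)$, where $\tau$ is the first time two equalities of the allowed form hold at once; note that before $\tau$ all particles stay in $(0,1)$ automatically once level $1$ does, so $\tau$ is also before any particle reaches $\{0,1\}$.

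It remains to prove $\tau=\infty$ almost surely, i.e.\ that a simultaneous pair of adjacent-level collisions is never attained, and I would argue locally, configuration by configuration. The typical obstruction is a ``diamond'' $j^{n-1}_{i-1}=j^{n-2}_{i-1}=j^{n-1}_i$, for which the two nonnegative gaps $g_1:=j^{n-2}_{i-1}-j^{n-1}_{i-1}$ and $g_2:=j^{n-1}_i-j^{n-2}_{i-1}$ would both vanish. Writing Itô's formula for $g_1$ and $g_2$: on the set where the gaps are small (hence in the interior of $[0,1]$, where the diffusion coefficient is bounded away from $0$) the two gaps are driven by distinct Brownian motions with comparable, nondegenerate variances, the drift differences are bounded (being differences of affine functions of the coordinates), and the relevant one-sided reflections push the gaps up; one therefore expects $(g_1,g_2)$ to dominate a planar Bessel-type process of index at least $2$, which a.s.\ avoids the origin, equivalently that a Lyapunov function such as $-\log(g_1g_2)$ or $g_1^{-\varepsilon}+g_2^{-\varepsilon}$ is a supermartingale after localization and so cannot blow up at $\tau$. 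The remaining bad configurations — two allowed equalities sharing one particle (e.g., $j^n_i=j^{n-1}_{i-1}=j^{n-2}_{i-2}$), or two equalities in disjoint parts of the pattern — are treated the same way, the disjoint case being easier because one may condition on the first collision time and use that the other gap process still has a continuous, strictly positive conditional law there. Summing these estimates over the finitely many sub-configurations and over all levels gives $\tau=\infty$, hence the theorem, including the ``at most one equality at a time'' conclusion.

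\textbf{Main obstacle.} The genuinely delicate point is this last step — ruling out simultaneous adjacent-level collisions. Relative to the Laguerre case of Theorem~\ref{thm:existence-general-lw}, the new complication is that $\sqrt{x(1-x)}$ degenerates at \emph{both} endpoints of $[0,1]$, so the Bessel comparison and the choice of Lyapunov function must be localized to the relevant interior sub-domain with some care; one must also check that the interaction of a particle's two one-sided reflections (off the levels above and below) does not spoil the sign needed for the supermartingale estimate.
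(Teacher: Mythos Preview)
Your overall architecture matches the paper's: induction on the number of levels, using the triangular structure to feed level $n-1$ into Theorem~\ref{thm:exist-crit} as time-dependent barriers for level $n$, Feller's test (or rather a Bessel-type argument near $0$ and $1$) to keep the bottom and top particles in $(0,1)$, and the It\^o--Tanaka/occupation-time identification of the reflection terms. Where you diverge is the core non-collision step, and that is where your sketch has a real gap.

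The paper does \emph{not} attack the simultaneous-collision problem via a direct Lyapunov function on the gaps. Instead it (i) localizes in time so that only the two or three relevant particles interact, (ii) applies the Lamperti-type map $f(x)=\int_0^x\phi(u)^{-1}\,du$ with $\phi(u)=2\sqrt{u(1-u)}$ to make the diffusion coefficients identically $1$, (iii) removes the now-bounded drifts by Girsanov, and (iv) is left with a system of two or three reflected Brownian motions whose gap process is a reflected Brownian motion in the quadrant with an explicit covariance matrix $A=\bigl(\begin{smallmatrix}2&-1\\-1&2\end{smallmatrix}\bigr)$ and an explicit reflection matrix $R$. Corner-avoidance for such RBMs is then read off from the literature (\cite{Sarantsev15}, \cite{BruggemanSarantsev}). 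This buys a clean reduction to a classified, solved problem.

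Your Lyapunov heuristic ``the relevant one-sided reflections push the gaps up'' is correct for the diamond configuration (Type~A in the paper's taxonomy: normal reflection, $R=I_2$), but it is \emph{false} for the chain configurations you list next, e.g.\ $j^n_i=j^{n-1}_{i-1}=j^{n-2}_{i-2}$. There, when $j^{n-1}_{i-1}$ hits $j^{n-2}_{i-2}$ and is pushed up, the gap $g_1=j^{n-1}_{i-1}-j^{n-2}_{i-2}$ increases but the gap $g_2=j^n_i-j^{n-1}_{i-1}$ \emph{decreases}: the reflection matrix acquires an off-diagonal $-1$ (Types~B1/B2) or $+1$ (Types~C1/C2). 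So a naive supermartingale argument for $-\log(g_1g_2)$ or $g_1^{-\varepsilon}+g_2^{-\varepsilon}$ will pick up a boundary term of the wrong sign, and you would in effect have to reprove the oblique-reflection corner-avoidance results of \cite{Sarantsev15} and \cite{BruggemanSarantsev}. You also do not enumerate the configurations; the paper identifies six (A, B1, B2, C1, C2, and a ``disjoint'' Type~D), and the C-types in particular require a sharper criterion (\cite[Corollary~3.6]{BruggemanSarantsev}) than the B-types. The Lamperti$+$Girsanov$+$RBM route both avoids the sign difficulty and lets you cite rather than redo the hard estimate.
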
 

As the proofs of Theorem~\ref{thm:existence-general-lw} and Theorem~\ref{thm:existence-general-jw} are similar, we present only the proof of Theorem~\ref{thm:existence-general-jw}.  We will give first some background on reflected Brownian motions which will later be used in the proofs.

\subsection{Background on reflected Brownian motion} 

Take a positive quadrant $S = \BR^2_+$, a $2\times 2$ positive definite symmetric matrix $A$, and another $2\times 2$ matrix $R$ with units on the main diagonal. 

\begin{define} Consider an $S$-valued continuous adapted process $Z = (Z(t),\, t \ge 0)$ satisfying $Z(t) = W(t) + RL(t)$, where
\begin{itemize}
\item[(a)] $W$ is a two-dimensional driftless Brownian motion with covariance matrix $A$, starting from $W(0) \in S$; and 

\item[(b)] $L$ is a two-dimensional continuous process with $L(0) = 0$, and with nondecreasing components $L_1, L_2$, such that $L_k$ can increase only when $Z_k = 0$, for $k = 1, 2$. 
\end{itemize}
Then $Z$ is a {\it reflected Brownian motion in the quadrant} with {\it covariance matrix} $A$, and {\it reflection matrix} $R$. 
\end{define}

For the theory of reflected Brownian motion in the quadrant (and higher-dimensional positive orthant), see the survey \cite{WilliamsSurvey} and references therein. Take a continuous function $F : \BR_+ \to \BR$. A continuous adapted process $R = (R(t),\, t \ge 0)$ is called a {\it reflected Brownian motion upon the function} $F$ if there exists a Brownian motion $B$ and a continuous nondecreasing adapted process $L$ with $L(0) = 0$, which can increase only when $R(t) = F(t)$, such that one of the two cases
$$
R(t) \le F(t),\ R(t) = B(t) - L(t),\ t \ge 0;
$$
$$
R(t) \ge F(t),\ R(t) = B(t) + L(t),\ t \ge 0
$$
holds. We consider five types of systems of three {\it reflected Brownian motions} $(Y_0, Y_1, Y_2)$, together with the corresponding {\it gap processes} $Z = (Z_1, Z_2)$. We consider also a system of four reflected Brownian motions $(X_0, X_1, X_2, X_3)$, together with gap processes $(Z_1, Z_2)$.

\noindent {\bf Type A.} $Y_1(t) \le Y_0(t) \le Y_2(t)$, where $Y_0$ is a (non-reflected) Brownian motion, $Y_1$ and $Y_2$ are Brownian motions reflected upon $Y_0$; and $Z_1 = Y_0 - Y_1$, $Z_2 = Y_2 - Y_0$. 

In the next four cases, $Y_0$ is a (non-reflected) Brownian motion, $Y_1$ is a Brownian motion reflected upon $Y_0$, and $Y_2$ is a Brownian motion reflected upon $Y_1$. In addition, $Z_1(t) = |Y_1(t) - Y_0(t)|$, and $Z_2(t) = |Y_2(t) - Y_1(t)|$. 

\noindent {\bf Type B1.} $Y_2(t) \le Y_1(t) \le Y_0(t)$.  

\noindent {\bf Type B2.} $Y_0(t) \le Y_1(t) \le Y_2(t)$. 

\noindent {\bf Type C1.} $Y_1(t) \le Y_0(t)$ and $Y_2(t) \ge Y_1(t)$.

\noindent {\bf Type C2.} $Y_1(t) \ge Y_0(t)$ and $Y_2(t) \le Y_1(t)$. 

\noindent {\bf Type D.} $X_0(t) \leq X_1(t)$ and $X_2(t) \leq X_3(t)$ with $X_0(t)$ and $X_2(t)$ non-reflected Brownian motions and $X_1(t)$ and $X_3(t)$ Brownian motions reflected on $X_0(t)$ and $X_2(t)$, respectively.  The gap processes are $Z_1(t) = |X_0(t) - X_1(t)|$ and $Z_2(t) = |X_2(t) - X_3(t)|$.

There is a {\it triple collision} in a system $(Y_0, Y_1, Y_2)$ at time $t \ge 0$ if $Y_0(t) = Y_1(t) = Y_2(t)$, which is equivalent to $Z_1(t) = Z_2(t) = 0$.  There is a \textit{double collision} in the system $(X_0, X_1, X_2, X_3)$ at time $t \geq 0$ if $X_0(t) = X_1(t)$ and $X_2(t) = X_3(t)$.

\begin{lemma} 
\label{lemma:no-triple-collisions}
In each of the six systems above, there are a.s. no triple or double collisions. That is, 
\begin{equation}
\label{eq:no-triple-collisions}
\MP\left(\exists\, t > 0:\ Y_0(t) = Y_1(t) = Y_2(t)\right) = 0 \text{ and } \MP\left(\exists\, t > 0:\ X_0(t) = X_1(t) \text{ and } X_2(t) = X_3(t)\right) = 0.
\end{equation}
Equivalently, this means that
\begin{equation}
\label{eq:no-corner}
\MP\left(\exists\, t > 0:\ Z_1(t) = Z_2(t) = 0\right) = 0.
\end{equation}
\end{lemma}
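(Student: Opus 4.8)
For each of the six systems the plan is to reduce the statement to the claim that the gap process $Z=(Z_1,Z_2)$ almost surely never reaches the corner of the quadrant; this is exactly \eqref{eq:no-corner}, and its equivalence with \eqref{eq:no-triple-collisions} is immediate from the identities $Z_1=|Y_1-Y_0|$, $Z_2=|Y_2-Y_1|$ in the five $Y$-systems and $Z_1=|X_0-X_1|$, $Z_2=|X_2-X_3|$ in Type~D, since $Z_1(t)=Z_2(t)=0$ is the same as $Y_0(t)=Y_1(t)=Y_2(t)$ (resp.\ $X_0(t)=X_1(t)$ together with $X_2(t)=X_3(t)$). So from now on it suffices to rule out a corner visit for $Z$.

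\noindent First I would exhibit $Z$ as a semimartingale reflected Brownian motion in the quadrant $S=\BR^2_+$. In the five $Y$-systems each $Y_j$ is a semimartingale --- a Brownian motion plus a continuous nondecreasing process --- so each gap $Z_k$, a difference of two of them, is a semimartingale; applying the It\^o--Tanaka formula to $|Y_1-Y_0|$ and $|Y_2-Y_1|$ and using that each pushing term acts only when the corresponding gap is zero gives a Skorokhod decomposition $Z(t)=Z(0)+W(t)+RL(t)$, where $W$ is a planar Brownian motion with constant positive-definite covariance matrix $A$ (one computes $A=\left(\begin{smallmatrix}2&\pm1\\\pm1&2\end{smallmatrix}\right)$, with the sign of the off-diagonal entry depending on the type), $R$ is a $2\times2$ matrix with units on the diagonal and off-diagonal entries in $\{0,\pm1\}$ (diagonal for Type~A, triangular for Types~B1, B2, C1, C2), and $L=(L_1,L_2)$ has nondecreasing coordinates with $L_k$ increasing only on $\{Z_k=0\}$. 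For Type~D the two gaps are one-dimensional reflected Brownian motions, the matrix $R$ is the identity, and the same conclusion holds; moreover $Z_1$ and $Z_2$ are independent when the two base motions $X_0,X_2$ are.

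\noindent The core of the argument is then the corner-avoidance statement for such a process, which I would draw from the Varadhan--Williams theory of obliquely reflected Brownian motion in a planar wedge (see \cite{WilliamsSurvey} and the references therein). After the linear change of variables that normalizes $A$, the data $(A,R)$ becomes a wedge of opening angle $\xi\in(0,2\pi)$ with reflection angles $\theta_1,\theta_2\in(-\tfrac{\pi}{2},\tfrac{\pi}{2})$ to the inward normals, and the relevant parameter is $\alpha:=(\theta_1+\theta_2)/\xi$; by the Varadhan--Williams trichotomy the vertex is reached in finite time with positive probability if and only if $\alpha\ge2$, while $\alpha\ge1$ is incompatible with $Z$ being a semimartingale. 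Since every gap process above is manifestly a semimartingale, one has $\alpha<1<2$ in all six cases, so the vertex is a.s.\ never reached, which is \eqref{eq:no-corner}; the lemma follows. For Type~D and, more generally, whenever $Z_1\perp Z_2$, one can instead argue elementarily: the supports of the two reflection local times are conditionally independent random closed sets of Hausdorff dimension $1/2$, hence a.s.\ disjoint for $t>0$, exactly as two independent Brownian zero sets are.

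\noindent I expect the main obstacle to be the bookkeeping in the middle step together with the precise invocation of the wedge trichotomy: one must identify $A$ and $R$ --- in particular the signs of the off-diagonal entries of $A$ and which pushing terms $dL_j$ feed into which gap --- for all six systems, verify that the resulting $Z$ does coincide with the canonical wedge reflected Brownian motion (so that the Varadhan--Williams theorem applies), and quote the dichotomy ``vertex reached in finite time $\iff\alpha\ge2$'' and the companion fact that $\alpha\ge1$ rules out the semimartingale property in exactly the forms needed. The semimartingale shortcut should make it unnecessary to compute $\alpha$ numerically for each type, but it rests on this last (standard) fact, which I would state with a reference.
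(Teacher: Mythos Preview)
Your reduction to the gap process is exactly what the paper does, and identifying $Z$ as an SRBM in the quadrant with explicit covariance $A$ and reflection matrix $R$ is the right first step. The divergence comes in how corner avoidance is then established.

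The paper does not use the Varadhan--Williams $\alpha$ parameter at all. Instead, for each type it writes down the pair $(A,R)$ and invokes explicit matrix criteria from the recent literature: \cite[Theorem~2.12]{Sarantsev15} for Types~A, B1, B2, D and \cite[Corollary~3.6]{BruggemanSarantsev} for Types~C1, C2. For the C-types this involves checking the inequality $\mathrm{tr}(\overline{R}^{-1}A) \ge 2\,c_+(\overline{R}^{-1})$, a Lyapunov-type bound. These criteria are stated directly in the orthant coordinates, so no wedge transformation is needed.

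Your semimartingale shortcut, by contrast, has a genuine gap. Williams' theorem says that \emph{for $0<\alpha<2$} the VW reflected Brownian motion is a semimartingale iff $\alpha<1$; the result is silent when $\alpha\ge 2$. In that regime the VW process simply hits the vertex in finite time, and up to that hitting time it is a perfectly good semimartingale (continuous martingale plus finite-variation pushing). So from ``$Z$ is manifestly a semimartingale'' you can rule out $1\le\alpha<2$, but you cannot rule out $\alpha\ge 2$---which is precisely the corner-hitting case you are trying to exclude. The argument is circular at exactly the point that matters. Your construction of $Z$ as the gap process does give a global-in-time semimartingale Skorokhod solution, but you would need an additional fact of the form ``a global finite-variation Skorokhod solution forces $\alpha<2$,'' and that is not what Williams proves; you would have to supply it or compute $\alpha$ type by type after all. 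The paper's route sidesteps this entirely by applying criteria that do not pass through $\alpha$.
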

\begin{proof} For Type {\bf A}, we have:
$$
Y_0(t) = W_0(t),\ Y_1(t) = W_1(t) - L_1(t),\ Y_2(t) = W_2(t) + L_2(t),\ t \ge 0,
$$
where $W_0, W_1, W_2$ are i.i.d. Brownian motions, and $L_1, L_2$ are continuous non-decreasing processes which start from zero and can grow only when $Y_0 = Y_1$, and $Y_0 = Y_2$, respectively. Therefore, the gap process satisfies
$$
Z_1(t) = W_0(t) - W_1(t) + L_1(t),\ \ Z_2(t) = W_2(t) - W_0(t) + L_2(t).
$$
The process $(W_0 - W_1, W_2 - W_0)$ is a two-dimensional driftless Brownian motion with covariance matrix
\begin{equation}
\label{eq:covariance}
A = 
\begin{bmatrix}
2 & -1\\
-1 & 2
\end{bmatrix}.
\end{equation}
Therefore, the gap process $Z$ is a reflected Brownian motion in the quadrant with covariance matrix $A$ from~\eqref{eq:covariance}, and reflection matrix
$$
R = 
\begin{bmatrix}
1 & 0\\
0 & 1
\end{bmatrix}.
$$
Applying~\cite[Theorem 2.12]{Sarantsev15}, we get~\eqref{eq:no-corner}.  

For Type {\bf B1}, we have:
$$
Y_0(t) = W_0(t),\ Y_1(t) = W_1(t) - L_1(t),\ Y_2(t) = W_2(t) - L_2(t),
$$
where $W_0, W_1, W_2$ are the same as for Type {\bf A}, and $L_1, L_2$ are continuous non-decreasing processes which can grow only when $Y_0 = Y_1$ and $Y_1 = Y_2$, respectively. Therefore,
$$
Z_1(t) = W_0(t) - W_1(t) + L_1(t),\ \ Z_2(t) = W_1(t) - W_2(t) + L_2(t) - L_1(t).
$$
Similarly to Type {\bf A}, the process $(W_0 - W_1, W_1 - W_2)$ is a two-dimensional driftless Brownian motion with covariance matrix~\eqref{eq:covariance}, and the gap process $Z$ is a reflected Brownian motion in the quadrant with covariance matrix~\eqref{eq:covariance}, and reflection matrix 
$$
R = 
\begin{bmatrix}
1 & 0\\
-1 & 1
\end{bmatrix}.
$$
Again applying~\cite[Theorem 2.12]{Sarantsev15}, we get~\eqref{eq:no-corner}. Systems of Type {\bf B2} are treated similarly. 

For Type {\bf C1}, after calculations very similar to the ones for Type {\bf B1}, we get that the gap process is a reflected Brownian motion in the quadrant with covariance matrix~\eqref{eq:covariance}, and reflection matrix
$$
R = 
\begin{bmatrix}
1 & 0\\
1 & 1
\end{bmatrix}.
$$
Apply \cite[Corollary 3.6]{BruggemanSarantsev}. In the notation of that corollary, we have: $\tilde{R} = I_2$, and $C = I_2$; therefore, $\ol{R} = I_2$, $\tr\left(\ol{R}^{-1}A\right) = \tr(A) = 4$, and 
$$
c_+(\ol{R}^{-1}) = \max\limits_{x \in \BR^2_+\setminus\{0\}}\frac{Ax\cdot x}{x\cdot x} = \max\limits_{\substack{x_1, x_2 \ge 0\\(x_1, x_2) \ne (0, 0)}}\frac{2x_1^2 + 2x_2^2 - 2x_1x_2}{x_1^2 + x_2^2} \le 2.
$$
Thus, the assumption of \cite[Corollary 3.6]{BruggemanSarantsev} is true, and we arrive at~\eqref{eq:no-corner}.  Systems of Type \textbf{C2} are treated similarly.

Finally, for Type \textbf{D}, we have
\[
X_0(t) = W_0(t), X_1(t) = W_1(t) + L_1(t), X_2(t) = W_2(t), X_3(t) = W_3(t) + L_2(t),
\]
where $W_0, W_1, W_2, W_3$ are i.i.d. Brownian motions and $L_1(t)$ and $L_2(t)$ are continuous non-decreasing processes which can grow only when $X_0 = X_1$ and $X_2 = X_3$, respectively.  We find that
\[
Z_1(t) = W_1(t) - W_0(t) + L_1(t) \text{ and } Z_2(t) = W_3(t) - W_2(t) + L_2(t),
\]
hence the gap process $(Z_1, Z_2)$ is a reflected Brownian motion in the quadrant with covariance and reflection matrices given by 
\[
A = \left[\begin{matrix} 2 & 0 \\ 0 & 2 \end{matrix}\right] \text{ and } R = \left[\begin{matrix} 1 & 0 \\ 0 & 1 \end{matrix}\right],
\]
so we get (\ref{eq:no-corner}) by \cite[Theorem 2.12]{Sarantsev15}. 
\end{proof}

\subsection{Proof of Theorem~\ref{thm:existence-general-jw}}
Let us introduce new pieces of notation:
\begin{equation}
\label{eq:f}
f(x) = \int_0^x\frac{\md u}{\phi(u)},\ \ 0 \le x \le 1, \ \mbox{where}\ \phi(u) := 2\sqrt{u(1-u)},\ \ 0 < u < 1;
\end{equation}
\begin{equation}
\label{eq:h}
h_l(x) := 2\left((p-l+1) + (p+q - 2l+2)x\right),\ \ l = 1, \ldots, p\wedge q;
\end{equation}
\begin{equation}
\label{eq:g}
g_l(x) := \frac12f''(x)\phi^2(x) + f'(x)h_l(x),\ \ l  = 1, \ldots, p\wedge q.
\end{equation}
Let us call the process $j^{n}_i(t)$ {\it a particle of level $n$ with rank $i$}. We will first prove the following three statements:
\begin{equation}
\label{eq:successive-GT}
\text{there exists a pathwise unique strong version of }
\left(j^{k}_i(t),\ k = 1, \ldots, n,\ i = 1, \ldots, k,\ t \ge 0\right);
\end{equation}
\begin{equation}
\label{eq:no-collisions}
\PP\left(\exists\, t \ge 0: j^{n}_i(t) = j^{n}_{i+1}(t)\right) = 0,\qquad i = 1, \ldots, n-1;
\end{equation}
\begin{equation}
\label{eq:no-hitting-0-1}
\PP\left(\exists\, t \ge 0: j^{n-1}_i(t) \in (0, 1)\right) = 1,\qquad i = 1, \ldots, n-1.
\end{equation}
Strong existence and pathwise uniqueness follows trivially from statements~\eqref{eq:successive-GT},~\eqref{eq:no-collisions}, and \eqref{eq:no-hitting-0-1}.

We proceed by induction on $n$. For the base case $n = 1$, there is nothing to prove. Suppose we proved this statement for $n-1$ instead of $n$, and let us prove it for $n$.  If we establish~\eqref{eq:no-collisions} and \eqref{eq:no-hitting-0-1}, existence and uniqueness for SDER~\eqref{eq:jw-eq} at level $n$ follows by applying Theorem~\ref{thm:exist-crit} to the non-colliding time-dependent boundaries given by $0$, $1$, and $j^{n - 1}_i(t)$.  We will first show~\eqref{eq:no-hitting-0-1} and use it to establish~\eqref{eq:no-collisions}.

Assume there exists a $t_0 > 0$ and an $i = 1, \ldots, n-1$ such that $j^{n-1}_i(t_0) \in \{0, 1\}$. After the mapping $x \mapsto 1 - x$ the system~\eqref{eq:jw-eq} turns into a system governed by the same SDER~\eqref{eq:jw-eq} with $p$ and $q$ exchanged, so we may assume without loss of generality that $j^{n-1}_i(t_0) = 0$. By the ranking of the particles on the $(n-1)^{\text{st}}$ level, we have that $i = 1$. Then there exists a $t_0 > 0$ such that $j^{n-1}_1(t) > 0$ for $t \in [0, t_0)$. Because $j^{n-2}_1(t) > 0$ for all $t \ge 0$ a.s.~by the induction assumption, there exists a rational $q \in (0, t_0)$ such that $j^{n-1}_1(t) < j^{n-2}_2(t)$ for all $t \in I := [q, t_0]$. Therefore, the collision term $L^{n-1, -}_1(t)$ stays constant on the time interval $I$, and the dynamics of the particle $j^{n-1}_1(t)$ on $I$ is given by
\begin{equation}
\label{eq:n-1-1}
\md j^{n-1}_1(t) = \phi\left(j^{n-1}_1(t)\right)\,\md B^{n-1}_1(t) + h_{n-1}\left(j^{n-1}_1(t)\right)\,\md t.
\end{equation}
Apply the function $f$ from~\eqref{eq:f} and get the new process 
$$
Y = (Y(t),\, t \ge 0),\ \ Y(t) = f\left(j^{n-1}_1(t)\right),
$$
with dynamics given by the following stochastic equation:
$$
\md Y(t) = \md B^{n-1}_1(t) + g_{n-1}\left(j^{n-1}_1(t)\right)\,\md t = 
 \md B^{n-1}_1(t) + v(Y(t))\,\md t,
$$
where $v(x) := g_{n-1}(f^{\leftarrow}(x))$, and $f^{\leftarrow}$ is the inverse mapping of $f$. Now, $f(x) \sim 2x^{1/2}$ as $x \downarrow 0$, and therefore $f^{\leftarrow}(x) \sim x^2/4$ as $x \downarrow 0$. Furthermore, $f'(x) = \phi^{-1}(x) \sim x^{-1/2}/2$, $f''(x) = -\phi'(x)/\phi^2(x) \sim -x^{-3/2}/4$ as $x \downarrow 0$. Also, we get from~\eqref{eq:h} that $h_{n-1}(0+) = h_{n-1}(0) = 2(p - (n-1) + 1) = 2(p - n) + 4$. Combining these asymptotic results in light of~\eqref{eq:g}, we have
$$
g_{n-1}(x) = \frac12\left(-\frac{x^{-3/2}}{4}\right)\left(2\sqrt{x(1-x)}\right)^2 + \frac1{2\sqrt{x(1 - x)}}h_{n-1}(x) \sim  (p - n + 3/2)x^{-1/2},
$$
and therefore 
$$
v(x) \sim g_{n-1}\left(\frac{x^2}4\right) \sim (p - n + 1/2)\left(\frac{x^2}4\right)^{-1/2} = \frac{2p - 2n + 3}{x},\ \ x \downarrow 0.
$$
Since $n \leq p$, it is a standard exercise in stochastic calculus to prove that $\PP(\exists\, t \in I:\, Y(t) = 0) = 0$, meaning the diffusion $Y$ a.s.~does not hit zero. Taking a countable union of probability zero events over all positive rational $q$, we complete the proof of~\eqref{eq:no-hitting-0-1}. 

Next, let us show~\eqref{eq:no-collisions}. Assume there exists an $i = 1, \ldots, n$ such that there exists a $t_0 \ge 0$ such that $j^{n}_i(t_0) = j^{n}_{i+1}(t_0)$. By the induction hypothesis applied to~\eqref{eq:no-collisions} together with~\eqref{eq:no-hitting-0-1}, we have a.s. that
$$
0 < j_{i-1}^{n-1}(t) < j_i^{n-1}(t) < j_{i+1}^{n-1}(t) < 1\qquad \mbox{for all}\qquad t \ge 0.
$$
Therefore, we get the string of inequalities
\begin{equation}
\label{eq:first-inequality}
0 < j_{i-1}^{n-1}(t_0) < j_i^{n}(t_0) = j_{i}^{n-1}(t_0) = j_{i+1}^{n}(t_0) = M < j^{n-1}_{i+1}(t_0) < 1.
\end{equation}
Now, there can be two cases.

\noindent {\bf Case 1.} Assume that the particle $j^{n-1}_{i}(t)$ does not collide with the two adjacent particles on the level $n-2$ at time $t_0$, meaning that
\begin{equation}
\label{eq:case-1}
j_{i-1}^{n-2}(t_0) < j_{i}^{n - 1}(t_0) < j_{i}^{n-2}(t_0).
\end{equation}
In light of~\eqref{eq:first-inequality} and~\eqref{eq:case-1}, together with continuity of all processes, there exist a rational $q \in (0, t_0)$, and a positive integer $m$ such that for $t \in I := [q, t_0]$, we have:
\begin{equation}
\label{eq:inequality}
m^{-1} \le j^{n-1}_{i-1}(t) < j_i^{n}(t) \le j_i^{n-1}(t) \le j_{i+1}^{n}(t) < j_{i+1}^{n-1}(t) \le 1 - m^{-1},
\end{equation}
and, in addition,
\begin{equation}
\label{eq:lower-order-inequality}
j_{i-1}^{n-2}(t) < j_{i}^{n-1}(t) < j_{i}^{n-2}(t).
\end{equation}
From~\eqref{eq:inequality}, it follows that there is no collision between $j_{i-1}^{n-1}(t)$ and $j_i^{n}(t)$, as well as between $j_{i+1}^{n-1}(t)$ and $j_{i+1}^{n}(t)$, on the time interval $I$. Therefore, on this time interval, the terms $L_{i-1}^{n, +}(t)$ and $L_{i+1}^{n, -}(t)$ are constant. Similarly, from~\eqref{eq:lower-order-inequality}, we get that the terms $L_{i}^{n-1, \pm}(t)$ are constant on this time interval. The processes $j_i^{n}(t)$, $j_i^{n-1}(t)$, $j_{i+1}^{n}(t)$ satisfy the following SDE's on the time interval $I$:
\begin{align}
\label{eq:level-n-1}
\mathrm{d}j^{n-1}_{i}(t) &= \phi\left(j_i^{n-1}(t)\right)\,\mathrm{d} B_i^{n-1}(t) + h_{n-1}\left(j^{n-1}_i(t)\right)\,\md t \\
\label{eq:level-n-plus}
\md j_{i+1}^{n}(t) &= \phi\left(j_{i+1}^{n}(t)\right)\,\md B_{i+1}^{n}(t) + h_n(j_{i+1}^{n}(t))\,\md t + \frac12\,\md L_{i+1}^{n, +}(t)\\
\label{eq:level-n-minus}
\md j_{i}^{n}(t) &= \phi\left(j_{i}^{n}(t)\right)\,\md B_{i}^{n}(t) + h_n(j_{i+1}^{n}(t))\,\md t - \frac12\,\md L_{i}^{n, -}(t).
\end{align}
Recall that $B_{i}^{n-1}(t)$, $B_i^{n}(t)$, and $B_{i+1}^{n}(t)$ are i.i.d. Brownian motions, $L_i^{n, -}(t)$ is a continuous non-decreasing process which can increase only when $j_i^{n}(t) = j_i^{n-1}(t)$, and $L_{i+1}^{n, +}(t)$ is a continuous non-decreasing process which can increase only when $j_{i+1}^{n}(t) = j_i^{n-1}(t)$. Next, we wish to get rid of the diffusion coefficients in~\eqref{eq:level-n-1},~\eqref{eq:level-n-plus},~\eqref{eq:level-n-minus}. Apply to these processes the function $f$ from~\eqref{eq:f}. Define the new processes
$$
Y(t) := f\left(j^{n-1}_{i}(t)\right),\ \ X_-(t) = f\left(j_i^{n}(t)\right),\ \ X_+(t) = f\left(j_{i+1}^{n}(t)\right).
$$
Because $f$ is strictly increasing, we have on the time interval $I$:
$X_-(t) \le Y(t) \le X_+(t)$. Applying It\^o's formula to~\eqref{eq:level-n-1},~\eqref{eq:level-n-plus},~\eqref{eq:level-n-minus}, we get for $t \in I$ that
\begin{align}
\label{eq:Y}
\md Y(t) &= \md B_i^{n-1}(t) + g_{n-1}\left(j_i^{n-1}(t)\right)\,\md t\\
\label{eq:X-}
\md X_-(t) &= \md B^{n}_i(t) + g_n\left(j_i^{n}(t)\right)\, \md t - \md\ell_-(t)\\
\label{eq:X+}
\md X_+(t) &= \md B^{n}_{i+1}(t) + g_n\left(j^{n}_{i+1}(t)\right)\, \md t + \md\ell_+(t).
\end{align}
Here, we use the notation
$$
\ell_+(t) = \frac12\int_0^tf'\left(j^{n}_{i+1}(s)\right)\,\md L_{i+1}^{n, +}(s),
$$
for a continuous nondecreasing process which can increase only when $j_{i+1}^{n}(t) = j_i^{n-1}(t)$, or, equivalently, when $X_+(t) = Z(t)$, and 
$$
\ell_-(t) = \frac12\int_0^tf'\left(j^{(n)}_i(s)\right)\,\md L_{i}^{n, -}(s)
$$
for a continuous nondecreasing process which can increase only when $j_{i}^{n}(t) = j_i^{n-1}(t)$, or, equivalently, when $X_-(t) = Z(t)$.  Comparing~\eqref{eq:g}, ~\eqref{eq:f}, and~\eqref{eq:h}, we see that $g_l$ is a continuous function on $(0, 1)$, and therefore, for $l = 1, \ldots, n$, we have
\begin{equation}
\label{eq:g-bound}
C_l := \max\limits_{x \in [m^{-1}, 1 - m^{-1}]}|g_l(x)| < \infty.
\end{equation}
Comparing~\eqref{eq:g-bound} with~\eqref{eq:inequality}, we have a.s.~that
\begin{equation}
\label{eq:Girsanov-g}
\max\limits_{t \in I}\left|g_n\left(j_i^{n}(t)\right)\right| \le C_n,\qquad \mbox{and}\qquad
\max\limits_{t \in I}\left|g_n\left(j_{i+1}^{n}(t)\right)\right| \le C_n.
\end{equation}
Apply Girsanov's theorem to~\eqref{eq:Y},~\eqref{eq:X+},~\eqref{eq:X-} in light of~\eqref{eq:Girsanov-g}. We conclude that there exists an equivalent measure $\mathbb Q$ such that under this measure, $Y(t)$ is a Brownian motion on the time interval $I$, and $X_-(t)$, $X_+(t)$ are Brownian motions reflected upon $Y(t)$. In other words, this is a system of type {\bf A}. By Lemma~\ref{lemma:no-triple-collisions}, this system a.s. does not have triple collisions where $X_-(t)  = X_+(t) = Y(t)$. Since we always have $X_-(t) \le Y(t) \le X_+(t)$, this is equivalent to saying that $\mathbb Q$-a.s. there does not exist a $t \ge 0$ such that $X_-(t) = X_+(t)$.  By equivalence, this statement is also true $\PP$-a.s. Taking a countable union of events of probability zero over all positive rationals $q$ and positive integers $m$ gives a contradiction and completes the proof of~\eqref{eq:no-collisions} for Case 1. 

\noindent {\bf Case 2.} At least one particle on the level $n-2$ hits $j^{n-1}_i(t)$ at time $t_0$. In other words, condition~\eqref{eq:case-1} is violated. This particle must be either $j^{n-2}_{i-1}(t)$ or $j^{n-2}_i(t)$. By the induction hypothesis, there are no collisions of particles on level $n - 2$. Therefore, only one particle of level $n-2$ can collide with $j^{n-1}_i(t)$ at time $t_0$. Now, let $k$ be the smallest of $l = 1, \ldots, n$ such that 
for every $j = l, \ldots, n-2$, there exists a particle from level $j$ which collides with $j_{i}^{n-1}(t)$ at time $t_0$.  Applying the induction hypothesis again, we argue that for every level $l$, there exists precisely one particle of level $l$ which collides with $j^{n-1}_i(t)$ at time $t_0$. Consider the colliding particles $X_0(t), X_1(t), X_2(t)$, on levels $k$, $k+1$, and $k+2$. If $k + 2 = n$, then pick any of the two particles $j^{n}_i(t)$ and $j^{n}_{i+1}(t)$. 

Notice that the particle $X_0(t)$ does not collide with any particle on level $k-1$ at time $t_0$. The particle $X_1(t)$ collides with only one particle of level $k$ at time $t_0$: namely, with the particle $X_0(t)$. The particle $X_2(t)$ collides with only one particle of level $k+1$ at time $t_0$, namely the particle $X_1(t)$. Similarly to Case 1, by continuity there exists a rational $q \in (0, t_0)$ and a positive integer $m$ such that on the time interval $I = [q, t_0]$, the same statements as in the previous three sentences about collisions of particles $X_0(t)$, $X_1(t)$, and $X_2(t)$ are true, and these three particles stay inside the interval $[m^{-1}, 1 - m^{-1}]$. 

Depending on the lower indices of particles $X_0(t)$, $X_1(t)$, and $X_2(t)$ in the Gelfand-Tsetlin pattern, we have one of the four orderings
\begin{align}
\label{eq:B1}
X_0(t) &\le X_1(t) \le X_2(t)\\
\label{eq:B2}
X_2(t) &\le X_1(t) \le X_0(t)\\
\label{eq:C1}
X_0(t) &\le X_1(t) \ge X_2(t)\\
\label{eq:C2}
X_0(t) &\ge X_1(t) \le X_2(t).
\end{align}
Consider the case~\eqref{eq:B1}. By the previous remark, on the time interval $I$, the particles $X_0$, $X_1$, and $X_2$ satisfy the equations
\begin{align}
\label{eq:X0}
\md X_0(t) &= \phi\left(X_0(t)\right)\,\md W_0(t) + h_k\left(X_0(t)\right)\,\md t\\
\label{eq:X1}
\md X_1(t) &= \phi\left(X_1(t)\right)\,\md t + h_{k+1}\left(X_1(t)\right)\,\md t + \md L_1(t)\\
\label{eq:X2}
\md X_2(t) &= \phi\left(X_2(t)\right)\,\md t + h_{k+2}\left(X_2(t)\right)\,\md t + \md L_2(t).
\end{align}
In these equations, $W_0(t), W_1(t), W_2(t)$ are i.i.d. Brownian motions, and $L_1(t),\, L_2(t)$ are continuous nondecreasing processes that can increase only when $X_0(t) = X_1(t)$ and $X_1(t) = X_2(t)$, respectively. After applying the function $f$ from~\eqref{eq:f} and using Girsanov's theorem similarly to Case 1, we get an equivalent probability measure $\MQ$, under which the processes $Y_0(t) = f(X_0(t)),\, Y_1(t) = f(X_1(t)),\, Y_2(t) = f(X_2(t))$ form a system of reflected Brownian motions of type {\bf B1}. From Lemma~\ref{lemma:no-triple-collisions}, this system a.s. does not have triple collisions
$$
Y_0(t) = Y_1(t) = Y_2(t)\ \Leftrightarrow X_0(t) = X_1(t) = X_2(t).
$$
Taking a countable union of $\MQ$-zero (and therefore $\MP$-zero) events over all positive rational $q$ and positive integers $m$, we obtain a contradiction and complete the proof in Case~\eqref{eq:B1}. Cases~\eqref{eq:B2}, ~\eqref{eq:C1}, and~\eqref{eq:C2} similarly correspond to systems of three reflected Brownian motions of types {\bf B2}, {\bf C1}, and {\bf C2}. This completes the proof of~\eqref{eq:no-collisions} in Case 2. 

We now check that the resulting process has no double collisions.  Assume for the sake of contradiction that there is a double collision at time $t_0 > 0$.  This collision takes one of the four forms
\begin{align*}
j^n_i(t_0) &= j^{n - 1}_i(t_0) \text{ and } j^{k}_j(t_0) = j^{k - 1}_j(t_0)\\
j^n_i(t_0) &= j^{n - 1}_i(t_0) \text{ and } j^{k}_{j + 1}(t_0) = j^{k - 1}_j(t_0)\\
j^n_{i+1}(t_0) &= j^{n - 1}_i(t_0) \text{ and } j^{k}_j(t_0) = j^{k - 1}_j(t_0)\\
j^n_{i+1}(t_0) &= j^{n - 1}_i(t_0) \text{ and } j^{k}_{j+1}(t_0) = j^{k - 1}_j(t_0).
\end{align*}
Consider the first case.  As before, there exists $q \in (0, t_0)$ and a positive integer $m$ so that on $I = [q, t_0]$ there are no other collisions of these particles and they stay in $[m^{-1}, 1 - m^{-1}]$.  Using the same change of variables and application of Girsanov's theorem as in the proof of \eqref{eq:no-collisions}, we obtain a system of reflected Brownian motions of type \textbf{D}.  By Lemma \ref{lemma:no-triple-collisions}, this system a.s. does not have double collisions.  Taking a countable union over all positive rational $q$ and positive integers $m$ gives a contradiction and completes the proof in the first case.  The other three cases are similar, implying that there are no double collisions.
}

\bibliographystyle{alpha}
\bibliography{20170724-arxiv-lagwar}
\end{document}